\documentclass{article}
\title{Finite length for unramified $\GL_2$: beyond multiplicity one, non-semisimple case}
\author{Lucrezia Bertoletti}
\date{}

\usepackage[a4paper]{geometry}

\usepackage{amsmath, bbm, amssymb} 
\usepackage{mathrsfs} 
\usepackage{comment} 
\usepackage{tikz-cd} 
\usepackage{mathtools}

\usepackage{amsthm, thmtools}
\usepackage{hyperref, cleveref}
\usepackage[shortlabels]{enumitem}
\usepackage[
backend=biber,
style=numeric,
sorting=nyt,
minnames=5,
maxnames=5
]{biblatex}
\usepackage[all]{xy}

\theoremstyle{plain}
\newtheorem{theorem}{Theorem}[subsection]
\newtheorem{lemma}[theorem]{Lemma}
\newtheorem{proposition}[theorem]{Proposition}
\newtheorem{corollary}[theorem]{Corollary}

\newtheorem{deflem}[theorem]{Definition-lemma}

\newtheorem{newthm}{Theorem}[section]

\newtheorem{example}[theorem]{Example}
\newtheorem{remark}[theorem]{Remark}

\theoremstyle{definition}
\newtheorem{definition}[theorem]{Definition}
\newtheorem{newdefinition}[newthm]{Definition}

\newcommand{\defeq}{:=} 
\newcommand{\xtwoheadrightarrow}[2][]{
\xrightarrow[#1]{#2}\mathrel{\mkern-14mu}\rightarrow
}

\DeclareMathOperator{\id}{id} 
\DeclareMathOperator{\im}{im} 
\DeclareMathOperator{\coker}{coker}

\DeclareMathOperator{\Hom}{Hom} 
\DeclareMathOperator{\Ext}{Ext} 
\DeclareMathOperator{\End}{End}

\DeclareMathOperator{\Gal}{Gal} 
\DeclareMathOperator{\Tr}{Tr} 
\DeclareMathOperator{\Fr}{Fr} 
\DeclareMathOperator{\Frob}{Frob} 
\DeclareMathOperator{\tr}{tr} 
\DeclareMathOperator{\Norm}{Norm} 

\DeclareMathOperator{\rad}{rad} 
\DeclareMathOperator{\soc}{soc} 
\DeclareMathOperator{\cosoc}{cosoc} 
\DeclareMathOperator{\JH}{JH} 
\DeclareMathOperator{\Ind}{Ind} 
\DeclareMathOperator{\cInd}{c-Ind} 
\DeclareMathOperator{\Ord}{Ord} 
\DeclareMathOperator{\Sym}{Sym}

\DeclareMathOperator{\GL}{GL} 
\DeclareMathOperator{\gr}{gr} 

\DeclareMathOperator{\lcm}{lcm}

\newcommand{\rep}{\overline{\rho}} 
\newcommand{\rss}{\overline{\rho}^{\mathrm{ss}}} 
\newcommand{\Wr}{W(\rep)} 
\newcommand{\Wss}{W(\rss)} 
 
\newcommand{\Dss}{\mathscr{D}^{\mathrm{ss}}} 

\newcommand{\DZero}{D_0(\rep)}
\newcommand{\DZeroEll}[1][\ell]{D_0(\rep)_{#1}}
\newcommand{\DZeroFil}[1][\ell]{D_0(\rep)_{\le #1}} 
\newcommand{\DiagrFil}[1][\ell]{D(\rep)_{\le #1}} 
\newcommand{\DZerogr}[1][\ell]{D_0(\rep)_{#1}} 
\newcommand{\DOne}{D_1(\rep)}
\newcommand{\DOneEll}[1][\ell]{D_1(\rep)_{#1}}
\newcommand{\DOneFil}[1][\ell]{D_1(\rep)_{\le #1}} 

\newcommand{\Diagr}{D(\rep)}

\newcommand{\Dx}[1][\sigma]{D_{0,{#1}} (\rep)}
\newcommand{\DOnex}[1][\sigma]{D_{1,{#1}} (\rep)}
\newcommand{\DxFil}[2][\sigma]{D_{0,{#1}} (\rep)_{\le #2}}
\newcommand{\Dxgr}[2][\sigma]{D_{0,{#1}} (\rep)_{#2}}

\newcommand{\DssZero}{D_0(\rss)}
\newcommand{\DssZeroEll}[1][\ell]{D_0(\rss)_{#1}}
\newcommand{\DssZeroFil}[1][\ell]{D_0(\rss)_{\le #1}} 
\newcommand{\Dssx}[1][\sigma]{D_{0,{#1}}(\rss)}
\newcommand{\DssOnex}[1][\sigma]{D_{1,{#1}} (\rss)}
\newcommand{\DssOne}[1][\sigma]{D_{1}(\rss)}

\newcommand{\aGal}[1][F]{\Gal(\overline{#1}/#1)} 
\newcommand{\Fres}{\mathbb{F}_q} 
\newcommand{\GR}[1][K]{\mathrm{GL}_2(\mathcal{O}_{#1})} 
\newcommand{\GF}[1][K]{\GL_2(#1)} 
\newcommand{\Gres}{\GL_2(\Fres)} 
\newcommand{\GRZ}[1][K]{\GR[#1] #1 ^\times} 
\newcommand{\MR}[1][K]{\mathrm{M}_2(\mathcal{O}_{#1})} 
\DeclareMathOperator{\Gt}{\widetilde{\Gamma}} 

\newcommand{\inertia}{I(\overline{K}/K)} 

\DeclareMathOperator{\Mat}{Mat}
\DeclareMathOperator{\rk}{rk}
\newcommand{\lgR}{\lg_{\GR}} 
\newcommand{\lgF}{\lg_{\GF}} 
\newcommand{\socR}{\soc_{\GR}} 
\newcommand{\socF}{\soc_{\GF}} 
\newcommand{\socG}{\soc_{\Gamma}} 
\newcommand{\socI}{\soc_{I/Z_1}} 
\newcommand{\cosocF}{\cosoc_{\GF}} 
\newcommand{\cosocG}{\cosoc_{\Gamma}} 
\newcommand{\radG}{\rad_{\Gamma}} 

\newcommand{\IndI}{\Ind_{I}^{\GR}}
\newcommand{\IndB}{\mathrm{Ind}^{\Gamma}_{B(\Fres)}} 
\newcommand{\IndBK}{\mathrm{Ind}^{\GF}_{B(K)}}

\DeclareMathOperator{\ProjG}{Proj_{\Gamma}}

\DeclareMathOperator{\F}{\mathbb{F}} 
 
\newcommand{\mytwist}{\det(\rep)\omega^{-1}}

\newcommand{\ip}{\iota_\pi} 
\newcommand{\alphaI}{\alpha} 
\newcommand{\bm}[1][\mu]{\beta_{#1}} 
\newcommand{\btm}[1][\mu]{\widetilde{\beta}_{#1}} 

\newcommand{\V}{V} 
\newcommand{\VFil}[1][\pi',\ell]{V(#1)} 
\newcommand{\Vgr}[1][\pi',\ell]{\frac{V(#1)}{V(#1+1)}} 
\newcommand{\Vell}[1][\pi',\ell]{V(#1)}

\newcommand{\rFil}[1][\pi', \ell]{r(#1)}

\newcommand{\Wchi}[2][J_1,J_2]{W(\chi_{#2},\chi_{#2}^{#1})}
\newcommand{\tell}[1][\ell]{\tau^{(f+1)}_{#1}}

\newcommand{\WJ}[1][J, \underline{i}]{W_{#1}'}
\newcommand{\PWJ}[1][J, \underline{i}]{W_{#1}}
\newcommand{\VJ}[1][J, \underline{i}]{V_{#1}}
\newcommand{\QJ}[1][J, \underline{i}]{Q_{#1}}
\newcommand{\KJ}[1][J, \underline{i}]{K_{#1}}
\newcommand{\pJo}[1][J, \underline{i}]{\psi^0_{#1}}
\newcommand{\pJ}[1][J, \underline{i}]{\psi_{#1}}
\newcommand{\pbJ}[1][J, \underline{i}]{\overline{\psi}_{#1}}
\newcommand{\Isigma}[1][\sigma_{\underline{m}}]{I(\sigma_{(J-1)^{\mathrm{ss}}}, #1)}

\newcommand{\mo}{{\left(\begin{smallmatrix}0 & 1 \\ 1 & 0\end{smallmatrix}\right)}}
\newcommand{\mpi}{{\left(\begin{smallmatrix}p & 0 \\ 0 & 1\end{smallmatrix}\right)}}

\newcommand{\mI}{\mathfrak m}
\newcommand{\mK}{\mathfrak m_{K_1}}

\newcommand{\xJ}[1][J, \underline{i}]{x_{#1}}

\newcommand{\E}[1][2f]{\mathrm{E}^{#1}_{\Lambda} }
\newcommand{\Eg}[1][2f]{\mathrm{E}^{#1}_{\grL} }

\newcommand{\Rbar}{\overline{R}}
\newcommand{\Rfil}[1][\ell]{\Rbar^{#1}}

\newcommand{\PhG}{\Phi\Gamma^{\text{ét} }_{\mathbb{F} } }
\newcommand{\PhGhat}{\widehat{\Phi\Gamma}^{\text{ét} }_{\mathbb{F} }} 

\newcommand{\DA}{D_A}
\newcommand{\Dvee}{D^\vee_\xi}

\newcommand{\Nnull}{N(\rep)}
\newcommand{\Nfil}[1][\ell]{N^{#1}(\rep)}

\newcommand{\Jrep}{J_{\rep}}

\newcommand{\Yn}[1][\lambda]{Y(#1)}

\newcommand{\Zn}[1][\lambda]{Z(#1)}

\newcommand{\atwo}[1][\ell]{\mathfrak{a}^{#1}}

\newcommand{\grm}{\gr_{\mI}}
\newcommand{\grL}{\gr(\Lambda)}

\newcommand{\mpp}[1][p_0]{m_{\mathfrak #1}}
\newcommand{\FX}{\F(\!(X)\!)}

\newcommand{\dimFX}{\dim_{\FX}}

\newcommand{\repgl}{\overline{r}} 
\newcommand{\mgl}{\mathfrak{m}_{\overline{r}}}
\newcommand{\Moo}{M_\infty}
\newcommand{\moo}{\mathfrak m_\infty}
\newcommand{\Roo}{R_\infty}

\DeclareMathOperator{\Art}{Art}

\begin{document}
\maketitle

\begin{abstract}
Let $p$ be a prime number and 
$K$ a finite unramified extension of 
$\mathbb{Q}_p$. 
We study the smooth mod $p$ representations 
of $\mathrm{GL}_2(K)$ appearing 
in a tower of mod $p$ Hecke eigenspaces 
of the cohomology of Shimura curves,
under mild genericity assumptions but notably 
no multiplicity one assumption at tame level,
and prove that they are of finite length, 
thereby extending the results of \cite{BHHMS4}
to higher multiplicity.
In the companion article \cite{Ber}
we investigated the case
where the local Galois representation 
attached to the Hecke eigensystem 
is semisimple; 
this article treats the non-semisimple case.
\end{abstract}

\tableofcontents
\section{Introduction}
	\label{sec:introduction}
\subsection{Main results}
	\label{sec:main-results} 
Fix a prime number $p$,
a totally real number field $F$ unramified
at places above $p$, and a quaternion algebra
$D$ with centre $F$ which is split at all places
above $p$ and at exactly one infinite place.
We denote by $\mathbb{A}_F^{ \infty }$
the ring of finite adèles of $F$.
Fix one place $v\mid p$, and a compact open
subgroup $V^v$ of 
$(D \otimes_{F}\mathbb{A}_F^{\infty,v})^{\times}$. 
If $V_v$ is any compact open subgroup
of $(D \otimes_{F}F_v)^{\times} \cong \GL_2(F_v)$,
then $V^vV_v$ is a compact open subgroup of
$(D \otimes_{F}\mathbb{A}^\infty_F)^{\times}$,
and we can consider the associated smooth
projective Shimura curve $X_{V^vV_v}$, 
which is defined over $F$, with the conventions of
\cite[§3.1]{BD14}.

If $\F$ is a finite extension of $\F_p$
(which we always tacitly assume 
to be sufficiently large),
and if
$\repgl \colon \Gal( \overline{F}/F)
\to \GL_2(\F)$ is a continuous absolutely
irreducible Galois representation,
then we study
the admissible smooth representation
\begin{equation}
	\label{eq:piShi-def}
\pi(V^{v}) \defeq \varinjlim_{V_v} \Hom_{\Gal(\overline{F}/F)}(\repgl, H^1_{ \text{ét}} (X_{V^vV_v} \times_{F}\overline{F}, \F))
\end{equation}
of $\GL_2(F_v)$ over $\F$,
where we let $V_v$ vary in the set of
compact open subgroups of  
$(D \otimes_{F}F_v)^{\times} \cong \GL_2(F_v)$.
The reader should note that,
despite our notation, $\pi(V^{v})$
depends on all global choices, and not only on
the prime-to-$v$ level.
We restrict ourselves to those
$\repgl$ that give a nonzero $\pi$.
These representations have seen some
attention in recent years
(e.g.\ in \cite{HW22}, \cite{BHHMS1}, \cite{BHHMS2}, \cite{Wang23}), 
though for the most part they
remain mysterious when $F_v \neq \mathbb{Q}_p$.
An important contribution to
 our understanding comes from 
the recent paper \cite{BHHMS4},
establishing that \eqref{eq:piShi-def} has
finite length, under
some genericity assumption on $\repgl$,
and a multiplicity one assumption on 
$\pi$ (which is called the 
\emph{minimal case}).

In \cite{Ber} we extend most of the results
in \cite{BHHMS4}
when the multiplicity one assumption is dropped,
but assuming that the restriction
$\repgl|_{\aGal[F_v]} $
of $\repgl$ to the decomposition subgroup at $v$ 
is semisimple.
In this article,
which is a companion to
\cite{Ber},
we investigate the remaining (and harder) case,
the \emph{non-semisimple} case.
From now we assume,
unless otherwise indicated,
that $\repgl|_{\aGal [F_v]} $
is non-semisimple.
In the rest of the introduction we 
present our results.

We set $K \defeq F_v$, $f \defeq [K:\mathbb{Q}_p]$,
and $q \defeq p^{f},$
and we denote by $\omega$ the 
mod $p$ cyclotomic character of
$\Gal(\overline{K}/K)$,
which we regard as a character of $K^{\times}$
via local class field theory,
 with the convention that uniformisers 
are sent to geometric
Frobenius elements.
We denote by $\omega_f,$ $\omega_{2f} $
Serre's fundamental characters of the 
inertia subgroup $\inertia$ of 
$ \Gal(\overline{K}/K)$ of level
$f$, $2f$ respectively.
Following \cite{BHHMS4}, we say that $\repgl$
is \emph{generic} if the following conditions
are satisfied
\begin{enumerate}[(i)]
\item 
\label{point:generic(i)} 
the restriction 
$\repgl|_{\Gal(\overline{F}/F(\mu_p))} $ is absolutely irreducible;
\item 
\label{point:generic(ii)} 
	if $w\nmid p$ is a place where either
$D$ or $\repgl$ is ramified, then
the framed deformation ring of 
$\repgl|_{\Gal(\overline{F}_w/F_w)} $
over the Witt vectors $W(\F)$ is formally smooth;
\item 
\label{point:generic(iii)} 
	the restriction 
$\repgl|_{\inertia} $ of $\repgl$
to the inertia subgroup at $v$
is, up to twist, of the form
\[
\begin{pmatrix}
\omega_{f}^{\sum_{j=0} ^{f-1}(r_j+1)p^j }	
 & \ast \\
0 & 1
\end{pmatrix}, 
\]
with $\max \{12,2f+1\} <r_j<p- \max \{15,2f+4\} $,
and $\ast \neq 0$.
\end{enumerate}
The bounds on the $r_i$ of 
\ref{point:generic(iii)} all come
from \cite{BHHMS4}: we don't need any stronger
bound to make our arguments work.

The following is the main result of this article.
\begin{theorem}[\Cref{thm:global-part-fl}]
	\label{thm:pre-main-0}
Assume $\repgl$ generic.
Then, the representation $\pi(V^{v})$
has finite length as a $\GF$-representation.
\end{theorem}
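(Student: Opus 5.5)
The plan follows the strategy of \cite{BHHMS4} and \cite{Ber}. The goal is to reduce finite length of $\pi \defeq \pi(V^{v})$ to two properties, both established by patching.

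The first property is a multiplicity bound on $K_1$-invariants, or more precisely on $\pi^{K_1}$ and its relation to the diagram $D(\rep)$. Here $\rep \defeq \repgl|_{\aGal[F_v]}$.

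The second property is a Gelfand--Kirillov dimension bound: $\dim_{\GF}\pi \le f$. This should come as a consequence of the statement that $\grm(\pi^\vee)$ is annihilated by an explicit ideal. Concretely, the target is that $\gr(\pi^\vee)$ is a finitely generated module over $\grL/\mathfrak{a}$, where $\mathfrak{a}$ is the ideal of \cite{BHHMS2}. Moreover, $\grL/\mathfrak{a}$ should have a filtration whose graded pieces are Cohen--Macaulay of dimension $f$.

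Once these two properties are available, the argument is the one of \cite[§3]{BHHMS4}. For any subquotient $\pi'$ of $\pi$, we use the functor $D_A$ (Breuil--Herzig--Hu--Morra--Schraen's $(\varphi,\mathcal{O}_K^\times)$-module functor). It is exact on the category of representations satisfying the dimension bound. Each nonzero subquotient that is not finite-dimensional contributes positively to $\rk D_A$.

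So it suffices to bound $\rk_{A}\DA(\pi)$ by $m\cdot 2^f$, where $m$ is the multiplicity, that is, the rank of the patched module. We also need to control the finite-dimensional constituents. For the latter, the character-theoretic argument of \cite{BHHMS4} (no $\mathrm{SL}_2$-invariants on $\pi$ under genericity) suffices.

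\textbf{Step 1: patching.}
Take the patched module $\Moo$ over $\Roo$, as in \cite{Ber}. Then $\pi^\vee = \Moo/\moo\Moo$. The module $\Moo$ is flat over the relevant local deformation ring, and in the non-semisimple case it is no longer free of rank one.

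Using Cohen--Macaulayness of $\Moo$ (Taylor--Wiles), we want to show that $\Moo/\moo$ is a successive extension, at the level of associated graded modules for the $\mI$-adic filtration, of at most $m$ quotients of $\grL/\mathfrak{a}(\rep)$. Here $\mathfrak{a}(\rep)$ is the non-semisimple ideal of \cite{BHHMS2}. Multiplicity one in the minimal case of \cite{BHHMS4} is replaced by $\dim \Hom_{\GR}(\sigma,\pi)=m$ for each Serre weight $\sigma\in\Wr$. This equality follows from freeness of the patched module at each weight.

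\textbf{Step 2: the main obstacle.}
The main obstacle is proving the ideal-annihilation statement in higher multiplicity when $\rep$ is non-semisimple. In the minimal case, \cite{BHHMS4} uses that $\pi^{K_1}\cong D_0(\rep)$ and that $\pi[\mI^3]$ has an explicit structure. With multiplicity $m$, we only know that $\socR\pi\cong\bigoplus_{\sigma\in\Wr}\sigma^{\oplus m}$.

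I would filter $\pi^{K_1}$ by the subdiagrams $\DZeroFil$, indexed by the distance $\ell$ from the socle in the graph of Serre weights. I would then show, by induction on $\ell$, that $\pi^{K_1}$ embeds in $D_0(\rep)^{\oplus m}$. The inputs are:
\begin{enumerate}[(i)]
\item a patching-functor computation of $\dim\Hom(\ProjG\sigma,\pi)$ via the generic fibres of the appropriate potentially Barsotti--Tate deformation rings;
\item the non-split extensions encoded by $\ast\neq 0$, which are needed to rule out weights in $\Wss\smallsetminus\Wr$.
\end{enumerate}
From the bound on $K_1$-invariants and the $\mI^3$-torsion, the argument of \cite[§3]{BHHMS2} gives annihilation of $\gr(\pi^\vee)$ by $\mathfrak{a}$. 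It also gives the required degree-wise bound on its multiplicity.

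\textbf{Step 3: conclusion.}
Finally, $\rk \DA(\pi)\le m 2^f$ follows from the multiplicity of $\gr(\pi^\vee)$ along the minimal primes of $\grL/\mathfrak{a}$. By additivity and positivity of the rank on infinite-dimensional irreducible subquotients (a consequence of the dimension bound and \cite{BHHMS4}), the number of such constituents is at most $m2^f$. The finite-dimensional constituents are bounded as in Step 0. Hence $\pi$ has finite length.
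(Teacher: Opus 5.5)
\textbf{Gap: positivity of $\DA$ on constituents.} Your route differs from the paper's, and it breaks at the step that carries the weight. You claim that every infinite-dimensional irreducible subquotient $\overline{\pi}$ of $\pi$ has $\DA(\overline{\pi})\neq 0$, and that this follows from the dimension bound. It does not.

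Since $A$ is obtained by inverting $Y_0\cdots Y_{f-1}$, $\gr\DA(\overline{\pi})$ is $\gr(\overline{\pi}^\vee)$ with $\prod_j y_j$ inverted. Because $y_jz_j=0$ on these modules, this only sees the component of the characteristic cycle at the single minimal prime $\mathfrak{p}_0=(z_0,\dots,z_{f-1})$ of $\Rbar$ (compare $\dimFX\Dvee=m_{\mathfrak{p}_0}$, \cite[Proposition~3.87(i)]{BHHMS2}). An upper bound $\dim_{\GF}\le f$ does not force a constituent to have dimension exactly $f$. Still less does it force its cycle to meet $\mathfrak{p}_0$ rather than only other minimal primes. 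Such a constituent would be invisible to your rank count.

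In the paper, nonvanishing of $\Dvee$ on nonzero subquotients is \Cref{thm:gen-by-inv}(iii). It is an output of the structure theory, not an input, so invoking it here is circular. Likewise, the absence of $\mathrm{SL}_2(K)$-invariants only excludes finite-dimensional \emph{subrepresentations}; it gives no bound on finite-dimensional subquotients.

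\textbf{What the paper uses instead.} The missing ingredient is exactly that structure theory.
\begin{enumerate}
\item For every subrepresentation $\pi_1$ one has $\ip(\pi_1^{K_1})=\sum_{\ell}\VFil[\pi_1,\ell]\otimes\DZeroFil[\ell]$ for a decreasing filtration of $\V$ (\Cref{thm:srep-structure}). This is a linewise weight-cycling argument, and it needs the full diagram isomorphism of \Cref{thm:diag}, not just $\pi^{K_1}\cong\V\otimes\DZero$.
\item There is an explicit isomorphism $\grm(\pi_1^\vee)\cong\bigoplus_\ell W_1(\ell)^\vee\otimes\Nfil[\ell]$ (\Cref{prop:CM-via-N}). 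Its proof uses the rank formula of \Cref{cor:Wang} and the duality $\pi_1\mapsto\widetilde{\pi}_1$ to force equality in a multiplicity count.
\item Consequently, subrepresentations are generated by, and determined by, their $K_1$-invariants (\Cref{thm:gen-by-inv}).
\end{enumerate}
Finite length then needs no positivity input. A strict chain of subrepresentations gives a strict chain of subspaces $\bigoplus_\ell\VFil[\pi_i,\ell]\subseteq\V^{\oplus(f+1)}$, so the length is at most $r(f+1)$ (\Cref{cor:fl}). This bounds all constituents at once, finite-dimensional or not, and is sharper than the $r2^f$ your count would give even if positivity were known.

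\textbf{Reduction step.} The multiplicity-$r$ hypotheses and Wang's freeness results hold for $\pi(V^v,\sigma_p^v)$ with fixed Serre weights at the places $w\in S_p\setminus\{v\}$, not for $\pi(V^v)$ itself. The paper first reduces to these pieces by the d\'evissage \eqref{eq:length-devissage}. Your Step~1 silently assumes this reduction.
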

Observe that we don't need any hypothesis
on the prime-to-$v$ level $V^{v}$.

The proof of \Cref{thm:pre-main-0}
reduces ``by dévissage''
to the study of some subquotients of \eqref{eq:piShi-def},
which we now describe.
For every $w \neq v$ lying over $p$,
choose an (absolutely) irreducible
$\GR[w]$-representation
$\sigma_w$,
and set $\sigma_p^{v} \defeq 
\bigotimes_{ \begin{substack}
w\mid p, \\ w\neq v
\end{substack} } \sigma_w $.
Then, our strategy 
(which mirrors the proof of 
\cite[Corollary~8.4.6]{BHHMS1})
is to show that $\pi(V^{v})$ admits 
a \emph{finite} filtration whose subquotients 
are submodules
of $\GF$-representations
of the form
\begin{equation}
	\label{eq:piShi-dev} 
\pi(V^{v},\sigma_p^{v})\defeq \varinjlim_{V_v} 
\Hom_{\prod_{
w\mid p, w\neq v
} \GL_2(\mathcal{O}_{F_w} )}
\left(
\sigma_p^{v},
\Hom_{\Gal(\overline{F}/F)}
\left(\repgl, H^{1}_{\text{ét}}(X_{V^vV_v} \times_{F}\overline{F}, \F)\right)\right).
\end{equation}
Then, we show that every such
$\pi(V^{v},\sigma_p^{v})$
is of finite length, concluding the proof.

The two theorems below describe in more precise terms
the properties of $\pi(V^{v},\sigma_p^{v})$.
Remember that
by \cite[Theorem~6.3(ii)]{Wang23}
there is a unique integer
$r \ge 1$, called the \emph{multiplicity},
such that $\dim_{\F} \Hom_{\GR}(\sigma, \pi) \in \{0,r\}$
for every (absolutely) irreducible representation
$\sigma$ of $\GR$ over $\F$.

From now on we let 
$\rep \defeq \repgl^\vee |_{\Gal(\overline{K}/K)},$
where $\rep^\vee $ is the dual of $\rep$.
Write $\rep \cong 
\left( \begin{smallmatrix} 
\chi_1& * \\
0 & \chi_2 \\
\end{smallmatrix} \right)$,
for two 
$\Gal(\overline{K}/K)$-characters 
$\chi_1 ,\chi_2 $ over $\F$.
Let $B \subseteq \GL_{2, \mathcal{O}_K}$
denote the algebraic subgroup of
upper-triangular matrices,
and let $\pi_0 , \pi_f$
be the principal series
$\pi_0 \defeq 
\IndBK (\chi_2 \otimes \chi_1 \omega^{-1})$
and
$\pi_f \defeq 
\IndBK (\chi_1 \otimes \chi_2 \omega^{-1})$.
Set $\V \defeq \F^r$.
While \cite[Theorem~1.1.1(iii)]{BHHMS4}
assumes that $r=1$,
the following theorem
is a generalisation for arbitrary
$r \ge 1$.
\begin{theorem}[{\Cref{thm:PS}}]
	\label{thm:pre-main-1} 
Assume $\repgl$ generic,
and let $\pi$ be a $\GF$-representation
of the form \eqref{eq:piShi-dev}.
Then, $\pi$ is of the form
\[
\pi \cong 
\left( 
\begin{xy} (0,0)*+{
\V \otimes_{\F} \pi_0 
}="a";
(15,0)*+{\pi'}="b";
(30,0)*+{
\V \otimes_{\F} \pi_f
}="c"; 
{\ar@{-}"a";"b"};
{\ar@{-}"b";"c"}\end{xy}
\right),
\]
where the notation means that
$\socF \pi \cong  \V \otimes_{\F} \pi_0$,
$\cosocF \pi \cong  \V \otimes_{\F} \pi_f$,
and that $\pi' \cong 
\ker(\pi/\V \otimes_{\F} \pi_0 \twoheadrightarrow 
\V \otimes_{\F} \pi_f)$.
Moreover, the length of $\pi'$ 
is bounded above by
$r \cdot (f -1 )$ and
every Jordan-H\"older constituent
of $\pi'$ is supersingular.
\end{theorem}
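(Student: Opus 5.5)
I will follow the strategy of \cite[Theorem~1.1.1(iii)]{BHHMS4} and \cite[§8]{BHHMS1}, replacing every multiplicity one input by its ``$\V\otimes_{\F}$'' analogue from \cite{Ber} and \cite{Wang23}. The argument has three steps.

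\textbf{Step 1: determine the socle.} By \cite[Theorem~6.3]{Wang23} we know $\socR \pi \cong \V\otimes_{\F} \bigoplus_{\sigma\in \Wr}\sigma$. For $\rep$ non-semisimple the set $\Wr$ contains a distinguished weight $\sigma_0$, the one with $J_{\sigma_0}=\emptyset$. Frobenius reciprocity then gives a nonzero map $\V\otimes \cInd_{\GRZ}^{\GF}\sigma_0/(T-\lambda)\to\pi$, whose image is $\V\otimes\pi_0$. Here $T$ is the Hecke operator and $\lambda$ the eigenvalue read off from $\rep$.

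To show $\V\otimes\pi_0$ is the whole $\GF$-socle, I argue as follows. Any irreducible subrepresentation of $\pi$ contains some weight in $\Wr$. For a supersingular subrepresentation, or for one coming from $\sigma\neq\sigma_0$, the $r$-dimensional weight spaces are rigid under the diagram structure $(\pi^{I_1}\hookleftarrow \pi^{K_1})$, so such a subrepresentation would force extra $I$-eigenvectors. This contradicts the known $\pi^{I_1}$, which is the $\V\otimes$-version of the BHHMS computation of $\pi^{I_1}$ via $\DZero\hookrightarrow D_1$.

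\textbf{Step 2: determine the cosocle.} Apply the same argument to the dual. Using the self-duality $\pi^\vee\cong \pi\otimes(\det\rep)\omega^{-1}$ up to $\Ext^{2f}$ (Gorenstein duality for $\Moo$, as in \cite[§8]{BHHMS4}), together with Emerton's ordinary parts, $\Ord_B \pi\cong \V\otimes(\chi_1\otimes\chi_2\omega^{-1})$ yields a surjection $\pi\twoheadrightarrow \V\otimes\pi_f$. Dualising the socle statement shows this is the cosocle. Both statements rest on computing $\Ord_B\pi$ and $\pi^{I_1}$ with multiplicity $r$, and this is where the ``beyond multiplicity one'' input of \cite{Ber} enters.

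\textbf{Step 3: bound $\pi'$.} Define $\pi'$ as in the statement. Its constituents cannot be principal series or characters, because $\Ord_B$ and the $I_1$-invariants are already exhausted by the two ends, so every constituent is supersingular. For the length bound, I pass to Gelfand--Kirillov dimension and the associated $\gr(\Lambda)$-module, together with the étale $(\varphi,\mathcal O_K^\times)$-module functor $D_A$. The rank of $D_A(\pi)$ over $A$ equals $r\cdot\dim \rep^{\otimes f}$-type data; it is $r$ times the corresponding BHHMS count. Each supersingular constituent contributes rank at least $1$, while $\pi_0$ and $\pi_f$ contribute exactly $r$ each. This gives $\lg\pi'\le r\cdot(f+1)-2r=r(f-1)$.

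\textbf{Expected obstacle.} The main difficulty is Step 3. One must show that $D_A$ is exact on these subquotients and that the rank of $D_A(\pi)$ is exactly $r\cdot 2^f$-compatible. This requires the cyclicity and flatness of $\gr_{\mI}\pi^\vee$ over $\gr(\Lambda)/\mathfrak a$ in the non-semisimple case with $r>1$. I would first try to prove that $\gr_{\mI}\pi^\vee\cong(\gr\Lambda/\mathfrak a)^{\oplus r}$ by a Nakayama-type argument on the graded pieces.
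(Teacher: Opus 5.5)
Your outline has the right overall shape: principal series socle, principal series cosocle, supersingular middle. However, each step is missing the input that actually makes it work, and the length count is wrong.

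\textbf{Steps 1--3: the missing inputs.} The central missing ingredient is \Cref{thm:srep-structure}. It says that for every subrepresentation $\pi_1\subseteq\pi$ one has $\ip(\pi_1^{K_1})=\sum_{\ell=0}^f V(\pi_1,\ell)\otimes_{\F}\DZeroFil[\ell]$ for a decreasing chain of subspaces $V(\pi_1,\ell)\subseteq\V$, and it is proved by line-wise weight cycling through $\Pi$. Alongside it you need \Cref{thm:gen-by-inv}(ii): these subspaces determine $\pi_1$. That result rests on the Cohen--Macaulay analysis of $\gr_{\mI}(\pi_1^\vee)$ in \Cref{prop:CM-via-N}.

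\begin{itemize}
\item In Step 1, your ``rigidity \ldots\ extra $I$-eigenvectors'' sentence is not an argument. Knowing $\pi^{I_1}$ as a whole says nothing about which part of it a putative supersingular subrepresentation could occupy. What must be shown is that every nonzero subrepresentation contains $\ip^{-1}(w\otimes\sigma_0)$ for some $w\neq0$. This is immediate from \Cref{thm:srep-structure}, since every $\DZeroFil[\ell]$ contains $\sigma_0$.
\item Still in Step 1, you assume $T$ acts on $\Hom_{\GF}(\cInd\sigma_0,\pi)\cong\V$ by a nonzero scalar. For $r>1$ this needs proof. The paper shows each line $L\subseteq\V$ is $T$-stable: $\ip$ is an isomorphism of diagrams, so $\Pi$ preserves $\ip^{-1}(L\otimes\DZero)$. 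Nonvanishing of the scalar comes from \cite[Lemma~5.14]{HW22}.
\item In Step 2, $\Ord_B$ is right adjoint to parabolic induction. It therefore only detects principal series \emph{sub}representations (that is how the paper pins down the socle as $\V\otimes\pi_0$), never quotients. The surjection $\pi\twoheadrightarrow\V\otimes\pi_f$ comes from applying $\Ext^{2f}_\Lambda(-,\Lambda)$ to $\V\otimes\pi_0\hookrightarrow\pi$ and using \eqref{eq:PS-self-duality}.
\item Also in Step 2, ``dualising the socle'' yields the whole cosocle only because $\pi_1\mapsto\widetilde\pi_1$ is an order-reversing bijection of the lattice of subrepresentations (\Cref{cor:tilde-involution}). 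That requires every subquotient to be Cohen--Macaulay of grade $2f$.
\item In Step 3, $(-)^{I_1}$ and $\Ord_B$ are only left exact. In the non-semisimple case $(\pi/\pi_1)^{I_1}$ is in general strictly larger than $\pi^{I_1}/\pi_1^{I_1}$ (\Cref{rmk:F-not-mI}), so nothing is ``exhausted by the two ends''. The paper instead finds $\F w\otimes\DZerogr[\ell_0]\hookrightarrow\overline\pi^{K_1}$ with $0<\ell_0<f$. Its $\GR$-socle contains $\binom{f}{\ell_0}\ge2$ weights of $\Wss$, which no character, special series or (generic) principal series has.
\end{itemize}

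\textbf{The length bound.} The bound does not follow from ranks. By \Cref{thm:xJ-basis}, $\rk_A D_A(\pi)=r\cdot2^f$, and $\V\otimes\pi_0$ and $\V\otimes\pi_f$ have rank $r$ each. So ``rank $\ge1$ per constituent'' (which itself needs \Cref{thm:gen-by-inv}(iii)) only gives $\lg(\pi')\le r(2^f-2)$. That is weaker than $r(f-1)$ for every $f\ge2$.

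Your $r(f+1)$ is not a rank; it is $\sum_{\ell=0}^f\dim_{\F}V(\pi,\ell)$. The correct argument is \Cref{cor:fl}: by \Cref{thm:gen-by-inv}(ii), each strict inclusion of subrepresentations strictly enlarges $\bigoplus_\ell V(\cdot,\ell)$. Apply this between two subrepresentations:
\begin{itemize}
\item the socle $\pi_0(\V)\cong\V\otimes\pi_0$, where $V(\cdot,\ell)=\V$ for $\ell=0$ and $0$ otherwise;
\item $\widetilde\pi_0(\V)=\ker(\pi\twoheadrightarrow\V\otimes\pi_f)$, where $V(\cdot,\ell)=\V$ for $\ell<f$ and $0$ for $\ell=f$, by the equality case of \eqref{eq:hype-mult-inequality}.
\end{itemize}
This gives $rf-r=r(f-1)$.

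Finally, the graded structure you propose to prove is not the right one. In fact $\gr_{\mI}(\pi^\vee)\cong\V^\vee\otimes\bigoplus_{\lambda\in\mathscr P}\chi_\lambda^{-1}\otimes\Rbar/\mathfrak a(\lambda)$, with $\lambda$-dependent ideals. What is actually needed is the filtered description $\gr_{\mI}(\pi_1^\vee)\cong\bigoplus_{\ell}W_1(\ell)^\vee\otimes\Nfil[\ell]$ for every subrepresentation $\pi_1$.
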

Ideally, we should expect that
for every irreducible subquotient $\overline{\pi}$
of $\pi'$ we have $[\pi : \overline{\pi}]=r$.
However, such a result seems currently out of reach.

Let $K_1\unlhd \GR $ 
be the set of matrices with trivial reduction mod $p $, 
$I\le \GR $ 
the set of matrices which are upper triangular mod $p $,
and $I_1\unlhd I $ 
the set of matrices which are upper unipotent mod $p $.
Define $Z_1 := (1+ p \mathcal{O} _{K} )\cdot \id $
to be the centre of $I_1 $,
and
$\Lambda \defeq \F [\![I_1/Z_1]\!]  $
to be the completed group algebra of $I_1/Z_1 $,
which is a noetherian noncommutative local ring of Krull
dimension $3f$,
whose maximal ideal we denote by $\mI$.
By \cite[Theorem~1.9]{BHHMS1} 
and by \cite[Theorem~6.3(ii)]{Wang23},
$\pi$ has a central character.
In particular, for all subquotients
$\pi'$ of $\pi$, 
the linear dual $ \Hom_{\F}(\pi', \F)$
is a $\Lambda$-module,
which is moreover finitely generated
since $\pi$ is admissible.
Recall that a nonzero finitely generated 
$\Lambda$-module $M$ is Cohen-Macaulay of
grade $c \ge 0$ if
$ \Ext^{i}_{\Lambda}(M, \Lambda)\neq 0$ if
and only if $i = c$.
The following theorem
is a generalisation 
of \cite[Theorem~1.1.3]{BHHMS4}
for arbitrary $r \ge 1$.
\begin{theorem}
  \label{thm:pre-main-2}
Assume $\repgl$ generic,
let $\pi$ be a $\GF$-representation
of the form \eqref{eq:piShi-dev},
let $\pi_1 \subsetneq \pi_0 $
be two nonzero subrepresentations of $\pi$,
and let $\pi_2 \defeq \pi_0 /\pi_1 $.
\begin{enumerate}[(i)]
\item 
The $\F$-linear dual
$\pi_2 ^\vee$ of $\pi_2$
is a Cohen-Macaulay $\Lambda$-module 
of grade $2f$.
\item 
The $\GF$-representation
$\pi_2 $ is generated 
by its $K_1 $-invariants $\pi_2 ^{K_1}$.
\item 
If $\rss$ denotes the semisimplification
of $\rep$,
and if $\Wss$
denotes the set of
irreducible $\GR$-representations
defined in \cite[§§3.1-2]{BDJ10},
then we have
\begin{equation}
  \label{eq:pre-Dvee-rk}
\dimFX \Dvee (\pi_0 )
= \sum_{\tau \in \Wss} 
[\pi_0 ^{K_1} : \tau], 
\end{equation}
where $\Dvee (\pi_0 )$
is the cyclotomic $(\varphi, \Gamma)$-module
associated to $\pi_0 $
in \cite[\S 2.1.1]{BHHMS2}.
\end{enumerate}
\end{theorem}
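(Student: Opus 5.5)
We follow the strategy of \cite[Theorem~1.1.3]{BHHMS4}, replacing each multiplicity one input by its version for multiplicity $r$ from \cite{Ber} and \cite{Wang23}. The first step concerns $\pi$ itself. We show that $\pi^\vee$ is Cohen--Macaulay of grade $2f$. The graded module $\grm(\pi^\vee)$ should be annihilated by an ideal $J$ of $\grL$ such that $\grL/J$ is Cohen--Macaulay of dimension $f$. One then combines this with the numerical bound $\dim_{\F}\pi^{K_1}\le r\cdot\dim_{\F}D_0(\rep)$ and with the patching description of \cite{Wang23}. In that description $\pi^\vee$ is the fibre of a patched module $\Moo$ over $\Roo$ that is flat, or at least maximal Cohen--Macaulay, of rank $r$ over suitable local deformation rings. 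Together these give a Gorenstein-type self-duality $\E(\pi^\vee)\cong\pi^\vee\otimes(\det\rep)\omega^{-1}$, up to twist, which is \cite[Theorem~8.4]{BHHMS4} with $V=\F^r$. The non-semisimple case enters through the structure of $\DZero$ and the ideal $J$ adapted to $\rep$. We take these from \cite{BHHMS4}, where they do not depend on $r$.

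In the second step we pass to subquotients. By \Cref{thm:pre-main-1}, $\pi$ has socle $V\otimes\pi_0$, cosocle $V\otimes\pi_f$, and a supersingular middle part $\pi'$. Let $\pi_1\subsetneq\pi_0$ be subrepresentations. We prove (i) by induction on the length. For an exact sequence $0\to\pi_1\to\pi_0\to\pi_2\to0$, the dual sequence together with the long exact sequence of $\Ext^i_\Lambda(-,\Lambda)$ shows that it is enough to prove the following: every subrepresentation of $\pi$ has a dual that is Cohen--Macaulay of grade $2f$, or equivalently has Gelfand--Kirillov dimension $f$ and vanishing $\E[i]$ for $i\neq 2f$. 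We aim to get this from the self-duality of $\pi^\vee$, which exchanges subobjects and quotients. The inputs are that every irreducible subquotient has GK dimension exactly $f$ (\cite{BHHMS2}, \cite{HW22}) and that the function $\pi''\mapsto\dimFX\Dvee(\pi'')$ is additive and exact.

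Part (iii) comes from a dimension count. We have $\dimFX\Dvee(\pi)=r\cdot 2^f$ by \cite{BHHMS2} and \cite{Wang23}, and $\sum_{\tau\in\Wss}[\pi^{K_1}:\tau]=r\cdot|\Wss|=r\cdot 2^f$. We then prove the inequality $\dimFX\Dvee(\pi'')\le\sum_{\tau\in\Wss}[\pi''^{K_1}:\tau]$ for every subquotient $\pi''$. To do this, bound the rank by the multiplicities of $\grm(\pi''^\vee)$ along the components of $\grL/J$, and note that these components are indexed by $\Wss$. Apply the inequality to both $\pi_0$ and $\pi/\pi_0$. By additivity the two inequalities sum to an equality, so equality holds for each of them, and this gives \eqref{eq:pre-Dvee-rk}. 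Part (ii) then follows as in \cite[\S 9]{BHHMS4}. Let $\langle\pi_2^{K_1}\rangle$ be the subrepresentation generated by $\pi_2^{K_1}$. Equality in (iii) for $\pi_2$ and for $\langle\pi_2^{K_1}\rangle$ forces $\Dvee(\pi_2/\langle\pi_2^{K_1}\rangle)=0$. Since $\pi_2^\vee$ is Cohen--Macaulay, every nonzero subquotient has nonzero $\Dvee$, so this quotient vanishes.

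The main obstacle is the additivity argument in the second step, and more precisely the fact that $\Dvee$ is exact and detects nonzero objects, applied to subquotients whose $K_1$-invariants we do not control. When $r>1$ a constituent may occur with multiplicity smaller than $r$, as the remark after \Cref{thm:pre-main-1} points out. I would first try to bound $\grm(\pi_2^\vee)$ by a quotient of $(\grL/J)^{\oplus r}$ and then use Cohen--Macaulayness of $\grL/J$ to rule out embedded lower-dimensional pieces.
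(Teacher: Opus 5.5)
There is a genuine gap, and it lies where the paper does its real work. You never obtain a description of subrepresentations $\pi_1\subseteq\pi$ fine enough to control $\grm(\pi_1^\vee)$ or $\pi_1^{K_1}$.

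Your reduction of (i) to subrepresentations is fine: for $\pi_1\subsetneq\pi_0$ it follows from the long exact $\Ext$ sequence and the fact that $\pi_2^\vee\subseteq\pi_0^\vee$ has grade $\ge 2f$. But the inputs you list cannot give Cohen--Macaulayness of $\pi_1^\vee$.
\begin{itemize}
\item GK dimension $f$ only gives grade $2f$. What is needed is $\E[i](\pi_1^\vee)=0$ for $i>2f$, which in particular excludes embedded lower-dimensional pieces. Quotients of a Cohen--Macaulay module of the same dimension can have such pieces, e.g.\ $k[x,y]/(xy)\twoheadrightarrow k[x,y]/(xy,x^2)$.
\item Exactness of $\Dvee$ is blind to these pieces, since $\dimFX\Dvee$ only sees the multiplicity at $\mathfrak{p}_0$.
\item Self-duality only pairs $\pi_1$ with $\widetilde{\pi}_1$.
\item Bounding $\grm(\pi_2^\vee)$ by ``a quotient of $(\grL/J)^{\oplus r}$'' does not help unless you know which quotient.
\end{itemize}
The missing idea is \Cref{thm:srep-structure}, $\ip(\pi_1^{K_1})=\sum_\ell\VFil[\pi_1,\ell]\otimes\DZeroFil[\ell]$. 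Its graded upgrade in \Cref{prop:CM-via-N} is $\grm(\pi_1^\vee)\cong\bigoplus_\ell W_1(\ell)^\vee\otimes\Nfil[\ell]$, whose summands are Cohen--Macaulay of grade $2f$ by \cite{BHHMS4}. Establishing that isomorphism uses \Cref{prop:char-only-occurs}, so that $\theta$ factors through $N_1$. It also uses a characteristic-cycle squeeze involving $\widetilde{\pi}_1$ and the rank formula for subrepresentations, to get injectivity of $\theta_1$. So in the paper (iii) is an input to (i), not a consequence. Likewise \Cref{thm:pre-main-1} is deduced from these results and cannot be used here.

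Your arguments for (iii) and (ii) fail independently. The squeeze in (iii) requires $\sum_{\tau\in\Wss}[(\pi/\pi_0)^{K_1}:\tau]=r\cdot 2^f-\sum_{\tau\in\Wss}[\pi_0^{K_1}:\tau]$. In the non-semisimple case, however, $(\pi/\pi_0)^{K_1}$ is in general strictly larger than $\pi^{K_1}/\pi_0^{K_1}$. So your two right-hand sides only sum to something $\ge r\cdot 2^f$, and no equality follows. Controlling the extra $\Wss$-constituents is exactly the problem the paper leaves open.

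The inequality $\dimFX\Dvee(\pi'')\le\sum_\tau[\pi''^{K_1}:\tau]$ is also unsupported:
\begin{itemize}
\item multiplicities of $\grm$ are read off from $I_1$-eigencharacters and their annihilators, not from weights in $\Wss$;
\item the semisimple comparison \cite[Proposition~3.87(ii)]{BHHMS2} fails here (\Cref{rmk:D-eq-m-non-ss});
\item even for subrepresentations, the correct count needs the refined ideals $\atwo[\ell](\lambda)$.
\end{itemize}
The paper instead computes $\rk_A\DA(\pi_0)$ directly. It extends Wang's elements $x_{J,\underline{i}}(w)$ to $r\ge1$. It then shows that the $x_J(w_s)$ with $s\le\rFil[\pi_0,|J|]$ form an $A$-basis, for a basis $(w_s)$ adapted to $\VFil[\pi_0,\cdot]$ (\Cref{thm:rk-subrep}). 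Finally it matches this with $\pi_0^{K_1}$ via \Cref{thm:srep-structure}.

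For (ii), you apply (iii) to the subquotient $\pi_2$, which is not available: the paper says it lacks a proof of \eqref{eq:pre-Dvee-rk} for quotients when $r>1$. Even after reducing to $\pi_1=0$, you still need that a nonzero subquotient has $\Dvee\neq0$. Cohen--Macaulayness does not give this, since grade $2f$ does not force $\mathfrak{p}_0$ into the support. In the paper this is \Cref{thm:gen-by-inv}(iii), which is deduced from (ii). The paper proves (ii) structurally instead. The subrepresentation $\langle\GF\cdot\pi_0^{K_1}\rangle$ has the same spaces $\VFil[\cdot,\ell]$ as $\pi_0$. So by \Cref{cor:CM-via-N-subquotients} the surjection $\grm(\pi_0^\vee)\twoheadrightarrow\grm(\langle\GF\cdot\pi_0^{K_1}\rangle^\vee)$ is an isomorphism.
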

\Cref{thm:pre-main-2}(i)
is \Cref{prop:CM-via-N}(ii),
\Cref{thm:pre-main-2}(ii)
is \Cref{prop:CM-via-N}(i),
and \Cref{thm:pre-main-2}(iii)
follows from \Cref{cor:Wang}.
We can even compute
$\dimFX \Dvee (\pi_2 )$
using
\Cref{thm:pre-main-2}(iii)
and the exactness of $\Dvee (-)$.
However, 
the natural injection
$\pi_0^{K_1} / \pi_1^{K_1} \hookrightarrow 
\pi_2 ^{K_1}$
is not surjective in general
(this is unique to the non-semisimple case),
so it is not a priori clear that
\eqref{eq:pre-Dvee-rk}
holds for $\pi_2 $.
When $r = 1$
this is \cite[Corollary~1.1.3]{BHHMS5},
for general $r \ge 1$
we expect \eqref{eq:pre-Dvee-rk} to still hold,
though at time of writing we lack a proof.

\subsection{Outline of the article}
We begin with the study of \eqref{eq:piShi-dev},
which we denote simply by $\pi$.

In \Cref{sec:K1-inv}
we study the subrepresentations of $\pi$,
mainly in terms of their diagram,
in the sense of \cite{Pas04}.
Remember that to a Galois representation $\rep$
as in \Cref{sec:main-results}
one can associate a diagram 
$\Diagr=(\DOne \hookrightarrow \DZero)$,
which we take to be the diagram 
$\mathcal{D} (\pi_{\text{glob}} (\rep)) $
of \cite[Theorem~1.3]{DL21}
(or equivalently the one of
\cite[Theorem~1.3.2]{BHHMS2}).
We refer to \Cref{sec:diagrams}
for the definition of diagram,
but in short $\Diagr$
should be regarded as the datum of
a $\GRZ$-representation $\DZero$
together with an action of 
$\Pi \defeq 
\left( 
\begin{smallmatrix} 0 & 1 \\ p & 0 \end{smallmatrix}
\right) $
on $\DZero^{I_1}$.
Crucially, if $\rep$ is generic then
there is a unique positive integer $r \ge 1$
such that the following four hypotheses hold:
\begin{enumerate}[(i)]
	\item \label{hypothesis:pre-i}
(\cite[Theorem~1.3]{DL21} when $r = 1$,
\Cref{thm:diag} when $r > 1$)
There is an isomorphism of diagrams
\begin{equation}
  \label{eq:ip-ante-litteram}
  \ip \colon 
  (\pi^{I_1} \hookrightarrow  \pi^{K_1})
  \cong \Diagr^{\oplus r},
\end{equation}
where the diagram structure on
$\V \otimes_{\F} \Diagr$ 
is induced from the one on $\Diagr$.
	\item \label{hypothesis:pre-ii}
(\cite[Proposition~6.4.6]{BHHMS1} and \cite[Theorem~6.3(ii)]{Wang23})
If 
$\chi:I\to \F^\times  $ is a smooth character
appearing in 
$\pi^{I_1}=\pi[\mI]$,
then we have an equality of multiplicities
		\[
			[\pi[\mI^3]\colon \chi] = [\pi[\mI]\colon \chi]=r.
		\]

	\item \label{hypothesis:pre-iii}
	(\cite[Theorem~8.2]{HW22} 
and \cite[Theorem~6.3(i)]{Wang23})
		The linear dual $\pi^\vee
\defeq \Hom_{\F}(\pi, \F)$ is \textit{essentially self-dual} of grade $2f$,
	i.e.\ there exists a $\GL_2(K)$-equivariant isomorphism of $\Lambda$-modules
\begin{equation}
  \label{eq:pre-kappa-pi}
\Ext^{2f}_{\Lambda}(\pi^\vee,\Lambda)\cong \pi^\vee \otimes \chi_\pi.
\end{equation}	

	\item \label{hypothesis:pre-iv}
(\cite[Proposition~2.6.3]{BHHMS4})
If $\chi \colon I \to \F ^\times $ is a 
smooth character and $i \ge 0$, 
then $\Ext^i_{I/Z_1} (\chi,\pi)\neq 0$
if and only if $[\pi[\mI]:\chi]\neq 0$,
in which case
\[
	\dim_{\F} \Ext^i_{I/Z_1} (\chi,\pi)=
	\binom{2f}{i} r.
\]
\end{enumerate}

Although the representation \eqref{eq:piShi-dev}
is a global object, the arguments are local:
we take any 
admissible smooth representation of $\GF$
with a central character
satisfying \ref{hypothesis:pre-i} to \ref{hypothesis:pre-iv},
and deduce some of its properties, 
culminating in \Cref{thm:pre-main-1} 
and \Cref{thm:pre-main-2}.
Note that \ref{hypothesis:pre-i} 
is stronger than condition (i) from
\cite[\S 3.3.2]{BHHMS2};
we impose this stronger condition 
in order to make weight cycling arguments possible.

Fix $\pi_1 $ a subrepresentation of $\pi$,
then $\iota_\pi(\pi_1 ^{I_1} 
\hookrightarrow \pi_1 ^{K_1})$
is a subdiagram of 
$\Diagr^{\oplus r}\cong \F^{r} \otimes_{\F} \Diagr$,
and in \Cref{sec:K1-inv}
we determine what subdiagrams can occur.
More precisely, 
recall that $\Diagr$ 
admits a increasing filtration
$(\DiagrFil[\ell])_{\ell = 0}^{f}$
of subdiagrams,
defined in the paragraph after
\cite[Remarque~5.10]{Hu16}
(and denoted there by $D_\ell$).
Then, we show in \Cref{thm:srep-structure}
that the subdiagram
$\iota_\pi(\pi_1 ^{ I_1} \hookrightarrow 
\pi_1 ^{ K_1}) \subseteq \F^{r} \otimes_{\F} \Diagr$ 
is always of the form
\begin{equation}
  \label{eq:pre-srep-K1}
\iota_\pi(\pi_1^{ I_1} \hookrightarrow 
\pi_1 ^{ K_1}) =
\sum_{ \ell=0 }^{ f} \VFil[\pi_1 ,\ell] \otimes_{\F} 
\DiagrFil[\ell] \subseteq \F^{r} \otimes_{\F}\Diagr, 
\end{equation}
for some uniquely determined
$\F$-vector subspaces 
$\VFil[\pi_1 ,\ell] \subseteq \F^r$,
for  $0 \le \ell \le f$.
The technical difficulty that emerges when $r>1$ 
is then to coordinate all our constructions
to ensure their compatibility
with these $\F$-vector subspaces.

In \Cref{sec:Wang-with-mult}
we set out to prove
\Cref{thm:pre-main-2}(iii),
and in doing so we show how to 
extend the majority of
\cite{Wang24} to the case $r \ge 1$.
Accordingly, the family 
$\{\xJ[J, \underline{i}]\}$
of elements of $\pi$ 
constructed in \cite[Theorem~6.3]{Wang24}
now depends on the choice of an element $w \in \F$,
and it becomes important to show 
that all the constructions at play
do not introduce any ``mixed terms''
involving multiple $w \in \F$,
which requires careful bookkeeping
of the various isomorphisms at play.

In \Cref{sec:CM-and-fl},
we prove some of the properties that
a nonzero subrepresentation 
$\pi_1$ of $\pi$ satisfies.
First, in \Cref{sec:CM},
we determine the structure of
$\grm(\pi_1 ^{\vee}) \defeq 
\bigoplus_{ n \ge 0} \mI^{n}\pi_1 ^{\vee}
/\mI^{n+1}\pi_1 ^{\vee}$
as a module over
$\grL \defeq
\bigoplus_{ n \ge 0} \mI^{n}/\mI^{n+1}$.
In particular, 
in \Cref{prop:CM-via-N}
we find that it is Cohen-Macaulay
of grade $2f$, and 
\Cref{thm:pre-main-2}(i)
follows from
\cite[Proposition~III.2.2.4]{LvO}.
Proving this statement required multiple adjustments:
constructing the diagram
\eqref{diag:thetas} involves a study
of certain $H$-eigenspaces occurring inside
$\bigoplus_{ n = 0}^{f} \mI^{n}\pi_1^{ \vee}
/\mI^{n+1}\pi_1^{ \vee}$,
and how they relate
to the $\F$-linear subspaces 
$\VFil[\pi_1, \ell] \subseteq \F^r$
of \eqref{eq:pre-srep-K1}.
Moreover,
after defining $\widetilde{\pi}_1$
as the unique 
subrepresentation of $\pi$ satisfying
\[
  \widetilde{\pi}_1  = 
  \ker\left( \pi
  \xrightarrow[\sim]{\kappa_\pi}
  \Ext^{2f}_{\Lambda}\left( \pi^\vee \otimes 
  \chi_\pi, \Lambda\right)^\vee
  \twoheadrightarrow 
  \Ext^{2f}_{\Lambda}\left(
  \pi_1 ^{\vee} \otimes \chi_\pi,
  \Lambda\right)^\vee
  \right),
\]
where $\kappa_\pi$ is the $\F$-linear dual
of \eqref{eq:pre-kappa-pi} 
(twisted by $\chi_\pi$),
we compute the $\F$-dimension
of $\VFil[\widetilde{\pi}_1, \ell]$
by adapting
the argument of \cite[Proposition~3.2.2]{BHHMS4}
in the semisimple case.
Note that the argument of 
Step 4 of the proof of
\cite[Proposition~4.4.3]{BHHMS4},
which computes
$\dim_{\F} \VFil[\widetilde{\pi}_1, \ell]$
(which are completely determined
by the integer $i( \widetilde{\pi}_2 )$
of \emph{loc.\ cit.})
purely using numerical invariants,
does not extend to the case $r >1$,
mainly for the reason that for $r > 1$ 
and for a fixed integer $m \ge 0$
there are in general multiple integer solutions
$(r(0), \dots, r(f))$ such that $0 \le r(\ell) \le r$
and $r (f) \le r (f -1) \le \dots \le r(1) \le r(0)$
to the equation
$\sum_{\ell = 0}^{f} 
r(\ell) \cdot \binom{f}{\ell} = m$.

In \Cref{sec:fl}
we show that $\pi_1$ is generated by $\pi_1^{K_1}$,
from which \Cref{thm:pre-main-2}(ii)
easily follows,
and establish the upper bound 
$\lgF (\pi_1 ) \le \sum_{\ell = 0}^{f} 
\dim_{\F} \VFil[\pi_1, \ell]$
on the length of $\pi_1$
(and in particular $\lgF (\pi) \le r \cdot (f+1)$).
Finally, in \Cref{thm:PS}
we conclude the proof of \Cref{thm:pre-main-1}.
A difference with the semisimple case is that
we need a preliminary study 
(done in \Cref{cor:tilde-involution})
of the contruction $\pi_1 \mapsto \widetilde{\pi}_1 $
introduced in the previous parargaph
to compute $\cosocF \pi$;
as an immediate consequence 
of \emph{loc.\ cit.}\ we can show that
the subquotient $\pi'$ 
of \Cref{thm:pre-main-1}
satisfies the analogue of
hypothesis \ref{hypothesis:pre-iii}.

In \Cref{sec:global-diagram} we show
that hypothesis \ref{hypothesis:pre-i} holds
for arbitrary $r \ge 1$.
The main obstacle rests in showing that
the action of
$\Pi$ on $\ip \colon \pi^{I_1 } \cong 
\F^r \otimes_{\F} \DOne$
``respects lines'',
i.e. that it restricts to $L \otimes_{\F} \DOne$
for all lines $L \subseteq \F^r$.
This is already known in the semisimple case,
cf.\ \cite[Theorem~1.8]{BHHMS2};
the argument in the non-semisimple case 
is largely unchanged,
but we should remark that it rests upon 
certain freeness results for patched modules
due to Yitong Wang.
Finally, in \Cref{sec:global-fl} we show
the finite length of \eqref{eq:piShi-def}
by ``dévissage'',
reducing ourselves to the finite length of
\eqref{eq:piShi-dev},
thus concluding the proof of
\Cref{thm:pre-main-0}.

\paragraph{Acknowledgements:}
The author wishes to give her most sincere thanks 
to Christophe Breuil and Stefano Morra
for their priceless encouragements and guidance,
and for reviewing several versions of this paper.
The author extends her thanks to 
Yitong Wang for his precious remarks
and to Florian Herzig
for his helpful suggestions and comments.
The author is supported by an FMJH grant.

\section{Preliminaries}
  \label{sec:preliminaries}
In this section we introduce 
the notation and the material
that will be used throughout the text.
We fix $K/\mathbb{Q}_p $ an unramified extension of (inertial) degree $f$, and let $q \defeq  p^f $.
We let
 $I \le \GR$ be the subgroup of matrices
 that are upper triangular modulo $p$,
$I_1\unlhd I$ be its maximal pro-$p$ subgroup, consisting of those matrices that are unipotent modulo $p$,
and we let $K_1\unlhd I_1$ be the subgroup of matrices that are sent to the identity modulo $p$,
which is also the pro-$p$ radical of $\GR $.
In particular, $\GR/K_1 \cong \Gres$,
which we also denote by $\Gamma$.
We let $N$ be the normaliser of $I$ in $\GL_2(K)$,
which is generated by $I$,  $K^\times $, 
and the matrix 
$\Pi \defeq 
\left( 
\begin{smallmatrix} 0 & 1 \\ p & 0 \end{smallmatrix} \right) $.

Fix a finite extension $\F$ of $\mathbb{F}_p$,
which we assume to be sufficiently large.
The following completed group algebra
will play an important role.
\begin{newdefinition}
If $Z_1 := (1+ p \mathcal{O} _{K} )\cdot \id $ denotes the centre of $I_1 $,
then we let
$\Lambda$
be the completed group algebra 
$\Lambda \defeq \mathbb{F} [\![I_1/Z_1]\!]$,
whose unique maximal ideal we call $\mI$,
and which we endow with the $\mI$-adic topology.
\end{newdefinition}
We know from
\cite{Clozel17}
and from
\cite[Theorem 5.3.4]{BHHMS1}, 
while keeping the notation of 
\cite[paragraph~before~Remark~3.17]{BHHMS2},
 that the graded ring $\grL \defeq \grm(\Lambda )$
associated to the $\mI$-adic filtration on 
$\Lambda $ is an Auslander-regular
noncommutative algebra over $\mathbb{F} $
generated by the variables $\{y_i, z_i, h_i\mid 0\le i \le f-1\}$,
which commute for different indices,
and are subject to the commutator relations 
\begin{align}
[y_i,z_i]=h_i, \quad
  \label{eq:grL-commutator-relations}
[h_i, y_i]= [h_i, z_i]=0. 
\end{align}
For the definition of Auslander-regularity,
see \cite[Definition~III.2.1.3]{LvO},
\cite[Definition~III.2.1.7]{LvO}.
In particular, $\Lambda$ is Auslander-regular
by \cite[Theorem~III.2.2.5]{LvO}.
Then, we define
\begin{align}
  \label{eq:R-definition}
  R  \defeq& \grL  /(h_0,\dots, h_{f-1} ) \cong \F [y_i,z_i\mid 0\le i \le  f-1], \\
  \label{eq:R-bar-definition}
 \Rbar  \defeq& \grL/J = R/(y_i z_i \mid 0\le i \le  f-1)\\ 
\nonumber \cong&
\F [y_i,z_i\mid 0\le i < f]/(y_iz_i\mid 0\le i \le  f-1),
\end{align}
where $J:=(h_i,y_iz_i\mid 0\le i \le  f-1)$.

We are interested in the following invariant, which is defined in the paragraph before 
\cite[Lemma~3.32]{BHHMS2}.
\begin{newdefinition}
  \label{def:multiplicity}
Let $M$ be a finitely generated module over $\grL$, 
annihilated by some power $J^n$ of $J$.
We define the \emph{multiplicity} of $M$ at a minimal prime $\mathfrak{q} $
of $\Rbar$ by
\[
  m_{\mathfrak{q} }(M) \defeq  \sum_{i=0} ^{n-1} \lg_{\rep_{\mathfrak{q} } } (J^iM/J^{i+1} M)_{\mathfrak{q} } . 
\]
Here, $\lg_{\Rbar_{\mathfrak{q}}}$ denotes the length of an $\Rbar_{\mathfrak{q}}$-module.  
\end{newdefinition}
Of the $2^f$ minimal prime ideals of $\Rbar $,
we fix in particular $\mathfrak{p} _0 \defeq  (z_i\mid 0\le i \le  f -1)$,
and we remind the reader that $m_{\mathfrak{q} }$
is additive,
cf.\ \cite[Lemma~3.32]{BHHMS2}.

\subsection{Diagrams}
  \label{sec:diagrams}
In this section, we review the theory of diagrams
and in particular the construction of the diagram
$\Diagr$ of
\cite[Theorem~13.8]{BP12}.

If $\chi:I \to \mathbb{F}^\times$ is a smooth character, we define
$\chi^s(\cdot) \defeq \chi(\Pi \cdot \Pi^{-1} )$.
Remember that
$\chi$ factors through a character of 
the finite torus $H \defeq I/I_1$.

\begin{definition}[{\cite[Definition~9.7]{BP12}}]
  \label{eq:diag-def}
  A \emph{diagram} is a triple $(D_0, D_1, r)$,
where $D_0$ is a smooth representation of $\GR K^ \times $ over $\mathbb{F}$, $D_1$ is a smooth representation of $N$ over $\mathbb{F}$,
    and $r:D_1\to D_0$ is an $IK^ \times $-equivariant map.
\end{definition}

\begin{example}
 If $\pi $ is a smooth representation of $\GR $, then it induces the diagram
    \[
      (\pi^{I_1}
\xhookrightarrow{r} \pi^{K_1}),
    \]
  where $r $ is the canonical inclusion.  
\end{example}

We are also interested in another diagram:
the diagram 
$\Diagr $ of 
 \cite[Proposition~13.1]{BP12}. We now recall its construction.
Fix once and for all an embedding
$\mathbb{F} _{q^2} \hookrightarrow \mathbb{F}  $.
If $\inertia $ denotes the inertia subgroup of $\Gal(\overline{K}/K) $,
we denote by 
\begin{align*}
  \omega_f \colon  &\inertia \twoheadrightarrow \mathbb{F}_q^\times  \hookrightarrow \mathbb{F}^\times , \\
  \omega_{2f} \colon  &\inertia \twoheadrightarrow \mathbb{F}_{q^2}^\times  \hookrightarrow \mathbb{F}^\times 
\end{align*}
Serre's fundamental characters of level
$f$, $2f$ respectively.
We also denote by
$\omega \colon \Gal (\overline{K}/K)
\twoheadrightarrow \mathbb{F}_p^\times 
\hookrightarrow \mathbb{F}^\times$
Serre's fundamental character of level
$1$, i.e.\ the cyclotomic character modulo $p$.

Following the second paragraph of
\cite[§1.3]{BHHMS4},
we say that 
 a continuous Galois representation 
 $ \rep:\Gal(\overline{K}/K)\to \GL_2(\mathbb{F} )$ 
is $n$-\emph{generic},
where $n \ge 0$ is a nonnegative integer,
if, up to twist, 
$\rep|_{\inertia}$
is not isomorphic to $\omega \oplus 1$,
and if 
one of the following holds:
\begin{enumerate}
  \item \emph{(the reducible case)}
the restriction $\rep|_{\inertia}  $ is isomorphic, up to twist, to
\[
  \begin{pmatrix}
  \omega_{f}^{\sum_{j=0} ^{f-1}(r_j+1)p^j }  
  & \ast \\
  0 & 1
  \end{pmatrix} ,
\]
for some integers $r_j $ with
$ n\le r_j \le p-3-n$.
\item \emph{(the irreducible case)}
the restriction $\rep|_{\inertia}  $ is isomorphic, up to twist, to
  \[
    \begin{pmatrix}
      \omega_{2f}^{\sum_{j=0} ^{f-1}(r_j+1)p^j }  
    & 0 \\
    0 & 
  \omega_{2f}^{\sum_{j=0} ^{f-1}(r_j+1)p^{j+f} }  
    \end{pmatrix} ,
  \]
for some integers $r_j $ with
$n+1\le r_0 \le p-2-n$,
and with
$n\le r_j \le p-3-n$
for $j>0 $.
\end{enumerate}

To a $0$-generic $\rep$ 
we can associate a set of Serre weights $\Wr $,
following \cite[§§3.1-2]{BDJ10}.
Remember that a Serre weight
$\sigma $ is an isomorphism class of smooth irreducible representations of 
$\GR $ over $\mathbb{F} $,
or equivalently of $\Gamma$.
Recall that every constituent
of $\DZero$ shares the same central character
(cf.\ for example
\cite[Lemma~4.1(i)]{Wang24},
after noticing that $\sigma_{\underline{b}}$
in \emph{loc.\ cit.}\ is defined in the paragraph
after \cite[Eq.~(3)]{Wang24}
in terms of the \emph{extension graph} $\Lambda_W^\mu$
of \emph{loc.\ cit.}),
which can be computed
by choosing the Serre weight
parametrised by 
$(x_0 , \dots, x_{f-1}) \in \mathscr{D}$
in \eqref{eq:D-def-Jrep} below.
We obtain the $I$-character
$\left( \begin{smallmatrix}
a & b \\
pc & d
\end{smallmatrix}
 \right)\mapsto \overline{a}^{\sum_{j = 0}^{f-1} r_j p^j}$
 (where $\overline{a}$ is the reduction
 of $a$ mod $p$),
in particular the centre $\mathcal{O}_K \cdot \id$
of $\GR$ acts on $\DZero$
by the character
$a \cdot \id
\mapsto \overline{a}^{\sum_{j = 0}^{f-1} r_j p^j}$,
which corresponds to
the $\inertia$-character
$\mytwist = \omega_f^{\sum_{j = 0}^{f-1} r_j p^j}$
under local class field theory,
normalised so that uniformisers correspond to geometric Frobeniuses.

A classification of Serre weights
can be found in \cite[Proposition~2.17]{NewYork}:
it states that
Serre weights can be uniquely written as 
\begin{equation}
\Sym^{s_0} \F^2 \otimes_{\F} 
(\Sym^{s_1} \F^2)^{\Fr} \otimes_{\F} 
\cdots \otimes_{\F} 
(\Sym^{s_{f-1}} \F^2)^{\Fr^{f-1}} \otimes_{\F} 
{\det}^m,
\label{eq:weight-characterisation}
\end{equation}
for $0\le s_i \le p-1 $ and for $0\le m < p^f-1 $.
Here, the notation 
$(\Sym^{s_{i}} \F^2)^{\Fr^{i}}$
denotes the base change
$(\Sym^{s_{i}} \mathbb{F}_q^2) \otimes_{\mathbb{F}_q, \Fr^i} \F$
via the field homomorphism 
$\Fr^i \colon \mathbb{F}_q \hookrightarrow \F$
sending $x \mapsto x^{p^i}$.
We denote the Serre weight
\eqref{eq:weight-characterisation}
by $(s_0,\dots, s_{f-1} )\otimes \det^m
$.
From this characterisation one can also see
that 
$\sigma^{I_1} $ is always 1-dimensional,
and that if the character $\chi:I\to \F^\times  $ describes the action of $I $ on $\sigma^{I_1}$, 
then we can recover $\sigma $ from $\chi $
as long as $\chi\neq \chi^s $.
Note that the genericity assumption on 
$\rep $ makes sure that for all $\sigma\in \Wr $
we have $\chi^s \neq \chi$.

Following \cite[§2]{HW22}, we say that
a Serre weight is $n$-\emph{generic},
where $n \ge 0$ is a nonnegative integer,
if the integers $s_i$ of \eqref{eq:weight-characterisation},
for $i \in \{0, \dots, f-1\}$,
satisfy
$n \le s_i \le p-2-n$.
We say that a smooth character 
$ \chi \colon I \to \F^{\times}$
is $n$-generic if $\chi = \sigma^{I_1}$
for an $n$-generic Serre weight $\sigma$.
If $\rep$ is $n$-generic, then 
we remark that any $\sigma \in \Wr$
is $n$-generic, and any $\lambda$ belonging to the set 
$\mathscr{P}$ 
defined in \eqref{eq:P-def-Jrep} below
is $(n-1)$-generic if $n \ge 1$.

Suppose that $\rep$ is $0$-generic.
In order to  construct the diagram $\Diagr $, 
we begin with the following definition, coming from \cite[p.~65, last paragraph]{BP12}:
\begin{deflem}[{\cite[Corollary~3.12]{BP12}}]
\label{deflem:I}
If $\sigma,\tau$ 
are two Serre weights,
then there exists at most one (up to isomorphism) indecomposable $\Gamma $-representation 
with socle $\sigma $ and cosocle $\tau $,
and such that $\sigma $ appears with multiplicity one.
We let $I(\sigma, \tau) $ be this representation, if it exists, and we let $I(\sigma, \tau)=0  $ otherwise.
\end{deflem}
Recall that the socle filtration 
$(\soc_i(M))_{i \ge 0} $ of a $\Gamma$-module
$M$ is defined inductively as follows:
$\soc_{0}(M)=0$, and for $i >  0$,
$\soc_i(M)$ is the preimage
of $\socG (M/\soc_{i-1} (M))$ in $M$.
If there exists an $\ell \ge 0$ such that
$\soc_\ell(M)=M$, then we call the
smallest such $\ell$ the 
\emph{Loewy length} of $M$.

Then, for any $I(\sigma, \tau)$,
we let $\ell(\sigma, \tau)\in 
\mathbb{Z} _{>0} \cup \infty $ 
 be $\infty $ if $I(\sigma, \tau)=0$,
and otherwise the Loewy length of $I(\sigma,\tau)$.
Finally, we set $\ell(\rep, \tau) \defeq  
\min \left\{ \ell(\sigma,\tau)
\mid \sigma\in \Wr \right\}$.
If $\ell(\rep,\tau)<\infty $, this minimum 
is realised by exactly one $\sigma \in \Wr $, 
cf.\ \cite[Lemma~12.8]{BP12}.
In this case,
following \cite[Paragraph before Lemma~13.3]{BP12}
we set $I(\rep,\tau) \defeq  I(\sigma,\tau) $.

Define $\Dx $ as the direct limit 
\begin{equation}
\label{eq:diagr-from-I}
\Dx \defeq \varinjlim_{\le} I(\rep, \tau) ,
\end{equation}
where the direct limit is taken over
the set of $I(\rep, \tau) $ with socle $\sigma$,
to which we give the partial order defined in 
\cite[Paragraph after Lemma~13.3]{BP12}.
It is clear by construction that 
$\socG \Dx[\sigma] = \sigma$.
Then, according to \cite[Proposition~13.4]{BP12}, 
we can define $\DZero $ as the direct sum 
\begin{equation} 
  \label{eq:diagr-def} 
\DZero  \defeq  \bigoplus_{\sigma \in \Wr} \Dx[\sigma],  
\end{equation}
which is multiplicity free
by \cite[Corollary~13.5]{BP12}.
We regard all these as representations of $\GR $, by inflation.

Consider the character
$\mytwist \colon \Gal(\overline{K}/K) \to \F^{\times}$,
which can be identify with a character 
$K^\times \cdot \id\to  \F^\times $
via local class field theory.
Then, we let $K^\times$ act on $\DZero$ by $\mytwist$.
Note that this extends the previous action 
of $\mathcal{O}_K ^{\times} \cdot \id$ on $\DZero$,
by the paragraph before
\eqref{eq:weight-characterisation}.
We define 
\begin{equation}
  \label{eq:diagr1-def} 
\DOnex[\sigma] \defeq \Dx[\rho]^{I_1},
\quad \DOne \defeq 
\DZero^{I_1}= \bigoplus_{\sigma \in \Wr} \DOnex[\sigma].
\end{equation}
This determines the diagram $\Diagr$
up to the choice of certain parameters,
namely the scalars $\nu_i$
defined in the second paragraph of
\cite[§6]{Bre11}.
We take these parameters to be the ones of
\cite[Theorem~1.3]{DL21}
(cf.\ the paragraph after \emph{loc.\ cit.}).
In any case, notice that $\Pi$ will send
an eigenvector $v\in \DOne$
 with $H$-eigencharacter $\chi$ 
to an eigenvector $\Pi v$
with $H$-eigencharacter $\chi^s$.

From now on, 
we will always assume that $\rep$ is reducible
and $0$-generic.
Following \cite[§4]{Bre14},
if $\lambda = (\lambda_0(x_0), \dots,\lambda_{f-1}(x_{f-1}))$
is an $f$-tuple, with $\lambda_{i} (x_i)
\in  \mathbb{Z} \pm x_i$
for $i \in \{0, \dots, f-1\}$,
we define
\begin{equation}
  \label{eq:e-lambda-def}
\begin{aligned}
e(\lambda) &\defeq \frac{1}{2} 
\left( \sum _{i=0}^{f-1} p^{i}(x_i-\lambda_i(x_i))\right)
\text{ if }\lambda_{f-1}(x_{f-1})
\in \mathbb{Z}+x_{f-1} , \\
e(\lambda) &\defeq \frac{1}{2} 
\left(p^{f}-1+ \sum _{i=0}^{f-1} p^{i}(x_i-\lambda_i(x_i))\right)
\text{ if }\lambda_{f-1}(x_{f-1})
\in \mathbb{Z}-x_{f-1}.
\end{aligned}
\end{equation}

Recall the set $\mathscr{P}$ parametrising $\DOne$
defined in \cite[§4]{Bre14} (and denoted there by $\mathscr{PD}$),
and the subset $\mathscr{D} \subseteq \mathscr{P}$
parametrising $\Wr$.
To a given $\lambda \in \mathscr{P}$
(which is of the form $ \lambda_{i} (x_i)
\in  \mathbb{Z} \pm x_i$
for $i \in \{0, \dots, f-1\}$)
we associate the Serre weight
$\sigma_{\lambda} \defeq(\lambda(r_0), \dots,\lambda(r_{f-1}))
\otimes \det^{e(\lambda)(r_0,\dots,r_{f-1})}$,
and let 
$\chi_{\lambda} \colon I \to \F^{\times}$
be the character acting on 
$\sigma_\lambda^{I_1}$.
It follows from \cite[Proposition~4.2]{Bre14} 
that $\lambda \mapsto \chi_\lambda$
is a bijection between $\mathscr{P}$
and the set of eigencharacters
of $\DOne$,
and it follows from \cite[Proposition~A.3]{Bre14} 
that $\lambda \mapsto \sigma_\lambda$
is a bijection between $\mathscr{D}$ and $\Wr$.

We let $\mathscr{D}^{\mathrm{ss}}
\subseteq \mathscr{P}^{\mathrm{ss}}$
be the corresponding sets for the semisimplification 
$\rep^{\mathrm{ss}}$ of $\rep$,
so $\mathscr{P} \subseteq \mathscr{P}^{\mathrm{ss}}$
and $\mathscr{D} \subseteq \mathscr{D}^{\mathrm{ss}}$.

More precisely, recall that
$\mathscr{P} ^{\mathrm{ss}}$ is the set of $f$-tuples
$ \left( \lambda_0 (x_0 ), \dots, \lambda_{f-1}(x_{f-1}) \right)$ such that,
for all $i \in \{0, \dots,f-1\}$:
\begin{enumerate}[(i)]
\item 
  \label{condition:Pss-i}
$\lambda_i(x_i) \in \{x_i,x_i+1, x_i+2 , p-3-x_i, p-2-x_i, p-1 - x_i\}$;
\item 
  \label{condition:Pss-ii}
$\lambda_i(x_i) \in \{x_i, x_i+1, x_i+2\} \implies
\lambda_{i+1}(x_{i+1}) \in \{x_{i+1} , x_{i+1}+2,  p-2-x_{i+1}\}$;
\item 
  \label{condition:Pss-iii}
$\lambda_i(x_i) \in \{p-3-x_i, p-2-x_i, p-1- x_i\} \implies
\lambda_{i+1}(x_{i+1})\in \{x_{i+1}+1, p-3-x_{i+1}, p-1 - x_{i+1} \}$,
\end{enumerate}
with the conventions 
$x_f = x_0$, $\lambda_f(x_f)=\lambda_0(x_0)$.
Moreover, $\mathscr{D} ^{\mathrm{ss}} \subseteq \mathscr{P} ^{\mathrm{ss}}$ is the subset
such that $\lambda_i (x_i) \in \{x_i,x_i+1,p-3-x_i,p-2-x_i\}$
for all $i \in \{0, \dots,f-1\}$.

Then, 
it follows from \cite[Proposition~4.2]{Bre14} that
there exists a unique subset $\Jrep \subseteq \{0, \dots,f-1\}$
such that 
\begin{align}
  \label{eq:D-def-Jrep}
\mathscr{D} &= \left\{ \lambda \in \mathscr{D} ^{\mathrm{ss}} \mid \lambda_i (x_i) \in \{x_i+1, p-3- x_i\} \implies i \in \Jrep \right\}, \\
  \label{eq:P-def-Jrep}
\mathscr{P} &= \left\{ \lambda \in \mathscr{P} ^{\mathrm{ss}} \mid \lambda_i (x_i) \in \{x_i+2, p-3- x_i\} \implies i \in \Jrep \right\}.
\end{align}
In particular, $ |\Wr| = 2^{|\Jrep|}$,
and $\rep$ is semisimple if and only if 
$\Jrep = \{0, \dots,f -1 \}$.

To a Serre weight $\sigma\in \Wr$ 
we can associate a positive integer $\ell(\sigma)$
as follows. 
\begin{definition}
  \label{def:J-and-ell}
If $\lambda \in \mathscr{P} ^{\mathrm{ss}}$,
following \cite[Eq.~(10)]{BHHMS4} we define 
\begin{equation}
  \label{eq:J-lambda-def}
J_{\lambda} \defeq \{j \in \{0, \dots, f-1\}
\mid
\lambda_j(x_j) \in \{x_j + 1, x_j + 2,
p-3-x_j\}\},
\end{equation}
and we set the length $\ell(\lambda) $ 
of $\lambda$ as 
$\ell(\lambda) \defeq |J_{\lambda}|$.
By \Cite[§11]{BP12},
the assignment $\lambda \mapsto J_{\lambda}$
gives a bijection between $\mathscr{D}^{\mathrm{ss}}$
and the set of subsets of $\{0, \dots, f-1\}$.
Since $\mathscr{D} ^{\mathrm{ss}}$ also parametrises 
$\Wss$, we sometimes write $J_{\tau} \defeq J_{\lambda}$
and $\ell(\tau) \defeq \ell(\lambda)$
for the unique $\lambda \in \mathscr{D} ^{\mathrm{ss}}$ parametrising $\tau \in \Wss$.

For $0 \le \ell \le f$, we set 
$\mathscr{P} ^{\mathrm{ss}}_{\ell} \defeq 
\{\lambda \in \mathscr{P} ^{\mathrm{ss}} \mid \ell(\lambda)=\ell\} $,
$\mathscr{P}_{\ell} \defeq 
\mathscr{P} \cap \mathscr{P} ^{\mathrm{ss}}_{\ell}$,
and we set 
$\Wss_{\ell} \defeq \{\sigma\in \Wss \mid \ell(\sigma)=\ell\}$,
$\Wr_{\ell} \defeq \Wr \cap \Wss_{\ell}$.

By \emph{loc.\ cit.}\ the assignment $\lambda \mapsto J_\lambda$
restricts to a bijection between $\mathscr{D}$
and the set of subsets of $\Jrep$.
\end{definition}

Suppose now and for the rest of the section
that $\rep$ is reducible nonsplit.
For $\ell \in \{-1, \dots, f\}$,
let $\DZeroFil[\ell]$ be the largest
$\Gamma$-subrepresentation of $\DZero$
such that no $\tau \in \Wss$
with $\ell(\tau) > i$ occurs as a subquotient of
$\DZeroFil[\ell]$.
By \cite[Proposition~5.2]{Hu16},
the filtration
\begin{equation}
  \label{eq:DZero-filtration-definition}
0 =  
\DZeroFil[-1] \subsetneq
\DZeroFil[0] \subsetneq \dots 
\subsetneq
\DZeroFil[i] \subsetneq \dots \subsetneq
\DZeroFil[f] = \DZero
\end{equation}
is the unique filtration of the 
$\Gamma$-representation $\DZero$ such that,
for all $\ell \in \{0, \dots, f\}$,
the $\Gamma$-representation
$\DZerogr[\ell] \defeq \DZeroFil[\ell] / \DZeroFil[\ell-1]$
embeds inside 
$\DssZeroEll[\ell] \defeq 
\bigoplus_{\tau \in \Wss_\ell} \Dssx[\tau]$,
and 
\begin{equation}
    \label{eq:socle-DZerogr}
\socR \DZerogr[\ell] = 
\bigoplus_{\tau \in \Wss_\ell} \tau.
\end{equation}
For $\ell \in \{0, \dots, f\}$,
set $\DssZeroFil[\ell] \defeq 
\bigoplus_{\ell' \le \ell} \DssZeroEll[\ell']$.
We recall
\cite[Eq.~(67), (68)]{BHHMS4}
(for $i = \ell$) below:
\begin{align}
  \label{eq:D-Dss-gr-comparison}
\JH(\DZerogr[\ell]) &= \JH(\DZero) \cap 
\JH(\DssZeroEll[\ell]),\\
  \label{eq:D-Dss-Fil-comparison}
\JH(\DZeroFil[\ell]) &= \JH(\DZero) \cap 
\JH(\DssZeroFil[\ell]).
\end{align}
Since $\DZero$ is multiplicity free,
the decomposition \eqref{eq:diagr-def}
implies that $\DZeroFil[\ell]$ decomposes as
\begin{equation}
  \label{eq:DFil-decomposition}
\DZeroFil[\ell] = \bigoplus_{\sigma \in \Wr} 
\DxFil[\sigma]{\ell},
\end{equation}
with $\DxFil[\sigma]{\ell} \defeq 
\Dx[\sigma] \cap \DZeroFil[\ell]$.
Similarly, $\DZerogr[\ell] $ decomposes as
\begin{equation}
\label{eq:Dgr-decomposition}
\DZerogr[\ell] = \bigoplus_{\tau \in \Wss_\ell} 
\Dxgr[\tau]{\ell},
\end{equation}
with $\Dxgr[\tau]{\ell} \defeq \DZerogr \cap \Dssx[\tau]$.

We also recall 
\cite[Eq.~(70)]{BHHMS4}
for $i = \ell$:
\[
\frac{\DxFil[\sigma]{\ell}}{\DxFil[\sigma]{\ell -1}}
= \bigoplus_{\tau \in \Wss_{\ell},\ J_\sigma = \Jrep \cap J_\tau} 
\Dxgr[\tau]{\ell}.
\]

We conclude this section by introducing
weight cycling on $\Wr$.
The following notation comes from \cite[p.~13]{BP12}.
\begin{definition}
\label{def:s-at-D0-level}
Given a Serre weight $\sigma $,
if the character $\chi:I\to \F ^\times$
describes the action of $I$ on $\sigma^{I_1} $,
then there exists a unique Serre weight, which we denote by $\sigma^{[s]} $,
such that $\sigma^{[s]} $ is distinct from $\sigma $
and such that $\chi^s$ describes the action of $I $ on $(\sigma^{[s]})^{I_1} $.
\end{definition}
Like before, if $\sigma\in \Wss $,
and if $\chi \in \mathscr{D} ^{\mathrm{ss}}$
is the character given by the action of $I $ on $\sigma^{I_1} $,
then $\chi^s \neq \chi$ 
follows from the genericity assumption on $\rep $.
In particular, the condition 
$\sigma^{[s]}\neq \sigma$
is automatic.

For a subset $J \subseteq \{0, \dots, f-1\}$,
set $\delta (J) \defeq  \{j \mid j+1 \in J\}$
(where the indices are taken modulo $f$).
If $\lambda \in \mathscr{D} ^{\mathrm{ss}}$
has corresponding subset $J_\lambda \subseteq \{0, \dots,f-1\}$,
then we let $\delta(\lambda) \in \mathscr{D} ^{\mathrm{ss}}$
be defined by $\delta(\lambda) _j \defeq \lambda_{j+1}$
for all $j \in \{0, \dots, f-1\}$
(where the indices are taken modulo $f$),
and we notice that $\delta(J_\lambda)$
is the subset corresponding to $\delta(\lambda)$.
Notice that this notation is compatible with
the $\delta_r(-)$ 
defined in the second paragraph of \cite[§15]{BP12}.
If moreover $\sigma = \sigma_\lambda \in \Wss$
corresponds to $\lambda$, 
then we write $\delta(\sigma) \in \Wss$
for the Serre weight corresponding to $\delta(\lambda)$.
By \cite[Lemma~15.2]{BP12}
applied to the case $\tau=\sigma$
(which in \emph{loc.\ cit.}\ are defined
in the paragraph before \cite[Lemma~15.1]{BP12};
note that in this case
we have $\mathcal{S} ^+=\mathcal{S} ^-=\emptyset $,
with the notation of the paragraph before
\cite[Lemma~15.1]{BP12}),
we see that $\delta(\sigma)$ is the unique 
Serre weight in $\Wss$ 
such that there exists a $\GR$-equivariant inclusion
$I(\delta(\sigma), \sigma^{[s ] })
\hookrightarrow \DssZero$.

\subsection{\texorpdfstring{$(\varphi, \Gamma)$}{(phi, Gamma)}-modules}
In this section we review the functor $\Dvee$
of \cite{Bre15}
taking values in (pro-)étale 
$(\varphi,\Gamma)$-modules.
Let 
\begin{equation}
  \label{eq:N0-def}
N_0 \defeq   \begin{pmatrix} 1 & \mathcal{O} _K \\ 
0 & 1\end{pmatrix}  \cong \mathcal{O} _K,
\end{equation}
which comes with the trace map 
$N_0 \cong \mathcal{O} _K 
\xtwoheadrightarrow{\Tr}
\mathbb{Z} _p,$
and let $N_1$ be its kernel.
\begin{definition}
  \label{def:Colmez_generalised}  
If $T_{2} \subseteq \GL_{2,K}  $
denotes the maximal torus of diagonal matrices, then we define the morphisms of algebraic groups
\begin{align*}
  \xi: \mathbb{G} _m 
  & \to T_{2}   
  &
  \theta: T_{2} 
  & \to \mathbb{G} _m \\
  x & \mapsto \begin{pmatrix} x & \\ & 1 \end{pmatrix},
  &
  \begin{pmatrix} x_1 & \\ & x_2 \end{pmatrix} & \mapsto x_1 ,
\end{align*}
which coincide with 
\cite[example~2.3]{BHHMS2} for $n=2$.
Observe that, for $x\in \mathbb{Z} _p\setminus \{0\} $,
we have
$\xi(x)N_1\xi(x^{-1} )\subseteq N_1 $.
\end{definition}

If $\pi$ is a smooth representation  of $\GF$
over $\F$,
we can regard $\pi^{N_1}$
as a smooth representation of 
$N_0/N_1 \cong \mathbb{Z} _p$.
With this in mind, from now on we fix the identification
\[
  \F[\![ N_0/N_1]\!] \xrightarrow{\sim} \F [\![ \mathbb{Z} _p ]\!] \cong \F [\![ X ]\!],
\]
where the last isomorphism sends $X$ to $[1] - 1$.

Define $\F [\![X]\!] [F]$ to be the noncommutative polynomial ring over $\F [\![X]\!] $ with the relation $FS(X)= S(X^p)F$ for $S(X)\in \mathbb{F} [\![X]\!] $.
Then, 
we enrich the structure of  $\F [\![X]\!] $-module on
$\pi^{N_1} $
to an $\F [\![X]\!] [F]$-module structure
by making $F$ act as 
$F(v) = \sum_{n_1\in N_1/{\xi(p)} N_1 {\xi(p)}^{-1}} 
n_1\xi(p)v \in \pi^{N_1 } $ on $v\in \pi^{N_1 }  $.

We can also endow $\pi^{N_1} $ with an action of $\mathbb{Z} _p ^ \times  $, 
by making $x \in \mathbb{Z} _p ^ \times  $
act by $\xi(x) $.
The fact that, for 
$x\in \mathbb{Z} _p ^ \times   $,
$\xi(x) $ normalises $N_1$ implies that
the action of $\mathbb{Z} _p ^ \times   $ commutes with $F$, while
the identity
\[
  \begin{pmatrix} x & 0 \\0 & 1 \end{pmatrix}   
  \begin{pmatrix} 1 & 1 \\0 & 1 \end{pmatrix} 
  \begin{pmatrix} x ^{-1} &0 \\0 & 1 \end{pmatrix} 
  =
  \begin{pmatrix} 1 & x \\0 & 1  \end{pmatrix},
\]
implies the commutation relation
\[
  \xi(x) \circ (1+X) = (1+X)^x \circ \xi(x)
\]
of endomorphisms of $\pi^{N_1} $.

We adopt the convention of
\cite{Bre15}
of denoting by $\PhG$ the category of 
finite-dimensional étale $(\varphi, \Gamma) $-modules
over $\F (\!(X)\!)  $,
and of denoting by $\PhGhat $ its pro-completion. 
Again following \emph{loc.\!\ cit.},
we now define a contravariant functor $\Dvee (-)$.
\begin{definition}
  \label{def:Dvee}
Given $\pi$ a smooth representation of $\GF $
over $\F $,
we can consider the set of all finitely generated 
$\F [\![X]\!] [F] $-submodules
$M \subseteq \pi^{N_1} $
which are stable by the action of  
$\mathbb{Z} _p^\times  $ 
and which are moreover \emph{admissible},
i.e.\ such that $M[X]= \{m\in M\mid Xm=0\} $
is finite dimensional over $\F  $.
If $M^\vee \defeq  \Hom_{\F } (M,\F ) $ is the 
algebraic $\F$-linear dual of $M $,
we define
\[
  D^\vee_\xi(\pi) \defeq  \varprojlim_{M} M^\vee[X ^{-1}]
\]
to be the inverse limit over this set.
It naturally lies in $\PhGhat$ by
\cite[Proposition~2.7(i)]{Bre15}.
\end{definition}

\subsection{The main four hypotheses}
  \label{sec:hypotheses}
Throughout this section, we assume that 
$\rep$ is reducible 
and $0$-generic.
We will use the following
set of hypotheses on
an admissible smooth representation 
$\pi$ of $\GF$ over $\mathbb{F}$
with a central character:
\begin{enumerate}[(i)]
  \item \label{hypothesis:i}
    there is an integer $r\ge 1$ such that
there is an isomorphism of diagrams
\[
(\pi^{I_1} \hookrightarrow  \pi^{K_1})
\cong \Diagr^{\oplus r};
\]
  \item \label{hypothesis:ii} for any $\lambda\in \mathscr{P} $ , we have an equality of multiplicities
\[
  [\pi[\mI^{3}]\colon \chi_{\lambda}] = [\pi[\mI]\colon \chi_\lambda];
\]

  \item \label{hypothesis:iii}
if we let 
$\E[j](-) \defeq  \Ext^j_{\Lambda} (-, \Lambda)
= \Ext^j_{I/Z_1} (-, \Lambda)$,
then the $\Lambda$-module
$\pi^\vee$ is \textit{essentially self-dual}
of grade $2f$,
i.e.\ there exists a $\GL_2(K)$-equivariant isomorphism of $\Lambda$-modules
\[
\E(\pi^\vee)\cong \pi^\vee \otimes (\mytwist)
\]  
where the action of $\GF$ on the
left-hand side is defined by Kohlhaase in 
\cite[Proposition~3.2]{Koh17};

  \item \label{hypothesis:iv}
for any smooth character $\chi \colon I \to \F ^\times $
and for $i \ge 0$
we have $\Ext^i_{I/Z_1} (\chi,\pi)\neq 0$
if and only if $[\pi[\mI]:\chi]\neq 0$,
in which case
\[
  \dim_{\F} \Ext^i_{I/Z_1} (\chi,\pi)=
  \binom{2f}{i} r,
\]
where $r \ge 1$ is the integer of \ref{hypothesis:i},
and where $\Ext^{i}_{I/Z_1}(\chi,-) $ 
is computed in the category of smooth $I/Z_1$-representations.
\end{enumerate}

Notice that, by the proof of \cite[Corollary~5.3.5]{BHHMS1},
\ref{hypothesis:ii} implies that the 
$\grL$-module $\grm(\pi^\vee)
\defeq \bigoplus _{i \ge 0}
\mI^{i}\pi^\vee /\mI^{i+1}\pi^\vee$
is annihilated by $J$, and so its multiplicity 
$\mpp[q](\grm(\pi^\vee))$ 
(cf.\ \Cref{def:multiplicity})
is well-defined for each minimal prime
$\mathfrak{q}\subseteq \Rbar$.

\section{The rank of \texorpdfstring{$(\varphi, \Gamma)$}{(phi, Gamma)}-modules
associated to subrepresentations of \texorpdfstring{$\pi$}{pi}}
The main result of this section is 
\Cref{cor:Wang},
which computes the $\FX$-rank of the cyclotomic 
$(\varphi, \Gamma)$-module $\Dvee (\pi_1 )$
associated to a
subrepresentation $\pi_1 $ of $\pi$
in terms of certain numerical invariants 
defined in \eqref{eq:VFil-rFil-def} below.
Throughout this section we assume that 
$\rep$ is reducible and $0$-generic,
and we assume that $\pi$ satisfies
hypotheses
\ref{hypothesis:i} to \ref{hypothesis:iv}
of \Cref{sec:hypotheses}.

\subsection{\texorpdfstring{$K_1$}{K1}- and \texorpdfstring{$I_1$}{I1}-invariants of subrepresentations of \texorpdfstring{$\pi$}{pi}}
  \label{sec:K1-inv}
Throughout this section we assume that 
$\rep$ is reducible nonsplit and $0$-generic,
and we assume that $\pi$ satisfies
hypotheses
\ref{hypothesis:i} to \ref{hypothesis:iv}
of \Cref{sec:hypotheses}.
Remember that,
in the paragraph after \eqref{eq:diagr1-def},
we chose the structure of
diagram on $\Diagr$ to be the same
as \cite[Theorem~1.3]{DL21},
which by the last paragraph of
\cite[§3.4.1]{BHHMS2}
coincides with the one of
\cite[Theorem~3.93]{BHHMS2}.
Set $\V\defeq\F^{r}$.
By hypothesis \ref{hypothesis:i}
of \Cref{sec:hypotheses}
we fix once and for all
an isomorphism of diagrams
\begin{equation}
  \label{eq:ip-def}
\ip :
( \pi^{I_1}\hookrightarrow\pi^{K_1})
\cong
\V \otimes_{\F} 
\Diagr,
\end{equation}
where the diagram structure on
$\V \otimes_{\F} \Diagr$ 
is induced from the one on $\Diagr$.
If $\pi_1 $ is a subrepresentation of $\pi$,
then $\iota_\pi(\pi_1 ^{I_1} \hookrightarrow \pi_1 ^{K_1})$
is a subdiagram of $\V \otimes_{\F}\Diagr$,
and the goal of this section is to
determine which subdiagrams can occur.
This is accomplished in \Cref{thm:srep-structure}.

For $j \in \{0, \dots, f-1\}$,
we denote by $\alpha_j$ the 
$I$-character 
$\alpha_j \left( 
\left( 
\begin{smallmatrix}
a & b \\
pc & d
\end{smallmatrix}\right)\right)\defeq 
\big( \overline{a}\overline{d}^{-1} \big)^{p^j}$.
Given a character $\chi \colon I \to \F ^\times $
and two subsets $J_1 , J_2 \subseteq \{0, \dots,f-1 \}$
such that $J_1 \cap J_2 = \emptyset $,
set
\[
  \chi^{J_1 ,J_2 } \defeq \chi 
\prod_{j \in J_1 }  \alpha_j^{-1}
\prod_{j \in J_2 }  \alpha_j.
\]
We let $\Wchi{}$ be the $I$-representation of
\cite[Lemma~4.3.1]{BHHMS4},
i.e.\ the unique $I$-representation with 
socle $\chi$ and
cosocle $\chi^{J_1 ,J_2 } $ and such that 
the $d$-th socle layer is given by 
\[
\bigoplus_{
\substack{
J_1' \subseteq J_1, J_2' \subseteq J_2,\\
|J_1 '| + |J_2 '| = d} 
}
\chi
\prod_{j \in J_1' }  \alpha_j^{-1}
\prod_{j \in J_2' }  \alpha_j.
\]

Following \cite[§4.3.2]{BHHMS4},
for $\mu \in \mathscr{P}$ we define
\begin{align}
  \label{eq:Ymu-def}
\Yn[\mu] &\defeq \left\{ j \in \{ 0, \dots,f-1\} 
\mid \mu_j (x_j) \in \{x_j ,x_j +1 ,  p-2- x_j, p-3- x_j\}\right\} \cup \Jrep^c,
\\
  \label{eq:Zmu-def}
\Zn[\mu] &\defeq \left\{ j \in \{ 0, \dots,f-1\} 
\mid \mu_j (x_j) \in \{x_j +1 , x_j +2, p-1- x_j, p-2- x_j\}\right\} \cup \Jrep^c.
\end{align}

We want to construct an $I$-equivariant injection
 $\V \otimes_{\F} \Wchi[J_1,J_2]{\mu}
 \hookrightarrow \pi|_I$.
We begin with the following lemma.
\begin{lemma}
  \label{lemma:extend-to-Wchi}
Suppose that $\rep$ is $2$-generic.
Let $\mu \in \mathscr{P}$,
and let $J_1 \subseteq \Yn[\mu]$ and
$J_2 \subseteq \Zn[\mu]$ be subsets satisfying
$J_1 \cap J_2 = \emptyset $.
Then, we have
$\JH(W(\chi_\mu,\chi_\mu^{J_1,J_2})) \cap 
\JH(\pi^{I_1}) = \{\chi_\mu\}$,
and the restriction map
\begin{equation}
  \label{eq:Wchi-iso}
  \Hom_{I/Z_1}(\Wchi[J_1,J_2]{\mu}, \pi|_I)
\to \Hom_{I/Z_1}(\chi_\mu, \pi|_I)
=\Hom_{I/Z_1}(\chi_\mu, \pi^{I_1})
\end{equation}
is an isomorphism.
\end{lemma}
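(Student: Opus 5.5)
The plan is to reduce the statement to a vanishing of $\Hom$ and $\Ext^1$ from the cokernel $W(\chi_\mu,\chi_\mu^{J_1,J_2})/\chi_\mu$ into $\pi$. That vanishing will follow from the first (combinatorial) assertion together with hypothesis \ref{hypothesis:iv} of \Cref{sec:hypotheses}.

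\textbf{Step 1 (the combinatorial assertion).} By hypothesis \ref{hypothesis:i} we have $\pi^{I_1}\cong \V\otimes_{\F}\DOne$. Hence $\JH(\pi^{I_1})=\{\chi_\lambda\mid \lambda\in\mathscr{P}\}$, which depends only on $\rep$ and not on $r$. The claim $\JH(W(\chi_\mu,\chi_\mu^{J_1,J_2}))\cap\JH(\pi^{I_1})=\{\chi_\mu\}$ is therefore purely about $\rep$. I expect it to be exactly the statement established in \cite[§4.3.2]{BHHMS4} in the case $r=1$, and I would invoke it.

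If one wants to check it directly, the constituents of $W(\chi_\mu,\chi_\mu^{J_1,J_2})$ are the characters $\chi_\mu\prod_{j\in J_1'}\alpha_j^{-1}\prod_{j\in J_2'}\alpha_j$ with $J_i'\subseteq J_i$. Multiplying by $\alpha_j^{\mp1}$ shifts the $j$-th coordinate of the parameter by $\mp 2$, adjusted by the determinant twist $e(\lambda)$. One then checks, coordinate by coordinate using the description of $\mathscr{P}$ in \eqref{eq:P-def-Jrep} and the definitions \eqref{eq:Ymu-def}, \eqref{eq:Zmu-def}, that such a shift lands outside $\mathscr{P}$ whenever $J_1'\cup J_2'\neq\emptyset$. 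The role of $\Jrep^c$ in $Y(\mu)$ and $Z(\mu)$ is precisely that the shifted values $x_j+2$ and $p-3-x_j$ are excluded from $\mathscr{P}$ for $j\notin\Jrep$. The $2$-genericity guarantees that $\lambda\mapsto\chi_\lambda$ is injective on the relevant range and that all the characters appearing in $W$ are pairwise distinct, so each occurs with multiplicity one. This bookkeeping is the only genuinely delicate step, and it is where I expect the main obstacle to lie.

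\textbf{Step 2 (the isomorphism).} Put $Q\defeq W(\chi_\mu,\chi_\mu^{J_1,J_2})/\chi_\mu$. Applying $\Hom_{I/Z_1}(-,\pi)$ to $0\to\chi_\mu\to W(\chi_\mu,\chi_\mu^{J_1,J_2})\to Q\to0$ gives the exact sequence
\[
0\to\Hom_{I/Z_1}(Q,\pi)\to\Hom_{I/Z_1}(W(\chi_\mu,\chi_\mu^{J_1,J_2}),\pi)\to\Hom_{I/Z_1}(\chi_\mu,\pi)\to\Ext^1_{I/Z_1}(Q,\pi).
\]
By Step 1 every constituent $\chi$ of $Q$ satisfies $[\pi[\mI]:\chi]=0$. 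By hypothesis \ref{hypothesis:iv}, this gives $\Ext^i_{I/Z_1}(\chi,\pi)=0$ for all $i\ge0$, in particular for $i=0,1$. Dévissage along a composition series of $Q$, using the long exact sequences, then yields $\Hom_{I/Z_1}(Q,\pi)=0$ and $\Ext^1_{I/Z_1}(Q,\pi)=0$. So the restriction map \eqref{eq:Wchi-iso} is an isomorphism.

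Finally, the identification $\Hom_{I/Z_1}(\chi_\mu,\pi)=\Hom_{I/Z_1}(\chi_\mu,\pi^{I_1})$ is immediate because $I_1$ acts trivially on $\chi_\mu$.
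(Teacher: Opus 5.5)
Your proposal is correct and follows essentially the same route as the paper. The paper also treats the first assertion as a statement depending only on $\rep$ and quotes it from the $r=1$ literature, though the precise reference is \cite[Lemma~2.3.6(ii)]{BHHMS4} with $m=1$ rather than \S 4.3.2. It then proves the isomorphism by the same dévissage on $\JH(W(\chi_\mu,\chi_\mu^{J_1,J_2})/\chi_\mu)$, using hypothesis~\ref{hypothesis:iv} to get $\Ext^{\varepsilon}_{I/Z_1}(-,\pi)=0$ for $\varepsilon=0,1$; your remark that $\chi_\mu$ occurs with multiplicity one in $W(\chi_\mu,\chi_\mu^{J_1,J_2})$ makes explicit the point the paper leaves implicit when passing to $\JH(W(\chi_\mu,\chi_\mu^{J_1,J_2})/\chi_\mu)\cap\JH(\pi^{I_1})=\emptyset$.
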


\begin{proof}
The first claim follows from 
\cite[Lemma~2.3.6(ii)]{BHHMS4} with $m=1$.
For the second one,
it is enough to show that the restriction map
$ \Hom_{I/Z_1}(\Wchi[J_1,J_2]{\mu}, \pi|_I)
\to \Hom_{I/Z_1}(\chi_\mu, \pi|_I)$ 
is an isomorphism, 
and we do this by showing that 
$\Ext^{\varepsilon}_{I/Z_1}
(\Wchi[J_1,J_2]{\mu}/\chi_\mu, \pi|_I)=0$
for $\varepsilon=0,1$.
The first statement implies that 
$\JH(\Wchi[J_1,J_2]{\mu}/\chi_\mu) \cap \JH(\pi^{I_1})
= \emptyset$,
we conclude by a dévissage on 
$\JH(\Wchi[J_1,J_2]{\mu}/\chi_\mu)$ together with
hypothesis \ref{hypothesis:iv} of
\Cref{sec:hypotheses}.
\end{proof}

By construction of $\mathscr{P}$
(cf.\ the paragraph after \eqref{eq:e-lambda-def}),
for all $\mu \in \mathscr{P}$
we can fix once and for all
a nonzero $H$-eigenvector $v_\mu \in \DOne$
of $H$-eigencharacter $\chi_\mu$,
and this choice in turn determines
an $H$-equivariant isomorphism
\begin{equation}
    \label{eq:v-iso-def}
\alphaI \colon  
\bigoplus_{ \mu \in \mathscr{P}}
\chi_{\mu}
 \xrightarrow{\sim} \DOne.
\end{equation}
Denote by 
$\alphaI_{\mu} \defeq \alphaI |_{\chi_\mu}
\colon \chi_\mu \hookrightarrow \DOne$
the restriction of $\alphaI$ to $\chi_\mu$.
As $\DOne$ is multiplicity free, we have
\begin{equation}
    \label{eq:explicit-isotypic-component}
  \Hom_{I/Z_1}(\chi_\mu, \DOne) = \F \alphaI_\mu.
\end{equation}
Suppose now that $\rep$ is $2$-generic,
and let $\mu \in \mathscr{P}$.
For all $J_1 \subseteq \Yn[\mu]$, 
$J_2 \subseteq \Zn[\mu]$ such that 
$J_1 \cap J_2 = \emptyset$,
a straightforward computation gives
\begin{align}
  \label{eq:iso-chi-isotypical}
\Hom_{I/Z_1}&(\V \otimes_{\F}\Wchi[J_1,J_2]{\mu}, \pi|_I)
 \cong \V^\vee \otimes_{\F} 
 \Hom_{I/Z_1}(\Wchi[J_1,J_2]{\mu}, \pi|_I)
\\
\nonumber
& \overset{\eqref{eq:Wchi-iso}}{\cong }
\V^\vee \otimes_{\F} 
\Hom_{I/Z_1}(\Wchi[J_1,J_2]{\mu}, \pi^{I_1 })
\xrightarrow[\sim]{\id_{\V^\vee } \otimes \ip}
\V^\vee \otimes_{\F} 
\Hom_{I/Z_1}(\chi_\mu, \V \otimes_{\F}\DOne) \\
\nonumber
&
\cong  \V^\vee \otimes_{\F} \V \otimes_{\F}
\Hom_{I/Z_1}(\chi_\mu, \DOne) 
\overset{\eqref{eq:explicit-isotypic-component}}{=} 
\V^\vee  \otimes_{\F} \V \otimes_{\F} \F \alphaI_\mu
\cong \Hom_{\F} \left( \V,\V \right) \otimes_{\F} \F \alphaI_\mu.
\end{align}
Denote by $\bm[\mu]$ the unique 
$I$-equivariant homomorphism 
$\bm[\mu] \colon \V \otimes_{\F} \Wchi[J_1,J_2]{\mu} 
\to \pi|_I$ 
that is sent to 
$\id_{\V} \otimes \alphaI_\mu \in 
\Hom_{\F} \left( \V,\V \right)
\otimes_{\F} \F \alphaI_\mu$
via the composition with \eqref{eq:iso-chi-isotypical}.
Notice that the composition 
$ \ip \circ (\bm[\mu]|_{\chi_\mu}) \colon
\V \otimes_{\F}\chi_\mu \to \V \otimes_{\F}\DOne$
is $\id_{\V} \otimes \alphaI_{\mu}$
by construction.
In particular, $\bm[\mu]$ is injective
because it is so on the $I$-socle
$\V \otimes_{\F}\socI \left(  
\Wchi[J_1,J_2]{\mu}\right) =
\V \otimes_{\F}\chi_\mu$.
For a nonzero $w \in \V$, 
we denote by $\bm[\mu,w]$ the restriction
of $\bm[\mu]$ to $\F w \otimes_{\F} 
\Wchi [J_1 , J_2 ]{\mu}$.

We also define $\btm[\mu]$ to be the $\GR$-equivariant homomorphism 
\begin{equation}
  \label{eq:beta-tilde-def}
\btm[\mu] \colon
\V \otimes_{\F}\IndI\Wchi[J_1,J_2]{\mu}\to\pi|_{\GR}
\end{equation}
corresponding to $\bm[\mu]$ 
under Frobenius reciprocity,
and we denote by $\btm[\mu, w]$
the restriction of $\btm[\mu]$ to 
$\F w \otimes_{\F} \IndI \Wchi[J_1,J_2]{\mu}$
(or equivalently the $\GR$-equivariant homomorphism
$\btm[\mu, w] \colon
\F w \otimes_{\F}\IndI\Wchi[J_1,J_2]{\mu}\to\pi|_{\GR}$
corresponding to $\bm[\mu, w]$ 
under Frobenius reciprocity).

Let $\mathcal{P} \defeq \mathcal{P} ( x_0 , \dots, x_{f -1 })$
be the set of $f$-tuples defined in 
\cite[\S 2]{BP12},
and let $\psi \colon I \to \F ^{\times}$
be a $1$-generic character 
(cf.\ the paragraph after \eqref{eq:weight-characterisation}
for the definition of $1$-genericity).
It follows from \eqref{eq:weight-characterisation}
and the genericity assumption 
that $\psi$ is uniquely
of the form
$\psi \colon \left( 
\begin{smallmatrix}
a & b \\
pc & d
\end{smallmatrix}\right)
\mapsto \overline{a}^{s} 
\eta (\overline{a}\overline{d})$,
for some character $\eta \colon \mathbb{F}_q ^{\times}
\to \F ^{\times}$
and some integer $s = \sum_{j = 0}^{f-1} p^j s_j$
with $1 \le s_j \le p -3 $.
We can explicitly compute $\psi^s$:
\begin{align}
    \label{eq:psi-s-computation}
\psi^s \left( 
\left( 
\begin{smallmatrix}
a & b \\
pc & d
\end{smallmatrix}\right)\right) 
&=  
\psi \left( 
\left( 
\begin{smallmatrix}
d & c \\
pb & a
\end{smallmatrix}\right)\right) =  
\overline{d}^{\sum_{j = 0}^{f-1} p^j s_j}
\eta (\overline{a}\overline{d})
= \\
\nonumber
& = 
\overline{a}^{\sum_{j = 0}^{f-1} p^j(p -1 - s_j)}
\eta(\overline{a} \overline{d})\cdot
(\overline{a} \overline{d})
^{\sum_{j = 0}^{f-1} p^j s_j} 
.
\end{align}
Then, by \cite[Lemma~2.2]{BP12}
applied to $\chi = \psi^s$
(hence $r_j = p-1- s_j$,
$\eta = \eta \cdot(\ )^{\sum_{j = 0}^{f-1} p^j s_j}$
in \emph{loc.\ cit.}\ by \eqref{eq:psi-s-computation}),
there is a bijection 
\begin{align}
  \label{eq:P-bijection-1}
\mathcal{P}\xrightarrow{\sim}
&
\JH \left( \IndI \psi \right) \\
\nonumber
\xi  \mapsto 
&
\xi (p -1 - s_0, \dots, p -1 - s_{f-1}) 
\otimes 
\operatorname{det}^{ 
e(\xi)(p- 1 -s_0 , \dots, p -1-s_{f-1})+ 
\sum_{j = 0}^{f-1} p^js_j}\eta = \\
  \label{eq:xi-xi-c-equality}
& = 
\xi^c \otimes 
\operatorname{det}^{ 
e(\xi^c)(s_0 , \dots, s_{f-1})}
\eta,
\end{align}
where $\xi^c \defeq 
\xi (p -1 - x_0, \dots, p -1 - x_{f-1}) \in \mathcal{P}$,
and where in \eqref{eq:xi-xi-c-equality} 
we have used the identity
$e (\xi^c) - e (\xi) \equiv
\sum_{j = 0}^{f -1} p^j \xi_j (x_j)
\mod (p^f -1)$
(which follows directly from \eqref{eq:e-lambda-def}
after an elementary computation)
together with \eqref{eq:psi-s-computation}.
Moreover, the assignment
\begin{equation}
  \label{eq:P-bijection-2}
\xi \in \mathcal{P} \mapsto 
\mathcal{S} (\xi) \defeq
\{j \in \{0, \dots, f-1\}  \mid 
\xi_j (x_j) \in \{x_j -1 , p -1 -x_j\}\} 
\end{equation}
gives a bijection between $\mathcal{P}$
and the set of subsets of $\{ 0, \dots, f -1 \}$.
The notation $\mathcal{S} (\xi)$ follows 
\cite[\S 19]{BP12},
but notice that it is related to the subset
$J(\xi)$ of 
\cite[\S 2]{BP12} and
\cite[\S 3]{HW22}
by
$ J(\xi) = 
\delta (\mathcal{S} (\xi))$
(cf.\ the last paragraph of \Cref{sec:diagrams}
for the definition of $\delta (-)$).
Our conventions follow the ones of 
\cite[Remark~3.1.1]{BHHMS4},
in particular $\emptyset $
parametrises the socle of $\IndI \psi$, and
$ \{ 0, \dots, f -1 \} $
parametrises the cosocle of $\IndI \psi$. 

We recall here \cite[Lemma~3.1.3]{BHHMS4} and
\cite[Lemma~4.1.1]{BHHMS4}.
\begin{lemma}[{\cite[Lemma~3.1.3]{BHHMS4}}]
  \label{eq:3.1.3-recalled}
If $\lambda \in \mathscr{P} ^{\mathrm{ss}}$, 
then $\chi_ \lambda$ occurs in $\DssOnex[\tau]$,
where $\tau \in \Wss$ is determined by 
$J_{\tau} = J_{\lambda}$ via \eqref{eq:J-lambda-def}.
Moreover, as a Jordan-H\"older constituent of $\IndI \chi _\lambda$,
$\tau$ is parametrised via 
\eqref{eq:P-bijection-1} and
\eqref{eq:P-bijection-2}
by the following subset of $\{ 0, \dots,f-1\}$:
\begin{equation}
  \label{eq:Xmu-ss-def}
X ^{\mathrm{ss}}(\lambda) \defeq 
\left\{ j \mid \lambda_j (x_j) \in \{x_j, x_j +1, p-2- x_j, p-3- x_j\}\right\}.
\end{equation}
\end{lemma}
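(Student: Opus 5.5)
The statement is recalled from \cite[Lemma~3.1.3]{BHHMS4}, so the argument is a translation of its proof into our conventions, in two steps.

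\emph{First claim: locating $\chi_\lambda$.} Recall that $\DssZero = \bigoplus_{\tau \in \Wss} \Dssx[\tau]$ is multiplicity free. By \cite[Proposition~4.2]{Bre14}, applied to $\rss$, the map $\lambda \mapsto \chi_\lambda$ is a bijection between $\mathscr{P}^{\mathrm{ss}}$ and the $H$-eigencharacters of $\DssZero^{I_1}$. Hence $\chi_\lambda$ lies in exactly one summand $\DssOnex[\tau]$. To identify $\tau$, I would use the explicit description of $\Dssx[\tau]$ in \cite[§§11--14]{BP12} and \cite[§4]{Bre14}. In that description, the characters of $\Dssx[\tau]^{I_1}$ are parametrised by those $\lambda$ obtained from the $\lambda_\tau \in \mathscr{D}^{\mathrm{ss}}$ of $\tau$ by replacing, coordinate by coordinate:
\begin{itemize}
\item[] $x_j$ by $x_j$ or $p-1-x_j$;
\item[] $x_j+1$ by $x_j+1$ or $x_j+2$;
\item[] $p-2-x_j$ by $p-2-x_j$ or $p-1-x_j$;
\item[] $p-3-x_j$ by $p-3-x_j$.
\end{itemize}
This has to be checked against conditions \ref{condition:Pss-i}--\ref{condition:Pss-iii}. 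The key observation is that each replacement preserves membership of $\lambda_j$ in the set $\{x_j+1, x_j+2, p-3-x_j\}$. Therefore $J_\lambda = J_{\lambda_\tau} = J_\tau$. Since $\lambda \mapsto J_\lambda$ is a bijection from $\mathscr{D}^{\mathrm{ss}}$ to the subsets of $\{0,\dots,f-1\}$, this determines $\tau$ uniquely.

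\emph{Second claim: the position of $\tau$ in $\IndI \chi_\lambda$.} Write $\chi_\lambda = \psi$ in the form preceding \eqref{eq:psi-s-computation}, with $s_j = \lambda_j(r_j)$. By \eqref{eq:P-bijection-1}--\eqref{eq:xi-xi-c-equality}, the constituent attached to $\xi \in \mathcal{P}$ is $\xi^c(\lambda(r)) \otimes \det^{e(\xi^c)(\lambda(r))}\eta$. The composite is
\[
\xi^c \circ \lambda (x) = \xi_j(p-1-\lambda_j(x_j)).
\]
I would solve for the $\xi$ for which this composite equals $\lambda_\tau$, working coordinatewise and recalling that $\mathcal{P}$ consists of tuples with $\xi_j \in \{x_j,\ x_j\pm1,\ p-2-x_j,\ p-1-x_j,\ p-3-x_j\}$ subject to BP's compatibility rules. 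One must also check that the determinant twists agree, using $e(\xi^c)(\lambda(r))+e(\lambda)(r) \equiv e(\lambda_\tau)(r)$. This congruence follows from \eqref{eq:e-lambda-def} together with the compatibility of $e$ under composition, \cite[Lemma~2.2]{BP12}. Finally, one reads off $\mathcal{S}(\xi)$ via \eqref{eq:P-bijection-2} and checks that it equals $X^{\mathrm{ss}}(\lambda)$. For example, when $\lambda_j = x_j$ we need $\xi_j(p-1-x_j) = x_j$, so $\xi_j = p-1-x_j$, and indeed $j \in \mathcal{S}(\xi)$.

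\emph{Main obstacle.} The hard part is the coordinatewise bookkeeping in the second step. The choice of $\xi_j$ depends on the pair $(\lambda_j, \lambda_{\tau,j})$, and the resulting $\xi$ must lie in $\mathcal{P}$, which imposes conditions linking $\xi_j$ with $\xi_{j+1}$. I would handle this by a case table over the six possible values of $\lambda_j$. Conditions \ref{condition:Pss-ii}--\ref{condition:Pss-iii} on $\lambda$ should translate exactly into the $\mathcal{P}$-compatibility of $\xi$. The carries in the $e$-term also need care when $\lambda_{f-1} \in \mathbb{Z}-x_{f-1}$. Alternatively, one can simply cite \cite[Lemma~3.1.3]{BHHMS4}, whose conventions are matched to ours by \cite[Remark~3.1.1]{BHHMS4}.
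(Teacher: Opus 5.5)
The paper gives no argument of its own here. The lemma is simply imported from \cite[Lemma~3.1.3]{BHHMS4}, so your closing fallback is exactly what the paper does.

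Your self-contained argument, however, has a genuine gap in the first step: the ``explicit description'' of the characters of $\DssOnex[\tau]$ that you attribute to \cite{BP12} and \cite{Bre14} is wrong. For $f=1$ one has $\mathscr{P}^{\mathrm{ss}}=\{x_0,\,x_0+2,\,p-3-x_0,\,p-1-x_0\}$ and $\mathscr{D}^{\mathrm{ss}}=\{x_0,\,p-3-x_0\}$. Your rules produce only $\chi_{x_0}$, $\chi_{p-1-x_0}$ and $\chi_{p-3-x_0}$; the fourth character $\chi_{x_0+2}=\chi_{p-3-x_0}^s$ of $D_1(\rss)$ is reached by no replacement. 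For $f=2$, $\lambda=(x_0,x_1+2)\in\mathscr{P}^{\mathrm{ss}}$ has $J_\lambda=\{1\}$, hence $\lambda_\tau=(p-2-x_0,x_1+1)$, and the replacement $p-2-x_0\mapsto x_0$ is not in your table (in total your rules yield $5$ of the $10$ elements of $\mathscr{P}^{\mathrm{ss}}$).

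The reason is that $\lambda_{\tau,j}$ equals $x_j$, $x_j+1$, $p-2-x_j$ or $p-3-x_j$ according as $(j\in J_\lambda,\ j+1\in J_\lambda)$ is (no,\,no), (yes,\,no), (no,\,yes) or (yes,\,yes). So it depends on $\lambda_{j+1}$, not only on $\lambda_j$. The correct replacement sets are $\{x_j,p-2-x_j,p-1-x_j\}$ over $\lambda_{\tau,j}\in\{x_j,p-2-x_j\}$ and $\{x_j+1,x_j+2,p-3-x_j\}$ over $\lambda_{\tau,j}\in\{x_j+1,p-3-x_j\}$, cut down by the conditions defining $\mathscr{P}^{\mathrm{ss}}$. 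With the correct table, though, your ``key observation'' is a tautology, since the table is just a restatement of the first claim.

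What is actually needed is input on $\JH(\Dssx[\tau])$. For instance, if $\chi_\lambda$ lies in $\DssOnex[\tau']$, the image of the induced map $\IndI\chi_\lambda\to\Dssx[\tau']$ is $I(\tau',\sigma_\lambda)$. By multiplicity freeness of $\DssZero$, this gives $\chi_\lambda\in\DssOnex[\tau]$ if and only if $\sigma_\lambda\in\JH(\Dssx[\tau])$. One must then locate $\sigma_\lambda$ using the description of the constituents of $\Dssx[\tau]$ in \cite[\S\S 11--14]{BP12}.

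Your second step is sound once $\lambda_\tau$ is the correct element, and it closes up more easily than you expect. Solving $\xi^c\circ\lambda=\lambda_\tau$ coordinatewise gives $\xi_j\in\{x_j-1,p-1-x_j\}$ exactly when $\lambda_j\in\{x_j,x_j+1,p-2-x_j,p-3-x_j\}$, whichever of its two possible values $\lambda_{\tau,j}$ takes. It also gives $\xi_j\in\{p-2-x_j,p-1-x_j\}$ exactly when $j+1\in X^{\mathrm{ss}}(\lambda)=\mathcal{S}(\xi)$, which is precisely the condition for $\xi\in\mathcal{P}$. The determinants match by the cocycle identity for $e$, as you say.

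This computation cannot replace the first step, however. Knowing $\tau\in\JH(\IndI\chi_\lambda)$ does not identify $\tau$, because $\JH(\IndI\chi_\lambda)\cap\Wss$ may have two elements. For $f=2$ and $\lambda=(x_0+1,p-2-x_1)\in\mathscr{D}^{\mathrm{ss}}$, both the cosocle $\sigma_\lambda$ and the socle $\sigma_{\lambda^{[s]}}$ of $\IndI\chi_\lambda$ lie in $\Wss$, since $\lambda^{[s]}=(p-2-x_0,x_1+1)\in\mathscr{D}^{\mathrm{ss}}$. So as written you prove the second claim only conditionally on the first, and the first rests on a misquoted description. Either supply the analysis of $\JH(\Dssx[\tau])$ or, as the paper does, cite \cite[Lemma~3.1.3]{BHHMS4}.
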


\begin{lemma}[{\cite[Lemma~4.1.1]{BHHMS4}}]
  \label{eq:4.1.1-recalled}
If $\mu \in \mathscr{P}$, then $\chi_ \mu$ occurs in $\DOnex[\sigma]$,
where $\sigma \in \Wr$ is determined by $J_{\sigma} = \Jrep \cap J_{\mu}$ via \eqref{eq:J-lambda-def}.
Moreover, as a Jordan-H\"older constituent of $\IndI \chi _\mu$,
$\sigma$ is parametrised via 
\eqref{eq:P-bijection-1} and
\eqref{eq:P-bijection-2}
by the following subset of $\{ 0, \dots,f-1\}$:
\begin{equation}
  \label{eq:Xmu-def}
X(\mu) \defeq 
\left\{ j \mid \mu_j (x_j) \in \{x_j, p-2- x_j, p-3- x_j\}\right\} 
\cup 
\left\{ j \in \Jrep
\mid \mu_j (x_j) = x_j +1\right\}.
\end{equation}
\end{lemma}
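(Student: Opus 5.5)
The statement is \cite[Lemma~4.1.1]{BHHMS4}, but I would reprove it by deducing it from its semisimple analogue \Cref{eq:3.1.3-recalled}, using the comparison between $\DZero$ and $\DssZero$ recalled in \Cref{sec:diagrams}. The key steps, in order, are: locate $\chi_\mu$ in a summand of $\DZero$, identify that summand as $\Dx[\sigma]$ with $J_\sigma=\Jrep\cap J_\mu$, and then read off where $\sigma$ sits inside $\IndI\chi_\mu$.

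\textbf{Step 1: a semisimple reference weight.} Let $\mu\in\mathscr{P}\subseteq\mathscr{P}^{\mathrm{ss}}$. By \Cref{eq:3.1.3-recalled}, $\chi_\mu$ occurs in $\DssOnex[\tau]$, where $\tau\in\Wss$ has $J_\tau=J_\mu$. Inside $\IndI\chi_\mu$, this $\tau$ is parametrised by $X^{\mathrm{ss}}(\mu)$.

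\textbf{Step 2: locating $\chi_\mu$ in $\DZero$.} Since $\DOne$ is multiplicity free, there is a unique $\sigma\in\Wr$ with $\chi_\mu\subseteq\DOnex[\sigma]$.
\begin{itemize}
\item Set $\ell\defeq\ell(\mu)=|J_\mu|$, so that $\ell(\tau)=\ell$.
\item I would first check that the $\chi_\mu$-eigenvector $v_\mu$ lies in $\DZeroFil[\ell]$ and has nonzero image in $\DZerogr[\ell]$.
\item To do this, compare $H$-eigencharacters. By \eqref{eq:D-Dss-gr-comparison}, $\DZerogr[\ell]$ embeds into $\DssZeroEll[\ell]$. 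The character $\chi_\mu$ occurs among the $I_1$-invariants of $\DssZeroEll[\ell]$ exactly once, namely in $\Dssx[\tau]$.
\item One also needs that $\chi_\mu$ does not occur in $\JH$ of the $I_1$-invariants of any other graded piece. This uses the multiplicity freeness of $\DssOne$ together with \eqref{eq:D-Dss-Fil-comparison}.
\end{itemize}

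\textbf{Step 3: identifying $\sigma$.} The image of $v_\mu$ lies in $\Dxgr[\tau]{\ell}$. By the decomposition recalled from \cite[Eq.~(70)]{BHHMS4},
\[
\frac{\DxFil[\sigma]{\ell}}{\DxFil[\sigma]{\ell -1}}
= \bigoplus_{\tau' \in \Wss_{\ell},\ J_\sigma = \Jrep \cap J_{\tau'}}
\Dxgr[\tau']{\ell},
\]
the summand containing $\Dxgr[\tau]{\ell}$ is $\Dx[\sigma]$ with $J_\sigma=\Jrep\cap J_\tau=\Jrep\cap J_\mu$. This proves the first assertion.

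\textbf{Step 4: the parametrisation inside $\IndI\chi_\mu$.} By Frobenius reciprocity, $v_\mu$ gives a nonzero map $\IndI\chi_\mu\to\Dx[\sigma]$. Its image has $\GR$-socle $\socG\Dx[\sigma]=\sigma$, so $\sigma$ is a constituent of $\IndI\chi_\mu$. By \eqref{eq:P-bijection-1} and \eqref{eq:P-bijection-2}, constituents of $\IndI\chi_\mu$ are multiplicity free and parametrised by subsets of $\{0,\dots,f-1\}$, so it remains to compute the subset for $\sigma$.
\begin{itemize}
\item $\sigma$ and $\tau$ differ only at indices $j\notin\Jrep$ with $j\in J_\mu$. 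Since $\mu\in\mathscr{P}$, \eqref{eq:P-def-Jrep} forces $\mu_j(x_j)\notin\{x_j+2,p-3-x_j\}$ for such $j$, hence $\mu_j(x_j)=x_j+1$.
\item At these indices one passes from $\tau$ (where $j\in J_\tau$) to $\sigma$ (where $j\notin J_\sigma$).
\item I expect that on the side of $\IndI\chi_\mu$ this exactly removes $j$ from $X^{\mathrm{ss}}(\mu)$. That yields $X(\mu)$ as defined in \eqref{eq:Xmu-def}.
\end{itemize}

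\textbf{Main obstacle.} The hardest part is this last combinatorial verification. It requires computing explicitly the tuple $\xi\in\mathcal{P}$ with $\xi^c\otimes\det^{e(\xi^c)}\eta\cong\sigma_{\lambda}$, where $\lambda\in\mathscr{D}$ is the tuple with $J_\lambda=\Jrep\cap J_\mu$. This must be done via \eqref{eq:xi-xi-c-equality} and \eqref{eq:e-lambda-def}, and the determinant exponents must be tracked modulo $p^f-1$. The modification at the indices $j\notin\Jrep$ with $\mu_j(x_j)=x_j+1$ propagates through the conditions on $\lambda_{j+1}$ in the definition of $\mathscr{P}^{\mathrm{ss}}$. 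I would handle this by checking index by index, splitting on the value of $\mu_j(x_j)$ and whether $j\in\Jrep$, and I would use $0$-genericity to keep all entries $s_j$ in range so that the bijections \eqref{eq:P-bijection-1} and \eqref{eq:P-bijection-2} apply.
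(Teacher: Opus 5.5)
The paper gives no proof of this lemma; it imports it from \cite{BHHMS4}. Your argument therefore has to stand on its own.

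\textbf{What works.} Steps 1--3 do give the first assertion from results the paper quotes. Let $\ell'$ be minimal with $v_\mu\in D_0(\rep)_{\le\ell'}$. The image of $v_\mu$ is then a nonzero $\chi_\mu$-eigenvector of $D_0(\rep)_{\ell'}^{I_1}\subseteq D_0(\rss)_{\ell'}^{I_1}$. Multiplicity freeness of $D_1(\rss)$ and \Cref{eq:3.1.3-recalled} force $\ell'=\ell$ and put this image in $D_{0,\tau}(\rep)_\ell$. After that, \cite[Eq.~(70)]{BHHMS4} identifies the summand. If you want an argument independent of \cite{BHHMS4}, check that Eq.~(70) there, which comes after Lemma~4.1.1, does not itself rely on that lemma.

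\textbf{The gap.} Step 4 is a genuine gap. The second assertion, which is the real content of the lemma, is only announced (``I expect''). Knowing that $\tau$ is indexed by $X^{\mathrm{ss}}(\mu)$ and that $J_\tau\setminus J_\sigma=J_\mu\setminus\Jrep$ does not transfer to the indexing set. Removing $j$ from $J$ changes the corresponding tuple of $\mathscr{D}^{\mathrm{ss}}$ at both $j$ and $j-1$, and it changes the determinant twist. One has to verify that this is exactly the effect of removing $j$ from $\mathcal{S}(\xi)$, and you give no argument for this.

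\textbf{How to close it.} The check is short once set up, and it does not need $\tau$ at all.

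First identify the constituents. Put $s_j=\mu_j(r_j)$ and $\eta=(\cdot)^{e(\mu)(r_0,\dots,r_{f-1})}$. Use \eqref{eq:xi-xi-c-equality} together with the congruence $e(\xi^c)(s_0,\dots,s_{f-1})+e(\mu)(r_0,\dots,r_{f-1})\equiv e(\xi^c\circ\mu)(r_0,\dots,r_{f-1})\pmod{p^f-1}$, which is immediate from \eqref{eq:e-lambda-def}. Together they show that the constituent of $\IndI\chi_\mu$ indexed by $\xi$ is the weight attached, by the same recipe as $\sigma_\lambda$, to the composed tuple $(\xi^c_j(\mu_j(x_j)))_j$.

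So you must show $\xi^c_j(\mu_j(x_j))=\lambda_j(x_j)$ for all $j$, where $\mathcal{S}(\xi)=X(\mu)$ and $\lambda\in\mathscr{D}$ has $J_\lambda=\Jrep\cap J_\mu$. Both sides are local:
\begin{itemize}
\item $\xi^c_j(x_j)$ is $p-1-x_j$, $p-2-x_j$, $x_j-1$, $x_j$ according as $(j\in\mathcal{S}(\xi),\,j+1\in\mathcal{S}(\xi))$ is (no,no), (yes,no), (no,yes), (yes,yes);
\item $\lambda_j(x_j)$ is $x_j$, $x_j+1$, $p-2-x_j$, $p-3-x_j$ according as $(j\in J_\lambda,\,j+1\in J_\lambda)$ runs through the same four patterns.
\end{itemize}

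Next use the conditions defining $\mathscr{P}$, namely the constraint on $\mu_{j+1}$ given $\mu_j$ together with \eqref{eq:P-def-Jrep} at $j+1$. They give $j+1\in X(\mu)\Leftrightarrow j+1\notin J_\lambda$ when $\mu_j(x_j)\in\{x_j,x_j+1,x_j+2\}$, and $j+1\in X(\mu)\Leftrightarrow j+1\in J_\lambda$ otherwise. Whether $j$ itself lies in $X(\mu)$ and in $J_\lambda$ is read off from $\mu_j$, and from whether $j\in\Jrep$ when $\mu_j(x_j)=x_j+1$.

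This leaves fourteen one-line checks, and all of them hold. For example, $\mu_j(x_j)=x_j+1$ with $j\notin\Jrep$ and $j+1\notin J_\lambda$ gives $\xi^c_j(x_j)=x_j-1$, hence $\xi^c_j(\mu_j(x_j))=x_j=\lambda_j(x_j)$. This computation also re-proves $\sigma\in\JH(\IndI\chi_\mu)$, so your Frobenius reciprocity step becomes redundant.
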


If $\lambda \in \mathscr{P}^{\mathrm{ss}}$ parametrises $\chi_{\lambda}$,
then 
\begin{equation}
  \label{eq:lambda-s-def}
\lambda^{[s]} \defeq \left( 
p-1- \lambda_0 (x_0 ), \dots, p-1- \lambda_{f-1}(x_{f-1}) \right)
\in \mathscr{P}^{\mathrm{ss}}
\end{equation}
parametrises $\chi_{\lambda}^s$, 
using \eqref{eq:psi-s-computation}
(applied to $\xi = \chi_\lambda$)
and the identity 
$e (\lambda^{[s]}) - e (\lambda) \equiv
\sum_{j = 0}^{f -1} p^j \lambda_j (x_j)
\mod (p^f -1)$,
formally equivalent to the one
that appears two lines below \eqref{eq:xi-xi-c-equality};
the only difference is that $\xi, \xi^c \in \mathcal{P}$
and $\lambda, \lambda^{[s]} \in \mathscr{P}^{\mathrm{ss}}$
belong to different sets
(this also explains why we have chosen the notation
$\lambda^{[s]}$ instead of $\lambda^c$).

For $\tau \in \Wss$, we see from the last paragraph of 
\Cref{sec:diagrams}
that $I(\delta (\tau), \tau^{[s]}) 
\hookrightarrow \Dssx[\delta (\tau)]$.
Moreover, if $\tau$
is parametrised by 
$\lambda \in \mathscr{D}^{\mathrm{ss}}$,
then by \Cref{def:s-at-D0-level} 
(applied to $\sigma = \tau$)
we know that $\chi_\lambda^s$ occurs in $\tau^{[s]}$,
so a fortiori it occurs in
$I(\delta (\tau), \tau^{[s]})$,
so we deduce from \Cref{eq:3.1.3-recalled}
(applied to $\lambda = \lambda^{[s]}$,
using \eqref{eq:lambda-s-def})
that 
\begin{equation}
  \label{eq:delta-s-interplay}
\delta (J_\lambda) = J_{\delta(\lambda)}
= J_{\delta (\tau)} = J_{\lambda^{[s]}},
\end{equation}
where the first two equalities
follow from the last paragraph of
\Cref{sec:diagrams}.

The following lemma,
which will be used in the proof of
\Cref{thm:srep-structure},
generalises 
\cite[Lemma~4.3.9]{BHHMS4} to the case 
$r \ge 1$.
\begin{lemma}
  \label{lemma:beta-vector}
Keep the assumptions of 
\Cref{lemma:extend-to-Wchi},
and fix $w \in V \setminus \{0\}$. 
Then,
\begin{enumerate}[(i)]
\item the image of 
$\btm[\mu, w] \colon
\F w \otimes_{\F} \IndI
\Wchi[J_1,J_2]{\mu}
\to
\pi|_{\GR}$
has socle isomorphic to $\sigma \in \Wr$
where $\sigma$ is the Serre weight determined
by $J_{\sigma} = J_{\rep} \cap J_{\mu}$
(cf.\ \eqref{eq:J-lambda-def}), and
\begin{equation}
  \label{eq:soc-im-betatld-containment}
\ip\left(
\socR \im(\btm[\mu,w])\right)
= \F w \otimes_{\F}\sigma \subseteq 
\V \otimes_{\F}\DZero;
\end{equation}

\item 
we have $\sigma \in \JH(\IndI \chi_\mu)$,
parametrised (as a subquotient of
$\IndI \chi_\mu$) by $X(\mu)$, where $X(\mu)$
is defined in \eqref{eq:Xmu-def}.
\end{enumerate}
\end{lemma}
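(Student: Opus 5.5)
Part (ii) is pure representation theory of $\Gamma$ and follows from \Cref{eq:4.1.1-recalled}: that lemma states that $\chi_\mu$ occurs in $\DOnex[\sigma]$ with $J_\sigma=\Jrep\cap J_\mu$, and that $\sigma$, as a constituent of $\IndI\chi_\mu$, is parametrised by $X(\mu)$. So I would prove (ii) first, citing \emph{loc.\ cit.}, and use it in (i).

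For (i), I would copy the proof of \cite[Lemma~4.3.9]{BHHMS4}, keeping the vector $w$ throughout. The composite $\F w\otimes\chi_\mu\hookrightarrow \F w\otimes\Wchi[J_1,J_2]{\mu}\xrightarrow{\bm[\mu,w]}\pi$ lands in $\pi^{I_1}$. By construction, $\ip$ sends it to $w\otimes v_\mu\in \F w\otimes\DOne$. Since $\chi_\mu$ occurs in $\DOnex[\sigma]\subseteq\Dx[\sigma]$, the vector $\ip^{-1}(w\otimes v_\mu)$ lies in $\ip^{-1}(\F w\otimes\Dx[\sigma])$. This is a $\GR$-subrepresentation of $\pi^{K_1}$ with socle $\F w\otimes\sigma$, so the $\GR$-subrepresentation generated by this vector has socle exactly $\F w\otimes\sigma$ (or at least it contains $\sigma$ in its socle). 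It therefore remains to show that the whole image $\im(\btm[\mu,w])$ has socle contained in $\ip^{-1}(\F w\otimes\sigma)$.

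The key step is to control $\socR\im(\btm[\mu,w])$. Any irreducible $\GR$-subrepresentation $\tau$ of the image lies in $\pi^{K_1}$, so $\tau\in\socR\pi=\V\otimes\bigoplus_{\sigma'\in\Wr}\sigma'$ via $\ip$. Moreover, $\tau$ is a constituent of $\IndI\Wchi[J_1,J_2]{\mu}$. Following \emph{loc.\ cit.}, I would show that the only $\sigma'\in\Wr$ occurring in $\JH(\IndI\Wchi[J_1,J_2]{\mu})$ is $\sigma$, and that it occurs with multiplicity one. The input for this is \Cref{lemma:extend-to-Wchi}, which gives $\JH(\Wchi[J_1,J_2]{\mu})\cap\JH(\pi^{I_1})=\{\chi_\mu\}$, together with the fact that $\sigma'^{I_1}\subseteq\pi^{I_1}$; the genericity computations are then those of \cite{BHHMS4}, combined with Frobenius reciprocity. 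I expect this to be the main obstacle, but it is a statement purely about $\Gamma$-representations and does not depend on $r$, so it can be quoted from the proof of \cite[Lemma~4.3.9]{BHHMS4}.

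Granting this, $\socR\im(\btm[\mu,w])$ is $\sigma$-isotypic of multiplicity at most one, since the image is a quotient of $\IndI\Wchi[J_1,J_2]{\mu}$, in which $\sigma$ appears once. Hence the socle is a single copy $\sigma_0\subseteq\ip^{-1}(\V\otimes\sigma)$, and by the previous paragraph it is nonzero. It remains to identify the line: $\sigma_0^{I_1}$ is one-dimensional, with character $\chi=\sigma^{I_1}$, and I would show it corresponds to $w$. The socle vector of $\sigma_0$ is obtained as an $\F[\GR]$-combination of the image of $\F w\otimes\Wchi[J_1,J_2]{\mu}$. Since $\btm[\mu,w]=\btm[\mu]|_{\F w\otimes -}$ and $\btm[\mu]$ is $\GR$-equivariant from $\V\otimes\IndI\Wchi[J_1,J_2]{\mu}$, the whole construction is $\mathrm{GL}(\V)$-equivariant after transport by $\ip$. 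More concretely, for any $A\in\End_{\F}(\V)$ one has $\btm[\mu]\circ(A\otimes\id)=\ip^{-1}(A\otimes\id)\ip\circ\btm[\mu]$ on the $K_1$-invariant part, by uniqueness in \eqref{eq:iso-chi-isotypical}. Taking $A$ to be a projection with kernel containing $w$ kills the image, so the socle lies in $\ip^{-1}(\F w\otimes\DZero)\cap\ip^{-1}(\V\otimes\sigma)=\ip^{-1}(\F w\otimes\sigma)$, giving \eqref{eq:soc-im-betatld-containment}. One point needs checking: the image of $\btm[\mu]$ need not be contained in $\pi^{K_1}$. So the equivariance should be applied to socles, which do lie in $\pi^{K_1}$. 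Alternatively, one can argue with a linear functional $\phi\in\V^\vee$, $\phi(w)=0$, and the map $(\phi\otimes\id)\circ\ip$ restricted to the socle.
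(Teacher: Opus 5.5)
\textbf{The ``key step'' of your proof of (i) rests on a false claim.} Your proof of (ii), and your computation that the $\GR$-span of $\ip^{-1}(w\otimes v_\mu)$ has socle $\ip^{-1}(\F w\otimes\sigma)$, are fine: that span is a nonzero subrepresentation of $\ip^{-1}(\F w\otimes\Dx[\sigma])$. The false claim is that $\sigma$ is the only element of $\Wr$ in $\JH(\IndI\Wchi{\mu})$, and it already fails for $J_1=J_2=\emptyset$:
\begin{itemize}
\item $\JH(\IndI\chi_\mu)=\JH(\IndI\chi_\mu^s)$.
\item By \Cref{eq:4.1.1-recalled} applied to $\mu^{[s]}$, the character $\chi_\mu^s=\chi_{\mu^{[s]}}$ occurs in $\DOnex[\sigma']$ with $J_{\sigma'}=\Jrep\cap J_{\mu^{[s]}}$.
\item Frobenius reciprocity and $\socR\Dx[\sigma']=\sigma'$ then give $\sigma'\in\JH(\IndI\chi_\mu)\cap\Wr$.
\end{itemize}
In general $\sigma'\neq\sigma$; this is the weight cycling of Step~1 of the proof of \Cref{thm:srep-structure}. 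For example, take $f=2$, $\Jrep=\{0\}$ and $\mu=(x_0+1,p-2-x_1)$. Then $J_\sigma=\{0\}$, but $\mu^{[s]}=(p-2-x_0,x_1+1)$ gives $J_{\sigma'}=\emptyset$.

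So you cannot conclude that the socle is $\sigma$-isotypic. The input you propose (``$\sigma'^{I_1}\subseteq\pi^{I_1}$'') would not help anyway, since a constituent of $\IndI\chi'$ need not have $I_1$-invariants $\chi'$. Your $\mathrm{GL}(\V)$-equivariance paragraph does not repair this either. At best, and only when the image is $K_1$-invariant, it puts the image inside $\ip^{-1}(\F w\otimes\DZero)$. The socle of that space is all of $\F w\otimes\socR\DZero$, so a socle constituent $\F w\otimes\sigma'$ with $\sigma'\neq\sigma$ is not excluded.

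\textbf{The fix.} What is true, and suffices, is the weaker statement $\JH(\IndI(\Wchi{\mu}/\chi_\mu))\cap\Wr=\emptyset$. It follows from three inputs:
\begin{itemize}
\item the first part of \Cref{lemma:extend-to-Wchi};
\item \cite[Proposition~4.2]{Bre14}: if $\chi'$ does not occur in $\pi^{I_1}$ (equivalently in $\DOne$), then $\JH(\IndI\chi')\cap\Wr=\emptyset$;
\item exactness of $\IndI$.
\end{itemize}
Let $\btm[\mu,w]'$ be the restriction of $\btm[\mu,w]$ to $\F w\otimes\IndI\chi_\mu$; its image is exactly the span you already analysed. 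Set $S\defeq\socR\im(\btm[\mu,w])$. Every constituent of $S$ lies in $\Wr$, since $S\subseteq\socR\pi$. On the other hand, $\im(\btm[\mu,w])/\im(\btm[\mu,w]')$ is a quotient of $\F w\otimes\IndI(\Wchi{\mu}/\chi_\mu)$, so it has no constituents in $\Wr$. Hence $S\subseteq\im(\btm[\mu,w]')$, and so $S=\socR\im(\btm[\mu,w]')=\ip^{-1}(\F w\otimes\sigma)$. This is the paper's argument; it needs no multiplicity count, and the identification of the line $\F w$ comes for free.
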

\begin{proof}
We essentially follow the proof of \cite[Lemma~4.3.9]{BHHMS4},
but need to be more careful about the different copies of the Serre weights of $\pi^{K_1}$.

(i)
Let $\btm[\mu,w]'$ be the restriction
of $\btm[\mu,w]$ to 
$\F w \otimes_{\F}\IndI\chi_\mu$.
We first show 
\eqref{eq:soc-im-betatld-containment}
with $\btm[\mu,w]'$ replacing
$\btm[\mu,w]$ in the left-hand side.

Let $\sigma \in \Wr$ be the Serre weight 
determined by $J_\sigma = J_{\rep} \cap J_\mu$
(cf.\ \eqref{eq:J-lambda-def}),
set $L \defeq \F w \subseteq \V$,
and follow $\btm[\mu,w]'
\in \Hom_{\GR}(L \otimes_{\F}\IndI\chi_\mu, \pi)$
in the following commutative diagram
\begin{equation}
  \label{eq:diagr-Wchi}
\begin{tikzcd}
\Hom_{\GR}(L \otimes_{\F}\IndI\!\chi_\mu, \pi) \ar[r, " \sim"] 
\ar[d, equals]
& \Hom_{I/Z_1}\!
(L \otimes_{\F}
\! \chi_\mu, \pi) 
\ar[d, equals] \\
\Hom_{\GR}(L \otimes_{\F}\IndI\!\chi_\mu, \pi^{K_1}) 
\ar[r, " \sim"] \ar[d, " \wr", "\ip"']
& \Hom_{I/Z_1}\!
(L \otimes_{\F}
\! \chi_\mu, \pi^{I_1})
\ar[d, " \wr"',  "\ip"]\\
\Hom_{\GR}(L \otimes_{\F}\IndI\!\chi_\mu, 
\V \otimes_{\F}
\! \DZero) 
\ar[r, " \sim"]
& \Hom_{I/Z_1}\!
(L \otimes_{\F}
\! \chi_\mu, \V \otimes_{\F}
\! \DOne) \\
\Hom_{\GR}(L \otimes_{\F}\IndI\!\chi_\mu, 
L \otimes_{\F}\!\Dx[\sigma]) 
\ar[r, " \sim"]
\ar[u, hook]
& \Hom_{I/Z_1}\!
(L \otimes_{\F}
\! \chi_\mu, L \otimes_{\F}
\!
\DOnex[\sigma]) \ar[u, hook],
\end{tikzcd}
\end{equation}
where the horizontal isomorphisms come from
Frobenius reciprocity.
We see that 
$\ip \circ (\btm[\mu,w]'|_{\chi_\mu})
\in \Hom_{I/Z_1}(L \otimes_{\F}\chi_\mu, 
\V \otimes_{\F}\DOnex[\sigma])$
by \cite[Lemma~4.1.1]{BHHMS4},
and its image is contained inside 
$L \otimes_{\F}\DOnex[\sigma]$
by construction of \eqref{eq:beta-tilde-def}.
In particular, 
$\ip \circ \btm[\mu,w]'
\in 
\Hom_{\GR}(L \otimes_{\F}\IndI \chi_\mu, 
L \otimes_{\F}\Dx[\sigma])$
by the commutativity of \eqref{eq:diagr-Wchi}.
The claim then follows from 
$\socR(\Dx[\sigma]) = \sigma$.

We conclude by showing that
\[
\JH \left( \IndI \left( 
\Wchi[J_1,J_2]{\mu}/\chi_\mu \right) \right) \cap 
\Wr = \emptyset,
\]
which follows from the first part of
\Cref{lemma:extend-to-Wchi} and
the fact that $\JH \left( \IndI \chi' \right)
\cap \Wr = \emptyset $ for any 
$\chi' \notin \pi^{I_1}$
by \cite[Proposition~4.2]{Bre14}.
Hence, 
$ \ip\left(
\socR \im(\btm[\mu,w])\right)$
can be no larger than
$ \ip\left(
\socR \im(\btm[\mu,w]')\right)$.

(ii)
That $\sigma \in \JH(\IndI \chi_\mu)$
follows from the proof of (i),
while the second part of the statement
follows from \Cref{eq:4.1.1-recalled}.
\end{proof}

We prepare to prove a generalisation of \cite[Lemma~4.3.11]{BHHMS4}.
Suppose that $\rep$ is $3$-generic,
so that $\chi_{\mu}$ is $2$-generic for any $\mu \in \mathscr{P}$.
Let $\lambda \in \mathscr{P} ^{\mathrm{ss}}$, and set
\begin{align}
  \label{eq:J1-J2-temp}
J_1  & \defeq  \{j \in  J_{\rep}^c  \mid 
\lambda_j(x_j)= p-3-x_j\} ,&
J_2  & \defeq  \{j \in J_{\rep}^c  \mid 
\lambda_j(x_j)= x_j+2\}.
\end{align}
Define $\mu \in \mathscr{P}^{\mathrm{ss}}$ by 
\begin{equation}
  \label{eq:mu-ad-hoc}
\mu_j(x_j) \defeq 
\begin{cases}
p-1- x_j & \text{if }j \in J_1  \\
x_j & \text{if }j \in J_2  \\
\lambda_j(x_j) & \text{otherwise}.
\end{cases}
\end{equation}
By the line after \cite[Eq.~(61)]{BHHMS4},
one has $\mu \in \mathscr{P}$,
$\chi_{\lambda} = \chi_\mu \prod_{j \in J_1 } \alpha_j^{-1} \prod_{j \in J_2 } \alpha_j$,
$|J_\mu| = |J_\lambda| - |J_1 | - |J_2 |$,
and $J_1 \subseteq \Yn[\mu]$,
$J_2 \subseteq \Zn[\mu]$, $J_1 \cap J_2 = \emptyset $.

Consider the morphism $\btm[\mu]$ of \eqref{eq:beta-tilde-def}
for
$J_1, J_2$ as in \eqref{eq:J1-J2-temp}. 
Fix a nonzero $w \in \V$, set $L \defeq \F w$,
and define $Q_L\defeq \im(\btm[\mu,w]) \subseteq \pi$,
which only depends on the line $L$.
If $\sigma \in \Wr$ is the Serre weight 
determined by $J_\sigma = J_{\rep} \cap J_\mu$,
then we have $\socR Q_L = \ip^{-1} \left( \F w \otimes_{\F} \sigma \right)$
by \Cref{lemma:beta-vector}.
Let $\tau \in \Wss$ 
be the Serre weight determined by 
$J_\tau = J_\lambda$,
with occurs inside $\IndI \chi_\lambda$
by \Cref{eq:3.1.3-recalled}.
The following lemma adapts
\cite[Lemma~4.3.11]{BHHMS4},
and will be used in the proof of
\Cref{thm:srep-structure}.
\begin{lemma}
  \label{lemma:I-and-QL}
Keep the above assumptions and assume that
$\rep$ is $6$-generic.
We have $L \otimes_{\F}I(\sigma,\tau) 
\subseteq \ip(Q_L^{K_1})$ and 
\[
\JH\left(Q_L/ \ip^{-1}\left( 
L \otimes_{\F}I(\sigma,\tau) \right)\right)
\cap \Wss = \emptyset.
\]
\end{lemma}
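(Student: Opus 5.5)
The plan is to run the proof of \cite[Lemma~4.3.11]{BHHMS4} almost verbatim, and to isolate the single place where $r>1$ matters: a "line-compatibility" check inside $\V\otimes_{\F}\DZero$. The argument splits into a part about the finite-dimensional $\GR$-representation $\IndI \Wchi[J_1,J_2]{\mu}$, which does not involve $\pi$, and a part placing the image inside $\pi$.

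\textbf{Step 1 (representation theory of $\IndI\Wchi[J_1,J_2]{\mu}$).} This step is independent of $r$. Using \eqref{eq:P-bijection-1}, \eqref{eq:P-bijection-2}, \Cref{eq:3.1.3-recalled} and \Cref{eq:4.1.1-recalled}, I would determine the Jordan--H\"older constituents of $\IndI\Wchi[J_1,J_2]{\mu}$ that lie in $\Wss$. As in \emph{loc.\ cit.}, these should be exactly the constituents of $I(\sigma,\tau)$, where $\sigma$ is parametrised by $X(\mu)$ inside $\IndI\chi_\mu$ and $\tau$ by $X^{\mathrm{ss}}(\lambda)$ inside $\IndI\chi_\lambda$. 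I would then check that $\IndI\Wchi[J_1,J_2]{\mu}$ admits a subquotient isomorphic to $I(\sigma,\tau)$, namely the piece generated over $\sigma$ up to $\tau$. The $6$-genericity is used here, exactly as in \cite{BHHMS4}, to keep all weights generic enough for the combinatorics of \cite[\S\S 2, 19]{BP12}.

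\textbf{Step 2 (the image in $\pi$).} By \Cref{lemma:beta-vector}, $\socR Q_L=\ip^{-1}(L\otimes_{\F}\sigma)$. Let $M\subseteq Q_L$ be the image of the subrepresentation of $\IndI\Wchi[J_1,J_2]{\mu}$ that surjects onto the $I(\sigma,\tau)$-piece.

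I first need to show that $M$ is $K_1$-invariant, i.e.\ that $\mK$ kills it. I would argue as in $r=1$. Its constituents have Loewy length at most that of $I(\sigma,\tau)$, so $M\subseteq\pi[\mI^3]$ at the relevant $H$-eigenspaces. Hypothesis \ref{hypothesis:ii} of \Cref{sec:hypotheses} then forces the eigenvectors to lie in $\pi[\mI]$ (respectively in $\pi^{K_1}$). Hypothesis \ref{hypothesis:iv}, now carrying the factor $r$, kills the Ext obstructions for characters not in $\pi^{I_1}$.

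This $K_1$-invariance is the step I expect to be the main obstacle. The bookkeeping must be done eigenline by eigenline, so that the factor $r$ in hypotheses \ref{hypothesis:ii} and \ref{hypothesis:iv} only ever multiplies dimensions and never creates extra room. I would handle it by restricting every Hom-space to $L$ via the identification \eqref{eq:iso-chi-isotypical}, exactly as was done for $\bm[\mu,w]$.

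Once $K_1$-invariance is known, $\ip(M)$ is a $\Gamma$-subrepresentation of $\V\otimes_{\F}\DZero$ with socle $L\otimes\sigma$ and cosocle $\tau$.

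\textbf{Step 3 (line compatibility).} I will show that $\ip(M)=L\otimes_{\F}I(\sigma,\tau)$. Since $\DZero$ is multiplicity free and $\sigma$ occurs only in $\Dx[\sigma]$, we have
\[
\Hom_\Gamma\bigl(I(\sigma,\tau),\V\otimes_{\F}\DZero\bigr)=\V\otimes_{\F}\Hom_\Gamma\bigl(I(\sigma,\tau),\Dx[\sigma]\bigr)\cong\V.
\]
Here the last $\Hom$-space is one-dimensional because $I(\sigma,\tau)\hookrightarrow\Dx[\sigma]$ with $\sigma$ of multiplicity one. Every such morphism is therefore of the form $v\otimes\iota$, and its restriction to the socle $\sigma$ is $v\otimes(\sigma\hookrightarrow\Dx[\sigma])$. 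Since the socle of $\ip(M)$ is $L\otimes\sigma$, this forces $v\in L$ and hence $\ip(M)=L\otimes_{\F}I(\sigma,\tau)\subseteq\ip(Q_L^{K_1})$.

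\textbf{Step 4 (the Jordan--H\"older statement).} The second assertion, that $Q_L/\ip^{-1}(L\otimes_{\F}I(\sigma,\tau))$ has no constituent in $\Wss$, then follows from Step 1. The constituents of $Q_L$ are among those of $\IndI\Wchi[J_1,J_2]{\mu}$, and the constituents lying in $\Wss$ are all used up, each with multiplicity one, by $I(\sigma,\tau)$.
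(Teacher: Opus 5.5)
Your Step~3 is correct and matches the paper's final step. The paper gets $\im(\ip\circ j)\subseteq L\otimes_{\F}\DZero$ from \cite[Lemma~3.1.5(ii)]{Ber} and then uses that $L\otimes_{\F}\DZero$ is multiplicity free; your $\Hom$ computation says the same thing. The gap is in Step~2, which is meant to produce a copy of $I(\sigma,\tau)$ inside $Q_L\cap\pi^{K_1}$ and does not. Your route to $K_1$-invariance through hypotheses \ref{hypothesis:ii} and \ref{hypothesis:iv} does not work as sketched. Hypothesis \ref{hypothesis:ii} only compares the multiplicities of the characters $\chi_\lambda$, $\lambda\in\mathscr{P}$, in $\pi[\mI^3]$ and $\pi[\mI]$. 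Nothing places $M$ inside $\pi[\mI^3]$: already a single generic Serre weight, restricted to $I_1$, is not killed by $\mI^3$. Moreover, $\IndI W(\chi_\mu,\chi_\lambda)$ itself need not be $K_1$-invariant when $J_2\neq\emptyset$. There is a second problem with $M$. Take $A\subseteq\IndI W(\chi_\mu,\chi_\lambda)$ with a quotient $A/B\cong I(\sigma,\tau)$; its image in $Q_L$ is $A/(A\cap\ker\btm[\mu,w])$, not $A/B$. You never show that $\tau$ survives in $Q_L$, nor that $M$ has cosocle $\tau$. So the input ``socle $L\otimes\sigma$ and cosocle $\tau$'' of your Step~3 is not established.

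The missing idea is that, beyond \Cref{lemma:beta-vector}, no input from $\pi$ is needed. By \cite[Lemma~4.3.3]{BHHMS4}, $\IndI W(\chi_\mu,\chi_\lambda)$ is multiplicity free, and by \Cref{lemma:beta-vector}, $Q_L$ has irreducible socle $\sigma$. Hence $Q_L$ is, as an abstract $\GR$-representation, \emph{the} unique quotient of $\IndI W(\chi_\mu,\chi_\lambda)$ with socle $\sigma$. It therefore depends only on $\rep$, not on $\pi$ or $r$, and is the representation $V$ of \cite[Lemma~4.3.11]{BHHMS4}. That lemma supplies an injection $j\colon I(\sigma,\tau)\hookrightarrow Q_L$ together with $\JH(Q_L/j(I(\sigma,\tau)))\cap\Wss=\emptyset$. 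The injection lands in $Q_L^{K_1}$ automatically, because $I(\sigma,\tau)$ is inflated from $\Gamma$. This also replaces your Step~4, whose claim that \emph{all} $\Wss$-constituents of the whole induced representation lie in $I(\sigma,\tau)$ is stronger than what the quotient $Q_L$ requires, and which you justify only by appeal to loc.\ cit. Once $j$ is available, your Step~3 applied to $\ip\circ j$ gives $\im(\ip\circ j)=L\otimes_{\F}I(\sigma,\tau)$ and finishes the proof.
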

\begin{proof}
We know that $\IndI W(\chi_\mu, \chi_\lambda)$
is multiplicity free by \cite[Lemma~4.3.3]{BHHMS4},
and from \Cref{lemma:beta-vector}
we know that $Q_L$ has irreducible socle,
isomorphic to $\sigma$.
It follows that $Q_L$ 
(as an abstract $\GR$-representation) 
is the unique quotient of $\IndI W(\chi_\mu, \chi_\lambda)$
with $\GR$-socle $\sigma$,
in particular the abstract $\GR$-representation
$Q_L$ 
only depends on $\rep$, not on $\pi$
or (a fortiori) on $r$.

Therefore, $Q_L$ is isomorphic
to the $\GR$-representation $V$ of
\cite[Lemma~4.3.11]{BHHMS4},
and by \emph{loc.\ cit.}\ there exists an injection
$j \colon I(\sigma, \tau) \hookrightarrow Q_L$,
unique up to scalars,
which moreover takes values in $Q_L^{K_1}$
because $I(\sigma, \tau) \subseteq \DZero$
is $K_1 $-invariant.
We know from \emph{loc.\ cit.}\ that
\[
\JH\left(Q_L/ j\left( 
I(\sigma,\tau) \right)\right)
\cap \Wss = \emptyset,
\]
so it only remains to show that the composition
$I(\sigma, \tau) 
\xhookrightarrow{j} Q_L^{K_1 }
\xhookrightarrow{\ip} \V \otimes_{\F} \DZero$
has image $ L \otimes_{\F} I(\sigma, \tau)$.
We know from \Cref{lemma:beta-vector}
that $(\ip \circ j) (\sigma) = L \otimes_{\F} \sigma$,
and by \cite[Lemma~3.1.5(ii)]{Ber}
applied to $S = \F[\Gamma],
D = \DZero, W = \V, M = \im \left( \ip \circ j \right)$
we see that $\im (\ip \circ j) \subseteq 
L \otimes_{\F} \DZero$.
Since $L \otimes_{\F} \DZero$ is multiplicity free,
it contains exactly one 
$\GR$-subrepresentation isomorphic to 
$I (\sigma, \tau)$, namely 
$L \otimes_{\F} I (\sigma, \tau)$,
so we deduce that  $\im (\ip \circ j)$ coincides with
$L \otimes_{\F} I (\sigma, \tau)$.
\end{proof}

The following theorem generalises 
\cite[Theorem~4.3.15]{BHHMS4}.
\begin{theorem}
  \label{thm:srep-structure}
Assume that $\rep$ is $6$-generic.
Let $\pi_1 $ be a subrepresentation of $\pi$.
Then, there exists a unique $(f+1)$-step
decreasing filtration
$V_1 ( 0) \supseteq V_1 ( 1)
\supseteq \dots \supseteq  V_1 ( f)$
of (possibly zero) $\F$-vector subspaces of $\V$
such that 
\begin{equation}
  \label{eq:srep-K1-decomposition}
\ip(\pi_1^{K_1}) = \sum_{\ell=0}^f 
V_1 (\ell) \otimes_{\F}\DZeroFil[\ell].
\end{equation}
\end{theorem}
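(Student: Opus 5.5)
The plan is to define the filtration directly and then prove the equality \eqref{eq:srep-K1-decomposition} by a double inclusion. Set $M \defeq \ip(\pi_1^{K_1}) \subseteq \V \otimes_{\F} \DZero$. For each $\ell \in \{0,\dots,f\}$ and each $\tau \in \Wr_\ell$, put
\[
V_1(\tau) \defeq \{ w \in \V \mid \F w \otimes_{\F} \tau \subseteq M\},
\]
where $\tau$ is viewed as the unique copy of $\tau$ in $\DZero$; multiplicity-freeness of $\DZero$ makes this well defined. By \cite[Lemma~3.1.5(ii)]{Ber}, applied with $S=\F[\Gamma]$, $D=\DZero$ and $W=\V$, every $\GR$-subrepresentation of $\V\otimes_{\F}\DZero$ generated by a copy of an irreducible is of the form $L\otimes \tau$. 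Hence the $\tau$-isotypic part of $\soc M$ equals $V_1(\tau)\otimes\tau$, and each $V_1(\tau)$ is a subspace.

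The first key step is to show that $V_1(\tau)$ depends only on $\ell(\tau)$. Call this common space $V_1(\ell)$. We then show that these spaces decrease as $\ell$ increases.

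Both claims come from weight cycling. Take $w\in V_1(\tau)$, so that $\F w\otimes\tau^{I_1}\subseteq \ip(\pi_1^{I_1})$. Since $\pi_1$ is $\GF$-stable, $\Pi$ preserves $\pi_1^{I_1}$. Because $\Pi$ acts on $\V\otimes\DOne$ as $\id_\V\otimes\Pi$, the vector $w\otimes \Pi v_\tau$ lies in $M$. It generates $\F w\otimes I(\sigma',\cdot)$ inside the relevant $\F w\otimes\Dx[\sigma']$, so we obtain $w\in V_1(\sigma')$ for the appropriate socle weight $\sigma'$.

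To move upward to larger $\ell$ and to other weights of the same length, I will use the morphisms $\btm[\mu,w]$ together with \Cref{lemma:beta-vector} and \Cref{lemma:I-and-QL}. For $\lambda\in\mathscr P^{\mathrm{ss}}$ with $\chi_\lambda$ occurring in the image of a vector $w\otimes v_\mu\in M$, the image $Q_{\F w}$ lies in $\pi_1$. This holds because it is generated by $\ip^{-1}(w\otimes v_\mu)\in\pi_1$: the $I$-map $\bm[\mu,w]$ is determined by its socle by \Cref{lemma:extend-to-Wchi}, and its image is generated over $\Lambda$ by the socle vector only up to the uniqueness statement, so I must check that it lands in $\pi_1$. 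I expect to argue this via \eqref{eq:Wchi-iso} applied to $\pi_1$ in place of $\pi$, which needs hypothesis (iv) for $\pi_1$ or a dévissage. Granting this, $\F w\otimes I(\sigma,\tau)\subseteq M$. Combining these steps with the combinatorics of $\delta$ and $(-)^{[s]}$ in \eqref{eq:delta-s-interplay} (the same cycling as in \cite[Theorem~4.3.15]{BHHMS4}) shows that $w$ belongs to every $V_1(\tau')$ with $\ell(\tau')\ge \ell(\tau)$.

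For the inclusion $\supseteq$, fix $w\in V_1(\ell)$. Every constituent of $\DZeroFil[\ell]$ sits inside some $I(\sigma,\tau)$ with $\ell(\tau)\le \ell$, because $\DZeroFil[\ell]$ is spanned by the $I(\rep,\tau)$ with $\ell(\tau)\le\ell$, using \eqref{eq:D-Dss-Fil-comparison} and the construction \eqref{eq:diagr-from-I}. The previous step then gives $w\otimes \DZeroFil[\ell]\subseteq M$.

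For the inclusion $\subseteq$, I will argue by induction on the socle filtration, or equivalently on $\ell$. Take the largest $\ell$ such that $M\not\subseteq \V\otimes\DZeroFil[\ell-1] + N$, where $N$ is the right-hand side. The image of $M$ in $\V\otimes\DZerogr[\ell]$ is a submodule, and by \eqref{eq:socle-DZerogr} its socle consists of weights $\tau\in\Wss_\ell$. Lifting such a socle vector and using the previous step together with the multiplicity-one structure of \eqref{eq:Dgr-decomposition} produces a vector of $\V$ that lies in $V_1(\ell)$ but is not yet accounted for, which is a contradiction.

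Uniqueness is immediate: \eqref{eq:srep-K1-decomposition} forces $V_1(\ell)$ to be the space of $w$ with $w\otimes\tau\subseteq M$ for any $\tau\in\Wss_\ell\cap\JH(\DZero)$.

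The main obstacle is the upward step, showing $V_1(\ell)\subseteq V_1(\ell+1)$ compatibly with lines. Concretely, this means proving that $Q_{\F w}\subseteq\pi_1$ and controlling the diagram maps so that no mixing between different $w$ occurs. This is where \Cref{lemma:I-and-QL} and the line-respecting property of $\Pi$ are used.
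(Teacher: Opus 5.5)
\paragraph{Gap 1: the definition and the direction of propagation.}
Your definition of $V_1(\ell)$ through socle weights cannot work when $\rep$ is non-semisimple. In that case $\Jrep\subsetneq\{0,\dots,f-1\}$. Hence $\Wr_\ell=\emptyset$ for $|\Jrep|<\ell\le f$, and $\socR\DZeroFil[\ell]=\socR\DZero$ for all $\ell\ge|\Jrep|$. Your spaces agree a posteriori with the correct ones for $\ell\le|\Jrep|$. In higher degrees they are simply undefined, and no socle datum can recover them. In the example of \Cref{rmk:I1-does-not-reconstruct-pi} ($f=1$, $\Jrep=\emptyset$, $r=1$), a principal series $\pi_1\subseteq\pi$ and $\pi$ itself have the same socle, yet $V_1(1)=0$, resp.\ $\F$. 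Your uniqueness argument quietly switches to $\tau\in\Wss_\ell$. For $\tau\notin\Wr$ the condition ``$\F w\otimes\tau\subseteq M$'' is meaningless, since $\tau$ is only a subquotient of $\DZero$.

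The direction of your propagation is also reversed. You claim that $w\in V_1(\tau)$ forces $w\in V_1(\tau')$ whenever $\ell(\tau')\ge\ell(\tau)$, and that $V_1(\ell)\subseteq V_1(\ell+1)$. Both are false: in the same example, $V_1(0)=\F\not\subseteq V_1(1)=0$. Combined with your ``decreasing'' claim, they would make all $V_1(\ell)$ equal.

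The workable setup, which is the paper's, is to let $V_1(\ell)$ be the largest subspace with $V_1(\ell)\otimes\DZeroFil[\ell]\subseteq M$. Then monotonicity and the inclusion $\supseteq$ are automatic, uniqueness is easy, and all the content lies in $\subseteq$.

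\paragraph{Gap 2: why $Q_{\F w}\subseteq\pi_1$, and what is missing for $\subseteq$.}
Your mechanism for $Q_{\F w}\subseteq\pi_1$ fails. The image of $\btm[\mu,w]$ is generated by lifts of its cosocle, not by the socle vector $\ip^{-1}(w\otimes v_\mu)$. Moreover, \eqref{eq:Wchi-iso} with $\pi_1$ in place of $\pi$ is not available, because hypothesis (iv) concerns $\pi$ only. It must in fact fail in general: it would force every $\bm[\mu,w]$ with $\ip^{-1}(w\otimes v_\mu)\in\pi_1$ to land in $\pi_1$. \Cref{lemma:I-and-QL} would then put a weight of $\Wss$ of length $1$ into the $K_1$-invariants of the principal series above.

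The paper argues in the opposite direction, linewise. It sets $D_L\defeq M\cap(L\otimes\DZero)$ and $\ell(L)$, and reduces to lines by \cite[Lemma~3.1.5(i)]{Ber}. It then assumes some $\tau\in\Wss\cap\JH(D_L)$, typically not a socle weight, with $\ell(\tau)>\ell(L)$ minimal, and proceeds in three steps.
\begin{enumerate}[(1)]
\item It shows $\ell(\tau)=\ell(L)+1$. This uses cycling $\tau\mapsto\delta(\tau)$ through $\Pi$ until leaving $\Wr$, together with \cite[Lemma~4.1.3]{BHHMS4}.
\item It shows that the image $Q_L$ of a suitable $\btm[\mu,w]$ lies in $\pi_1$. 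This is checked constituent by constituent of the cosocle, using multiplicity-freeness of $L\otimes\DZero$ and the fact that $\chi_\lambda$ already occurs in $D_L$. The $\Pi$-conjugate $\gamma_w$ and \cite[Lemmas~4.3.13, 4.3.14, Proposition~4.3.8]{BHHMS4} then give $L\otimes\Dxgr[\tau]{\ell(L)+1}$ inside $D_L$ modulo $L\otimes\DZeroFil[\ell(L)]$.
\item The set of $J_\tau$ so obtained is stable under $J\mapsto(J\setminus\{j\})\sqcup\{j+1\}$. Hence it contains every subset of size $\ell(L)+1$, which gives $L\otimes\DZeroFil[\ell(L)+1]\subseteq D_L$, a contradiction.
\end{enumerate}
Your step of lifting a socle vector from the image in $\V\otimes\DZerogr[\ell]$ contains none of this. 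Those socle weights lie in $\Wss_\ell$, outside the reach of your socle-based step. Also, a lift need not lie in any single $L\otimes\DZero$.
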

\begin{remark}
  \label{rmk:srep-structure-r=1}
When $\pi_1 = \pi$, we have $V_1 (\ell) = \V$
for all $\ell \in \{0, \dots, f\}$
by \eqref{eq:ip-def}.

Also, when $V = \F$ is $1$-dimensional,
then for every $\ell \in \{0, \dots,f \}$
we either have $V_1 (\ell) = 0$ or $\F$.
In particular,
there exists a unique integer $i_0 \in \{-1, \dots,f\}$
such that $V_1 (\ell) = 0$ if and only if $i_0 < \ell$.
This recovers the integer $i_0$ 
of \cite[Theorem~4.3.15]{BHHMS4}.
\end{remark}

\begin{proof}
For $0 \le \ell \le f$, 
we define $V_1( \ell)$ as the largest
$\F$-vector subspace of $\V$ such that
$V_1( \ell) \otimes_{\F}\DZeroFil[\ell]
\subseteq \ip(\pi_1^{ K_1})$.
We declare $V_1(-1) \defeq  \V$ by convention.

We mostly follow the proof of \cite[Theorem~4.3.15]{BHHMS4},
but need to work ``linewise''.
Therefore, for a fixed line $L \subseteq \V$, we set
\begin{equation}
  \label{eq:DL-def}
D_L \defeq \ip(\pi_1^{ K_1}) \cap
( L \otimes_{\F} \DZero),\quad
\ell(L) \defeq 
\max \{\ell \in \{-1, \dots, f\} \mid L \subseteq 
V_1( \ell) \},
\end{equation}
where the intersection on the left is taken inside
$\V \otimes_{\F} \DZero$.
We observe that, by maximality of $V_1(\ell)$,
\begin{equation}
  \label{eq:DL-inclusion-not-inclusion}
L \otimes_{\F} \DZeroFil[\ell(L)] \subseteq D_L,
\quad
L \otimes_{\F} \DZeroFil[\ell(L)+1] \not\subseteq D_L.
\end{equation}
By \cite[Lemma~3.1.5(i)]{Ber}
applied to $S= \F[\Gamma]$, $D= \DZero$,
$W=\V$, and $M_1, M_2$ the two sides of \eqref{eq:srep-K1-decomposition},
it is enough to prove that
the inclusion 
$L \otimes_{\F} \DZeroFil[\ell(L)] \subseteq D_L$
is an equality for all lines $L \subseteq \V$.

Suppose that there exists a line $L$
such that this is not the case.
Then, we may find a Serre weight $\tau$
which embeds into $D_L/ \left( L \otimes_{\F} 
\DZeroFil[\ell(L)] \right)$,
and in particular which embeds into 
$ \left( L \otimes_{\F}
\DZero \right)/ \left( L \otimes_{\F} 
\DZeroFil[\ell(L)] \right)$.
This implies that $\tau \in W(\rss)$
and moreover that $\ell(\tau) > \ell(L)$,
by construction of $\DZeroFil[\ell(L)]$
(cf.\ the paragraph of \eqref{eq:socle-DZerogr}).
Thus, there exists a Serre weight $\tau$
satisfying 
\begin{equation}
  \label{eq:tau-chosen}
\tau \in W(\rss) \cap \JH(D_L),
\quad \ell(\tau) > \ell(L),
\end{equation}
and we choose it so that $\ell(\tau)$ is minimal.

\paragraph{Step 1.}
We prove that $\ell(\tau) = \ell(L)+1$.

First, assume that $\tau \in \Wss \setminus \Wr$,
and let $\sigma ( \neq \tau)$ be the 
unique Serre weight in $\Wr$ such that 
$J_\sigma = J_{\rep} \cap J_\tau$
(via \eqref{eq:J-lambda-def}).
By \cite[Lemma~4.1.3]{BHHMS4} we have
$\tau \in \JH(\Dx[\sigma])$,
hence by \eqref{eq:diagr-from-I}
(cf.\ also the paragraph before \eqref{eq:diagr-from-I})
we have
$L \otimes_{\F} I(\sigma,\tau)
\hookrightarrow L \otimes_{\F}\DZero$,
and so $L \otimes_{\F} I(\sigma,\tau) \hookrightarrow D_L$.
By \cite[Lemma~4.1.3]{BHHMS4} we also have
\[
\JH(\radG(L \otimes_{\F}I(\sigma,\tau))) 
\subseteq \Wss \cap \JH(D_L),
\]
and $\JH(\radG(L \otimes_{\F} I(\sigma,\tau))) \neq 0$
since $\sigma \neq \tau$.
The minimality of $\tau$ implies that
$\ell(\tau') \le \ell(L)$
for any $\tau' \in \JH(\radG(I(\sigma,\tau)))$.
By the first statement of
\cite[Lemma~4.1.3]{BHHMS4}, and using
$|J_{\tau'}|=\ell(\tau')$ for
$\tau' \in \Wss$, this forces 
$\ell(\tau) \le \ell(L) +1$, hence 
$\ell(\tau)= \ell(L)+1$.

Next, assume that 
$\tau \in \Wr$, i.e.\ $\tau$ occurs in the
$\GR$-socle of $D_L$, or equivalently
$J_\tau \subseteq J_{\rep}$.
Let $\tau_1 \in \Wss$ be the Serre weight
uniquely determined by 
$J_{\tau_1} = \delta(J_\tau)$
(for the definition of $\delta$,
cf.\ the third paragraph of \Cref{def:s-at-D0-level}),
and $\sigma_1 \in \Wr$ be the Serre weight
uniquely determined by 
$J_{\sigma_1} = J_{\rep} \cap \delta(J_\tau)$.

Consider the Serre weight $\tau^{[s]}$ 
(for the definition of $\tau^{[s]}$,
cf.\ \Cref{def:s-at-D0-level}).
We have $\chi_{\tau^{[s]}} = \chi_{\tau}^s$
by \Cref{def:s-at-D0-level},
then using \eqref{eq:lambda-s-def}
and \eqref{eq:delta-s-interplay}
we compute 
$J_{\chi_\tau^s} = \delta(J_{\tau})$.
By the first statement in \Cref{eq:3.1.3-recalled} and
\Cref{eq:4.1.1-recalled}, 
we deduce that $\tau^{[s]}$ occurs in both 
$\Dssx[\tau_1]$ and $\Dx[\sigma_1]$.
Moreover, the image of the $\GR$-equivariant map 
$\IndI \chi_\tau^s \to D_L$, obtained by Frobenius reciprocity
from the inclusion $L \otimes_{\F} \chi_\tau^s \subseteq D_L$
(which in turn comes from the inclusion 
$L \otimes_{\F} \chi_\tau \subseteq D_L$
after applying $\Pi$ to both sides),
is multiplicity free with $\GR$-cosocle equal to
$\tau^{[s]}$.
There exists a unique such subrepresentation of
$L \otimes_{\F} \DZero$,
namely $L \otimes_{\F} I(\sigma_1, \tau^{[s]})$
(cf.\ \Cref{deflem:I}).
So, $L \otimes_{\F} I(\sigma_1, \tau^{[s]})
\subseteq D_L$,
and $\tau_1 \in \JH(I(\sigma_1, \tau^{[s]}))$
by \cite[Lemma~4.3.10]{BHHMS4}
applied to $\rss$ and $\chi_\mu = \chi_{\tau^{[s]}}$.
Note that $\ell(\tau_1 )= \ell(\tau) > \ell(L)$,
so $\tau_1 $ satisfies \eqref{eq:tau-chosen},
with $\ell(\tau_1 )$ minimal.
If again $\tau_1 \in \Wr$, i.e.\ $J_{\tau_1 } \subseteq \Jrep$,
we may continue this procedure to obtain $\tau_2$ and $\sigma_2 $.
Since $J_\tau \neq \emptyset $, $\Jrep \neq \{0, \dots, f-1\}$
(using that $\rep$ is reducible nonsplit)
we may continue this procedure until we arrive at some
$\tau_n$ with 
$J_{\tau_n} = \delta^n(J_\tau) \not \subseteq \Jrep$,
or equivalently $\tau_n \in \Wss \setminus \Wr$,
so we conclude $\ell(\tau) = \ell(\tau_n) = \ell(L) +1$ 
by the case of the first paragraph of Step 1.

\paragraph{Step 2.}
Let $\lambda \in \Dss$ be the element
corresponding to $\tau$, and define the 
$f$-tuple $\mu = (\mu_j(x_j))$
by $\mu_j(x_j) = p-1-x_j$ if $j \in J_1$
and $\mu_j(x_j) = \lambda_j(x_j) $
otherwise, where 
\[
J_1 \defeq \{j \in J_{\rep} ^c
\mid \lambda_j(x_j)=p-3-x_j\}.
\]

One checks as in
Step 2 of the proof of
\cite[Theorem~4.3.15]{BHHMS4}
that $\mu \in \mathscr{P}$,
that $\chi_\lambda =
\chi_\mu \prod_{j \in J_1} \alpha_j^{-1}$,
and moreover that  
$J_{\rep} \cap J_\lambda = J_\mu$.
Let 
\[
\widetilde{J}_1 \defeq 
\{0 \le j \le f \mid \lambda_j(x_j)
\in \!\{x_j+1,p-2-x_j\}\} = 
\{0 \le j \le f \mid \mu_j(x_j) \in \!
\{x_j+1,p-2-x_j\}\}.
\]
Then $J_1 \cap \widetilde{J}_1 = \emptyset$,
and if we set $J \defeq J_1 \sqcup \widetilde{J}_1$
then one checks that $J \subseteq \Yn[\mu]$, 
where $\Yn[\mu]$ is as in \eqref{eq:Ymu-def}.

Let $\chi'' \defeq \chi_\mu^{J, \emptyset}
= \chi_\mu \prod_{j \in J} \alpha_j^{-1}.$
Fix a nonzero vector $w \in L \setminus \{0\}$,
and consider the $\GR$-equivariant homomorphism
$\btm[\mu,w] \colon
L \otimes_{\F}\IndI (W(\chi_\mu,\chi''))
\to \pi|_{\GR}$
of \Cref{lemma:beta-vector}(i) 
(applied to $J_1 = J$, $J_2 = \emptyset $). 

\paragraph{Step 3.}
Let $Q_L\defeq \im(\btm[\mu,w])$,
which does not depend on 
$w \in L \setminus \{0\}$.
Notice that the $\GR$-representation
 $\IndI (W(\chi_\mu,\chi''))$
is $K_1$-invariant by \cite[Lemma~4.3.1]{BHHMS4},
so $Q_L\subseteq\pi^{K_1}$.
We claim that $Q_L \subseteq \pi_1 $,
and that $\ip(Q_L) \subseteq D_L$.

By \Cref{lemma:beta-vector}(i),
$\socR(\ip(Q_L)) = L \otimes_{\F}\sigma 
\subseteq \V \otimes_{\F}\DZero$,
where $\sigma \in \Wr$ is the unique Serre
weight such that 
$J_\sigma = J_{\rep} \cap J_\tau$.
By \cite[Lemma~3.1.5(ii)]{Ber}
applied to $S = \F[\Gamma],
D = \DZero, W = \V, M = \ip(Q_L)$
we have
$\ip(Q_L) \subseteq  
L \otimes_{\F}\Dx[\sigma]$.

It follows from
\cite[Proposition~4.3.6(i)]{BHHMS4}
that
\begin{equation}
  \label{eq:cosoc-QL}
\cosocG(Q_L) \cong  \bigoplus
_{J' \subseteq J, \tau^{J'} \in \JH(Q_L)} 
\tau^{J'},
\end{equation} 
where $\tau^{J'}$ denotes the $\GR$-cosocle of
$\IndI \left( \chi_\mu \prod_{j \in J'} \alpha_j^{-1} \right)$.
Since we are comparing $\ip(Q_L)$
and $D_L$ inside $L \otimes_{\F}\DZero$,
which is multiplicity free,
it suffices to show that $\tau^{J'}$
occurs in $D_L$ for each $J' \subseteq J$
satisfying $\tau^{J'} \in \JH(Q_L)$.

If $J' = J_1$ then $\chi'' = \chi_\lambda$
and $\bm[\mu,w] \colon L \otimes_{\F}W(\chi_\mu, \chi_\lambda) \to \pi^{K_1}|_I$
(cf.\ the paragraph above
\eqref{eq:beta-tilde-def})
takes values in $\pi_1$, 
using that $\ip (\im(\bm[\mu,w])) \subseteq L \otimes_{\F} \DZero$
is multiplicity free and its $I$-cosocle $\chi_\lambda$
occurs in $D_L|_I$.
It follows by Frobenius reciprocity that $\ip \circ \btm[\mu,w]$
takes values in $D_L$, 
so we may assume $J' \neq J_1$
in the following.

Let $Q_L' \defeq
\btm[\mu,w] (L \otimes_{\F} 
\IndI W(\chi_\mu, \chi_\lambda))
\subseteq Q_L$.
By \Cref{lemma:I-and-QL} we have
$L \otimes_{\F}I(\sigma,\tau) \subseteq \ip(Q'_L)$,
and 
\begin{equation}
  \label{eq:JH-QL-prime-mod-I}
\JH\left(\ip (Q_L') / 
\left( L \otimes_{\F}I(\sigma,\tau) \right)\right)
\cap \Wss = \emptyset.
\end{equation}
Moreover, we claim that
\begin{equation}
  \label{eq:JH-QL-mod-I}
\JH\left(\ip (Q_L) / 
\left( L \otimes_{\F}I(\sigma,\tau) \right)\right)
\cap \Wss = \emptyset.
\end{equation}
This follows by noting that if
$\chi' \in \JH(W(\chi_\mu, \chi'')) \setminus 
\JH(W(\chi_\mu, \chi_\lambda))$,
then $\chi' \notin \JH(\DssOne)$
by the explicit description of $\mathscr{P} ^{\mathrm{ss}}$,
so $\JH(\IndI \chi') \cap \Wss = \emptyset $ 
by \cite[Proposition~4.2]{Bre14}.

Now fix $J' \subseteq J$ such that 
$J' \neq J_1$ and $\tau^{J'} \in \JH(Q_L)$,
in particular $\tau^{J'} \neq \tau$.

Then, as $L \otimes_{\F} \Dx[\sigma]$ is
multiplicity free we must have 
$L \otimes_{\F} I(\sigma, \tau^{J'})
\subseteq Q_L$,
and we claim that
$L \otimes_{\F} I(\sigma, \tau^{J'})
\subseteq L \otimes_{\F} \DZeroFil[\ell(L)]
\subseteq D_L$.
Suppose that this is not the case. 
Then, the composition
$L \otimes_{\F} I(\sigma, \tau^{J'})
\to  \frac{L \otimes_{\F} \DZero}
{L \otimes_{\F} \DZeroFil[\ell(L)]}$
is nonzero, 
and by construction of $\DZeroFil[\ell(L)]$
(cf.\ the paragraph of \eqref{eq:socle-DZerogr})
there exists a Serre weight 
$\tau' \in \JH(I(\sigma,\tau^{J'}))$ 
lying inside $\Wss$
and such that $\ell(\tau') \ge \ell(L)+1$.
By \eqref{eq:JH-QL-mod-I} and
$L \otimes_{\F} I(\sigma, \tau^{J'})
\subseteq Q_L$
we deduce that $\tau'$ occurs inside
$L \otimes_{\F} I(\sigma,\tau)$, 
and the last statement of \cite[Lemma~4.1.3]{BHHMS4}
implies $\tau' = \tau$ 
(it cannot be the case that
$\ell(\tau') < \ell(\tau)=\ell(L)+1$).
In particular, $\tau \in 
\JH(I(\sigma,\tau^{J'}))$,
and \eqref{eq:cosoc-QL} implies that 
$\tau$ is in the cosocle of $\ip(Q_L)$,
which is multiplicity free,
hence it must occur in the cosocle of 
$I(\sigma,\tau^{J'})$ too.
But $\cosocG(I(\sigma,\tau^{J'}))=
\tau^{J'}$, so $\tau^{J'}= \tau$,
contradiction.
Hence, 
$L \otimes_{\F} I(\sigma, \tau^{J'})
\subseteq L \otimes_{\F} \DZeroFil[\ell(L)]
\subseteq D_L$
as claimed.

\paragraph{Step 4.}
We show that for $\tau$ as in \eqref{eq:tau-chosen}
we have an inclusion
\begin{equation}
  \label{eq:Dtau-DL-inclusion}
L \otimes_{\F} \Dxgr[\tau]{\ell(L)+1} \subseteq 
\frac{\ip(D_L)}{ 
L \otimes_{\F} \DZeroFil[\ell(L)]}.
\end{equation}
Notice that, by Step 1,
$\tau$ is a Serre weight in 
$\Wss_{\ell(L) +1} \cap \JH(D_L)$.
Since $L \otimes_{\F}\DZeroFil[\ell(L)+1]$
(and all its $\Gamma$-subquotients) is
multiplicity free, it is enough to show that
\begin{equation}
  \label{eq:DL-JH-inclusion}
\JH(\Dssx[\tau]) \cap \JH(\DZerogr[\ell(L) +1 ])
\overset{\eqref{eq:D-Dss-gr-comparison}}{= }
\JH(\Dxgr[\tau]{\ell(L) + 1})
\subseteq \JH(D_L).
\end{equation}

Define $\lambda$, $\mu$, $\chi''$
as in Step 2.
In Step 3 we have shown that the map $\btm[\mu,w]$
restricts to $\btm[\mu,w] \colon 
L \otimes_{\F} \IndI (W(\chi_\mu,\chi'')) \to \pi_1 $.
By Frobenius reciprocity, we have
$\bm[\mu,w] \colon L \otimes_{\F}W(\chi_\mu,\chi'') \to \pi_1 |_I$
(cf.\ the paragraph after \eqref{eq:iso-chi-isotypical}
for $\bm[\mu,w]$).
Notice that 
$W(\chi_\mu^{s}, \chi^{\prime \prime s})$,
as an $I$-representation, has the same underlying
$\F$-vector space as
$W(\chi_\mu, \chi'')$
but the two $I$-actions differ by conjugation by $\Pi$.
In particular, applying $\Pi$ to $\bm[\mu,w]$ we obtain
an $I$-equivariant map
$\gamma_w \colon L \otimes_{\F} W(\chi_\mu^{s},
\chi^{\prime \prime s}) \hookrightarrow 
\pi_1|_I$
(to be precise,
$\gamma_w(w \otimes x) \defeq  \Pi \bm[\mu,w](w \otimes x)$
for $x \in W(\chi_\mu, \chi'')
= W(\chi_\mu^{s}, \chi^{\prime \prime s})$).
By Frobenius reciprocity, we obtain a 
$\GR$-equivariant map
$\widetilde{\gamma}_w \colon L \otimes_{\F} \IndI
W(\chi_\mu^{s}, \chi^{\prime \prime s})
\to \pi_1$.

Let $\sigma_1 \in \Wr$ be the Serre weight
 such that $\chi_\mu^s$ occurs in 
$\DOnex[\sigma_1 ]$.
Then, $\sigma_1 $ occurs in 
$\IndI \chi_\mu^s$ and is parametrised by 
$X(\mu^{[s]})$
via \Cref{eq:4.1.1-recalled}
(cf.\ \eqref{eq:lambda-s-def}).
Similarly, let 
$\tau_1 = \delta(\tau) \in \Wss$
be such that $\chi_\lambda^s$
occurs in $\DssOnex[\tau_1 ]$,
then $\tau_1 $ occurs in $\IndI \chi_\lambda^s$
and is parametrised by $X^{\mathrm{ss}}(\lambda^{[s]})$
via \Cref{eq:3.1.3-recalled}.
By \Cref{lemma:beta-vector}
applied with $\mu = \mu^{[s]}, J_1 = \emptyset, J_2 = J$
we see that $\socR(\im(\widetilde{\gamma}_w))
= \sigma_1$.
By \Cref{lemma:I-and-QL} applied with
$\lambda = \lambda^{[s]} , \mu = \mu^{[s]}$,
and noting that $W(\chi_\mu^s, \chi_\lambda^s)
\hookrightarrow W(\chi_\mu^s, \chi^{\prime \prime s} )$,
we deduce that 
$L \otimes_{\F }I(\sigma_1 ,\tau_1) \subseteq 
\im(\widetilde{\gamma}_w)^{K_1}
\subseteq \ip(\pi_1^{ K_1})$.
In particular, $\tau_1 \in \JH(D_L)$.

Note that $J_{\sigma_1} =  J_{\rep} \cap J_{\tau_1}$,
using
$J_{\sigma_1 } = 
\Jrep \cap J_{\mu^{[s]}} = J_{\lambda^{[s]}}$
(by \Cref{eq:4.1.1-recalled}),
$J_{\lambda^{[s]}} 
 = J_{\tau_1 }$
(by \Cref{eq:3.1.3-recalled}),
and the identity 
$\Jrep \cap J_{\mu^{[s]}} = 
\Jrep \cap  J_{\lambda^{[s]}}$, 
which follows from Step 2
(namely, for all $j \in J_1 ^c \supseteq \Jrep$
we have $\lambda_j = \mu_j$,
hence $\lambda_j^{[s]} = \mu_j^{[s]}$).

Let $Q_{\tau_1} $ be the unique quotient
of $\IndI W(\chi_\mu^s, \chi^{\prime \prime s} )$
with socle $\delta(\tau)= \tau_1$.
Notice that the quotient 
$\IndI W(\chi_\mu^s, \chi^{\prime \prime s} )
\twoheadrightarrow Q_{\tau_1}$
factors through
\[
\IndI W(\chi_\mu^s, \chi^{\prime \prime s} )
\twoheadrightarrow
\IndI W(\chi_\lambda^s, \chi^{\prime \prime s} ).
\]
By \cite[Lemma~4.3.13]{BHHMS4} applied
to $J = J_1 $, $\chi = \chi_\lambda$,
and using that the $\GR$-representation 
$\widetilde{R}_{\widetilde{J}_1} (\chi_\lambda)$
of \emph{loc.\ cit.}\ is isomorphic to
$\IndI W(\chi_\lambda^s, \chi^{\prime \prime s} )$
(cf.\ the paragraph above \emph{loc.\ cit.}),
we deduce that $Q_{\tau_1} $ contains $\Dssx[\tau_1]$
(as an abstract $\GR$-representation).
If $Q_{\sigma_1}$ denotes
the unique quotient of
$\IndI W(\chi_\mu^s, \chi^{\prime \prime s} )$
with socle $\sigma_1$,
then $\tau_1$ occurs in 
$Q_{\sigma_1} \cong \im(\widetilde{\gamma}_w)$,
hence $Q_{\sigma_1} $ surjects onto 
$Q_{\tau_1} $, 
and hence contains 
$\Dssx[\tau_1]$
(as an abstract $\GR$-representation) as a subquotient.

The rest of the argument proceeds as in 
Step 4 of the proof of \cite[Theorem~4.3.15]{BHHMS4}.
By \cite[Lemma~4.3.14]{BHHMS4},
we have the chain of inclusions
\begin{align*}
\JH(\Dssx[\tau_1]) \cap \JH(\DZerogr[\ell (L) +1])
=  \JH(\Dssx[\tau_1]) \cap 
&\JH(\Dx[\sigma_1]) \subseteq  \\
\subseteq 
\JH(Q_{\sigma_1} ) \cap &\JH(\Dx[\sigma_1])
\subseteq \JH(Q_\sigma^{K_1} ),
\end{align*}
where the last inclusion follows from
\cite[Proposition~4.3.8]{BHHMS4},
applied to $\sigma = \sigma_1$ and 
varying $\tau$.

As $Q_{\tau_1}  \cong \im(\widetilde{\gamma}_w)
\subseteq D_L$,
then \eqref{eq:DL-JH-inclusion}
(hence \eqref{eq:Dtau-DL-inclusion})
holds if $\tau$ is replaced by $\tau_1$.
We can repeat the same argument with 
$\tau_1 = \tau \in \Wss \cap \JH(D_L)$,
which has still length $\ell(L)+1$,
hence we see that 
\eqref{eq:Dtau-DL-inclusion}
holds if $\tau$ is replaced by 
$\delta^n(\tau)$, for every integer $n > 0$.
Hence, it holds for $\tau$ itself, as 
$\delta( - )$ is periodic.

\paragraph{Step 5.}
We show that 
$L \otimes_{\F}\DZeroFil[\ell(L)+1] 
\subseteq \ip(D_L)$, 
which will contradict 
\eqref{eq:DL-inclusion-not-inclusion}.
As $D_L$ is multiplicity free,
and since $L \otimes_{\F}
\DZeroFil[\ell(L)]\subseteq \ip(D_L)$
by construction of $\ell(L)$,
it suffices to prove 
\begin{equation}
  \label{eq:JH-claimed-inclusion}
\bigsqcup_{\tau \in \Wss_{\ell(L)+1}} 
\JH(\Dxgr[\tau]{\ell(L) +1 })
\overset{\eqref{eq:Dgr-decomposition}}{=}
\JH(\DZerogr[\ell(L)+1])
\subseteq \JH(D_L).
\end{equation}
If moreover
$\tau \in \JH(D_L)$, this follows from
Step 4 (since $\tau$
satisfies \eqref{eq:tau-chosen}).
Therefore, we are reduced to showing that
for all $\tau \in \Wss_{\ell(L) +1 }$
we have $\tau \in \JH(D_L)$.

Fix $\tau$ as in \eqref{eq:tau-chosen},
write $J_{\tau} = \mathcal{S}_1 \sqcup \dots
\sqcup \mathcal{S}_t$, with 
$\mathcal{S}_i = \{a_i, a_i+1, \dots,
b_i = a_i + \ell_i -1\}$,
for
$0 \le a_1 < a_2< \dots< a_t < f$,
$\ell_i \defeq |\mathcal{S}_i|$,
and $b_i + 1 \notin J_{\tau}$
for each $1 \le i \le r$. 
Like before, the indices in $\{0, \dots, f-1\}$
are taken modulo  $f$.
In particular, 
$\ell(\tau) = \sum _{i = 1}^{t} \ell_i $.
Fix $1 \le i \le t$, and define an
$f$-tuple $\lambda$ by 
\begin{equation}
  \label{eq:lambda-def-again}
\lambda_j(x_j) = 
\begin{cases}
p-3-x_j & \text{if } j \in J_\tau \setminus \{b_i\}, \\
x_j + 1 & \text{if } j = b_i, \\
p-2-x_j & \text{if } j = b_i + 1, \\
p-1-x_j & \text{otherwise.}
\end{cases}
\end{equation}
Then, as in Step 5 of the proof of
\cite[Theorem~4.3.15]{BHHMS4}
one checks that
$\lambda \in \mathscr{P} ^{\mathrm{ss}}$,
that $|J_\lambda| = \ell(L) + 1$,
and that the unique Serre weight
$\tau_1  \in \Wss$ such that $\chi_\lambda^s$
contributes to $\Dssx[\tau_1 ]$
satisfies 
\[
J_{\tau_1}  = (J_\tau \setminus \{b_i\})
\sqcup \{b_i + 1\},
\]
so in particular $\ell(\tau_1) = \ell(\tau) = 
\ell(L) + 1.$

We want to show that $\tau_1 \in \JH(D_L)$, 
so that we can take
$\tau = \tau_1 $ in \eqref{eq:tau-chosen},
and so we can deduce by Step 4 that 
$\JH( \Dxgr[\tau_1]{\ell(L) +1 }) \subseteq \JH(D_L)$.
Define $\mu \in \mathscr{P}$ and $J_1 $ as in Step 2.
For a fixed $w \in L \setminus \{0\}$
consider the $\GR$-equivariant morphism
$\btm[\mu,w] \colon
L \otimes_{\F} \IndI (W(\chi_\mu, \chi_\lambda))
\hookrightarrow \pi$
of (the line below) \eqref{eq:beta-tilde-def},
with $J_2 = \emptyset$.
By \cite[Lemma~4.3.1(ii)]{BHHMS4}
$\IndI (W(\chi_\mu, \chi_\lambda))$
is $K_1 $-invariant, hence $\btm[\mu,w]$
takes values in
$\pi^{K_1}$,
and we let $Q_L \defeq \im \btm[\mu,w] \subseteq \pi^{K_1 }$
be its image.
We first claim that $\ip(Q_L) \subseteq D_L$.

As in Step 3,
one shows that $\ip(Q_L) \subseteq L \otimes_{\F} \Dx[\sigma]$,
where $\sigma \in \Wr$ is the unique Serre weight such that 
$J_\sigma = \Jrep \cap J_\tau$ via \eqref{eq:J-lambda-def}.
Keeping the notation of Step 3,
it follows from \cite[Proposition~4.3.6(i)]{BHHMS4}
(with $\chi = \chi_\mu$, $J_2  = \emptyset $)
that
\[
  \cosocG(Q_L) \cong \bigoplus
  _{J' \subseteq J_1 , \tau^{J'} \in \JH(Q_L)} 
  \tau^{J'},
\]
so it suffices to show that,
for all $J' \subseteq J_1 $,
$\tau^{J'} \in \JH(Q_L)$ implies
$\tau^{J'} \in \JH(D_L)$.

Let $\tau^{J'} \in \JH(Q_L)$ for some
$J' \subseteq J_1$, and define
an $f$-tuple $\mu'$ by
\[
\begin{cases}
\mu_j'(x_j) = \mu_j(x_j)-2 = \lambda_j(x_j),
 & \text{if } j \in J'\\
 \mu_j'(x_j) = \mu_j(x_j) & \text{otherwise },
\end{cases}
\]
so that $\chi_{\mu'} = \chi_\mu \prod _{j \in J'} \alpha_j^{-1} .$
Then, $\mu' \in \mathscr{P} ^{\mathrm{ss}}$
and $|J_{\mu'} | \le |J_\lambda| = \ell(L) +1$,
with equality holding if and only if $\mu' = \lambda$,
(or equivalently $J' = J_1$),
in which case $\tau^{J'} \in
\JH(\Dssx[\tau]) \cap \JH(\DZero)
= \JH(\Dxgr[\tau]{\ell(L)+1})$
by \Cref{eq:3.1.3-recalled}.
If $J' = J_1 $, then $\tau^{J'} \in \JH(D_L)$
by \eqref{eq:DL-JH-inclusion}
(choosing $\tau = \tau_1$ in \eqref{eq:tau-chosen}).
If instead $J' \subsetneq J_1$,
then $\tau^{J'}  \in \JH(\DssZeroFil[\ell(L)])
\cap \JH(\DZero) 
\overset{\eqref{eq:D-Dss-Fil-comparison}}{=} 
\JH(\DZeroFil[\ell(L)])
\subseteq \JH(D_L)$
by assumption.
This proves the claim.

We now show that $\tau_1  \in \JH(D_L)$.
One defines a $\GR$-equivariant morphism
$\widetilde{\gamma}_w \colon
L \otimes_{\F} \IndI W(\chi_\mu^s, \chi_\lambda^s)
\to \pi_1$
like in Step 4, namely we notice that 
$W(\chi_\mu^{s}, \chi_\lambda^{s})$
has the same underlying $\F$-vector space as
$W(\chi_\mu, \chi_\lambda)$
but the two $I$-actions differ by conjugation by $\Pi$.
Applying $\Pi$ to $\bm[\mu,w]$ we obtain
an $I$-equivariant map
$\gamma_w \colon L \otimes_{\F} W(\chi_\mu^{s},
\chi_\lambda^{s}) \hookrightarrow 
\pi_1|_I$
as in Step 4,
and by Frobenius reciprocity a 
$\GR$-equivariant map
$\widetilde{\gamma}_w \colon L \otimes_{\F}\IndI
W(\chi_\mu^{s}, \chi_\lambda^{s})
\to \pi_1$.
Define $\sigma_1 \in \Wr$,
as in the third paragraph of Step 4,
and notice that the definition of $\tau_1 \in \Wss$
(cf.\ the line after \eqref{eq:lambda-def-again})
is identical to the definition of $\tau_1 $
in the third paragraph of Step 4.

Finally, by \Cref{lemma:I-and-QL}
applied to $\lambda = \lambda^{[s]}$,
$\mu = \mu^{[s]}$
(and so $\sigma = \sigma_1 $, $\tau = \tau_1 $
in \emph{loc.\ cit.}),
we have
$L \otimes_{\F} I(\sigma_1 , \tau_1 ) \subseteq 
\ip \left( \im(\widetilde{\gamma}_w)^{K_1} \right)
\subseteq D_L$,
in particular $\tau_1$ occurs inside $D_L$.

Therefore, if we let
\[
\mathfrak{S} \defeq \{J_\tau \mid \tau
\in \Wss_{\ell(L) + 1} \!\cap \JH (D_L)\}
\subseteq  \mathcal{J}_{\ell(L) + 1} \defeq \{J \subseteq \{ 0, \dots,f -1 \} \mid |J| = \ell(L) + 1\},
\]
then we have proved that $\mathfrak{S}$ is a nonempty
set with the following property:
if $J \in \mathfrak{S}$, $j \in J$ and $j +1 \notin J$,
then 
$\tau_j J \defeq (J \setminus \{j\} ) \sqcup \{j+1\} \in 
\mathfrak{S}$.
We claim that this is enough to conclude that 
$\mathfrak{S} = \mathcal{J}_{\ell(L) + 1}$.
Indeed, suppose there exists 
$J_0 \in 
\mathcal{J}_{\ell(L) + 1} \setminus \mathfrak{S}$
(which implies $\ell (L) + 1 \le f$),
and let $j_0$ an element in $J_0$ such that 
$j_0 - 1 \notin J_0$
(such a $j_0$ exists, because 
$J_0 \subsetneq \{0, \dots, f-1\}$).
We give $\mathcal{P} (\{0 , \dots, f -1 \})$
the total order given by 
$J_1 \preceq J_2 $ if and only if 
either $|J_1 | < |J_2 |$,
or $|J_1 | = |J_2 |$ and 
$\sum_{j \in J_1 } 2^{j-j_0 } \le \sum_{j \in J_2} 2^{j- j_0}$.

Let $\mathfrak{S}_0 \defeq  \{J \in \mathfrak{S}\mid 
J_0  \setminus  J
\text{ is $\preceq$-minimal}\}$,
and let $J \defeq  \max_{\preceq} \mathfrak{S}_0$.
If we set $ \partial J \defeq J \setminus \delta (J)$,
$ \partial J_0  \defeq J_0  \setminus \delta (J_0 )$
(cf.\ the last paragraph of \Cref{sec:diagrams}
for the definition of $\delta (-)$),
then we distinguish two cases:
if $ \partial J \not\subseteq \partial J_0 $,
i.e.\ if there exists $j \in \partial J \setminus \partial J_0 $,
then we reach a contradiction upon observing that
$J' \defeq \tau_j J =
(J \setminus \{j\} ) \sqcup \{j+1\} \in \mathfrak{S}$
is such that either $J_0 \setminus J' \prec J_0  \setminus J$
(if $j+1 \in J_0 $), thus violating 
$J \in \mathfrak{S}_0 $;
or $J_0 \setminus J' =  J_0  \setminus J$ 
(if $j+1 \notin J_0 $, which together with
$j \notin \partial J_0$ implies $j \notin J_0$)
and $J' \in \mathfrak{S}_0$ is such that 
$J \prec J'$, 
thus violating the $\preceq$-maximality of $J$.
If instead
$ \partial J \subseteq \partial J_0 $,
then we can decompose
$J = \mathcal{S}_1 \sqcup \cdots \sqcup \mathcal{S}_t$
as in the paragraph of \eqref{eq:lambda-def-again}.
Since $|J| = |J_0 |$ 
and $J \neq J_0$, there exists $1 \le t' \le t$
such that $\mathcal{S}_{t'} \not \subseteq J_0$.
Write $\mathcal{S}_{t'} = \{a, a+1, \dots,
b = a + |\mathcal{S}_{t'}| -1\}$
for some $0 \le a \le f-1$,
and let $m \in \{0, \dots, |\mathcal{S}_{t'}|-1\}$ 
be the largest nonnegative integer
such that $a + m \notin J_0$.
If we set $J' \defeq  
\tau_{a+m} \tau_{a+m +1}\dots \tau_{b-1}\tau_b J$,
then we see that
either $J_0 \setminus  J' \prec J_0 \setminus J$
(precisely when $b + 1 \in J_0$),
thus violating $J \in \mathfrak{S}_0 $;
or $J' \in \mathfrak{S}_0$ and
$J \prec J'$, 
thus violating the $\preceq$-maximality of $J$.

This implies that
$\mathfrak{S} = \mathcal{J}_{\ell(L) + 1}$.
Since $\tau \mapsto J_\tau$
gives a bijection between $\Wss_{\ell(L) + 1}$
and $\mathcal{J}_{\ell(L) +1 }$,
this shows
the claim of Step 5,
cf.\ the line after 
\eqref{eq:JH-claimed-inclusion}.
\end{proof}

In the hypotheses of \Cref{thm:srep-structure},
and for $\ell \in \{-1, \dots, f+1\}$,
we set 
\begin{equation}
  \label{eq:VFil-rFil-def}
\VFil[\pi_1,\ell] \defeq 
\begin{cases}
V_1 ( \ell)& \text{if }\ell \in \{0, \dots, f\}, \\
\V & \text{if }\ell = -1, \\
0 & \text{if } \ell = f+1,
\end{cases}
\quad
\rFil[\pi_1,\ell] \defeq 
\dim_{\F} \VFil[\pi_1,\ell],
\end{equation}
where $V_1 (\ell)$ is the $\F$-vector subspace of 
\emph{loc.\ cit.}
We now give some immediate corollaries to 
\Cref{thm:srep-structure}.
\begin{corollary}
  \label{cor:K1-direct-sum}
Assume that $\rep$ is $6$-generic,
and let $\pi_1$ be a subrepresentation of $\pi$.
Then, there exists an isomorphism of 
$\GR$-representations
\[
\pi_1 ^{K_1} \cong  
\bigoplus_{\ell = 0}^  f 
\Vgr[\pi_1 ,\ell] \otimes_{\F}\DZeroFil[\ell].
\]
\end{corollary}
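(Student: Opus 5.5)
The plan is to deduce the isomorphism directly from \Cref{thm:srep-structure}, by choosing compatible complements of the nested subspaces $\VFil[\pi_1,\ell]$. Here $\Vgr[\pi_1,\ell]$ denotes the quotient $\VFil[\pi_1,\ell]/\VFil[\pi_1,\ell+1]$, with $\VFil[\pi_1,f+1]=0$ as in \eqref{eq:VFil-rFil-def}.

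\textbf{Step 1: an adapted decomposition of $\V$.} For each $\ell\in\{0,\dots,f\}$ choose an $\F$-linear complement $U_\ell$ of $\VFil[\pi_1,\ell+1]$ inside $\VFil[\pi_1,\ell]$. This is possible since the filtration is decreasing. Then $U_\ell\cong \Vgr[\pi_1,\ell]$, and for every $\ell$ we have
\[
\VFil[\pi_1,\ell]=\bigoplus_{\ell'\ge \ell}U_{\ell'}.
\]

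\textbf{Step 2: rewriting the sum in \eqref{eq:srep-K1-decomposition}.} Substitute this into \eqref{eq:srep-K1-decomposition}:
\[
\ip(\pi_1^{K_1})=\sum_{\ell=0}^{f}\ \sum_{\ell'\ge \ell} U_{\ell'}\otimes_{\F}\DZeroFil[\ell].
\]
The filtration $\DZeroFil[\ell]$ is increasing. So for a fixed $\ell'$, the terms with $\ell\le\ell'$ add up to $U_{\ell'}\otimes_{\F}\DZeroFil[\ell']$. This gives
\[
\ip(\pi_1^{K_1})=\sum_{\ell'=0}^{f}U_{\ell'}\otimes_{\F}\DZeroFil[\ell'].
\]

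\textbf{Step 3: the sum is direct.} Since $\V=\bigl(\bigoplus_{\ell'} U_{\ell'}\bigr)\oplus C$ for some complement $C$, we get
\[
\V\otimes_{\F}\DZero=\bigoplus_{\ell'}\bigl(U_{\ell'}\otimes_{\F}\DZero\bigr)\oplus\bigl(C\otimes_{\F}\DZero\bigr).
\]
Each summand $U_{\ell'}\otimes_{\F}\DZeroFil[\ell']$ lies in $U_{\ell'}\otimes_{\F}\DZero$, so the sum in Step 2 is direct. Each summand is also $\GR$-stable, because $\GR$ acts only on the second tensor factor.

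\textbf{Step 4: conclusion.} The map $\ip$ is $\GR$-equivariant on $K_1$-invariants, since it is an isomorphism of diagrams. Composing $\ip^{-1}$ with the isomorphisms $U_\ell\cong\Vgr[\pi_1,\ell]$ yields
\[
\pi_1^{K_1}\cong\bigoplus_{\ell=0}^{f}\Vgr[\pi_1,\ell]\otimes_{\F}\DZeroFil[\ell].
\]

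There is no real obstacle. The only point needing care is the directness in Step 3, and it holds because the $U_\ell$ are independent subspaces of $\V$ and tensoring with $\DZero$ over $\F$ preserves direct sums.
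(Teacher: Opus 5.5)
Your proof is correct and is essentially the paper's: both choose complements $W_1(\ell)$ of $\VFil[\pi_1,\ell+1]$ in $\VFil[\pi_1,\ell]$ and substitute them into \eqref{eq:srep-K1-decomposition}. The only difference is in how equality is obtained. The paper proves the inclusion $\bigoplus_{\ell}W_1(\ell)\otimes_{\F}\DZeroFil[\ell]\subseteq\ip(\pi_1^{K_1})$ and then concludes by a dimension count. You instead get equality directly by expanding $\VFil[\pi_1,\ell]=\bigoplus_{\ell'\ge\ell}U_{\ell'}$ and absorbing $\DZeroFil[\ell]\subseteq\DZeroFil[\ell']$ for $\ell\le\ell'$, which is slightly cleaner.
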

\begin{proof}
For all $\ell \in \{0, \dots, f\}$,
let $W_1 (\ell)$ be a complementary subspace of 
$\Vell[\pi_1 ,\ell+1]$ inside
$\Vell[\pi_1 ,\ell]$.
Then, 
we have an inclusion
\begin{equation}
    \label{eq:from-sum-to-direct-sum}
\bigoplus_{\ell = 0}^f 
W_1(\ell) \otimes_{\F}\DZeroFil[\ell] 
= \sum_{\ell = 0}^f 
W_1(\ell) \otimes_{\F}\DZeroFil[\ell] \subseteq 
\sum_{\ell = 0}^f 
\Vell[\pi_1 ,\ell] \otimes_{\F}\DZeroFil[\ell] 
= \ip (  \pi_1 ^{K_1}).
\end{equation}
Moreover, the composition
\begin{equation}
\bigoplus_{\ell = 0}^f 
W_1(\ell) \otimes_{\F}\DZeroFil[\ell] 
\overset{\eqref{eq:from-sum-to-direct-sum}}{ \hookrightarrow }
\sum_{\ell = 0}^f 
\Vell[\pi_1 ,\ell] \otimes_{\F}\DZeroFil[\ell] 
\twoheadrightarrow 
\bigoplus_{\ell = 0}^f 
\Vgr[\pi_1 ,\ell] \otimes_{\F}\DZeroFil[\ell]
\end{equation}
is an isomorphism
since $W_1(\ell)$ is isomorphic to $\Vgr[\pi_1 , \ell]$
as an $\F$-vector space.
Since all the $\GR$-representations involved
are finite-dimensional over $\F$,
we conclude for dimension reasons.
\end{proof}

If we set $\DOneFil[\ell] \defeq \DZeroFil[\ell]^{I_1}$
and $\DOneEll[\ell] \defeq \DZeroEll[\ell]^{I_1}$
for all $ \ell \in \{0, \dots, f+1\}$,
then the following corollary
describes the $I_1 $-invariants of
subrepresentations of $\pi$,
generalising \cite[Corollary~4.3.16]{BHHMS4} 
to the case $r \ge 1$.
\begin{corollary}
  \label{eq:I1-subrep}
Assume that $\rep$ is $6$-generic,
and let $\pi_1$ be a subrepresentation of $\pi$.
Then, the representation $\pi_1^{I_1}$
of $H = I/I_1 $ over $\F$ is isomorphic to
\begin{equation}
  \label{eq:srep-I1-decomposition}
\pi_1 ^{ I_1} 
\cong 
\bigoplus_{\ell =0} ^f 
\VFil[\pi_1 , \ell]\otimes_{\F}
\DOneEll[\ell].
\end{equation}
\end{corollary}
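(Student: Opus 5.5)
The plan is to take $I_1$-invariants in the description of $\ip(\pi_1^{K_1})$ from \Cref{thm:srep-structure}, and then decompose the result into $H$-eigenspaces. Since $I_1 \subseteq K_1$ acts trivially on $\pi^{K_1}$ only through $\GR/K_1$, we have $\pi_1^{I_1} = (\pi_1^{K_1})^{I_1}$. The isomorphism $\ip$ of diagrams restricts to an $H$-equivariant isomorphism $\pi^{I_1} \cong \V \otimes_{\F} \DOne$. Hence
\[
\ip(\pi_1^{I_1}) = \Bigl(\sum_{\ell=0}^f \VFil[\pi_1,\ell] \otimes_{\F} \DZeroFil[\ell]\Bigr)^{I_1}.
\]

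The first step is to commute invariants with the sum. By \Cref{cor:K1-direct-sum}, or more directly by the splitting argument in its proof with complements $W_1(\ell)$ of $\VFil[\pi_1,\ell+1]$ in $\VFil[\pi_1,\ell]$, the sum is the direct sum $\bigoplus_\ell W_1(\ell)\otimes_{\F}\DZeroFil[\ell]$. Taking $I_1$-invariants commutes with finite direct sums and with $W_1(\ell)\otimes_{\F}-$. We therefore get
\[
\pi_1^{I_1} \cong \bigoplus_{\ell} W_1(\ell)\otimes_{\F}\DOneFil[\ell].
\]

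The second step is to regroup this expression. The filtration $\DZeroFil[\ell]$ gives a filtration $\DOneFil[\ell]$ of $\DOne$. As an $H$-representation, $\DOne$ is a direct sum of distinct characters $\chi_\mu$ (see \eqref{eq:v-iso-def}), and $H$ has order prime to $p$. Hence $\DOneFil[\ell] \cong \bigoplus_{\ell'\le\ell} \DOneFil[\ell']/\DOneFil[\ell'-1]$ as $H$-representations. Here I must check that $\DOneFil[\ell']/\DOneFil[\ell'-1]$ is what the paper denotes $\DOneEll[\ell']$: I interpret $\DOneEll[\ell]$ as the $I_1$-invariants of the graded piece $\DZerogr[\ell]$. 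This identification requires that the sequence $0\to\DZeroFil[\ell-1]^{I_1}\to\DZeroFil[\ell]^{I_1}\to\DZerogr[\ell]^{I_1}\to0$ be exact on the right.

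I expect this right-exactness to be the main obstacle. My first approach is to compare dimensions. Since $\DOne$ is multiplicity free and $\dim\sigma^{I_1}=1$ for each Serre weight $\sigma$, I expect the equality $\dim \DZerogr[\ell]^{I_1} = \#\mathscr{P}_\ell$ to follow from \cite[Proposition~4.2]{Bre14} together with the description of $\JH$ in \eqref{eq:D-Dss-gr-comparison}. Indeed, $\chi_\lambda$ with $\lambda\in\mathscr P_\ell$ lies in the part of $\DssOnex[\tau]$ with $J_\tau=J_\lambda$ by \Cref{eq:3.1.3-recalled}. Summing over $\ell$ gives $\sum_\ell \dim\DZerogr[\ell]^{I_1} = \dim\DOne$, which forces exactness. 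If that bookkeeping fails, I would instead prove the $H$-character identity directly: $\DOneFil[\ell]$ is spanned by the $v_\mu$ with $\ell(\mu)\le\ell$. This should follow from \eqref{eq:D-Dss-Fil-comparison} and the fact that $\chi_\mu$ occurs in $\DssOnex[\tau]$ with $\ell(\tau)=\ell(\mu)$, using the $6$-genericity assumption.

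Granting this, the third step is to substitute and swap the order of summation:
\[
\pi_1^{I_1}\cong\bigoplus_{\ell}\bigoplus_{\ell'\le\ell}W_1(\ell)\otimes_{\F}\DOneEll[\ell'] = \bigoplus_{\ell'}\Bigl(\bigoplus_{\ell\ge\ell'}W_1(\ell)\Bigr)\otimes_{\F}\DOneEll[\ell'].
\]
Since $\bigoplus_{\ell\ge\ell'}W_1(\ell)\cong\VFil[\pi_1,\ell']$ (recall $\VFil[\pi_1,f+1]=0$), this yields \eqref{eq:srep-I1-decomposition}.
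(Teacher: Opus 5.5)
Your Steps 1 and 3 are fine and match the paper's proof, which also starts from \Cref{cor:K1-direct-sum} and ends with the same regrouping. One slip: it is $K_1\subseteq I_1$, which is why $\pi_1^{I_1}=(\pi_1^{K_1})^{I_1}$.

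The gap is in Step 2, and it cannot be repaired as you set it up. With your literal reading $\DOneEll[\ell]=\DZerogr[\ell]^{I_1}$, the map $\DOneFil[\ell]\to\DZerogr[\ell]^{I_1}$ is \emph{not} surjective in general, and the count $\dim\DZerogr[\ell]^{I_1}=\#\mathscr{P}_\ell$ you hope for is false. By \eqref{eq:socle-DZerogr}, every $\tau\in\Wss_\ell$ lies in $\socR\DZerogr[\ell]$. Hence $\tau^{I_1}$, whose character is $\chi_\lambda$ for the $\lambda\in\mathscr{D}^{\mathrm{ss}}$ parametrising $\tau$, lies in $\DZerogr[\ell]^{I_1}$ whether or not $\lambda\in\mathscr{P}$.

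Take $\ell=f$ and $\lambda=(p-3-x_0,\dots,p-3-x_{f-1})$. Then $J_\lambda=\mathcal{J}=\{0,\dots,f-1\}$, and $\lambda\notin\mathscr{P}$ because $\Jrep\neq\mathcal{J}$. Moreover $\mathscr{P}_f=\emptyset$: an index $j\notin\Jrep$ forces $\lambda_j(x_j)=x_j+1$, and then condition \ref{condition:Pss-ii} defining $\mathscr{P}^{\mathrm{ss}}$ forces $x+2$ at every subsequent index, all the way around back to $j$, a contradiction. So $\DZerogr[f]^{I_1}\neq 0$ while $\DOneFil[f]/\DOneFil[f-1]=0$. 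Since $\lambda\mapsto\chi_\lambda$ is injective on $\mathscr{P}^{\mathrm{ss}}$, this $\chi_\lambda$ does not occur in $\DOne$ at all. Under your reading the displayed isomorphism is therefore already false for $\pi_1=\pi$, and no dimension bookkeeping can rescue right-exactness.

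The corollary is correct, and is used in the paper, only with $\DOneEll[\ell]$ understood as $\DOneFil[\ell]/\DOneFil[\ell-1]\cong\bigoplus_{\mu\in\mathscr{P}_\ell}\chi_\mu$, i.e.\ the image of $\DOneFil[\ell]$ in $\DZerogr[\ell]^{I_1}$. The paper's proof simply takes the constituents of $\DOneEll[\ell']$ to be the $\chi_\mu$ with $\mu\in\mathscr{P}_{\ell'}$. Likewise, \Cref{rmk:I1-does-not-reconstruct-pi} needs $\DOneEll[\ell]=0$ for $\ell>f/2$ when $\Jrep=\emptyset$.

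With that reading, your fallback is the right argument and closes the proof:
\begin{itemize}
\item Let $m$ be minimal with $v_\mu\in\DZeroFil[m]$. The image of $v_\mu$ is a nonzero $I_1$-invariant of character $\chi_\mu$ in $\DZerogr[m]\hookrightarrow\DssZeroEll[m]$.
\item By \Cref{eq:3.1.3-recalled} and injectivity of $\lambda\mapsto\chi_\lambda$ on $\mathscr{P}^{\mathrm{ss}}$, this forces $m=\ell(\mu)$.
\item Since $\DOneFil[\ell]$ is an $H$-stable subspace of the multiplicity-free $\DOne$, it equals $\bigoplus_{\ell(\mu)\le\ell}\F v_\mu$, and your Step 3 finishes.
\end{itemize}
This is the paper's comparison of Jordan--H\"older constituents of multiplicity-free $H$-representations, made precise. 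What you must drop is the claim of right-exactness.
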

\begin{proof}
We have 
$\pi_1 ^{ I_1} \cong  
\bigoplus_{\ell = 0}^{f} 
\Vgr[\pi_1, \ell]\otimes_{\F} \DOneFil[\ell]$
by \Cref{cor:K1-direct-sum}.
If we show that
$\DOneFil[\ell] \cong \bigoplus_{0 \le \ell' \le \ell} \DOneEll[\ell']$ 
are isomorphic as $H$-representations over $\F$
for all $\ell \in \{0, \dots, f\}$,
then we can conclude with a simple computation:
\begin{align*}
\pi_1 ^{ I_1} \cong  
\bigoplus_{\ell = 0}^{f} 
\Vgr[\pi_1, \ell]\otimes_{\F} \DOneFil[\ell]
\cong 
\bigoplus_{\ell = 0}^{f} 
\bigoplus_{0 \le \ell' \le \ell} 
\left( \Vgr[\pi_1, \ell]\otimes_{\F} 
\DOneEll[\ell'] 
\right) \\
= 
\bigoplus_{\ell' = 0}^{f} 
\bigoplus_{\ell' \le \ell \le f} 
\left( \Vgr[\pi_1, \ell]\otimes_{\F} 
\DOneEll[\ell'] 
\right)
\cong  
\bigoplus_{\ell' = 0}^{f} 
\left( \VFil[\pi_1, \ell']\otimes_{\F} 
\DOneEll[\ell'] 
\right),
\end{align*}
where in the second line we have used 
the isomorphism
$\bigoplus_{\ell' \le \ell \le f}
\frac{\VFil[\pi_1,\ell]}{\VFil[\pi_1,\ell+1]}
\cong \VFil[\pi_1,\ell']$
of $\F$-vector spaces,
valid for all $\ell' \in \{0, \dots, f\}$.
To show that
$\DOneFil[\ell] \cong 
\bigoplus_{0 \le \ell' \le \ell} \DOneEll[\ell']$,
notice that both sides are semisimple
$H$-representations
(as every $H$-representation over $\F$ is)
and multiplicity free,
so it is enough to compare Jordan-H\"older constituents.
But it follows from \eqref{eq:D-Dss-gr-comparison}
that
$\JH(\DOneFil[\ell]) =
\left\{ \sigma \in \Wss \mid \ell (\sigma) \le \ell,
\ \chi_\sigma \in \mathscr{P} \right\}$,
and it follows from \eqref{eq:D-Dss-Fil-comparison}
that
$\JH(\DOneEll[\ell']) =
\left\{ \sigma \in \Wss \mid \ell (\sigma) = \ell',
\ \chi_\sigma \in \mathscr{P} \right\}$,
which immediately implies
$\JH(\DOneFil[\ell]) =
\bigsqcup_{0 \le \ell' \le \ell}\JH(\DOneEll[\ell'])$.
\end{proof}

The following corollary
describes the $\GR$-socle of
subrepresentations of $\pi$,
and generalises 
\cite[Corollary~4.3.17]{BHHMS4} 
to the case $r \ge 1$.
\begin{corollary}
Assume that $\rep$ is $6$-generic.
Let $\pi_1$ be a subrepresentation of $\pi$.
Then, 
\[
\socR \pi_1 
\cong  \bigoplus_{\ell =0} ^f 
\VFil[\pi_1, \ell]\otimes_{\F}
\big( \textstyle\bigoplus_{\sigma \in \Wr_\ell} 
\sigma \big).
\]
\end{corollary}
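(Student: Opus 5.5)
The plan is to deduce the statement from \Cref{cor:K1-direct-sum}, together with a socle computation for the pieces $\DZeroFil[\ell]$ and the same telescoping argument used in the proof of \Cref{eq:I1-subrep}.

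First, since $K_1$ is normal in $\GR$ and pro-$p$, every irreducible smooth $\GR$-representation over $\F$ is $K_1$-invariant. Hence $\socR \pi_1 = \socR(\pi_1^{K_1})$. By \Cref{cor:K1-direct-sum} this gives
\[
\socR \pi_1 \cong \bigoplus_{\ell=0}^f \Vgr[\pi_1,\ell]\otimes_{\F}\socR \DZeroFil[\ell].
\]

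Next I compute $\socR \DZeroFil[\ell]$. By \eqref{eq:DFil-decomposition}, $\DZeroFil[\ell]=\bigoplus_{\sigma\in\Wr}\DxFil[\sigma]{\ell}$, and each summand is a subrepresentation of $\Dx[\sigma]$, whose socle is $\sigma$. So $\DxFil[\sigma]{\ell}$ is either zero or has socle $\sigma$. It is nonzero exactly when $\sigma$ is a constituent of $\DZeroFil[\ell]$. By \eqref{eq:D-Dss-Fil-comparison}, this happens exactly when $\sigma\in\JH(\DssZeroFil[\ell])$, that is, when $\ell(\sigma)\le\ell$. Here I use that $\sigma\in\Wss$ occurs in $\DssZero$ only as the socle of $\Dssx[\sigma]$, because $\DssZero$ is multiplicity free. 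Therefore
\[
\socR\DZeroFil[\ell]\cong\bigoplus_{\ell'\le \ell}\ \bigoplus_{\sigma\in\Wr_{\ell'}}\sigma.
\]
This matches the definition of $\DZeroFil[\ell]$ as the largest subrepresentation without constituents $\tau\in\Wss$ with $\ell(\tau)>\ell$.

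Finally I substitute this into the first display and swap the order of summation. Using $\bigoplus_{\ell'\le\ell\le f}\Vgr[\pi_1,\ell]\cong\VFil[\pi_1,\ell']$, exactly as in the proof of \Cref{eq:I1-subrep}, gives the stated formula.

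The only step that needs care is the socle computation for $\DZeroFil[\ell]$: that the weights $\sigma\in\Wr$ lying in $\DZeroFil[\ell]$ are exactly those with $\ell(\sigma)\le\ell$. This follows from \eqref{eq:D-Dss-Fil-comparison} and the multiplicity-freeness of $\DZero$. The rest is bookkeeping.
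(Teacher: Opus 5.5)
Your proposal is correct and is essentially the paper's argument. The paper likewise combines the decomposition of \Cref{thm:srep-structure} with the identity $\socR(\DZeroFil[\ell])=\bigoplus_{\ell'=0}^{\ell}\bigoplus_{\sigma\in\Wr_{\ell'}}\sigma$, deduced from \eqref{eq:D-Dss-Fil-comparison}; the only difference is that you go through the direct-sum form \Cref{cor:K1-direct-sum} and telescope as in \Cref{eq:I1-subrep}, which makes explicit the step the paper leaves implicit, namely that the socle of $\sum_{\ell}\VFil[\pi_1,\ell]\otimes_{\F}\DZeroFil[\ell]$ is the sum of the socles.
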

\begin{proof}
Follows directly from 
$\ip(\socR \pi_1) \overset{\eqref{eq:srep-K1-decomposition}}{=}
\sum_{\ell =0} ^f \VFil[\pi_1, \ell]\otimes_{\F}
\socR \left( \DZeroFil[\ell] \right)$
and from $\socR \left( \DZeroFil[\ell] \right)
= \bigoplus_{\ell' = 0} ^\ell 
\bigoplus_{\sigma \in \Wr_{\ell'}} \sigma$
(which in turn is a consequence of 
\eqref{eq:D-Dss-Fil-comparison}).
\end{proof}

We give a last corollary, that will be used 
in the proof of 
\Cref{thm:gen-by-inv}(ii).
\begin{corollary}
  \label{cor:VFil-monotone}
Assume that $\rep$ is $6$-generic,
and let $\pi_0, \pi_1 $ 
be two subrepresentations of $\pi$.
If $\pi_1^{K_1 } \subseteq \pi_0^{K_1 }$,
then we have
$\VFil[\pi_1 ,\ell] \subseteq \VFil[\pi_0 ,\ell]$
for all
$\ell \in \{0, \dots, f\}$.
\end{corollary}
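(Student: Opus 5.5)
The plan is to use the definition of $V_1(\ell)$ from the proof of \Cref{thm:srep-structure}: the largest $\F$-subspace $W \subseteq \V$ with $W \otimes_{\F} \DZeroFil[\ell] \subseteq \ip(\pi_1^{K_1})$. Uniqueness in \Cref{thm:srep-structure} identifies $\VFil[\pi_1,\ell]$ with this maximal subspace. In more detail, \eqref{eq:srep-K1-decomposition} gives $\VFil[\pi_1,\ell] \otimes_{\F} \DZeroFil[\ell] \subseteq \ip(\pi_1^{K_1})$. Conversely, any $W$ with $W \otimes \DZeroFil[\ell] \subseteq \ip(\pi_1^{K_1})$ satisfies $W \subseteq \VFil[\pi_1,\ell]$, since the proof defines $V_1(\ell)$ as exactly this maximal space and then shows the sum equals $\ip(\pi_1^{K_1})$. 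If one prefers not to rely on the proof, one can instead show the characterisation directly, as in the next paragraph.

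Alternative direct check. Suppose $w \in \V$ is such that $w \otimes \DZeroFil[\ell] \subseteq \sum_{\ell'} \VFil[\pi_1,\ell'] \otimes \DZeroFil[\ell']$. Pick $\tau \in \Wss_\ell \cap \JH(\DZero)$; it exists since $\DZerogr[\ell] \neq 0$ by the strict inclusions in \eqref{eq:DZero-filtration-definition}. Then $\tau$ occurs in $\DZeroFil[\ell']$ only for $\ell' \ge \ell$, by \eqref{eq:D-Dss-Fil-comparison}. Since $\DZero$ is multiplicity free, the $\tau$-isotypic part of $\V \otimes \DZero$ is $\V \otimes \tau$. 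Intersecting both sides with it gives $w \otimes \tau \subseteq \sum_{\ell' \ge \ell} \VFil[\pi_1,\ell'] \otimes \tau = \VFil[\pi_1,\ell] \otimes \tau$, using that the filtration is decreasing. Hence $w \in \VFil[\pi_1,\ell]$. This justifies the characterisation.

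With the characterisation in hand, the corollary is immediate. By hypothesis $\ip(\pi_1^{K_1}) \subseteq \ip(\pi_0^{K_1})$, and $\ip$ is injective. So $\VFil[\pi_1,\ell]\otimes \DZeroFil[\ell] \subseteq \ip(\pi_1^{K_1}) \subseteq \ip(\pi_0^{K_1})$. Maximality for $\pi_0$ then gives $\VFil[\pi_1,\ell] \subseteq \VFil[\pi_0,\ell]$.

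The only delicate point is the isotypic-component argument. It needs a clean formulation: the isotypic part is well defined for the $\Gamma$-module $\V \otimes \DZero$, which is not semisimple, so one should argue with subquotients rather than submodules. One way is to pass to $\V \otimes \DZerogr[\ell]$. Project the inclusion to $\bigl(\V\otimes\DZeroFil[\ell]\bigr)/\bigl(\V\otimes\DZeroFil[\ell-1]\bigr) = \V \otimes \DZerogr[\ell]$. The image of the right-hand side there is $\VFil[\pi_1,\ell]\otimes\DZerogr[\ell]$, since the terms with $\ell'<\ell$ die and those with $\ell'>\ell$ have $V$-part contained in $\VFil[\pi_1,\ell]$. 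The image of $w\otimes\DZeroFil[\ell]$ is $w\otimes\DZerogr[\ell]\neq0$, and this yields $w\in\VFil[\pi_1,\ell]$ by linear algebra, choosing any nonzero vector of $\DZerogr[\ell]$.
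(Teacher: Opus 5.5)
Your argument is correct, but it takes a different route from the paper.

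The paper uses only the decomposition \eqref{eq:srep-K1-decomposition}. It intersects each $\ip(\pi_i^{K_1})$ with $\V\otimes_{\F}\DZeroFil[\ell]$ to get the truncated sums $\sum_{\ell'\le\ell}\VFil[\pi_i,\ell']\otimes_{\F}\DZeroFil[\ell']$. It then passes to the quotient by $\V\otimes_{\F}\DZeroFil[\ell-1]$, obtains $\VFil[\pi_1,\ell]\otimes_{\F}\DZerogr[\ell]\subseteq\VFil[\pi_0,\ell]\otimes_{\F}\DZerogr[\ell]$, and reads off the subspaces.

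You instead use that $\VFil[\pi_i,\ell]$ is, by its very definition in the proof of \Cref{thm:srep-structure} (via \eqref{eq:VFil-rFil-def}), the largest $W\subseteq\V$ with $W\otimes_{\F}\DZeroFil[\ell]\subseteq\ip(\pi_i^{K_1})$. Such a largest $W$ exists because the property is stable under sums. Monotonicity is then formal. Your route is shorter, and it sidesteps identifying $\ip(\pi_i^{K_1})\cap(\V\otimes_{\F}\DZeroFil[\ell])$ with the truncated sum. The paper asserts that identification without comment; it does hold, for instance via a basis of $\V$ adapted to the filtration. The paper's route has the advantage of depending only on the statement of \Cref{thm:srep-structure}, not on how $V_1(\ell)$ was constructed in its proof.

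Your backup derivation of the maximality property from the decomposition alone is essentially the paper's graded projection. Two small points there:
\begin{enumerate}
\item The ``$\tau$-isotypic part'' version does not make sense as written, since $\tau$ need not lie in the socle of $\DZero$, as you yourself note.
\item In the projection version, $\sum_{\ell'}\VFil[\pi_1,\ell']\otimes_{\F}\DZeroFil[\ell']$ is not contained in $\V\otimes_{\F}\DZeroFil[\ell]$, so its ``image'' in $\V\otimes_{\F}\DZerogr[\ell]$ is not defined. Project instead to $\V\otimes_{\F}(\DZero/\DZeroFil[\ell-1])$. There you only need two facts: the image of the right-hand side lies in $\VFil[\pi_1,\ell]\otimes_{\F}(\DZero/\DZeroFil[\ell-1])$, and $w\otimes_{\F}\DZerogr[\ell]\neq 0$.
\end{enumerate}
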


\begin{proof}
Observe first that $\pi_1 ^{K_1 } \subseteq \pi_0 ^{K_1 }$
implies
\begin{align}
  \label{eq:inclusion-filtered-truncated}
\sum_{\ell' = 0}^{\ell} \VFil[\pi_1 ,\ell'] \otimes_{\F}
\DZeroFil[\ell']
&\overset{ \eqref{eq:srep-K1-decomposition}}{= } \ip(\pi_1 ^{K_1 }) \cap \DZeroFil[\ell] 
 \\
\nonumber
&\subseteq 
\ip(\pi_0 ^{K_1}) \cap \DZeroFil[\ell]
\overset{ \eqref{eq:srep-K1-decomposition}}{= }
\sum_{\ell' = 0}^{\ell} \VFil[\pi_0 ,\ell'] \otimes_{\F}
\DZeroFil[\ell']
\end{align}
for all $\ell \in \{0, \dots, f\}$.
Upon taking images through the 
$\GR$-equivariant surjection
$\V \otimes_{\F} \DZeroFil[\ell]
\twoheadrightarrow 
\frac{\V \otimes_{\F} \DZeroFil[\ell]}
{\V \otimes_{\F} \DZeroFil[\ell -1 ]}$,
the inclusion \eqref{eq:inclusion-filtered-truncated}
of $\GR$-submodules of $\V \otimes_{\F} \DZero$
induces an inclusion
\begin{align}
  \label{eq:inclusion-filtered-truncated-mod}
\VFil[\pi_1 ,\ell] \otimes_{\F}
\DZerogr[\ell] 
 &= 
 \frac{\V \otimes_{\F} \DZeroFil[\ell -1 ] + 
\VFil[\pi_1 ,\ell] \otimes_{\F} \DZeroFil[\ell]}
{\V \otimes_{\F} \DZeroFil[\ell -1 ] }
 \\
\nonumber
&=
\frac{\V \otimes_{\F} \DZeroFil[\ell -1 ] + 
\sum_{\ell' = 0}^{\ell} 
\VFil[\pi_1 ,\ell'] \otimes_{\F} \DZeroFil[\ell']}
{\V \otimes_{\F} \DZeroFil[\ell -1 ] }
\\
\nonumber
&\subseteq 
\frac{\V \otimes_{\F} \DZeroFil[\ell -1 ] + 
\sum_{\ell' = 0}^{\ell} 
\VFil[\pi_0 ,\ell'] \otimes_{\F} \DZeroFil[\ell']}
{\V \otimes_{\F} \DZeroFil[\ell -1 ] }
\\
\nonumber
&= 
\frac{\V \otimes_{\F} \DZeroFil[\ell -1 ] + 
\VFil[\pi_0 ,\ell] \otimes_{\F} \DZeroFil[\ell]}
{\V \otimes_{\F} \DZeroFil[\ell -1 ] }
= 
\VFil[\pi_0 ,\ell] \otimes_{\F}
\DZerogr[\ell],
\end{align}
which implies
$ \VFil[\pi_1 ,\ell] \subseteq 
\VFil[\pi_0 ,\ell]$.
\end{proof}

\begin{remark}
  \label{rmk:I1-does-not-reconstruct-pi}
As in \cite[Remark~4.3.18]{BHHMS4},
we remark that a subrepresentation $\pi_1 $ of $\pi$
is not in general determined by $\pi_1 ^{I_1 }$.
For example, when $\Jrep =  \emptyset$,
then it follows from the definitions that 
$|J_\lambda| \le f/2$ for all $\lambda \in \mathscr{P}$,
so \eqref{eq:srep-I1-decomposition} does not depend on
$\VFil[\pi_1 ,\ell]$ for $\ell >  f/2$,
hence $I_1$-invariants cannot distinguish 
between two subrepresentations $\pi_1 , \pi_1' $
such that 
$\VFil[\pi_1 ,\ell] = \VFil[\pi_1' ,\ell]$ 
for $\ell$ up to $f/2$, 
but such that 
$\VFil[\pi_1 ,\ell] \neq \VFil[\pi_1' ,\ell]$ 
for some $\ell >  f/2$.
This does occur in practice:
when $f = 1$, $r = 1$, if we take
$\pi_1 $ to be a principal series subrepresentation
of $\pi$,
and take $\pi_1 ' = \pi$,
then we have
$\VFil[\pi_1, 0] = \VFil[\pi_1 ' ,0] = \F$,
$0 = \VFil[\pi_1, 1] \subsetneq 
\VFil[\pi_1', 1] = \F$.

Similarly, $\pi_1$ is not in general determined
by $\socR \pi_1$.
\end{remark}

We prepare to state 
\Cref{prop:char-only-occurs} below.
Assume that $\rep$ is $ \max \{6,2f+1\}$-generic.
For $\lambda \in \mathscr{P}$, consider
the finite-dimensional representation
$\tau_{\lambda}^{(f+1)}$ of $I$ over $\F$
with socle $\chi_\lambda$
of \cite[Lemma~2.4.1]{BHHMS4} applied with $n = f+1$
(which is multiplicity free by
\cite[Corollary~2.4.3]{BHHMS4}),
and define $\tell[\ell] \defeq 
\bigoplus_{\lambda \in \mathscr{P}_{\ell}} 
\tau_\lambda^{(f+1)}$,
where $\mathscr{P}_\ell$ is defined 
in the paragraph after \eqref{eq:J-lambda-def}.
We know 
by \cite[Corollary~2.4.3(i)]{BHHMS4} (with $n = f+1$)
that $\bigoplus_{\ell = 0} ^f \tell$ is multiplicity free.

If $\pi_1$ is a subrepresentation of $\pi$,
for $d  \in \{ 0, \dots, f \}$ 
we set 
\begin{equation}
  \label{eq:tau1d-def}
\tau^{(f + 1)}(\pi_1) \defeq 
\bigoplus  _{\ell = 0}^f
\left( \VFil[\pi_1,\ell] \otimes_{\F}
\tell \right),
\quad
\tau_{\le d}^{(f + 1)}(\pi_1 ) \defeq 
\VFil[\pi_1, d] \otimes_{\F}
\left( \bigoplus _{\ell = 0}^d
\tell[\ell] \right)
\end{equation}
Notice that $\tau^{(f+1)}(\pi) =
\V \otimes_{\F} \bigoplus_{\lambda \in \mathscr{P}} \tau_\lambda^{(f+1)}$
is isomorphic to the $I$-representation
$\tau^{(f+1)}$ of \cite[Lemma~2.4.1]{BHHMS4}
(with $n = f+1$,
and noting that 
\emph{loc.\ cit.}\ does hold for $r \ge 1$).
Let
\begin{equation}
  \label{eq:i-tau-iso-def}
i \colon  \tau^{(f+1)}(\pi) = 
\bigoplus_{ \ell = 0} ^f 
\left( \V \otimes_{\F} \tell  \right)
 \hookrightarrow \pi|_I
\end{equation}
be the $I$-equivariant injection 
of \cite[Lemma~2.4.2]{BHHMS4} (with $n = f+1$).
By \emph{loc.\ cit.}\ we know that 
$i \colon  \tau^{(f+1)}(\pi)[\mI^{f+1}] 
\xrightarrow{\sim} \pi[\mI^{f+1}]$
is an isomorphism,
where $\mI \subseteq \Lambda$ 
is the unique the maximal ideal of $\Lambda$
(cf.\ \Cref{sec:diagrams}).

Moreover, the proof of \emph{loc.\ cit.}\ makes it clear
that \eqref{eq:i-tau-iso-def} can be chosen to be compatible with $\ip$,
in the sense that the composition
\begin{equation}
  \label{eq:i-ip-compatibility}
\V \otimes_{\F}
\left( \bigoplus_{ \lambda \in \mathscr{P}}
\chi_{\lambda} \right)
\cong 
\V \otimes_{\F}
\left( \bigoplus_{ \ell = 0} ^f 
\left(\tell\right)^{I_1 }  \right)
=  \tau^{(f+1)}(\pi)[\mI] 
 \xrightarrow[\sim]{i} \pi^{I_1} \xrightarrow[\sim]{\ip}
\V \otimes_{\F} \DOne
\end{equation}
is $\id_{\V} \otimes \alphaI$
(for the definition of the $H$-equivariant isomorphism 
$\alphaI$, cf.\ \eqref{eq:v-iso-def}).

We conclude the section with the following proposition,
which will be used in Step 1 of the proof of 
\Cref{prop:CM-via-N} below.
\begin{proposition}
  \label{prop:char-only-occurs}
Assume that $\rep$ is $ \max \{6,2f+1\}$-generic,
and keep the above notation.
Let $\pi_1 \subseteq \pi$ be a subrepresentation of $\pi$.
If $\lambda \in 
\mathscr{P} ^{\mathrm{ss}} \setminus 
\mathscr{P}$, and if we set
 $d \defeq \ell(\lambda) = |J_{\lambda}|$,
then the character $\chi_{\lambda}$
can only occur in the following 
$I$-subrepresentation of $\pi_1[\mI^{f+1}]$:
\[
i \left( 
\tau_{\le d}^{(f+1)}(\pi_1)[\mI^{f+1}] \right)
\cap \pi_1[\mI^{f+1}],
\]
where this intersection is taken inside $\pi[\mI^{f+1}]$.
\end{proposition}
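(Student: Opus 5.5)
Since $i$ restricts to an isomorphism $\tau^{(f+1)}(\pi)[\mI^{f+1}] \xrightarrow{\sim} \pi[\mI^{f+1}]$, I will work entirely on the model side. The plan is to describe where $\chi_\lambda$ sits inside $\bigoplus_{\ell}\V\otimes_{\F}\tell$, and then use the structure of $\pi_1$ from \Cref{thm:srep-structure} to cut down which copies can lie in $\pi_1$.

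\textbf{Step 1: locate $\chi_\lambda$ in the model.} By multiplicity freeness (\cite[Corollary~2.4.3]{BHHMS4}), for each $\mu\in\mathscr{P}$ the character $\chi_\lambda$ occurs at most once in $\tau_\mu^{(f+1)}$. It occurs only when $\chi_\lambda=\chi_\mu^{J_1',J_2'}$ for some disjoint $J_1'\subseteq \Yn[\mu]$, $J_2'\subseteq\Zn[\mu]$. Using the explicit description of $\mathscr{P}^{\mathrm{ss}}$ and of $\mathscr{P}$ via $\Jrep$ (\eqref{eq:P-def-Jrep}), I will check that every such $\mu$ satisfies $|J_\mu|\le |J_\lambda|=d$. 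This is the same bookkeeping that gives $|J_\mu|=|J_\lambda|-|J_1|-|J_2|$ after \eqref{eq:mu-ad-hoc}: passing from $\mu$ to $\chi_\mu\alpha_j^{\mp1}$ shifts $\mu_j$ by $\pm 2$ and can only increase membership in $J$. Since $\lambda\notin\mathscr{P}$, equality $\mu=\lambda$ is excluded, but this is not needed. Consequently the $\chi_\lambda$-isotypic part of $\tau^{(f+1)}(\pi)$ is contained in $\V\otimes\bigoplus_{\ell\le d}\tell$.

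\textbf{Step 2: reduce to a line statement.} Let $x\in\pi_1[\mI^{f+1}]$ be an $H$-eigenvector of eigencharacter $\chi_\lambda$. By Step 1, $i^{-1}(x)=\sum_{\ell\le d}\sum_{\mu\in\mathscr{P}_\ell} w_\mu\otimes e_\mu$, where $e_\mu$ spans the $\chi_\lambda$-line in $\tau^{(f+1)}_\mu$ and $w_\mu\in\V$. The claim is that every $w_\mu\in\VFil[\pi_1,d]$. For a fixed $\mu$, I apply a suitable element of $\Lambda$ (a product of the operators $Y_j,Z_j$ lifting $y_j,z_j$, of degree $|J_1'|+|J_2'|$) that sends $e_\mu$ to a generator of the socle $\chi_\mu$ of $\tau^{(f+1)}_\mu$. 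This operator should kill the other components $e_{\mu'}$, because they map to eigenvectors of character $\chi_\mu\ne\chi_{\mu'}$, and by multiplicity freeness $\chi_\mu$ lies only in $\tau_\mu^{(f+1)}$; a leading-term argument in $\grL$ handles any lower-order corrections. The result is an element of $\pi_1^{I_1}$ equal to $i(w_\mu\otimes\alphaI_\mu)$. By \eqref{eq:i-ip-compatibility} and \Cref{eq:I1-subrep}, this forces $w_\mu\in\VFil[\pi_1,|J_\mu|]$.

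\textbf{Step 3: upgrade $\VFil[\pi_1,|J_\mu|]$ to $\VFil[\pi_1,d]$.} This is the main obstacle, since $|J_\mu|\le d$ and $\VFil$ is decreasing, so Step 2 gives the wrong inclusion. I would obtain the stronger statement by weight cycling, as in Steps 1 and 4 of \Cref{thm:srep-structure}. Take the line $L=\F w_\mu$. The vector $w_\mu\otimes e_\mu$ together with its $\Lambda$-span gives an injection $L\otimes W(\chi_\mu,\chi_\lambda)\hookrightarrow\pi_1|_I$, namely $\bm[\mu,w_\mu]$ by \Cref{lemma:extend-to-Wchi}. Applying $\Pi$ and Frobenius reciprocity as with $\widetilde\gamma_w$ shows that $L\otimes I(\sigma_1,\tau_1)\subseteq\ip(\pi_1^{K_1})$ with $\ell(\tau_1)=d$ (\Cref{lemma:I-and-QL}). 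Hence $D_L$ contains a weight $\tau$ of $\Wss_d$. By the argument of Steps 4 and 5 of \Cref{thm:srep-structure}, this gives $L\otimes\DZeroFil[d]\subseteq\ip(\pi_1^{K_1})$, that is, $w_\mu\in\VFil[\pi_1,d]$.

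The delicate point is ensuring that the extension really has cosocle $\chi_\lambda$ with $\lambda\notin\mathscr{P}$, so that the genericity bound $2f+1$ makes \Cref{lemma:I-and-QL} applicable. Once every $w_\mu\in\VFil[\pi_1,d]$, we get $x\in i(\tau_{\le d}^{(f+1)}(\pi_1)[\mI^{f+1}])\cap\pi_1[\mI^{f+1}]$, as claimed.
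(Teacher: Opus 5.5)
Your proposal is correct and is essentially the paper's argument: the $I$-span of a $\chi_\lambda$-eigenvector of $i^{-1}(\pi_1[\mI^{f+1}])$ is identified with the image of $\bm[\mu,w]$, using that $W(\chi_\mu,\chi_\lambda)$ is the unique subrepresentation of $(\mathrm{Inj}_{I/Z_1}\chi_\mu)[\mI^{f+1}]$ with cosocle $\chi_\lambda$. Hence $\btm[\mu,w]$ lands in $\pi_1$, and \Cref{lemma:I-and-QL} puts $L\otimes_{\F}I(\sigma,\tau)$ with $\ell(\tau)=d$ inside $\ip(\pi_1^{K_1})$.

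Several of your steps can be shortened. Multiplicity freeness of $\bigoplus_{\mu\in\mathscr{P}}\tau_\mu^{(f+1)}$ makes $\mu$ unique, so every $\chi_\lambda$-eigenvector is a pure tensor $w\otimes e_\mu$ and your Step 2 is superfluous; it also identifies $\mu$ with the element of \eqref{eq:mu-ad-hoc}, which gives $\ell(\mu)\le d$ without your case check. The $\Pi$-twist is not needed. Finally, the last step follows directly from the statement of \Cref{thm:srep-structure}: $D_L=L\otimes_{\F}\DZeroFil[\ell(L)]$ forces $d=\ell(\tau)\le\ell(L)$, so there is no need to rerun Steps 4 and 5.
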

\begin{remark}
  \label{rmk:char-only-occurs-r=1}
Notice that \Cref{prop:char-only-occurs}
generalises
\cite[Proposition~4.3.19]{BHHMS4}
to the case $r \ge 1$.
Indeed,
when $r = 1$, \eqref{eq:tau1d-def} becomes
\begin{equation}
  \label{eq:tau1d-minimal}
\tau^{(f + 1)}(\pi_1 ) =  
\bigoplus  _{\ell = 0}^{i_0 }
\tau_\ell^{(f+1)}, \quad
\tau_{\le d}^{(f + 1)}(\pi_1 ) =  
\begin{cases}
\bigoplus  _{\ell = 0}^{d}
\tau_\ell^{(f+1)},
 & \text{if }d \le i_0, \\
0 & \text{otherwise,}
\end{cases}
\end{equation}
where $i_0 $ is as in \cite[Theorem~4.3.15]{BHHMS4}
(cf.\ also \Cref{rmk:srep-structure-r=1}).
In particular, when $d = i_0 +1$ we have 
$\tau_{\le d}^{(f+1)}(\pi_1 ) = 0$, and we recover
\cite[Proposition~4.3.19]{BHHMS4}.
\end{remark}

\begin{proof}
By (the line below) \eqref{eq:i-tau-iso-def},
the statement can be reformulated as follows:
if $U$ is an $I$-subrepresentation of 
$\tau^{(f+1)}(\pi)[\mI^{f+1 } ]
= \V \otimes_{\F} \bigoplus_{\lambda \in \mathscr{P}}
\tau_\lambda^{(f+1)} [\mI^{f+1}]$ 
with $I$-cosocle $\chi_{\lambda}$ such that
$i(U) \subseteq \pi_1 [\mI^{f+1}]$,
then in fact $U \subseteq \tau_{\le d}^{(f+1 )}(\pi_1 ) $.

By \cite[Lemma~3.1.5(ii)]{Ber}
applied to $S= \Lambda$,
$D = 
\bigoplus_{\mu \in \mathscr{P}} 
\tau_{\mu}^{(f+1)} [\mI^{f+1}]$,
$W = \V$, $M = U$,
there is a unique $\mu' \in \mathscr{P}$
and a unique $\F$-vector subspace $W_{\mu'} \subseteq \V$
such that $\soc_{I} U = W_{\mu'} \otimes_{\F} \chi_{\mu'}$
and
$U \subseteq  W_{\mu'} \otimes_{\F} 
\tau_{\mu'}^{(f+1)} [\mI^{f +1 }]$.
We now claim that $W_{\mu'}$ is a line.

If $\psi$ is a Jordan-H\"older constituent of
$\tau_{\mu'}^{(f+1)} [\mI^{f+1}]$,
then we let 
$\tau_{\mu'}(\psi)$ be the unique $I$-subrepresentation of
$\tau_{\mu'}^{(f+1)} [\mI^{f+1}]$ with $I$-cosocle $\psi$,
and we let
$W_\psi \subseteq \V$ 
be the largest vector subspace of $W$
such that $W_\psi \otimes \tau_{\mu'} (\psi) 
\subseteq U$
(notice that $W_{\chi_{\mu'}} = W_{\mu'}$
by construction).
Then, we see that
\begin{equation}
  \label{eq:U-as-a-sum}
U = \sum_{\psi \in \JH(\tau_{\mu'}^{(f+1)} [\mI^{f+1}])}
W_\psi \otimes_{\F} \tau_{\mu'} (\psi)
\end{equation}
by \cite[Lemma~3.1.5(i)]{Ber}
applied to $S= \Lambda$,
$D = 
\bigoplus_{\mu' \in \mathscr{P}}
\tau_{\mu'}^{(f+1)} [\mI^{f+1}]$,
$W = \V$, and $M_1 , M_2 $ the two sides of
\eqref{eq:U-as-a-sum}.
Since $\cosoc_I U = \chi_\lambda$ by assumption,
we know that $W_{\chi_\lambda} =: L$ is a line.
Moreover, consider the projection
$U \twoheadrightarrow U/ (L \otimes_{\F} \tau_ {\mu'} (\chi_\lambda)) =: U'$.
We have that
$\cosoc_I U = \chi_\lambda$ surjects onto $\cosoc_I U'$,
however we also have 
$[U' : \chi_\lambda] = 
[U : \chi_\lambda] - [L \otimes_{\F} \tau_ {\mu'} (\chi_\lambda) : \chi_\lambda] = 1 - 1 = 0$,
so we must have $\cosoc_I U' = 0$, hence $U' = 0$,
hence $U \subseteq 
L \otimes_{\F} \tau_{\mu'} (\chi_\lambda) \subseteq 
L \otimes_{\F} \tau_{\mu'}^{(f+1)} [\mI^{f+1}]$.
This implies that
$W_{\psi} = L$
for all $\psi \in \JH(\tau_{\mu'}(\chi_\lambda))$,
and taking $\psi = \chi_{\mu'}$ 
we conclude that
$W_{\chi_{\mu'}} = W_{\mu'} = L$
is a line.

In particular, $U$ is multiplicity free
because $\tau_{\mu'}^{(f+1)}$ is
(by \cite[Corollary~2.4.3(i)]{BHHMS4} with $n = f+1$).
By definition of $\tau^{(f +1 )}_{\le d} (\pi_1 )$
(cf.\ \eqref{eq:tau1d-def}),
and using that $U \subseteq 
L \otimes_{\F} \tau_{\mu'}^{(f+1)} [\mI^{f+1}]$,
the condition
$U \subseteq \tau_{\le d}^{(f+1)}(\pi_1 )$
(which is equivalent to the statement,
by the first paragraph of this proof)
holds if and only if we have
\begin{equation}
  \label{eq:line-condition-restated}
\ell({\mu'}) \le d = \ell (\lambda), \quad
L \subseteq \VFil[\pi_1 , d].
\end{equation}
By \eqref{eq:i-ip-compatibility}
and by \Cref{thm:srep-structure},
we at least have $L \subseteq \VFil[\pi_1 ,\ell (\mu')]$.
Let  $J_1 $, $J_2 $ be as in \eqref{eq:J1-J2-temp},
and $\mu \in \mathscr{P} ^{\mathrm{ss}}$ 
as in \eqref{eq:mu-ad-hoc}.
As in the line after \eqref{eq:mu-ad-hoc},
we have $\mu \in \mathscr{P}$,
$\chi_{\lambda} = \chi_\mu \prod_{j \in J_1 } \alpha_j^{-1} \prod_{j \in J_2 } \alpha_j$,
$J_1 \subseteq \Yn[\mu]$,
$J_2 \subseteq \Zn[\mu]$, $J_1 \cap J_2 = \emptyset$,
and finally
$\ell (\mu) = |J_\mu| = |J_\lambda| - |J_1 | - |J_2 | = \ell (\lambda) - |J_1 | - |J_2 |$.
In particular, 
this proves the first half of 
\eqref{eq:line-condition-restated}.
We claim that $\mu = \mu'$.

Let $\sigma \in \Wr$ be the Serre weight
determined by $J_\sigma = J_{\rep}  \cap J_{\mu}$,
and $\tau \in \Wss$ be the Serre weight
determined by $J_\tau = J_{\lambda}$.
For a fixed vector $w \in L \setminus \{0\}$,
consider the $I$-equivariant map
$\bm[\mu,w] \colon L \otimes_{\F}W(\chi_\mu, \chi_{\lambda}) \to \pi|_I$  
defined in the paragraph below \eqref{eq:iso-chi-isotypical}
for 
$J_1 = \{j \in J_{\rep} ^c
\mid \lambda_j(x_j)=p-3-x_j\}$,
$J_2 = \emptyset $
(one checks directly that $\Jrep \cap J_\lambda = J_\mu$).

Note that by the proof of \cite[Lemma~4.3.1]{BHHMS4}
\begin{equation}
  \label{eq:W-inside-Inj}
W(\chi_\mu, \chi_\lambda) \subseteq 
(\operatorname{Inj}_{I/Z_1 }\chi_\mu)[\mI^{|J_1 | 
  + |J_2 | + 1}] \subseteq 
(\operatorname{Inj}_{I/Z_1 }\chi_\mu)[\mI^{f + 1}],
\end{equation}
and that it is the unique $I$-subrepresentation of
$(\operatorname{Inj}_{I/Z_1 }\chi_\mu)[\mI^{f + 1}]$
with $I$-cosocle $\chi_\lambda$.
In particular, $\bm[\mu,w]$ takes values in
$ \pi[\mI^{f+1}]$,
so we can consider $i^{-1}(\im (\bm[\mu,w]))$,
which by \eqref{eq:i-ip-compatibility}
and by construction of $\bm[\mu, w]$
(cf.\ the paragraph after 
\eqref{eq:iso-chi-isotypical})
is contained inside 
$L \otimes_{\F}\tau_{\mu}^{(f+1)} [\mI^{f+1}]$.
In conclusion, $\chi_\lambda$ appears as a 
Jordan-H\"older consituent both inside
$i^{-1}(\im(\bm[\mu,w])) \subseteq 
L \otimes_{\F}\tau_{\mu}^{(f+1)} [\mI^{f+1}]$,
and inside $U \subseteq L \otimes_{\F}\tau_{\mu'}^{(f+1)} [\mI^{f+1}]$.
By \cite[Corollary~2.4.3(i)]{BHHMS4} (with $n = f+1$),
we deduce that $\mu = \mu'$.

Notice that both $i^{-1}(\im (\bm[\mu,w]))$ and $U$
are $I$-subrepresentations of 
$L \otimes_{\F}\tau_{\mu}^{(f+1)} [\mI^{f+1}]$
with $I$-cosocle $\chi_\lambda$,
and so they coincide
by the line after \eqref{eq:W-inside-Inj}.
In particular, $\bm[\mu,w]$ takes values in $\pi_1|_I$,
since $i(U) \subseteq \pi_1 [\mI^{f +1 }]$
by hypothesis,
hence the $\GR$-equivariant morphism 
$\btm[\mu,w]$ 
of (the line below) \eqref{eq:beta-tilde-def}
also takes values in
$ \pi_1$. 
We let $Q_L = \im \btm[\mu,w]$ be the image of $\btm[\mu,w]$.

Note that $\chi_\lambda$ occurs inside
$\DssOnex[\tau]$ 
and that we have 
$\ell(\tau) = |J_{\tau} | = |J_{\lambda}| = d$
by \Cref{eq:3.1.3-recalled}.
By the first assertion of 
\Cref{lemma:I-and-QL},
we have $L \otimes_{\F}I(\sigma,\tau) \subseteq 
\ip(Q_L^{K_1 })
\subseteq \ip(\pi_1^{K_1})$.
If we define $\ell(L) \in \{-1, \dots, f\}$
as in \eqref{eq:DL-def},
then we deduce from \Cref{thm:srep-structure}
that $L \otimes_{\F}I(\sigma,\tau) \subseteq 
L \otimes_{\F} \DZeroFil[\ell(L)]$.
By \eqref{eq:D-Dss-Fil-comparison},
$\tau \in \Wss$, and the definition
of $\DssZeroFil[\ell(L)]$,
we see that $d = \ell(\tau) \le \ell(L)$,
or equivalently $L \subseteq \VFil[\pi_1 ,d]$
(cf.\ \eqref{eq:DL-def}),
thus proving the second half of 
\eqref{eq:line-condition-restated}.
\end{proof}

\subsection{Extending \texorpdfstring{\cite{Wang24}}{[Wang24]} to \texorpdfstring{$r \ge 1$}{r ≥ 1}}
  \label{sec:Wang-with-mult}
The main result of this section is 
\Cref{thm:rk-subrep} below,
which computes the $A$-rank of the multivariable
$(\varphi, \mathcal{O}_K ^\times )$-module 
$\DA (\pi_1 )$
associated to a 
subrepresentation $\pi_1 $ of $\pi$
in terms of the numerical invariants 
defined in \eqref{eq:VFil-rFil-def}.
Using 
$\dim_{\FX}(\Dvee (\pi_1 )) = \rk_A (\DA (\pi_1 ))$
we then deduce
\Cref{cor:Wang} below.

In order to prove \Cref{thm:rk-subrep},
in this section 
we extend most of the results from \cite{Wang24}
to the case $r \ge 1$.
In keeping with the assumptions of \cite{Wang24},
throughout this section we assume that 
$\rep$ is reducible and $(2f +1)$-generic.
We specifically allow the case 
$\rep$ reducible split.
Even though \cite{Wang24} only requires
that $\pi$ satisfy
\ref{hypothesis:i} and \ref{hypothesis:ii},
we will also assume hypotheses
\ref{hypothesis:iv}
of \Cref{sec:hypotheses} to hold.
The reason for this is that it is needed for
\Cref{thm:srep-structure}.

Recall the group $N_0 $ defined in
\eqref{eq:N0-def}.
There is an equality $\F [\![N_0 ]\!] 
= \F [\![Y_0 , \dots, Y_{f-1}]\!] $,
where $Y_j \defeq \sum_{a \in \mathbb{F}_q ^{\times}} 
a^{- p^j}
\left( 
\begin{smallmatrix}
1 & [a] \\
0 & 1
\end{smallmatrix}\right)
\in \F [\![N_0 ]\!]$
for $0 \le j \le f-1$.
For $\underline{i} = (i_0, \dots , i _{f-1})
 \in \mathbb{Z}^f$,
we set $\| \underline{i}\| \defeq
\sum_{j = 0}^{f-1} i_j$ 
and define $\underline{Y} ^{\underline{i}}
= \prod_{j = 0} ^{f-1} Y_j ^{i_j}
 \in \F [\![ N_0 ]\!] $.
We endow 
$\F [\![N_0 ]\!]$
with the $(Y_0 , \dots, Y_{f-1})$-adic topology, 
we endow
$\F [\![N_0 ]\!] [1/(Y_0 \cdots Y_{f-1})]$
with the unique topology that makes
$\F [\![N_0 ]\!] \subseteq 
\F [\![N_0 ]\!] [1/(Y_0 \cdots Y_{f-1})]$
an open neibourhood of $0$,
and denote by $A$ 
the topological ring given by the completion of
$\F [\![N_0 ]\!] [1/(Y_0 \cdots Y_{f-1})]$
with respect to this topology.
For an admissible smooth representation 
$\overline{\pi}$ of
$\GF$ over $\F$ with a central character,
we define $\DA (\overline{\pi})$ to be the completion
of $A \otimes_{\F [\![N_0 ]\!] } \overline{\pi}^\vee $
with respect to the tensor product topology,
where $\overline{\pi}^\vee $ is given the $\mI$-adic topology.
We endow $\overline{\pi}^\vee $ 
with the $\mathcal{O}_K ^{\times}$-action
given by $f \mapsto f \circ 
\left( 
\begin{smallmatrix}
a & 0 \\
0 & 1
\end{smallmatrix}\right)$
(for $a \in \mathcal{O}_K ^{\times}$),
which extends by continuity to $\DA (\overline{\pi})$,
and the $\F [\![N_0 ]\!] $-module map
$\overline{\pi}^\vee \to \F [\![N_0 ]\!] 
\otimes_{\varphi, \F [\![N_0 ]\!] } \overline{\pi}^\vee$
(where the $\F$-linear Frobenius 
$\varphi \colon \F [\![N_0 ]\!] \to \F [\![N_0 ]\!] $
comes from multiplication by $p$ on 
$N_0 = \mathcal{O}_K$)
given by $f \mapsto f \circ 
\left( 
\begin{smallmatrix}
p & 0 \\
0 & 1
\end{smallmatrix}\right)$
induces a continuous $A$-module map
\begin{equation}
  \label{eq:beta-DA-definition}
\beta \colon  \DA (\overline{\pi}) \to 
A \otimes_{\varphi, A} \DA (\overline{\pi}).
\end{equation}

Throughout the section we always follow the notation of 
\cite{Wang24}.
For the sake of clarity, 
we highlight some of the differences
in notation
in the following remark.
\begin{remark}
  \label{rmk:Wang-notation}
We begin by recalling some of the notation
from \cite[\S 2]{Wang24}.
For $P$ a statement,
we let $\delta _P = 1$ if $P$ is true,
and $0$ otherwise.
Set $\mathcal{J} \defeq \{0, \dots, f -1\}$,
and for $J \subseteq \mathcal{J}$
define $\underline{e}^J \in \mathbb{Z}^f$
as $e^J_j \defeq \delta_{j \in J}$,
and define 
$\sigma _J \defeq \sigma _{\underline{a}^J}$,
where $\underline{a}^J \in \mathbb{Z}^f$
is as in \cite[Eq.~(5)]{Wang24}
and where $\sigma _{\underline{b}}$
(for $\underline{b} \in \mathbb{Z}^f$)
is defined in the paragraph of 
\cite[Eq.~(4)]{Wang24}.

By \cite[Lemma~3.2(i)]{Wang24}
we have $\JH(\IndI \chi_\lambda^s) = 
\{F (\mathfrak{t}_\lambda (- \underline{b}) )
\mid \underline{0} \le \underline{b} \le\underline{1}\}$,
with $F (\mathfrak{t}_\lambda (- \underline{b}))$
corresponding to the subset $\{j \mid b_{j+1} = 1\}$
of the parametrisation of \cite[\S 2]{BP12}.
By the paragraph of 
\eqref{eq:P-bijection-2}
(and taking $\psi$ to be $\chi_\lambda^s$
in \eqref{eq:P-bijection-1}),
the subset
$\{j \mid b_{j} = 1\}$
corresponds to $\mathcal{S} (\xi)$
in the parametrisation of \eqref{eq:P-bijection-2}.
In particular, 
$ \underline{b} = \underline{e}^{ \emptyset } 
= \underline{0}$
parametrises the socle of $\IndI \chi_\lambda^s$,
i.e.\ the cosocle of $\IndI \chi_\lambda$,
and $\underline{b} = \underline{e}^{ \{0, \dots, f-1\}}
= \underline{1}$
parametrises the cosocle of $\IndI \chi_\lambda^s$,
i.e.\ the socle of $\IndI \chi_\lambda$.

It follows from 
\cite[Eq.~(4)]{Wang24}
that $J \mapsto \sigma_{\underline{e}^J}$
gives a bijection 
$\{J \subseteq \mathcal{J}\} \xrightarrow{\sim} \Wss$,
and restricts to a bijection 
$\{J \subseteq \Jrep\} \xrightarrow{\sim} \Wr$.
Composing with the bijection
$\mu \in \mathscr{D} ^{\mathrm{ss}}\mapsto J_\mu \subseteq \mathcal{J}$
of \eqref{eq:J-lambda-def}
gives $\sigma_\mu =  \sigma_{\underline{e}^{J_\mu}}$.
For $J \subseteq \mathcal{J}$,
there is also the Serre weight $\sigma_J$
defined in the line after
\cite[Eq.~(4)]{Wang24},
and $\chi_J$ the $I$-character acting on 
$\sigma_J^{I_1 }$.
A simple computation shows that the assignment 
$\mu \in \mathscr{D} ^{\mathrm{ss}}
\mapsto \lambda \in \mathscr{P}$
given by 
\begin{equation}
  \label{eq:lambda-bij-Jsh}
\lambda_j (x_j) \defeq 
\begin{cases}
p-1-x_j & \text{if }\mu_j (x_j) = p-3-x_j,
\ j \notin \Jrep \\
\mu_j (x_j) & \text{otherwise}
\end{cases}
\end{equation}
defines a bijection 
between $\Dss$ and 
$ \{\lambda \in \mathscr{P} \mid 
y_j \notin \mathfrak{a} (\lambda)
\ \forall j \in \mathcal{J}\}$
which is the inverse of the bijection 
in the proof of \cite[Lemma~3.59(ii)]{BHHMS2}
(note that these two bijections also hold for
$\rep$ reducible split).

For every subset $J \subseteq \mathcal{J}$,
recall the subsets
$J ^{\mathrm{ss}} \defeq  J \cap \Jrep$,
$J ^{\mathrm{nss}} \defeq  J \setminus  \Jrep 
= J \setminus J ^{\mathrm{ss}}$,
$J ^{\mathrm{sh}} \defeq  J \cap (J -1 )\cap \Jrep 
\subseteq J ^{\mathrm{ss}}$
defined in \cite[Eq.~(8)]{Wang24},
and the subset
$\partial J \defeq  J \setminus (J - 1)$
defined in \cite[Remark~5.11]{Wang24}.
Keeping the above notation, 
and using \cite[Eq.~(6),(7)]{Wang24},
it is a direct computation that 
\eqref{eq:lambda-bij-Jsh}
implies $\chi_{J_\mu} 
= \chi_\mu \alpha^{\underline{e}^{J ^{\mathrm{sh}}}}
= \chi_\lambda$,
hence
$\sigma_{J_\mu} = \sigma_\lambda$
because $\DOne$ is multiplicity free,
and by the $0$-genericity of $\rep$.
\end{remark}

Following the paragraph before
\cite[Proposition~4.2]{Wang24},
for each $J \subseteq \mathcal{J}$
we fix once and for all a nonzero $H$-eigenvector
$v_J \in \DOne$ with eigenvalue $\chi_J$.
By the second paragraph of
\Cref{rmk:Wang-notation}
we can pick $v_J$ to be equal to 
the $H$-eigenvector $v_\lambda \in \DOne$
of the paragraph of \eqref{eq:v-iso-def},
where $\lambda$ is defined as in 
\eqref{eq:lambda-bij-Jsh}
in terms of the unique $\mu \in \Dss$
such that $J = J_\mu$.

Recall that, by \cite[Proposition~4.2]{Wang24},
for all $J \subseteq \mathcal{J}$ and all
$\underline{i} \in \mathbb{Z}^f$ such that 
$\underline{0} \le \underline{i} \le \underline{f} - \underline{e}^{J ^{\mathrm{sh}}}$
there is a unique $H$-eigenvector $\underline{Y}^{- \underline{i}} v_J \in \DZero$
satisfying
\begin{align}
  \label{eq:YvJ-def-i}
Y_j^{i_j + 1} (\underline{Y}^{- \underline{i}} v_J) &= 0 \ \forall j \in \mathcal{J}; \\
\nonumber
  \label{eq:YvJ-def-ii}
\underline{Y}^{\underline{i}} (\underline{Y}^{- \underline{i}} v_J) &= v_J.
\end{align}
We let $\WJ$ be the $I$-subrepresentation of $\DZero$
generated by $\underline{Y}^{-\underline{i}} v_J$,
and we let $\PWJ$ be the $I$-representation
with underlying vector space $\WJ$
but action of $g \in I$ on $x \in \WJ$
given by $\Pi^{-1} g \Pi \cdot x$.

Let $J \subseteq \mathcal{J}$ and 
$\underline{i} \in \mathbb{Z}^{f}$ such that 
$\underline{0} \le \underline{i} \le \underline{f} - \underline{e}^{J^{\mathrm{sh}}}$.
Consider the $I$-equivariant map
$V \otimes_{\F} \WJ \subseteq V \otimes_{\F} \DZero  
\xrightarrow{\ip^{-1}} \pi$.
Applying $\Pi$, we obtain the following $I$-equivariant injection
\begin{equation}
  \label{eq:psi0-def}
 \pJo \colon  V \otimes_{\F} \PWJ \to \pi,
 \quad
\pJo(w \otimes x) \defeq  \Pi \ip^{-1}(w \otimes  x),
\end{equation}
for $w \in V,\ x \in \PWJ$.

Set $\VJ \defeq \IndI(\PWJ)$.
By Frobenius reciprocity applied to $\pJo$ we obtain a $\GR$-equivariant map
\begin{equation}
  \label{eq:psi-def}
\pJ \colon  V \otimes_{\F} \VJ \to \pi.
\end{equation}
By the proof of 
\cite[Lemma~3.48]{BHHMS2}
we have $\VJ$ multiplicity free, 
so we may define $\QJ$ as the unique quotient of $\VJ$
whose socle is the Jordan-H\"older constituent of $\IndI(\chi_J^s) \subseteq \VJ$
corresponding to $(J \bigtriangleup (J-1)^{\mathrm{ss}})-1$
via \eqref{eq:P-bijection-1} and
\eqref{eq:P-bijection-2},
and set $\KJ \defeq \ker(\VJ \twoheadrightarrow \QJ)$.
\begin{remark}
  \label{rmk:QJ-angle-bracket}
Keep the above notation.
It follows from the definitions that 
\begin{align}
\nonumber
\WJ & \cong  \left\langle I \cdot \underline{Y}^{-\underline{i}} v_J \right\rangle, \\ 
\nonumber
\pJo \left( \V \otimes_{\F} \PWJ \right)
&= \left\langle I \cdot \Pi\ip^{-1} \left( \V \otimes_{\F}
\underline{Y}^{-\underline{i}} v_J  \right)\right\rangle, \\
  \label{eq:pJ-image}
\pJ \left( V \otimes_{\F} \VJ \right) 
&= \left\langle \GR \cdot \Pi\ip^{-1} \left( \V \otimes_{\F}
\underline{Y}^{-\underline{i}} v_J  \right)\right\rangle.
\end{align}
The $I$-representation $\WJ$ (resp.\ $\PWJ$) is isomorphic to the 
$I$-representation $W'$ (resp.\ $W$) in the first paragraph of the proof of \cite[Lemma~5.1]{Wang24},
and the $\GR$-representation $\VJ$ is isomorphic to the 
$\GR$-representation $V$ of \emph{loc.\ cit.}
\Cref{lemma:Wang-first-lemma}(i)
below shows that 
the $\GR$-representation $\QJ$ is isomorphic to the 
$\GR$-representation 
$Q(\chi_J^s, \chi_J^s \alpha^{\underline{i}}, 
(J \bigtriangleup (J-1)^{\mathrm{ss}})-1)$
of \cite[Remark~5.2]{Wang24}.
\end{remark}

Fix $w \in \V$. 
For all 
$\underline{k} \in \mathbb{Z}_{\ge 0}^f$
and all
$\underline{i}' \in \mathbb{Z} _{\ge 0}^f$ such that 
$\underline{i}' \le \underline{i}$,
the right-hand side of 
\eqref{eq:pJ-image}
contains the element
$\underline{Y}^{\underline{k}} \mo
\Pi \ip^{-1}
\left( w \otimes \underline{Y}^{- \underline{i}'} v_J
\right)
= 
\underline{Y}^{\underline{k}} 
\mpi \ip^{-1}
\left( w \otimes \underline{Y}^{- \underline{i}'} v_J
\right)$,
where we have used that 
$\mpi = 
\mo
\left( 
\begin{smallmatrix}
0 & 1 \\
p & 0
\end{smallmatrix} \right) = 
\mo
\Pi$.
We now construct a preimage of this element 
inside $\V \otimes_{\F} \VJ$.
Following the notation of the paragraph before
\cite[Lemma~3.5]{NewYork},
if $\tau$ is a smooth $I$-representation,
for $g \in \GR$ and $x \in \tau$
we let $[g, x] \in \IndI \tau$
be the unique element with support on $I g^{-1}$
which sends $g^{-1}$ to $x$.
We have
$w \otimes \underline{Y}^{- \underline{i}} v_J
\in V \otimes_{\F} \PWJ$,
hence we can consider
$Y^{\underline{k}} \mo
\cdot
\big[\id, 
\big( w \otimes \underline{Y}^{- \underline{i}'} v_J
\big)\big]
\in V \otimes_{\F} \VJ$,
and compute
\begin{align}
  \label{eq:interpret-inside-VV}
\pJ 
&
\left(
Y^{\underline{k}} {\mo}
\cdot 
\big[\id, 
\big( w \otimes \underline{Y}^{- \underline{i}'} v_J
\big)\big]
\right)
= 
\underline{Y}^{\underline{k}} {\mo}
\pJo 
\left( w \otimes \underline{Y}^{- \underline{i}'} v_J
\right)
\\
\nonumber
&\overset{\eqref{eq:psi0-def}}{= }
\underline{Y}^{\underline{k}} {\mo}
\Pi \ip^{-1}
\left( w \otimes \underline{Y}^{- \underline{i}'} v_J
\right)
= \underline{Y}^{\underline{k}} 
{\mpi} \ip^{-1}
\left( w \otimes \underline{Y}^{- \underline{i}'} v_J
\right)
\in \pi,
\end{align}
where in the first identity we have used 
Frobenius reciprocity.

Analogously, we have
$w \otimes v_J
\in V \otimes_{\F} \PWJ$,
and we can compute that $\pJ$
sends
$\big[\id, 
\big( w \otimes v_J
\big)\big]
\in V \otimes_{\F} \VJ$
to $\ip^{-1}\Pi (w \otimes v_J)
= \ip^{-1}(w \otimes \Pi v_J) \in \pi$
(using that \eqref{eq:ip-def}
is an isomorphism of diagrams).
In other words,
if we let $\phi \defeq [\id, 1] \in \IndI (\chi_J^s)$
be the element 
defined in \cite[\S 2]{BP12},
then we have an $I$-equivariant injection
$\F w \otimes_{\F}\chi_J^s \hookrightarrow  
\F w \otimes_{\F} \PWJ$
(corresponding to the inclusion
$\F w \otimes_{\F} \chi_J \subseteq 
\F w \otimes_{\F} \WJ$
sending $w \otimes 1 $ to $w \otimes v_J$),
and taking inductions we obtain
a $\GR$-equivariant injection 
$\F w \otimes_{\F }\IndI (\chi_J^s)
\hookrightarrow  \F w \otimes_{\F} \VJ$
whose composition with $\pJ$
sends $w \otimes \phi$ to $\ip^{-1}(w \otimes \Pi v_J)$.

The following lemma generalises \cite[Lemma~5.1]{Wang24}.
\begin{lemma}
  \label{lemma:Wang-first-lemma}
Keep the above assumptions.
\begin{enumerate}[(i)]
\item 
The $\GR$-equivariant map 
$\pJ$ of \eqref{eq:psi-def} factors uniquely as
\[
\pJ \colon V \otimes_{\F} \VJ \twoheadrightarrow 
V \otimes_{\F} \QJ \xhookrightarrow{\pbJ}
\pi.
\]
\item
The $\GR$-representation $\QJ$ is multiplicity free,
with $\GR$-socle $\sigma_{(J-1) ^{\mathrm{ss}}}$
and $\GR$-cosocle $\sigma_{\underline{c}}$,
where
\[
c_j = (-1)^{1 - \delta_{j+1 \in J}} \left( 
2i_j +1 + \delta_{j \in (J-1) ^{\mathrm{ss}}} - \delta_{j \in J \triangle (J-1) ^{\mathrm{ss}}} \right),
\]
for all $j \in \mathcal{J}$,
is the same integer as in \cite[Eq.~(13)]{Wang24}

\item 
We have
\begin{equation}
  \label{eq:QJ-subquot}
 \frac{\QJ}
{\sum_{\underline{0}\le \underline{i}' < \underline{i}} \QJ[J, \underline{i}']}
\cong Q(\chi_J^s \alpha^{\underline{i}}, 
\{j \mid j+1 \in J \triangle (J -1 ) ^{\mathrm{ss}}, i_{j+1} = 0\} ),
\end{equation}
where the term on the right-hand side is defined in
\cite[Remark~5.2]{Wang24}.

\item 
For $\underline{m} \in \mathbb{Z}^{f}$ such that
all $m_j$ are between $\delta_{j \in (J -1 ) ^{\mathrm{ss}}}$
and $c_j$
there is a unique subrepresentation
$\Isigma$ of $\QJ$ with $\GR$-cosocle $\sigma_{\underline{m}}$.
Moreover, $\Isigma$ has constituents $\sigma_{\underline{b}}$
with each $b_j$ between $\delta_{j \in (J-1) ^{\mathrm{ss}}}$
and $m_j$, and we have 
\begin{equation}
  \label{eq:I-restr-iso}
\Hom_{\GR}(\Isigma, \pi) \xrightarrow{\sim}
 \Hom_{\GR}(\sigma_{(J-1) ^{\mathrm{ss}}}, \pi) \cong \V,
\end{equation}
where the map is given by restriction along
$\sigma_{(J-1)^{\mathrm{ss}}} \subseteq I(\sigma_{(J-1) ^{\mathrm{ss}}}, \sigma_{\underline{m}})$.
\end{enumerate}
\end{lemma}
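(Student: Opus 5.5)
The plan is to reduce everything to the case $r=1$, i.e.\ to \cite[Lemma~5.1]{Wang24}, by working ``linewise'' as in the proof of \Cref{thm:srep-structure}. Parts (ii) and (iii) concern only the abstract $\GR$-representation $\QJ$. This is a quotient of $\VJ=\IndI(\PWJ)$, and $\VJ$ depends only on $\rep$ (through $\DZero$ and the element $\underline{Y}^{-\underline{i}}v_J$), not on $\pi$ or $r$. Since $\VJ$ is multiplicity free by the proof of \cite[Lemma~3.48]{BHHMS2}, $\QJ$ is well defined as the unique quotient with the prescribed socle. By \Cref{rmk:QJ-angle-bracket}, $\VJ$ and $\QJ$ are isomorphic to the representations $V$ and $Q$ of \cite[Lemma~5.1]{Wang24} and \cite[Remark~5.2]{Wang24}. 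Hence (ii) and (iii) can be quoted verbatim from \emph{loc.\ cit.}, and so can the existence, uniqueness and constituents of $\Isigma$ in (iv). What remains is (i) and the isomorphism \eqref{eq:I-restr-iso}.

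For (i), I fix a nonzero $w\in\V$ with line $L=\F w$ and restrict $\pJ$ to $L\otimes_{\F}\VJ$. By \eqref{eq:pJ-image}, the image is $\langle \GR\cdot \Pi\ip^{-1}(w\otimes\underline{Y}^{-\underline{i}}v_J)\rangle$. This is exactly the image that Wang studies when $r=1$, provided I identify $\pi$ restricted to this line with a minimal-type object. To make this precise, I first determine the socle of $\pJ(L\otimes\VJ)$, and I expect this to be the main obstacle. The element $\Pi v_J$ lies in $\DOne$ with eigencharacter $\chi_J^s$. By \Cref{lemma:beta-vector}, applied with $J_1=J_2=\emptyset$ and $\mu$ corresponding to $\chi_J^s$, the image of $L\otimes\IndI\chi_J^s$ has socle $\ip^{-1}(L\otimes\sigma_{(J-1)^{\mathrm{ss}}})$. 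To pass from $\IndI\chi_J^s$ to all of $\VJ$, I will show, as in the last paragraph of the proof of \Cref{lemma:beta-vector}, that no other constituent of $\VJ$ lying in $\Wr$ can map to the socle. The input is that the relevant characters of $\PWJ$ do not occur in $\pi^{I_1}$, together with hypothesis \ref{hypothesis:iv} and \cite[Proposition~4.2]{Bre14}. Once the socle of the image is the irreducible weight $\sigma_{(J-1)^{\mathrm{ss}}}$ tensored with $L$, multiplicity-freeness of $\VJ$ shows that $\pJ|_{L\otimes\VJ}$ factors through the unique quotient with that socle, namely $L\otimes\QJ$, and that the induced map is injective. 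Summing over a basis of $\V$ then gives the factorisation $\V\otimes\QJ\to\pi$.

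It is still necessary to check that this summed map $\pbJ$ is injective. For this I use \cite[Lemma~3.1.5(ii)]{Ber}, applied to the socle $\V\otimes\sigma_{(J-1)^{\mathrm{ss}}}$: the image of $\pbJ$ on the socle is $\ip^{-1}(\V\otimes\sigma_{(J-1)^{\mathrm{ss}}})$, which has dimension $r\dim\sigma$. Injectivity on the socle then gives injectivity of $\pbJ$. Uniqueness of the factorisation is clear because $\V\otimes\VJ\to\V\otimes\QJ$ is surjective.

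For \eqref{eq:I-restr-iso}, the restriction map is injective because $\Isigma$ has irreducible socle $\sigma_{(J-1)^{\mathrm{ss}}}$, which occurs with multiplicity one. For surjectivity, (i) gives, for each $w$, the embedding $\Isigma\subseteq\QJ\xrightarrow{\pbJ|_{w}}\pi$, and its restriction to the socle is the vector in $\Hom_{\GR}(\sigma_{(J-1)^{\mathrm{ss}}},\pi)\cong\V$ corresponding to $w$, via $\ip$ and the multiplicity-one property of $\DZero$. The dimensions then match, both sides being $r$.
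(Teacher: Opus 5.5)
\textbf{Gap in (iv).} You justify injectivity of the restriction map $\Hom_{\GR}(\Isigma,\pi)\to\Hom_{\GR}(\sigma_{(J-1)^{\mathrm{ss}}},\pi)$ by saying that $\Isigma$ has irreducible socle $\sigma_{(J-1)^{\mathrm{ss}}}$, occurring with multiplicity one. That does not suffice. A homomorphism vanishing on the socle is just a homomorphism out of $\Isigma/\sigma_{(J-1)^{\mathrm{ss}}}$, and the shape of the socle says nothing about those. Nothing in your argument excludes, say, the cosocle $\sigma_{\underline{m}}$ lying in $\Wr$. In that case $\Isigma\twoheadrightarrow\sigma_{\underline{m}}\hookrightarrow\pi$ would be a nonzero map restricting to zero on the socle, and $\dim_{\F}\Hom_{\GR}(\Isigma,\pi)$ would exceed $r$.

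What you need is $\Hom_{\GR}(\Isigma/\sigma_{(J-1)^{\mathrm{ss}}},\pi)=0$. The paper gets this in two steps:
\begin{itemize}
\item By the proof of \cite[Lemma~5.1(iii)]{Wang24}, any constituent of $\Isigma$ lying in $\Wr$ must occur in $\Dx[\sigma_{(J-1)^{\mathrm{ss}}}]$. By multiplicity freeness of $\DZero$ it therefore equals $\sigma_{(J-1)^{\mathrm{ss}}}$.
\item Every Serre weight in $\socR\pi\cong\V\otimes_{\F}\socR\DZero$ lies in $\Wr$, so a dévissage on $\JH(\Isigma/\sigma_{(J-1)^{\mathrm{ss}}})$ gives the vanishing.
\end{itemize}
With this in place, your surjectivity argument is the paper's and finishes the proof. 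That argument uses the embeddings $\pbJ|_{\F w\otimes\Isigma}$ for $w\in\V$, whose restrictions to the socle exhaust $\V$.

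\textbf{The rest.} Your treatment of (i)--(iii) is sound. In (i) your last step differs from the paper's. You obtain $\V\otimes_{\F}\KJ\subseteq\ker\pJ$ by summing over a basis, then get injectivity of $\pbJ$ from a dimension count on the socle $\V\otimes_{\F}\sigma_{(J-1)^{\mathrm{ss}}}$. The paper instead proves $\ker(\pJ|_{\F w\otimes\VJ})=\F w\otimes_{\F}\KJ$ for every line and invokes \cite[Lemma~3.1.5(i)]{Ber}. Your version is more elementary and equally valid.

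Two minor points:
\begin{itemize}
\item You only assert that the characters $\chi_J^s\alpha^{\underline{i}'}$ ($\underline{0}<\underline{i}'\le\underline{i}$) of $\PWJ$ are absent from $\pi^{I_1}$; you do not prove it. It is true, and combined with \cite[Proposition~4.2]{Bre14} it yields exactly the Claim $\Wr\cap\JH(\VJ)\subseteq\JH(\IndI\chi_J^s)$ in the proof of \cite[Lemma~5.1(i)]{Wang24}, which the paper cites directly. Hypothesis~\ref{hypothesis:iv} plays no role here.
\item \Cref{lemma:beta-vector} was proved in \Cref{sec:K1-inv} under the nonsplit assumption, while the present lemma allows $\rep$ split. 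It is cleaner to use \cite[Lemma~4.1(iii)]{Wang24}, as the paper does. That reference also supplies the identification of the socle with $\sigma_{(J-1)^{\mathrm{ss}}}$, which you take for granted.
\end{itemize}
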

\begin{proof}
(i)
Fix a nonzero $w \in \V$ and denote by $\psi_w$ the restriction
of $\pJ$ to $\F w \otimes_{\F} \VJ$.
We prove that $\ker(\psi_w) = \F w \otimes_{\F} \KJ$,
which is enough to conclude
by \cite[Lemma~3.1.5(i)]{Ber} applied to 
$S = \F [\GR]$, $D = \VJ$, $M_1 = \V \otimes_{\F} \KJ$, $M_2 = \ker \pJ$
(by the proof of 
\cite[Lemma~3.48]{BHHMS2},
the $\GR$-representation $\VJ$ is multiplicity free).

The Claim in the proof of \cite[Lemma~5.1(i)]{Wang24}
(recall that $\VJ$ is isomorphic to the 
$\GR$-representation $V$ of the Claim, 
cf.\ \Cref{rmk:QJ-angle-bracket})
gives us the inclusion
$\Wr \cap \JH(\VJ) \subseteq \JH \left( \IndI (\chi_J^s) \right)$.
In particular, for every Serre weight $\sigma$
in the socle of $\im (\psi_w) \subseteq \pi$ we have
$\sigma \in \Wr \cap \JH(\VJ) \subseteq \JH \left( \IndI (\chi_J^s) \right)$,
so
\begin{equation}
  \label{eq:socle-equality}
\socR \im(\psi_w) = \socR \psi_w 
\left(\F w \otimes_{\F} \IndI (\chi_J^s) \right),
\end{equation}
and we are left to show that \eqref{eq:socle-equality}
is isomorphic to $\sigma_{(J-1) ^{\mathrm{ss}}}$,
which by \cite[Lemma~2.1(ii)]{Wang24}
is isomorphic to $F(\mathfrak{t}_{\lambda_J}
(- \underline{e}^{J \triangle (J -1 ) ^{\mathrm{ss}}}))$,
and so is parametrised by 
$J \triangle (J -1 ) ^{\mathrm{ss}}$
under \eqref{eq:P-bijection-1}
and \eqref{eq:P-bijection-2}
(cf.\ also \Cref{rmk:Wang-notation}).

If we denote by $\psi_w'$ the restriction of
$\psi_w$ 
to $\F w \otimes_{\F} \IndI(\chi_J^s) \to \pi^{K_1 }$,
then using the fact that 
$\IndI (\chi_J^s)$ is $K_1 $-invariant
we see that $\psi_w'$ takes values in
$ \pi^{K_1 }$.
Notice that $\psi_w$ is nonzero, 
otherwise by Frobenius reciprocity
we would deduce that 
$\pJo (\F w \otimes_{\F} \PWJ) = 0$,
(cf.\ \eqref{eq:psi0-def}),
and this in turn would imply 
(upon applying $\Pi^{-1}$ to $\pJo$,
cf.\ the line before \eqref{eq:psi0-def}) 
that the inclusion
$\F w \otimes_{\F} \WJ \subseteq 
\F w \otimes_{\F} \DZero \xrightarrow{\ip^{-1}} \pi$
is zero, contradiction.
In particular, 
$\socR \im (\psi_w') 
\overset{\eqref{eq:socle-equality}}{= }
\socR \im (\psi_w) \neq 0$,
which implies $\im (\psi_w ') \neq 0$.
By Frobenius reciprocity,
we see that
$\psi_w'(\F w \otimes_{\F} \chi_J^s)
\subseteq \pi^{K_1}|_I$
is also nonzero, 
hence $\psi_w'(\F w \otimes_{\F} \chi_J^s)$ 
is necessarily isomorphic to $\chi_J^s$.
By \cite[Lemma~4.1(iii)]{Wang24},
we deduce that $\ip \circ \psi_w'$ takes values in 
$\F w \otimes_{\F} \Dx[\sigma_{(J-1) ^{\mathrm{ss}}}]$.
This forces
$\ip \left( \socR \im (\psi_w') \right)
= \socR \im (\ip \circ \psi_w') 
= \socR \left( \F w \otimes_{\F} \Dx[\sigma_{(J-1) ^{\mathrm{ss}}}] \right)
= \F w \otimes_{\F}\sigma_{(J-1) ^{\mathrm{ss}}}$,
completing the proof of (i).

(ii), (iii)
Recall that the proof of 
\cite[Lemma~3.48]{BHHMS2}
shows that the $\GR$-representation 
$\VJ$ is multiplicity free,
a fortiori
$\F w \otimes_{\F}\QJ$ 
(which by \Cref{lemma:Wang-first-lemma}(i)
is abstractly isomorphic to the $\GR$-representation
$\overline{V}$
defined in the proof 
\cite[Lemma~5.1(i)]{Wang24})
is also multiplicity free.
Then, the proof of \cite[Lemma~5.1(i),(ii)]{Wang24}
goes through without problems.

(iv)
Since $\QJ$
is abstractly isomorphic to the $\GR$-representation
$\overline{V}$
defined in the proof 
\cite[Lemma~5.1(i)]{Wang24},
we deduce from
\cite[Lemma~5.1(iii)]{Wang24} that 
$\QJ$ contains a 
(necessarily unique)
$\GR$-subrepresentation isomorphic to 
$I\left(\sigma_{(J-1) ^{\mathrm{ss}}}, \sigma_{\underline{m}}\right)$,
for all $\underline{m} \in \mathbb{Z}^f$
such that all $m_j$ are between 
$\delta_{j \in (J -1 ) ^{\mathrm{ss}}}$ and $c_j$;
we also deduce from
\cite[Lemma~5.1(iii)]{Wang24} that 
$\Isigma$ has constituents $\sigma_{\underline{b}}$
with each $b_j$ between 
$\delta_{j \in (J-1) ^{\mathrm{ss}}}$
and $m_j$.
We are only left to show \eqref{eq:I-restr-iso}.

It follows from the proof of
\cite[Lemma~5.1(iii)]{Wang24}
that any Jordan-H\"older constituent of 
$I\left(\sigma_{(J-1) ^{\mathrm{ss}}}, \sigma_{\underline{m}}\right)$
which is also an element of $\Wr$ must appear in 
$\Dx[\sigma_{(J-1) ^{\mathrm{ss}}}]$,
hence has to be $\sigma_{(J-1) ^{\mathrm{ss}}}$.
A dévissage on the Jordan-H\"older constituents of 
$\Isigma/\sigma_{ (J -1 )^{\mathrm{ss}}}$ 
shows that 
$\Hom_{\GR}(\Isigma /\sigma_{ (J -1 )^{\mathrm{ss}}}, \pi)
= 0$,
which implies that the restriction morphism
$\Hom_{\GR}(\Isigma, \pi) \hookrightarrow 
\Hom_{\GR}(\sigma_{(J-1) ^{\mathrm{ss}}}, \pi)
= \Hom_{\GR}(\sigma_{(J-1) ^{\mathrm{ss}}}, \pi^{K_1})
\overset{\ip}{\cong }\V$
is injective.
However, we also have an $\F$-linear embedding
$\V \hookrightarrow \Hom_{\GR}(\Isigma, \pi)$,
given by sending $w \in \V$ to the restriction of 
$\pbJ$ to $\F w \otimes_{\F}
I\left(\sigma_{(J-1) ^{\mathrm{ss}}}, \sigma_{\underline{m}}\right)
\subseteq \F w \otimes_{\F} \QJ$.
We conclude for dimension reasons.
\end{proof}

\begin{remark}
\label{rmk:interpret-inside-VQ}
Using \Cref{lemma:Wang-first-lemma}(i),
we can rewrite \eqref{eq:interpret-inside-VV}
as
\begin{equation}
  \label{eq:interpret-inside-VQ}
  \pbJ \left( 
  Y^{\underline{k}} {\mo}
  \cdot
  \big[\id, 
  \big( w \otimes \underline{Y}^{- \underline{i}'} v_J
  \big)\big]
  \right)
  = \underline{Y}^{\underline{k}} 
  {\mpi} \ip^{-1}
  \left( w \otimes \underline{Y}^{- \underline{i}'} v_J
  \right)
  \in \pi,
\end{equation}
where by an abuse of notation
we still denote by
$Y^{\underline{k}} {\mo}
\cdot \big[\id, 
\big( w \otimes \underline{Y}^{- \underline{i}'} v_J
\big)\big]$
the image of 
$Y^{\underline{k}} {\mo}
\cdot \big[\id, 
\big( w \otimes \underline{Y}^{- \underline{i}'} v_J
\big)\big] \in \V \otimes_{\F} \VJ$
inside $\V \otimes_{\F} \QJ$.
\end{remark}

\begin{remark}
  \label{rmk:uniqueness-of-Isigma}
Let $J, J' \subseteq \mathcal{J}$ be such that
$(J-1) ^{\mathrm{ss}} = (J' -1) ^{\mathrm{ss}}$,
and let $\underline{i}, \underline{i}' \in \mathbb{Z}^f$
be such that 
$\underline{0} \le \underline{i} \le  \underline{f} - \underline{e}^{J^{\mathrm{sh}}}$,
$\underline{0} \le \underline{i}' \le  \underline{f} - \underline{e}^{J^{\prime\mathrm{sh}}}$.
Fix a nonzero $w \in \V$.
A consequence of \Cref{lemma:Wang-first-lemma}(iv) is that,
whenever $\underline{m} \in  \mathbb{Z}^f$ 
satisfies the assumptions 
of \emph{loc.\ cit.}\ for the pairs
 $(J, \underline{i})$ and $(J', \underline{i}')$, 
then we have
\begin{equation}
  \label{eq:uniqueness-of-Isigma}
\pbJ[J, \underline{i}](\F w \otimes_{\F} I( \sigma_{(J-1) ^{\mathrm{ss}}}, \sigma_{\underline{m}}))
= 
\pbJ[J', \underline{i}'](\F w \otimes_{\F} I( \sigma_{(J'-1) ^{\mathrm{ss}}}, \sigma_{\underline{m}}))
\end{equation}
inside $\pi$.
\end{remark}

\paragraph{Extending \cite[§§1--4]{Wang24}:}
the hypothesis $r = 1$ is never used.
\paragraph{Extending \cite[§5]{Wang24}:}
as a general rule, 
the correct way to extend the results 
and proofs of \cite[§5]{Wang24}
is to fix a nonzero $w \in \V$ at the outset,
substitute $\underline{Y}^{- \underline{i}} v_J$
with $\ip^{-1} \left( w \otimes \underline{Y}^{- \underline{i}} v_J \right)$,
and $\underline{Y}^{\underline{k}}
\mpi (\underline{Y}^{- \underline{i}} v_J)$
with either side of
\eqref{eq:interpret-inside-VQ}
with $\underline{i} = \underline{i}'$
(keeping the same assumptions on $J$, $\underline{i}$).
Moreover,
$I (\sigma_{(J -1 ) ^{\mathrm{ss}}}, 
\sigma_{\underline{m}})$
should always be substituted by
$\pbJ (I (\sigma_{(J -1 ) ^{\mathrm{ss}}}, 
\sigma_{\underline{m}}))$
(keeping the same assumptions on $\underline{m}$).
From now on we will do this without further notice,
and only point out when more care is needed.
\begin{itemize}
\item 
\cite[Lemma~5.1]{Wang24}: addressed by \Cref{lemma:Wang-first-lemma}.
\item 
\cite[Proposition~5.4]{Wang24}: 
in the second line of the proof, 
we replace $\phi$ by $w \otimes \phi$
(cf.\ the paragraph after 
\eqref{eq:interpret-inside-VV}).
In the last sentence, 
we use that 
$\F w \otimes_{\F}\QJ[J, \underline{0}]$ 
is multiplicity free.
We conclude that there exists
a unique scalar 
$\mu_{J, (J-1) ^{\mathrm{ss}}} \in \F ^{\times}$
as in the statement of \emph{loc.\ cit.}
\end{itemize}
\begin{remark}
  \label{rmk:mu-uniform}
Notice that
$\mu_{J, (J-1) ^{\mathrm{ss}}} \in \F ^{\times}$
does not depend on the choice of 
$w \in \V \setminus \left\{ 0  \right\}$.
To do this, 
use \eqref{eq:interpret-inside-VQ}
to interpret (the analogue of)
\cite[Eq.~(16)]{Wang24}
as an identity inside 
$\F w \otimes_{\F} \QJ[J, \underline{0}]$,
and use that for all
$w_1 , w_2  \in \V \setminus \left\{ 0  \right\}$
there exists a $\GR$-equivariant isomorphism
$\F w_1  \otimes_{\F} \QJ[J, \underline{0}]
\xrightarrow{\sim} \F w_2  \otimes_{\F} 
\QJ[J, \underline{0}]$
(namely, the one obtained by tensoring
$\F w_1 \xrightarrow{\sim} \F w_2 $,
$w_1  \mapsto w_2 $, 
with the identity on $\QJ[J, \underline{0}]$)
that sends
$\mo\ip^{-1} \left( w_1 \otimes \Pi v_J \right)$
to
$\mo\ip^{-1} \left( w_2 \otimes \Pi v_J \right)$
for all $J \subseteq \mathcal{J}$.
\end{remark}

\begin{itemize}
\item 
\cite[Proposition~5.5]{Wang24}, 
\cite[Remark~5.6]{Wang24}, 
\cite[roposition~5.7]{Wang24}: 
their proofs go through without problems,
\emph{mutatis mutandis}.

\item 
\cite[Proposition~5.8]{Wang24}:
 following the paragraph before \cite[Lemma~5.9]{Wang24} 
we denote by 
$B_1 (w)$ (resp.\ $B_2 (w)$) 
the left-hand side (resp.\ right-hand side)
of (the analogue of) \cite[Eq.~(29)]{Wang24};
note that we have 
$B_1 (w) \in \pbJ(\F w \otimes_{\F} \QJ)$
and
$B_2 (w) \in \pbJ[J \setminus \{j_0 +2\}, \underline{i}']
(\F w \otimes_{\F} \QJ[J \setminus \{j_0 +2\},
\underline{i}'])$.
In the second paragraph of the proof
of \cite[Proposition~5.8]{Wang24},
we substitute
$B_1 , B_2 \in 
I(\sigma_{(J-1) ^{\mathrm{ss}}}, \sigma_{\underline{m}})
= 
I(\sigma_{(J-1) ^{\mathrm{ss}}}, \sigma_{\underline{m}'})
$
(cf.\ \cite[Lemma~5.9(i)]{Wang24}
for the definition of 
$\underline{m}, \underline{m}' 
\in \mathbb{Z}^f$)
with
\begin{equation*}
B_1 (w),
B_2 (w)\in \pbJ[J, \underline{i}] \left(  
\F w \otimes_{\F} I(\sigma_{(J-1) ^{\mathrm{ss}}}, \sigma_{\underline{m}})\right)
\overset{\eqref{eq:uniqueness-of-Isigma}}{= }
\pbJ[J \setminus \{j_0 +2\}, \underline{i}'] \left(  
\F w \otimes_{\F} I(\sigma_{(J-1) ^{\mathrm{ss}}}, \sigma_{\underline{m}'})\right),
\end{equation*}
in particular $B_2 (w) \in \pbJ(\F w \otimes_{\F} \QJ)$.
The rest of the proof goes through
without problems.

\item \cite[Proposition~5.10]{Wang24}:
following the proof of \emph{loc.\ cit.}\ we denote 
the left-hand side of 
(the analogue of) \cite[Eq.~(37)]{Wang24}
by $B(w) \in 
\pbJ[J, \underline{e}^{(J \cap J') ^{\text{nss}}}]
\left( \F w \otimes_{\F} \QJ[J, 
\underline{e}^{(J \cap J') ^{\text{nss}}}] \right)$.
In \cite[Eq.~(40)]{Wang24}
we substitute $\Dx[\sigma_{(J -1 ) ^{\mathrm{ss}}}]$
with 
$\ip^{-1}(\F w \otimes_{\F}
\Dx[\sigma_{(J -1 ) ^{\mathrm{ss}}}])$.
Observe that $I (\sigma_{(J -1 ) ^{\mathrm{ss}}},
\sigma_{J'})$
is $K_1 $-invariant by the inclusion
\cite[Eq.~(40)]{Wang24}.
By construction,
$\pbJ[J, \underline{e}^{(J \cap J') ^{\text{nss}}}]
|_{
\F w \otimes_{\F} I (\sigma_{(J -1 ) ^{\mathrm{ss}}},
\sigma_{J'})}
\linebreak
\in \Hom_{\GR}(\Isigma[\sigma_{J'}], \pi)$
corresponds to $w \in \V$
under \eqref{eq:I-restr-iso}.
It follows 
from \cite[Lemma~3.1.5(ii)]{Ber}
applied to $S = \F[\Gamma],
D = \DZero, W = \V, 
M = \pbJ[J, \underline{e}^{(J \cap J') ^{\text{nss}}}]
( \F w \otimes_{\F} I (\sigma_{(J -1 ) ^{\mathrm{ss}}},
\sigma_{J'}))$
that 
$ \pbJ[J, \underline{e}^{(J \cap J') ^{\text{nss}}}]
( \F w \otimes_{\F} I (\sigma_{(J -1 ) ^{\mathrm{ss}}},
\sigma_{J'}))
\subseteq \ip^{-1}(\F w \otimes_{\F}
\DZero
)$,
and even that 
$ \pbJ[J, \underline{e}^{(J \cap J') ^{\text{nss}}}]
( \F w \otimes_{\F} I (\sigma_{(J -1 ) ^{\mathrm{ss}}},
\sigma_{J'}))
\subseteq \ip^{-1}(\F w \otimes_{\F}
\Dx[\sigma_{(J -1 ) ^{\mathrm{ss}}}])$,
because $\sigma_{J'}$ is a 
Jordan-H\"older constituent of
$\Dx[\sigma_{(J -1 ) ^{\mathrm{ss}}}])$
(cf.\ the line after \cite[Eq.~(40)]{Wang24}).

The rest of the proof goes through
without problems.
We argue as in
\Cref{rmk:mu-uniform}
to show that $\mu_{J, J'}$
does not depend on $w$.
\end{itemize}
if $\Jrep = \emptyset $, define 
$\xJ[ \emptyset , \underline{r}](w) \defeq 
\mu_{ \emptyset , \emptyset }^{-1} \underline{Y}^{\underline{p} - \underline{1} - \underline{r}} \Pi
\ip^{-1}(w \otimes v_{ \emptyset })$,
so that $\underline{Y}^{\underline{r}} \xJ[ \emptyset , \underline{r}](w) 
= \ip^{-1}(w \otimes v_{ \emptyset })$
by (the $r \ge 1$ analogue of) \cite[Proposition~5.4]{Wang24}
applied to $J = \emptyset $.
This agrees with the definition of $\xJ(w)$
given in \Cref{thm:xJi-def} below.

\begin{itemize}
\item \cite[Proposition~5.12]{Wang24}:
after replacing $\phi$
by $w \otimes \phi$
(both times it appears in the proof)
like we did in the proof of 
\cite[Proposition~5.4]{Wang24},
the proof goes through without problems.
We argue as in
\Cref{rmk:mu-uniform}
to show that $\mu_{J, J'}$
does not depend on $w$.
\end{itemize}

\paragraph{Extending \cite[§6]{Wang24}:}
in addition to the previous indications,
as a general rule
the correct way to extend the results 
and proofs of 
\cite[§6]{Wang24}
is to fix a nonzero $w \in \V$ at the outset,
and substitute $\xJ$ with $\xJ(w)$
(keeping the same assumptions on $J$, $\underline{i}$).
From now on we will do this without further notice,
and only point out when more care is needed.

\begin{itemize}
\item \cite[Theorem~6.3]{Wang24}:
we report the extension for $r \ge 1$ below,
for ease of reference.
\end{itemize}
\begin{theorem}
  \label{thm:xJi-def}
Fix $w \in \V$.
Then, there exists a unique family 
$ \left\{ \xJ(w) \middle| J \subseteq \mathcal{J}, \ \underline{i} \in \mathbb{Z}^f \right\}$
of elements of $\pi$ satisfying the following properties:

\begin{enumerate}[(i)]
\item 
For each $J \subseteq \mathcal{J}$,
we have $\xJ[J, \underline{f}](w) = \ip^{-1} \big( w \otimes  \underline{Y}^{- (\underline{ f} - \underline{e}^{J ^{\mathrm{sh}}}) }v_J \big)$.

\item 
For each $J \subseteq \mathcal{J},\ \underline{i} \in \mathbb{Z}^f$ 
and $\underline{k} \in \mathbb{Z}_{\ge 0}^f$,
we have $\underline{Y}^{\underline{k}} \xJ(w) = \xJ[J, \underline{i} - \underline{k}](w)$.

\item 
For each $J \subseteq \mathcal{J}$ and $\underline{i} \in \mathbb{Z}^f$, 
we have
\[
\mpi
\xJ[J+1, \underline{i}](w) = 
\sum_{J ^{\mathrm{ss}} \subseteq J' \subseteq J} \varepsilon_{J'} \mu_{J+1, J'} \xJ[J', p \delta(\underline{i})+ \underline{c}^J + \underline{r}^{J \setminus J'}](w).
\]
\end{enumerate}
\end{theorem}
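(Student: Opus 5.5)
The plan is to transport the proof of \cite[Theorem~6.3]{Wang24} line by line, working inside the fixed line $L = \F w$. The case $w = 0$ is trivial: set $\xJ(0) \defeq 0$, and uniqueness is immediate from linearity. So we assume $w \neq 0$.

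Uniqueness comes first and is the easier half. Property (ii) shows that $\xJ(w)$ for $\underline{i} \le \underline{f}$ (componentwise) is forced by (i), namely $\xJ(w) = \underline{Y}^{\underline{f}-\underline{i}}\xJ[J,\underline{f}](w)$. For larger $\underline{i}$, property (iii) expresses $\mpi \xJ[J+1,\underline{i}](w)$ in terms of elements whose indices are roughly $p\delta(\underline{i})$. Following the inductive argument of \emph{loc.\ cit.}, we then argue by induction on $\max_j i_j$: the right-hand side of (iii) at level $\underline{i}$ involves indices at which the family is already determined. Since $\mpi$ acts injectively on $\pi$, the left-hand side, and hence $\xJ[J+1,\underline{i}](w)$, is determined. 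None of this uses $r=1$.

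Existence is the substantive half. We define $\xJ(w)$ for $\underline{i} \le \underline{f}$ by (i) and (ii), using the vectors $\underline{Y}^{-\underline{i}}v_J$ of \cite[Proposition~4.2]{Wang24} transported via $\ip^{-1}(w \otimes -)$. We then extend inductively to all $\underline{i}$ by the recursive formula (iii), exactly as in \cite[\S 6]{Wang24}. For this we need the $r\ge1$ versions of \cite[Propositions~5.4, 5.5, 5.7, 5.8, 5.10, 5.12]{Wang24}, which were established above linewise. The key point is that every relation we invoke is an identity inside $\pbJ(\F w \otimes_{\F} \QJ)$, or inside $\ip^{-1}(\F w \otimes_{\F} \DZero)$, and therefore only involves the fixed $w$. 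The compatibility checks of \cite[\S 6]{Wang24} consist of verifying that (ii) and (iii) are consistent, i.e.\ that applying $\underline{Y}^{\underline{k}}$ to both sides of (iii) gives the instance of (iii) at $\underline{i}-\underline{k}$, together with the base-case identities for $\underline{i}\le \underline{f}$. These checks reduce, via \eqref{eq:interpret-inside-VQ}, to the identities of \S 5 with $\underline{Y}^{\underline{k}}\mpi(\underline{Y}^{-\underline{i}}v_J)$ replaced by $\underline{Y}^{\underline{k}}\mpi\ip^{-1}(w\otimes \underline{Y}^{-\underline{i}}v_J)$.

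The main obstacle is ensuring that the structure constants $\mu_{J,J'}$ and the signs $\varepsilon_{J'}$ appearing in (iii) are the same for every $w$. Without this, the family would not be well defined as a single formula, and different lines could produce inconsistent recursions. This is exactly what \Cref{rmk:mu-uniform} provides, together with its analogues stated for \cite[Propositions~5.10, 5.12]{Wang24}. The argument uses the $\GR$-isomorphisms $\F w_1\otimes_{\F}\QJ \cong \F w_2\otimes_{\F}\QJ$, combined with the multiplicity-freeness of $\F w \otimes_{\F} \QJ$ from \Cref{lemma:Wang-first-lemma}. We also need \eqref{eq:uniqueness-of-Isigma}, so that elements produced from different pairs $(J,\underline{i})$ with the same $w$ land in the same subspace $\pbJ(\F w\otimes_{\F} I(\sigma_{(J-1)^{\mathrm{ss}}},\sigma_{\underline{m}}))$ and can be compared without mixed terms. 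With these in hand, the remaining verifications are those of \cite[Theorem~6.3]{Wang24} verbatim.
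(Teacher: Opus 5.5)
Your framing (transport \cite[Theorem~6.3]{Wang24} linewise with $w$ fixed, using \Cref{rmk:mu-uniform} so that the $\mu_{J+1,J'}$ do not depend on $w$) is what the paper does. However, the induction you describe runs backwards. In (iii) for $(J,\underline{i})$, the left-hand side $\mpi\,\xJ[J+1,\underline{i}](w)$ sits at index $\underline{i}$. Every term on the right sits at index $p\delta(\underline{i})+\underline{c}^J+\underline{r}^{J\setminus J'}$, roughly $p$ times larger. So in an induction on $\max_j i_j$ the right-hand side is \emph{not} yet determined. Inverting $\mpi$ then only returns the small-index element $\xJ[J+1,\underline{i}](w)$, which for $\underline{i}\le\underline{f}$ you had already fixed via (i) and (ii). As written, nothing in your argument pins down any $\xJ(w)$ with $\max_j i_j>f$.

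The missing idea is to read (iii) as a triangular system in $J'$ and solve for its $J'=J$ term. That term has index $p\delta(\underline{i}')+\underline{c}^J$ and invertible coefficient $\varepsilon_J\mu_{J+1,J}$. For $\max_j i_j>f$, write $\underline{i}=p\delta(\underline{i}')+\underline{c}^J-\underline{\ell}$ with $\underline{0}\le\underline{\ell}\le\underline{p}-\underline{1}$. Applying $\underline{Y}^{\underline{\ell}}$ to (iii) at $(J,\underline{i}')$ and using (ii) forces
\[
\varepsilon_J\mu_{J+1,J}\,\xJ(w)=\underline{Y}^{\underline{\ell}}\mpi\,\xJ[J+1,\underline{i}'](w)-\sum_{J^{\mathrm{ss}}\subseteq J'\subsetneq J}\varepsilon_{J'}\mu_{J+1,J'}\,\xJ[J',\underline{i}+\underline{r}^{J\setminus J'}](w),
\]
which is \eqref{eq:xJ-def-2}. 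Here $\underline{i}'$ has smaller maximum than $\underline{i}$. The terms with $J'\subsetneq J$, however, occur at index $\underline{i}+\underline{r}^{J\setminus J'}$, which is no smaller than $\underline{i}$, so they must already be known at \emph{all} indices. This is why the paper uses a double induction: outer on $|J|$ (treating all $J$ of a given size together, since $|J+1|=|J|$) and inner on $\max_j i_j$, with base case \eqref{eq:xJ-def-1}. Your proposal mentions neither the induction on $|J|$ nor the need to divide by $\varepsilon_J\mu_{J+1,J}$.

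The same recursion is the definition for existence. The real content there is checking that the recursively defined family satisfies (ii) and (iii) at all indices, using the linewise versions of \cite[\S\S5--6]{Wang24}.

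Finally, for $w=0$ do not appeal to linearity. \Cref{rmk:xJ-linear} is deduced \emph{from} the uniqueness in this theorem, so that argument is circular. The inductive argument above handles $w=0$ directly.
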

\begin{remark}
  \label{rmk:xJ-linear}
We observe that $\xJ(w)$ 
depends linearly on $w \in \V$,
in the sense that 
$\label{eq:xJ-linear}
\xJ(\lambda_1 w_1 + \lambda_2 w_2) = \lambda_1 \xJ(w_1) + \lambda_2 \xJ(w_2)$
for all $w_1 , w_2 \in \V$,
$\lambda_1 , \lambda_2 \in \F$.
Indeed, the right-hand side
of the previous equation
satisfies (i), (ii), (iii) of 
\Cref{thm:xJi-def} 
(applied to $w = \lambda_1 w_1 + \lambda_2 w_2$),
and we conclude from the uniqueness in
\Cref{thm:xJi-def}.
In particular, $\xJ (0) = 0$ for all
$J \subseteq \mathcal{J}$, 
$\underline{i} \in \mathbb{Z}^f$.
\end{remark}

As in \cite{Wang24}, 
for $J \subseteq \mathcal{J}$, $\underline{i} \in  \mathbb{Z}^{f}$,
one defines $\xJ(w)$
with a double induction on $|J|$ and on $\max _j i_j$
by the formulas 
\begin{equation}
  \label{eq:xJ-def-1}
\xJ(w) \defeq 
\begin{cases}
\ip^{-1} 
\left( w \otimes \underline{Y}^{- 
\big( \underline{i} - 
\underline{e}^{J ^{\mathrm{sh}}} \big)} 
v_J 
\right)
& \text{if }\underline{i} \ge \underline{e} ^{J^{\mathrm{sh}}}
, \\
0 & \text{otherwise}
\end{cases}
\end{equation}
for $\max_j i_j \le f$,
and by
\begin{equation}
  \label{eq:xJ-def-2}
\varepsilon_J \mu_{J+1, J} \xJ(w) \defeq 
\underline{Y}^{  \underline{\ell} }
\mpi
\xJ[J+1, \underline{i}'](w)
- 
\sum_{J ^{\mathrm{ss}} \subseteq J' \subsetneq J}
\varepsilon_{J'} \mu_{J+1, J'} \xJ[J', \underline{i} + \underline{r}^{J \setminus J'}](w)
\end{equation}
for $\max_j i_j > f$,
where $\underline{i}', \underline{\ell} \in \mathbb{Z}^f$
are uniquely determined by 
$\underline{i} = p \delta(\underline{i}') + \underline{c}^J - \underline{\ell}$,
$\underline{0} \le \underline{\ell} \le \underline{p} - \underline{1}$.
\begin{itemize}
\item \cite[Lemma~6.7]{Wang24}:
if we define
$z_J(w)$, $w_J(w)$ as in
(the analogue of)
\cite[Eq.~(61)]{Wang24},
then we replace $z_J, w_J \in \DZero$
in the statement of
\cite[Lemma~6.7]{Wang24} with
$z_J(w), w_J(w) \in \ip^{-1} 
\left( \F w \otimes \DZero \right)$.
We argue as in the $r \ge 1$ analogue of
\cite[Proposition~5.10]{Wang24}
to substitute 
$I (\sigma_{J ^{\mathrm{ss}}}, 
\sigma_{\underline{e}^{J ^{\mathrm{ss}}}
+ \underline{e}^{J ^{\mathrm{nss}}}})
\subseteq \Dx[\sigma_{J ^{\mathrm{ss}}}]$
with
$\pbJ[J, \underline{e}^{(J \cap J') ^{\mathrm{nss}}}]
(\F w \otimes_{\F} I(\sigma_{J ^{\mathrm{ss}}}, 
\sigma_{\underline{e}^{J ^{\mathrm{ss}}}
+ \underline{e}^{J ^{\mathrm{nss}}}}))
\subseteq \ip^{-1} (\F w \otimes_{\F}
\Dx[\sigma_{J ^{\mathrm{ss}}}])$
in the first paragraph of (i),
and to substitute
$I (\sigma_{(J') ^{\mathrm{ss}}}, 
\sigma_{\underline{m}})
\subseteq \Dx[\sigma_{(J') ^{\mathrm{ss}}}]$
with
$\pbJ[J, \underline{e}^{(J \cap J') ^{\mathrm{nss}}}]
(I (\sigma_{(J') ^{\mathrm{ss}}}, 
\sigma_{\underline{m}}))
\subseteq \ip^{-1} (\Dx[\sigma_{(J') ^{\mathrm{ss}}}])$
in the first paragraph of (ii).

The rest of the proof goes through without problems.

\end{itemize}

\paragraph{Extending \cite[§7]{Wang24}:}
after substituting $\ip^{-1}(w \otimes v_J)$ for $v_J$
and $\xJ(w)$ for $\xJ$ as previous indicated,
everything extends without problems.
In particular, we have the following generalisation
of \cite[Theorem~7.5]{Wang24}.
\begin{theorem}[Finiteness condition]
  \label{thm:finiteness-condition}
For $J \subseteq \mathcal{J}$, $w \in \V$, and $M \in \mathbb{Z}$,
the set $ \{\underline{i}  \in \mathbb{Z}^{f} \mid \xJ(w) \neq 0,\ \| \underline{i}\| = M\} $
is finite.
\end{theorem}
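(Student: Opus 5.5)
The plan is to reduce to the case $r=1$, i.e.\ to \cite[Theorem~7.5]{Wang24}, using the linearity of $w \mapsto \xJ(w)$ and the fact that the whole construction "respects lines". Write the finite set $S(J,M,w) \defeq \{\underline{i} \in \mathbb{Z}^f \mid \xJ(w) \neq 0,\ \|\underline{i}\| = M\}$.

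\textbf{Step 1: reduction to basis vectors.} By \Cref{rmk:xJ-linear}, if $w = \sum_k a_k e_k$ in a basis $e_1, \dots, e_r$ of $\V$, then $\xJ(w) = \sum_k a_k \xJ(e_k)$. Hence $S(J,M,w) \subseteq \bigcup_k S(J,M,e_k)$. It therefore suffices to treat a single nonzero $w$, and in particular a fixed line $L = \F w$.

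\textbf{Step 2: working inside one line.} Fix a nonzero $w$ and consider the $\GF$-subrepresentation of $\pi$ generated by the elements $\ip^{-1}(w \otimes \DZero)$. By the recursive definition \eqref{eq:xJ-def-1}--\eqref{eq:xJ-def-2}, every $\xJ(w)$ is obtained from the vectors $\ip^{-1}(w \otimes \underline{Y}^{-\underline{i}}v_J)$ by repeatedly applying:
\begin{itemize}
\item $\mpi$,
\item elements of $\F[\![N_0]\!]$,
\item linear combinations with coefficients $\varepsilon_{J'}\mu_{J+1,J'}$.
\end{itemize}
By \Cref{rmk:mu-uniform} and its analogues, these coefficients are independent of $w$.

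I then follow the proof of \cite[Theorem~7.5]{Wang24} verbatim. As I recall it, that proof bounds the $\underline{i}$ with $\xJ \neq 0$ by exploiting:
\begin{itemize}
\item the relation (iii) of \Cref{thm:xJi-def},
\item the vanishing $Y_j^{i_j+1}\,\underline{Y}^{-\underline{i}}v_J = 0$,
\item an inductive bound on $\max_j i_j$ in terms of $\|\underline{i}\|$, derived from the degree relation $\underline{i} = p\delta(\underline{i}') + \underline{c}^J - \underline{\ell}$ together with the fact that vectors of large negative degree in some coordinate vanish.
\end{itemize}
All the inputs used there are identities between the $\xJ(w)$, or statements about $\DZero$ transported via $\ip^{-1}(w \otimes -)$. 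Both kinds extend, since by the preceding discussion all the results of \cite[\S\S5--6]{Wang24} (Lemma~6.7 in particular) hold with $\xJ(w)$ in place of $\xJ$.

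\textbf{Step 3: the obstacle.} The main obstacle is to make sure that \cite[\S7]{Wang24} does not invoke multiplicity one implicitly. For example, it might argue that some $\GR$-subrepresentation of $\pi$ is unique, or that some $\Hom$ space is one-dimensional. If so, I would replace that uniqueness by uniqueness inside $\ip^{-1}(\F w \otimes \DZero)$, or inside $\pbJ(\F w \otimes_{\F}\QJ)$, using \cite[Lemma~3.1.5]{Ber} and \eqref{eq:I-restr-iso}, exactly as was done for \cite[Proposition~5.10]{Wang24}.

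Once this is done, the bound obtained for $S(J,M,w)$ is uniform in $w$: it depends only on $J$, $M$, $f$ and the $r_j$. Finiteness then follows directly.
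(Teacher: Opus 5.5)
Your proposal is essentially the paper's argument. The paper proves this statement only by remarking that \cite[\S 7]{Wang24}, and in particular the proof of \cite[Theorem~7.5]{Wang24}, goes through once $v_J$ is replaced by $\ip^{-1}(w \otimes v_J)$ and $x_{J,\underline{i}}$ by $\xJ(w)$, using the already-extended results of \cite[\S\S 5--6]{Wang24}. That is exactly your Steps 2--3; your Step 1 reduction to basis vectors via \Cref{rmk:xJ-linear} is correct but redundant, since Step 2 already treats an arbitrary nonzero $w$ and $w = 0$ gives the empty set.
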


\paragraph{Extending \cite[§8]{Wang24}:}
for $w \in \V$, we define the sequence
$\xJ[J](w) = (\xJ[J, k] (w))_{k \ge 0}$
as $\xJ[J,k](w) \defeq \xJ[J, \underline{k}](w)$.
By (the $r \ge 1$ analogue of) \cite[Proposition~B.4]{Wang24},
we have
$\xJ[J,k](w) \in \pi \left[ \mI^{kf - |J ^{\mathrm{sh}}|+1} \right]$
for all $k \ge 0$,
hence $\xJ[J,k](w) \in \Hom_{\F} ^{\text{cont}}(\DA(\pi), \F)$,
by the description of \cite[Proposition~3.2.3]{BHHMS3}.

Let $(w_s)_{1 \le s \le r}$ be any basis of $\V$,
and recall the $A$-linear injection
\begin{equation}
  \label{eq:mu-ast}
\mu_{*} \colon \Hom_{A}(\DA (\pi), A) \hookrightarrow 
\Hom_{\F} ^{\mathrm{cont}}(\DA (\pi), \F)
\end{equation}
of \cite[Theorem~3.7.1]{BHHMS3}
given by composition with the 
map $\mu \in \Hom ^{\mathrm{cont}}_{\F}
(A, \F)$
of \cite[Proposition~3.3.1]{BHHMS3},
and where the $A$-module structure 
on the right-hand side of 
\eqref{eq:mu-ast}
is described by
\cite[Lemma~3.8.1(i)]{BHHMS3}
(whose proof does not use $r = 1$).

Recall from \cite[\S 3.1.1]{BHHMS2}
that $\F [\![N_0 ]\!]$
comes with the $\mathfrak{m}_{N_0 }$-adic
valuation $v_{N_0}$,
which defines an increasing filtration on
$\F [\![N_0 ]\!] $.
This filtration is inherited by
$\F [\![N_0 ]\!] [1/(Y_0 \dots Y_{f-1})]$
and by $A$,
which is thus a complete filtered ring,
and we denote its associated graded ring by $\gr (A)$.
Moreover, 
$A \otimes_{\F [\![N_0 ]\!] } \pi^\vee$
can be endowed with the tensor product filtration,
where $\pi^\vee $ is given the 
$\mI$-adic filtration,
or more generally any good filtration on 
$\pi^\vee $
in the sense of \cite[Definition~I.5.1]{LvO}
(cf.\ also the paragraph before
\cite[Proposition~3.12]{BHHMS2}).
The filtration on
$A \otimes_{\F [\![N_0 ]\!] } \pi^\vee$
is inherited by
$\DA (\pi)$,
and we denote its associated graded $\gr (A)$-module
by $\gr (\DA (\pi))$.
The following theorem,
which computes the rank of $\DA (\pi)$,
extends \cite[Theorem~8.2]{Wang24}
for $r \ge 1$.
\begin{theorem}
  \label{thm:xJ-basis}
The sequences $\{x_J (w_s) \mid J \subseteq \mathcal{J},\ 1 \le s \le r\}$
are contained in the image of the
$A$-linear injection
\[
\mu_{*} \colon \Hom_{A}(\DA (\pi), A) \to 
\Hom_{\F} ^{\mathrm{cont}}(\DA (\pi), \F)
\]
and form an $A$-basis of $ \Hom_{A}(\DA (\pi), A)$.
In particular, $\DA (\pi)$ is a free $A$-module of rank $r \cdot 2^f$.
\end{theorem}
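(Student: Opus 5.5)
We would follow the proof of \cite[Theorem~8.2]{Wang24} linewise and then glue the lines together through linearity in $w$. The argument has three steps.

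\textbf{Step 1: the sequences lie in the image of $\mu_*$.} Fix $J \subseteq \mathcal{J}$ and $w \in \V$. By \Cref{thm:xJi-def}(ii), $Y_j \xJ[J,k+1](w)$ is obtained from $\xJ[J,k](w)$ by a shift in the index. By \Cref{thm:xJi-def}(iii), $\mpi$ relates $\xJ[J+1,\cdot](w)$ to a combination of $\xJ[J',\cdot](w)$. Together with \Cref{thm:finiteness-condition}, these give exactly the compatibility and finiteness conditions that \cite[Theorem~3.7.1]{BHHMS3} (as exploited in \cite[\S 8]{Wang24}) requires for a sequence in $\Hom_{\F}^{\mathrm{cont}}(\DA(\pi),\F)$ to come from $\Hom_A(\DA(\pi),A)$. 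None of the relevant statements in \emph{loc.\ cit.}\ use $r=1$, since each concerns a single sequence. So for every $s$ and $J$ we get an element $\widehat{x}_J(w_s) \in \Hom_A(\DA(\pi),A)$ with $\mu_*(\widehat{x}_J(w_s)) = \xJ[J](w_s)$. By \Cref{rmk:xJ-linear} and injectivity of $\mu_*$, the map $w \mapsto \widehat{x}_J(w)$ is $\F$-linear.

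\textbf{Step 2: the rank of $\DA(\pi)$.} We expect $\DA(\pi)$ to have rank $r\cdot 2^f$ as an $A$-module. The plan is to compute the graded object. By hypothesis \ref{hypothesis:ii}, $\grm(\pi^\vee)$ is killed by $J$, so its multiplicity at $\mathfrak{p}_0$ is defined. Hypotheses \ref{hypothesis:i} and \ref{hypothesis:ii} give $\pi[\mI^3] \cong \V \otimes (\text{the } r=1 \text{ model})$. Following \cite[Theorem~3.30, Corollary~3.36]{BHHMS2} (the freeness statement for $r$ copies), $\gr(\DA(\pi))$ is then a free $\gr(A)$-module of rank $\mpp(\grm(\pi^\vee)) = r\cdot|\mathscr{D}^{\mathrm{ss}}| = r\cdot 2^f$. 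Hence $\DA(\pi)$ is free of rank $r 2^f$, and so is its $A$-dual.

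\textbf{Step 3: the $\widehat{x}_J(w_s)$ form a basis.} This is the main obstacle. As in \cite[\S 8]{Wang24}, we pass to associated graded modules. The leading terms of the $\widehat{x}_J(w_s)$ in $\gr\Hom_A(\DA(\pi),A) \cong \Hom_{\gr A}(\gr\DA(\pi),\gr A)$ are governed by the elements $\xJ[J,\underline{f}](w_s) = \ip^{-1}\bigl(w_s\otimes \underline{Y}^{-(\underline{f}-\underline{e}^{J^{\mathrm{sh}}})}v_J\bigr)$. Via $\ip$ these lie in $\F w_s \otimes \DZero$, and they are pairwise independent because the vectors $v_J$ have distinct $H$-eigencharacters and the lines $\F w_s$ are independent. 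Wang's argument shows that for fixed $w$ the $2^f$ leading terms are $\gr A$-linearly independent. Independence across $s$ comes from the $\V$-tensor decomposition $\gr\DA(\pi) \cong \V \otimes_{\F} (\text{rank-}2^f\text{ model})$, which is compatible with $\ip$. So the $r2^f$ leading terms form a $\gr A$-basis of a free module of the same rank. A standard filtered argument (\cite[Lemma~I.6.2]{LvO}-type, using completeness of $A$) then lifts this to an $A$-basis of $\Hom_A(\DA(\pi),A)$. The delicate point is checking that no mixed terms between different $w_s$ appear in the leading terms. We would handle it by proving directly that every construction respects the decomposition $\ip^{-1}(\F w\otimes \DZero)$, using \cite[Lemma~3.1.5]{Ber}.
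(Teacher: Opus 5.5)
Your skeleton matches the paper's: you get the elements of $\Hom_A(\DA(\pi),A)$ from \Cref{thm:finiteness-condition} and \cite[Lemma~3.3.7]{BHHMS3}, then pass to $\gr$ and lift with \cite{LvO}. But Step~3 does not close as written.

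\textbf{The gap in Step~3.} You conclude that the $r2^f$ leading terms form a basis because they are linearly independent in a free $\gr(A)$-module of rank $r2^f$. That inference fails. For $f\ge 2$, $\gr(A)\cong\F[y_0^{\pm1},\dots,y_{f-1}^{\pm1}]$ is not a graded field: $y_0-y_1$ is homogeneous, nonzero and not a unit, so $\{(y_0-y_1)b_1,b_2,\dots\}$ is independent but not a basis. What you need is that the pairing matrix against an explicit basis of $\gr(\DA(\pi))$ is invertible. The paper gets this in three moves:
\begin{enumerate}[(1)]
\item Localising the isomorphism $\theta$ of \cite[Eq.~(14)]{BHHMS4} at $\prod_j y_j$ gives the $\gr(A)$-basis $e_{J,s}=\ip^\vee(u_s\otimes e_J)$ of $\gr(\DA(\pi))$.
\item It replaces $x_J(w_s)$ by $x'_J(w_s)\defeq\underline{Y}^{-\underline{e}^{J^{\mathrm{sh}}}}x_J(w_s)$. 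By \eqref{eq:xJ-def-1}, the $0$-th term of $x'_J(w_s)$ is $x_{J,\underline{e}^{J^{\mathrm{sh}}}}(w_s)=\ip^{-1}(w_s\otimes v_J)$. By contrast, the $0$-th term of $x_J(w_s)$ vanishes when $J^{\mathrm{sh}}\neq\emptyset$, so "leading terms governed by $x_{J,\underline f}(w_s)$" is not the right normalisation.
\item As in \cite[Theorem~3.7.1]{BHHMS3}, with degree control from the $r\ge1$ analogues of \cite[Corollary~7.3, Proposition~8.1]{Wang24}, one gets $\langle\gr(x'_J(w_s)),e_{J',s'}\rangle=\delta_{J=J'}\delta_{s=s'}\underline{y}^{-\underline{1}}$. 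This is a unit, so the $\gr(x'_J(w_s))$ are the dual basis of $(\underline{y}^{\underline{1}}e_{J,s})$.
\end{enumerate}
This formula also settles the "mixed terms" worry directly. You do not need to re-verify at this stage that every construction respects lines via \cite[Lemma~3.1.5]{Ber}.

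\textbf{Step~2 is under-justified.} From hypotheses \ref{hypothesis:i} and \ref{hypothesis:ii} alone you only get, by \cite[Theorem~3.67]{BHHMS2}, a surjection $\V^\vee\otimes\Nnull\twoheadrightarrow\gr_{\mI}(\pi^\vee)$. That yields a surjection from a free $\gr(A)$-module of rank $r2^f$ onto $\gr(\DA(\pi))$, not freeness. A statement about $\pi[\mI^3]$ does not control $\gr_{\mI}(\pi^\vee)$ in higher degrees. What you need is that this surjection is an isomorphism, which is \cite[Eq.~(14)]{BHHMS4} and holds for $r\ge1$. The paper uses it both for freeness of rank $r2^f$ and, more importantly, for the explicit basis compatible with $\ip$ that Step~3 requires.
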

\begin{proof}
We follow the proof of
\cite[Theorem~8.2]{Wang24}.
By \Cref{thm:finiteness-condition}
and by \cite[Lemma~3.3.7]{BHHMS3},
each $x_J(w_s)$ comes from an element of 
$ \Hom_{A}(\DA(\pi), A)$, which we still denote by $x_J(w_s)$.

To show that $(x_J(w_s) \mid J \subseteq \mathcal{J},\ 1 \le s \le r)$
is an $A$-basis of $ \Hom_{A}(\DA(\pi), A)$,
we instead consider
$x_J'(w_s) \defeq (x_{J,k}'(w_s))_{k \ge 0}$,
where $x_{J,k}(w)$
is defined as
$x'_{J, k}(w_s) \defeq x_{J, \underline{k} + \underline{e}^{J^{ \mathrm{sh}}}}(w_s)$.
Like before, $x_J'(w_s)$ comes from an element of 
$\Hom_{A}(\DA(\pi), A)$, which we still
denote by $x'_J(w_s)$.
We have $x'_J(w_s) =  \underline{Y}^{- \underline{e}^{J ^{\mathrm{sh}}}} x_J(w_s)$
by \cite[Lemma~3.8.1(i)]{BHHMS3}
(whose proof does not use $r = 1$),
using \Cref{thm:xJi-def}(ii)
and the fact that 
$\underline{Y}^{- \underline{e}^{J ^{\mathrm{sh}}}}$
is invertible in $A$.
And since 
$\underline{Y}^{- \underline{e}^{J ^{\mathrm{sh}}}}$
is invertible in $A$,
it suffices to show that 
$(x_J'(w_s) \mid J \subseteq \mathcal{J},\ 1 \le s \le r)$ 
is an $A$-basis of $ \Hom_{A}(\DA(\pi), A)$.

Consider the $\grL$-linear isomorphism
$\theta \colon 
\bigoplus_{\lambda \in \mathscr{P}} 
\left(\V^\vee \otimes_{\F}\! R/\mathfrak{a} (\lambda) 
\right)
\!
\xrightarrow{\sim} \grm (\pi^\vee )$
of \cite[Eq.~(14)]{BHHMS4}
(we have suppressed the twists
of \emph{loc.\ cit.}\ 
as we are not interested in the $H$-action here),
and localise it at $\prod_{j = 0}^{f-1} y_j$.
Using the bijection
\eqref{eq:lambda-bij-Jsh}
between $\Dss$ and 
$ \{\lambda \in \mathscr{P} \mid 
y_j \notin \mathfrak{a} (\lambda)
\ \forall j \in \mathcal{J}\}$,
and the bijection 
$\mu \mapsto J_\mu$
between $\Dss$ and $\{J \subseteq \mathcal{J}\}$
we obtain a $\gr (A)$-linear isomorphism
$\theta_A \colon 
\bigoplus_{J \subseteq \mathcal{J}}
\left(\V^\vee \otimes_{\F}
\gr (A)
\right)
\xrightarrow{\sim} 
\grm (\pi^\vee )[ \frac{1}{\prod_{j} y_j}]
\cong\gr \left( \DA(\pi) \right)$,
where the last isomorphism comes from
\cite[Lemma~3.1]{BHHMS2}.

Since $\DOne$ is multiplicity-free by
\cite[Lemma~4.1(ii)]{Wang24},
there exist unique $I$-eigenvectors 
$e_J \in \DOne^\vee$
such that $ \left\langle v_J, 
e_{J'} \right\rangle = \delta_{J = J'}$
for all $J, J' \subseteq \mathcal{J}$.
Moreover, let $(u_s)_{1 \le s \le r}$ 
be the dual basis of $(w_s)_{1 \le s \le r}$
in $\V^\vee$,
and set
$e_{J, s} \defeq \ip^\vee 
(u_s \otimes e_J) 
\in ( \pi^{I_1 } )^\vee$
for $J \subseteq \mathcal{J}$,
$1 \le s \le r$.
We can see from the proof of 
\cite[Theorem~2.1.2]{BHHMS4}
that the isomorphism $\theta_A$ 
of the previous paragraph 
can be chosen so that
it sends the element $u_s \otimes 1$
inside the copy of $V^\vee \otimes_{\F} \gr (A)$
indexed by 
$J \subseteq \mathcal{J}$
to the element 
$\underline{Y}^{-\underline{e}^{J ^{\mathrm{sh}}}}
e_{J
,s} \in \gr (\DA (\pi))$;
cf.\ also the line after
\eqref{eq:N-unwinding} below,
after observing that
$e_{\lambda,s}$ of 
\eqref{eq:explicit-dual-basis}
coincides with $e_{J,s}$ here,
by the choice of $v_J$
in the paragraph after
\Cref{rmk:Wang-notation},
and where $\lambda$ is defined as in 
\eqref{eq:lambda-bij-Jsh}
in terms of the unique $\mu \in \Dss$
such that $J = J_\mu$.
In particular, we see that
$(e_{J,s} \mid 
J \subseteq \mathcal{J},\ 1 \le s \le r)$
is a $\gr(A)$-basis of $\gr(\DA(\pi))$.

By \cite[Theorem~I.5.7]{LvO}
the filtration we endowed $\DA (\pi)$ with
is good
in the sense of \cite[Definition~I.5.1]{LvO},
in particular $\DA (\pi)$ 
is a finitely generated $A$-module.
Then, by \cite[Lemma~I.6.9]{LvO},
\cite[Lemma~I.6.6]{LvO}
and by
\cite[Lemma~I.4.2.5(2)]{LvO}
it suffices to show that 
\begin{equation}
  \label{eq:gr-basis}
(\gr(x_J'(w_s)) \mid J \subseteq \mathcal{J},\ 1 \le s \le r)
\end{equation}
is a $\gr (A)$-basis of $ \Hom_{\gr(A)}(\gr(\DA(\pi)), \gr(A)) \cong \gr \left(  \Hom_{A}(\DA(\pi), A) \right)$.
We claim that 
\[
\left\langle \gr(x_J'(w_s)), e_{J', s'} \right\rangle
= \delta_{J=J'} \delta_{s= s'} \underline{y}^{- \underline{1}}
\]
in $\gr (A)$
for all $J, J' \subseteq \mathcal{J}$
and all $1 \le s, s' \le r$,
which is enough to conclude
using that $\underline{y}^{-\underline{1}} 
\in \gr(A)^{\times }$ is a unit
(more precisely, this shows that
\eqref{eq:gr-basis} is the dual basis to
$(\underline{y}^{\underline{1}} e_{J,s}
\mid J \subseteq \mathcal{J},\ 1 \le s \le r)$).
The argument is completely analogous to 
that of
\cite[Theorem~3.7.1]{BHHMS3},
using (the $r \ge 1$ analogue of) 
\cite[Corollary~7.3]{Wang24}
and
\cite[Proposition~8.1]{Wang24}.
\end{proof}

\begin{itemize}
\item \cite[Proposition~8.3]{Wang24}:
when $r \ge 1$ we denote by $\Mat(\varphi)$
and $\Mat(a)$, for $a \in \mathcal{O}_K ^{\times}$,
the matrices for the action of $\varphi$ and 
$a \in \mathcal{O}_K ^{\times}$ on 
$\Hom_A(\DA(\pi), A)(1)$ with respect to the basis
$ \left( x_J(w_s) \mid J \subseteq \mathcal{J}, \ 1 \le s \le r \right)$.
The following proposition describes these matrices
when $a \in [\mathbb{F}_q ^{\times}]$.
\end{itemize}
\begin{proposition}
  \label{prop:Mat-rep}
\begin{enumerate}[(i)]
\item We have 
\[
\Mat(\varphi)_{(J', s'),(J+1, s)} = 
\begin{cases}
\gamma_{J+1, J'} \underline{Y}^{- \left( \underline{c} ^J + \underline{r}^{J \setminus J'} \right)} & \text{if }J ^{\mathrm{ss}} \subseteq J' \subseteq J, \ s = s' \\
0 & \text{otherwise.}
\end{cases}
\]

\item 
For $a \in \left[ \mathbb{F}_q^{\times } \right]$,
$\Mat(a)$ is a diagonal matrix with 
$\Mat(a)_{(J,s),(J,s)} = \overline{a}^{\underline{r}^{J^c}}$.
\end{enumerate}
\end{proposition}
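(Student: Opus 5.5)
The plan is to reduce both statements to the corresponding computations of \cite[Proposition~8.3]{Wang24} for $r=1$, carried out ``linewise'' with respect to the basis $(w_s)_{1\le s\le r}$ of $\V$. Throughout, the key input is the linearity and $w$-independence established above: the elements $\xJ(w)$ depend linearly on $w$ (\Cref{rmk:xJ-linear}), and the scalars $\mu_{J,J'}$ do not depend on $w$ (\Cref{rmk:mu-uniform} and its analogues for \cite[Propositions~5.10,~5.12]{Wang24}). The matrices are taken with respect to the $A$-basis $(x_J(w_s))$ of $\Hom_A(\DA(\pi),A)$ given by \Cref{thm:xJ-basis}.

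For (i), I will fix $s$ and $J\subseteq\mathcal{J}$ and compute $\varphi(x_{J+1}(w_s))$ using \Cref{thm:xJi-def}(iii) with $w=w_s$. The element $\mpi\, x_{J+1,\underline{i}}(w_s)$ is a linear combination of the $x_{J',\underline{i}'}(w_s)$ with the \emph{same} $w_s$, and with coefficients $\varepsilon_{J'}\mu_{J+1,J'}$ independent of $s$. Translating this sequence-level identity into an identity in $\Hom_A(\DA(\pi),A)$ is done exactly as in \cite[Proposition~8.3]{Wang24}, via the description of the $A$-module structure in \cite[Lemma~3.8.1]{BHHMS3} together with \Cref{thm:xJi-def}(ii), which converts the index shifts $p\delta(\underline{i})+\underline{c}^J+\underline{r}^{J\setminus J'}$ into multiplication by $\underline{Y}^{-(\underline{c}^J+\underline{r}^{J\setminus J'})}$. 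This yields $\varphi(x_{J+1}(w_s))=\sum_{J^{\mathrm{ss}}\subseteq J'\subseteq J}\gamma_{J+1,J'}\underline{Y}^{-(\underline{c}^J+\underline{r}^{J\setminus J'})}x_{J'}(w_s)$, where $\gamma_{J+1,J'}$ is Wang's constant built from $\varepsilon_{J'}\mu_{J+1,J'}$ (and the twist $(1)$). Because no $w_{s'}$ with $s'\neq s$ appears, the entries with $s\neq s'$ vanish. Uniqueness of the coordinates follows since $(x_J(w_s))$ is a basis.

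For (ii), I will take $a\in[\mathbb{F}_q^\times]$ and note that $\left(\begin{smallmatrix}a&0\\0&1\end{smallmatrix}\right)\in H$. The vector $\ip^{-1}(w_s\otimes \underline{Y}^{-\underline{i}}v_J)$ is an $H$-eigenvector with the same eigencharacter as $\underline{Y}^{-\underline{i}}v_J$, since $\ip$ is $H$-equivariant and $H$ acts trivially on $\V$. The defining properties in \Cref{thm:xJi-def} are compatible with the $H$-action, so uniqueness gives that each $x_{J,\underline{i}}(w_s)$ is an eigenvector with the eigenvalue computed in \cite[Proposition~8.3]{Wang24}. The resulting matrix is therefore diagonal with entry $\overline{a}^{\underline{r}^{J^c}}$, independent of $s$.

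I expect the main obstacle to be checking that the passage from identities of sequences in $\Hom_\F^{\mathrm{cont}}(\DA(\pi),\F)$ to matrix coefficients over $A$, through $\mu_*$, introduces no cross terms between different $s$. I will handle this using the injectivity of $\mu_*$ and the linear independence from \Cref{thm:xJ-basis}.
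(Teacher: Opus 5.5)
Your proposal is correct and follows the paper's approach: the paper's proof just says that \cite[Proposition~8.3]{Wang24} goes through \emph{mutatis mutandis}. Your linewise bookkeeping is exactly what that phrase hides. This means using the same $w_s$ on both sides of \Cref{thm:xJi-def}(iii), the $w$-independence of the $\mu_{J+1,J'}$, and the fact that the $x_{J,\underline{i}}(w_s)$ are $[H]$-eigenvectors; the last point is what makes $\Mat(a)$ diagonal for Teichm\"uller $a$, even though the paper cannot control the $s\neq s'$ entries for general $a$.

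The one step to phrase carefully in (ii) is that $[H]$ commutes with each $Y_j$ only up to a character. So the uniqueness in \Cref{thm:xJi-def} should be applied to the family $h\cdot x_{J,\underline{i}}(w)$ rescaled by the appropriate characters. Alternatively, you can read off the eigencharacters directly from the inductive definitions \eqref{eq:xJ-def-1} and \eqref{eq:xJ-def-2}.
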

\begin{proof}
Identical to the proof of
\cite[Propostion~8.3]{Wang24},
\emph{mutatis mutandis}.
\end{proof}

\paragraph{Extending \cite[§9]{Wang24}:}
\begin{itemize}
\item \cite[Lemma~9.1]{Wang24}:
the following lemma,
which is used in the proof of
\Cref{thm:rk-subrep} below,
generalises \cite[Lemma~9.1]{Wang24}
for $r \ge 1$.
\end{itemize}
Let $\pi_1 \subseteq \pi$ be 
a subrepresentation of $\pi$, 
and recall that 
for $\ell \in \{0, \dots, f\}$
the $\F$-vector space
$\VFil[\pi_1 ,\ell] \subseteq \V$
is defined in
\eqref{eq:VFil-rFil-def}
when $\rep$ is 
reducible nonsplit,
and in the paragraph after the proof of
\cite[Lemma~3.1.3]{Ber}
when $\rep$ is 
reducible split.

\begin{lemma}
  \label{lemma:ss-Serre-weight-mult}
Assume that $\rep$ is $\max \{6, 2f + 1\}$-generic,
and let $\pi_1 \subseteq \pi$ be 
a subrepresentation of $\pi$, 
and let $L \subseteq \V$ be a line.
Moreover, set $I_L \defeq \{ \ell \in  \{0, \dots, f \}
\mid 
L \subseteq \VFil[\pi_1 ,\ell] \}$,
and set
$S_L \defeq  \{J \subseteq \mathcal{J} \mid
|J| \in I_L\}$.
Then, $S_L$
is stable under $J \mapsto J -1 $,
and is moreover stable under taking subsets 
if $\Jrep \neq \mathcal{J}$.
Moreover, 
keeping the notation of 
\cite[Lemma~9.1]{Wang24},
we have
\begin{align}
  \label{eq:JH-modular-of-line}
  \JH \left(\pi_1^{K_1 } \cap
  \ip^{-1}(L \otimes_{\F} \DZero)\right)
  \cap \Wss
  &= 
  \left\{\sigma_{\underline{e}^J} \ \middle|\ 
  J \in S_L\right\} = 
  \left\{\sigma_{\underline{e}^J} \ \middle|\ 
  |J| \in I_L\right\}; \\
  \label{eq:JH-of-line}
  \JH \left(\pi_1^{K_1 } \cap
  \ip^{-1}(L \otimes_{\F} \DZero)\right)
  & = 
  \left\{\sigma_{\underline{b}} \in \JH (\DZero)
  \ \middle|\ 
  \{j \mid b_j \ge 1\} \in S_L\right\}.
\end{align}
\end{lemma}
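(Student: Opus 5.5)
Since $V_1(0)\supseteq V_1(1)\supseteq\dots\supseteq V_1(f)$ by \Cref{thm:srep-structure} in the nonsplit case (and the analogous statement of \cite{Ber} in the split case), the set $I_L$ is an initial segment $\{0,\dots,\ell(L)\}$ of $\{0,\dots,f\}$. Here $\ell(L)$ is as in \eqref{eq:DL-def}, and $I_L$ is empty if $\ell(L)=-1$. Membership of $J$ in $S_L$ depends only on $|J|$, and $|J-1|=|J|$, so $S_L$ is stable under $J\mapsto J-1$. When $\Jrep\neq\mathcal{J}$, $\rep$ is nonsplit and $I_L$ is downward closed, so a subset $J'\subseteq J$ has $|J'|\le|J|\in I_L$, hence $J'\in S_L$. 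The second equality in \eqref{eq:JH-modular-of-line} is just the definition of $S_L$.

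For the identities on Jordan--H\"older constituents in the nonsplit case, I would first apply \cite[Lemma~3.1.5(ii)]{Ber}, or argue directly from \eqref{eq:srep-K1-decomposition}, to get
\[
\ip\bigl(\pi_1^{K_1}\bigr)\cap\bigl(L\otimes_{\F}\DZero\bigr)=D_L=L\otimes_{\F}\DZeroFil[\ell(L)],
\]
which is exactly what the proof of \Cref{thm:srep-structure} establishes. Therefore $\JH(\pi_1^{K_1}\cap\ip^{-1}(L\otimes\DZero))=\JH(\DZeroFil[\ell(L)])$. By \eqref{eq:D-Dss-Fil-comparison}, this equals $\JH(\DZero)\cap\JH(\DssZeroFil[\ell(L)])$.

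Intersecting with $\Wss$ gives the weights $\tau\in\Wss$ with $\ell(\tau)\le\ell(L)$; each of these lies in $\JH(\DZero)$ by \eqref{eq:socle-DZerogr}. Using the dictionary $\sigma_{\underline e^J}\leftrightarrow J$ from \Cref{rmk:Wang-notation}, under which $\ell(\sigma_{\underline e^J})=|J|$, we obtain \eqref{eq:JH-modular-of-line}.

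For \eqref{eq:JH-of-line}, I would use the description in \cite[Lemma~9.1]{Wang24} and its proof. The constituents of $\DZero$ are the $\sigma_{\underline b}$, and the constituent $\sigma_{\underline b}$ lies in $\Dssx[\tau]$ with $J_\tau=\{j\mid b_j\ge 1\}$, so its length is $|\{j\mid b_j\ge1\}|$. Hence $\sigma_{\underline b}\in\JH(\DssZeroFil[\ell(L)])$ exactly when $|\{j\mid b_j\ge 1\}|\le\ell(L)$, that is, when $\{j\mid b_j\ge 1\}\in S_L$.

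In the split case I would argue the same way from the corresponding decomposition of $\pi_1^{K_1}$ in \cite{Ber}, where $V_1(\ell)$ is indexed per semisimple weight. This is why only stability under $J\mapsto J-1$ is claimed there: I expect $S_L$ to be a union of $\delta$-orbits, with membership governed by weight cycling. The main obstacle is checking that the constituent bookkeeping $\sigma_{\underline b}\mapsto\{j: b_j\ge1\}$ agrees with the $J_\tau$ of \eqref{eq:J-lambda-def} (via \Cref{eq:3.1.3-recalled}). I would verify this against \cite[Lemma~4.1]{Wang24}.
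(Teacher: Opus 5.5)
Your nonsplit case is the paper's argument. You get $I_L=\{0,\dots,\ell(L)\}$, then the equality $\pi_1^{K_1}\cap\ip^{-1}(L\otimes_{\F}\DZero)=\ip^{-1}(L\otimes_{\F}\DZeroFil[\ell(L)])$ from (the proof of) \Cref{thm:srep-structure}. You then combine \eqref{eq:D-Dss-Fil-comparison} with \cite[Lemma~4.1(i)]{Wang24} applied to $\rss$. That lemma is exactly the bookkeeping $\sigma_{\underline b}\in\JH(\Dssx[\sigma_J])$, $J=\{j\mid b_j\ge1\}$, that you flagged as the remaining check. Your cardinality argument for stability under $J\mapsto J-1$ is also correct, in both cases.

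The split case, however, rests on a false claim. There the subspaces $\VFil[\pi_1,\ell]$ of \cite{Ber} are \emph{not} nested, so $I_L$ need not be an initial segment of $\{0,\dots,f\}$. In that case $\DZero=\bigoplus_{\ell}\DZeroEll[\ell]$, and \cite[Corollary~3.1.6]{Ber} gives $\pi_1^{K_1}\cap\ip^{-1}(L\otimes_{\F}\DZero)\cong\bigoplus_{\ell\in I_L}L\otimes_{\F}\DZeroEll[\ell]$. For instance, for $r=1$ the representation $\pi$ is a direct sum of $f+1$ irreducible subrepresentations with $K_1$-invariants $\DZeroEll[0],\dots,\DZeroEll[f]$, and the last summand has $I_L=\{f\}$. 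The same non-nestedness is why the split case of the proof of \Cref{thm:rk-subrep} uses a separate adapted basis $(w_{\ell,s})_s$ for each $\ell$.

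This, not a weight-cycling or $\delta$-orbit phenomenon, is why stability under subsets is only asserted when $\Jrep\neq\mathcal{J}$. Indeed, if your first paragraph were correct, your own argument would give subset-stability in the split case too. So ``arguing the same way'' through $\ell(L)$ and $\DZeroFil[\ell(L)]$ is not available there.

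The fix is to start from the direct-sum decomposition above. Its constituents are the $\sigma_{\underline b}\in\JH(\DZeroEll[\ell])$ with $\ell\in I_L$. By the definition of $\DZeroEll[\ell]$ in \cite{Ber}, these are the $\sigma_{\underline b}$ with $|\{j\mid b_j\ge1\}|\in I_L$, which is \eqref{eq:JH-of-line}. Then \eqref{eq:JH-modular-of-line} follows by intersecting with $\Wss$.
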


\begin{proof}
Notice that \eqref{eq:JH-of-line}
implies \eqref{eq:JH-modular-of-line},
using \cite[Eq.~(4)]{Wang24}
applied to $\rep = \rss$ (or equivalently
to $\Jrep = \mathcal{J}$).

Assume first $\rep$ is reducible nonsplit,
or equivalently that $\Jrep \neq \mathcal{J}$.
We know from \Cref{thm:srep-structure}
that $\pi_1^{K_1 } \cap \ip^{-1}(L \otimes_{\F} \DZero)$
is isomorphic to $\DZeroFil[\ell (L)]$
as a $\GR$-representation,
where $\ell (L) \defeq 
\max \{\ell \in \{-1, \dots, f\} \mid 
L \subseteq \VFil[\pi_1 ,\ell] \}$.
We also see that 
$S_L = \{J \subseteq \mathcal{J} \mid
|J| \le \ell (L)\}$,
in particular
$S_L$ is stable under $J \mapsto J -1 $
and under taking subsets.
Notice that, by 
\cite[Lemma~4.1(i)]{Wang24}
applied to $\rep = \rss$ (or equivalently
to $\Jrep = \mathcal{J}$)
the set 
$J \defeq  \{j \mid b_j \ge 1\}$
of  \eqref{eq:JH-of-line}
gives the unique $\sigma_J \in \Wss$
such that $\sigma_{\underline{b}} \in 
\JH ( \Dssx[\sigma_J])$.
Then, we conclude 
from \eqref{eq:D-Dss-Fil-comparison}
applied with $\ell = \ell (L)$
that \eqref{eq:JH-of-line} 
(hence also \eqref{eq:JH-modular-of-line}) holds.

Assume now that $\rep$ is reducible split.
We see from 
\cite[Corollary~3.1.6]{Ber}
applied to $\pi' = \pi_1 $
that $\pi_1^{K_1 } \cap \ip^{-1}(L \otimes_{\F} \DZero)$
is isomorphic to 
$\bigoplus_{\ell \in I_L} 
(L \otimes_{\F} \DZeroEll[\ell])$
as a $\GR$-representation,
in particular
$S_L$ is stable under $J \mapsto J -1 $.
Then, the right-hand side of \eqref{eq:JH-of-line} 
becomes
$\left\{\sigma_{\underline{b}} \in \JH (\DZero)
\mid
|\{j \mid b_j \ge 1\}| \in I_L\right\}$,
and we conclude by definition of
$\DZeroEll[\ell]$,
cf.\ the equation after 
\cite[Eq.~(13)]{Ber}.
\end{proof}

Recall the numerical invariants of 
\eqref{eq:VFil-rFil-def}.
The following theorem extends 
\Cref{thm:xJ-basis}
to subrepresentations of $\pi$.
\begin{theorem}
  \label{thm:rk-subrep}
Assume that $\rep$ is $\max \{6, 2f + 1\}$-generic,
and let $\pi_1 $ be a subrepresentation of $\pi$.
Then, we have 
\begin{equation}
  \label{eq:rk-subrep}
\rk_A \DA(\pi_1 ) = 
\sum_{0 \le \ell \le f} \rFil[\pi_1, \ell] \binom{f}{\ell}.
\end{equation}
\end{theorem}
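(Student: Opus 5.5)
We plan to follow the proof of \cite[Theorem~9.2]{Wang24}, adapted linewise. The first move is to choose a basis $(w_s)_{1\le s\le r}$ of $\V$ adapted to the flag $\V=\VFil[\pi_1,0]\supseteq\VFil[\pi_1,1]\supseteq\dots\supseteq\VFil[\pi_1,f]$. Concretely, for each $\ell$ the vectors $w_s$ lying in $\VFil[\pi_1,\ell]$ should form a basis of it. For each $s$ we then let $L_s\defeq\F w_s$ and $\ell_s\defeq\ell(L_s)$, so that $I_{L_s}=\{0,\dots,\ell_s\}$ in the nonsplit case, and $S_{L_s}=\{J\mid |J|\le \ell_s\}$. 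With this choice one has
\[
\sum_{s=1}^r |S_{L_s}|=\sum_{s=1}^r\sum_{\ell=0}^{\ell_s}\binom{f}{\ell}=\sum_{\ell=0}^f \rFil[\pi_1,\ell]\binom{f}{\ell},
\]
because the number of indices $s$ with $\ell_s\ge\ell$ is exactly $\rFil[\pi_1,\ell]$. The split case works the same way, using \cite[Corollary~3.1.6]{Ber} in place of \Cref{thm:srep-structure}. It therefore suffices to prove that $\Hom_A(\DA(\pi_1),A)$ is free with basis the images of $\{x_J(w_s)\mid J\in S_{L_s}\}$.

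\emph{Lower bound.} The restriction $\pi^\vee\twoheadrightarrow\pi_1^\vee$, combined with the exactness of $\DA$ on the relevant category and \Cref{thm:xJ-basis}, realises $\Hom_A(\DA(\pi_1),A)$ as an $A$-submodule of $\Hom_A(\DA(\pi),A)$. Inside $\Hom^{\mathrm{cont}}_\F(\DA(\pi),\F)$, it consists of those sequences whose terms all lie in $\pi_1$. We claim that $x_{J,\underline i}(w_s)\in\pi_1$ for all $\underline i$ whenever $J\in S_{L_s}$. For the initial terms \eqref{eq:xJ-def-1} this holds because $\underline Y^{-\underline i}v_J$ lies in $\Dx[\sigma_{J^{\mathrm{ss}}}]$. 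Its Jordan--H\"older constituents are of the form $\sigma_{\underline b}$ with $\{j\mid b_j\ge1\}\subseteq J$. By \eqref{eq:JH-of-line}, the stability of $S_{L_s}$ under subsets and multiplicity freeness of $L_s\otimes\DZero$, we get $\ip^{-1}(w_s\otimes\underline Y^{-\underline i}v_J)\in\pi_1^{K_1}$. For the recursive terms we use \eqref{eq:xJ-def-2} and induct on $|J|$ and $\max_j i_j$. Each such term is a combination of $\underline Y^{\underline\ell}\mpi x_{J+1,\underline i'}(w_s)$ and terms $x_{J',\cdot}(w_s)$ with $J'\subsetneq J$. The set $S_{L_s}$ is stable under $J\mapsto J\pm1$ (equal cardinality) and under subsets, and $\pi_1$ is $\GF$-stable, so every term stays in $\pi_1$. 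Hence these $x_J(w_s)$ lie in $\Hom_A(\DA(\pi_1),A)$. They are $A$-linearly independent, being part of the basis of \Cref{thm:xJ-basis}. This gives $\rk_A\DA(\pi_1)\ge\sum_\ell\rFil[\pi_1,\ell]\binom f\ell$.

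\emph{Upper bound.} This is the step we expect to be the main obstacle. We plan to compare graded objects. The module $\gr\DA(\pi_1)$ is a quotient of $\grm(\pi_1^\vee)[1/\prod y_j]$, so its rank is at most the number of basis vectors $e_{J,s}$ of $\gr\DA(\pi)$ that survive restriction to $\pi_1$. The point is that $e_{J,s}$, viewed in $(\pi^{I_1})^\vee$, restricts to zero on $\pi_1^{I_1}$ unless $\chi_\lambda$, with $\lambda$ attached to $J$ via \eqref{eq:lambda-bij-Jsh}, occurs in $\VFil[\pi_1,|J|]$-directions. By \Cref{eq:I1-subrep}, and because our basis is adapted to the flag, this happens exactly when $w_s\in\VFil[\pi_1,|J|]$. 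The subtlety is that $\grm(\pi_1^\vee)$ is generated by $(\pi_1^{I_1})^\vee$ only after localising; here we would invoke \Cref{prop:char-only-occurs}, or else the essential self-duality hypothesis \ref{hypothesis:iii} applied to $\pi/\pi_1$. The fallback is the other direction: apply the lower bound to $\widetilde{\pi}_1$ and to the quotient $\pi/\pi_1$ via additivity of rank, $\rk\DA(\pi)=r2^f$. This would require the analogous lower bound for quotients.
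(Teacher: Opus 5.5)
\paragraph{Review.}
Your lower bound matches the paper's Step 1: you show $x_{J,\underline{i}}(w_s)\in\pi_1$ by double induction via \eqref{eq:xJ-def-1} and \eqref{eq:xJ-def-2}. One inaccuracy in the base case: $\Dx[\sigma_{J^{\mathrm{ss}}}]$ contains every $\sigma_{\underline{e}^{J'}}$ with $J'\cap\Jrep=J^{\mathrm{ss}}$, including some with $J'\not\subseteq J$. So what you need is a statement about the $\GR$-span of $\underline{Y}^{-\underline{i}}v_J$, not about $\Dx[\sigma_{J^{\mathrm{ss}}}]$.

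The genuine gap is the upper bound, whose key assertion is false. The vector attached to $J=J_\mu$ is $v_\lambda$ with $\lambda$ as in \eqref{eq:lambda-bij-Jsh}, and $J=J_\lambda\sqcup J_1(\lambda)$ with $J_1(\lambda)$ as in \eqref{eq:J1-def}. Since $v_\lambda\in\DOneFil[|J_\lambda|]$, \Cref{thm:srep-structure} gives that $e_{J,s}$ is nonzero on $\pi_1^{I_1}$ if and only if $w_s\in\VFil[\pi_1,|J_\lambda|]$. This is strictly weaker than $w_s\in\VFil[\pi_1,|J|]$ as soon as $J_1(\lambda)\neq\emptyset$. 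Concretely, take $f=r=1$ and $\pi_1$ the principal series in the socle. Then $\pi_1^{I_1}=\pi^{I_1}$ (\Cref{rmk:I1-does-not-reconstruct-pi}), so both degree-$0$ generators survive. Yet $\VFil[\pi_1,0]=\F$ and $\VFil[\pi_1,1]=0$, so the rank to be proved is $1$, while $\pi$ has rank $2$. No count performed on $\pi_1^{I_1}$ alone can give the right bound. Your remark on localisation is also reversed. $\grm(\pi_1^\vee)$ is always generated in degree $0$, being a quotient of $\grm(\pi^\vee)$. The real issue is that some nonzero degree-$0$ generators die once $\prod_j y_j$ is inverted.

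The missing ingredient is this torsion statement. If $w_s\in W_1(\ell)$, the class of $e_{\lambda,s}$ in $\grm(\pi_1^\vee)$ is annihilated by $\atwo[\ell](\lambda)$. Hence it is killed by $\prod_{j\in J'}y_j$ for any $J'\subseteq J_1(\lambda)$ with $|J'|=\ell+1-|J_\lambda|$, and so it vanishes in $\gr(\DA(\pi_1))$ whenever $\ell<|J|=|J_\lambda|+|J_1(\lambda)|$. This is Step 1 of the proof of \Cref{prop:CM-via-N}, which uses \Cref{prop:char-only-occurs} but not the present theorem. With it, your graded count plus lifting generators along the good filtration would give the upper bound, and your route would be a legitimate alternative. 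Saying you ``would invoke'' \Cref{prop:char-only-occurs} or self-duality does not supply it. Your fallback through $\widetilde\pi_1$ would need the inequality \eqref{eq:hype-mult-inequality} from Step 3 of that same proof, which you do not establish.

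The paper argues on the dual side instead. Suppose $\sum_{s>\rFil[\pi_1,|J|]}a_{J,s}x_J(w_s)\in\Hom_A(\DA(\pi_1),A)$ with not all $a_{J,s}$ zero. It picks $(J_0,s_0)$ minimising $\deg a_{J,s}-|\partial J|$, with $J_0$ maximal, and examines the $\F w_{s_0}$-component of the $0$-th term $z_0\in\pi_1^{I_1}$. The Claim places this component outside $\F w_{s_0}\otimes V^{\mathrm{Wang}}$. But \Cref{lemma:ss-Serre-weight-mult} forces $\ip(\pi_1^{K_1})\cap(\F w_{s_0}\otimes\DZero)\subseteq\F w_{s_0}\otimes V^{\mathrm{Wang}}$, a contradiction.
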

\begin{remark}
  \label{rmk:rk-subrep-r=1}
Note that \Cref{thm:rk-subrep}
generalises
\cite[Theorem~9.2]{Wang24}
to the case $r \ge 1$.
Indeed, if $r = 1$
then
\eqref{eq:rk-subrep} becomes
$ \rk_A \DA(\pi_1 ) = 
\sum_{\ell \in I_{\V}}
\binom{f}{\ell}$,
where $I_{\V}$
is the set of \Cref{lemma:ss-Serre-weight-mult}
for $L = \V$.
On the other hand,
\eqref{eq:JH-modular-of-line}
applied to $L = \V$ gives 
$| \JH (\pi_1^{K_1 })
\cap \Wss| = 
|\left\{\sigma_{\underline{e}^J} \ \middle|\ 
|J| \in I_\V\right\}|
= \sum_{\ell \in I_\V}  \binom{f}{\ell}$.
\end{remark}
\begin{proof}
We essentially adapt the proof of \cite[Theorem~9.2]{Wang24}.
Assume first that $\rep$ is reducible nonsplit.

For $\ell \in \{0, \dots, f\}$,
set $r_1 (\ell) \defeq \rFil[\pi_1, \ell]$,
and let $(w_s \mid 1 \le s \le r)$ be an $\F$-basis of $\V$
adapted to $\pi_1 \subseteq \pi$,
in the sense that $(w_s \mid 1  \le s \le  r_1(\ell))$
is an $\F$-basis of $\VFil[\pi_1 , \ell]$,
or equivalently that $(w_s \mid r_1(\ell + 1) + 1  \le s \le  r_1(\ell))$
is sent to an $\F$-basis of
$\frac{\VFil[\pi_1 , \ell]}{\VFil [\pi_1 , \ell +1]}$.
Then, we prove the statement by showing that 
$\mathcal{B}  \defeq   \left( x_J(w_s) \mid J \subseteq \mathcal{J}, \ 1 \le s \le r_1(|J| ) \right)$
is an $A$-basis of $ \Hom_{A}(\DA(\pi_1 ), A)$.

\paragraph{Step 1.}
We prove that $x_J(w_s)$ is an element of 
$\Hom_A(\DA(\pi_1 ), A)$
for all $J \subseteq \mathcal{J}$, $1 \le s \le r_1 (\ell)$.

Observe first that
$\Hom_A(\DA(\pi_1 ), A) = \Hom_A(\DA(\pi ), A) 
\cap \Hom_{\F} ^{\mathrm{cont}}(\DA(\pi_1 ), \F)$,
where the intersection is taken inside
$\Hom_{\F} ^{\mathrm{cont}}(\DA(\pi ), \F)$,
and that $x_J(w_s) \in \Hom_A(\DA(\pi ), A)$
by \Cref{thm:finiteness-condition} and by \cite[Lemma~3.3.7]{BHHMS3}.
It remains to show that $x_J(w_s) \in 
\Hom_{\F} ^{\mathrm{cont}}(\DA(\pi_1 ), \F)$.
By \cite[Proposition~3.2.3]{BHHMS3}
(and since $\pi_1 [\mI^n] = \pi_1  \cap \pi[\mI^n]$
for all $n \in \mathbb{N}$),
it is enough to show that $\xJ(w) \in \pi_1 $
for all $J \subseteq \mathcal{J}$ 
and for all $w \in \VFil[\pi_1 , |J|]$.
This can be proven with a double induction on $|J|$
and on $\max_j i_j$,
using \eqref{eq:xJ-def-1}, \eqref{eq:xJ-def-2}.

\paragraph{Step 2.}
Clearly the elements
$ \left\{ x_J(w_s) \mid J \subseteq \mathcal{J}, \ 1 \le s \le r_1(|J| ) \right\}$
are linearly independent over $A$, since they are a subset of the $A$-basis of \Cref{thm:xJ-basis}.
We now show that these elements generate
$ \Hom_{A}(\DA(\pi_1 ), A)$.

If this is not the case, then there exist an $A$-linear combination
\begin{equation}
  \label{eq:absurd-lin-comb}
\sum_{ \substack{
 J \subseteq  \mathcal{J} \\ 
r_1(|J|)+1 \le s \le r}} a_{J,s} x_J(w_s) =
\sum_{s = 1}^{r} 
\sum_{ \substack{J \subseteq  \mathcal{J} \\
r_1(|J|) < s }}
a_{J,s} x_J(w_s) \in 
\Hom_{A}(\DA (\pi_1 ), A),
\end{equation}
with $a_{J,s} \in A$ not all zero.

Let $\mathfrak{S}$ be the set of pairs $(J,s)$, with
$J \subseteq \mathcal{J},\ r_1(|J|) +1 \le  s \le r$
and such that $a_{J,s} \neq 0$;
let $\mathfrak{S}_0 \subseteq  \mathfrak{S}$
be the set of pairs  $(J,s) \in \mathfrak{S}$ such that the quantity
$\deg(a_{J,s}) - | \partial J|$ is minimal in $\mathfrak{S}$.
Pick $(J_0 ,s_0 ) \in \mathfrak{S}_0$ such that
$J_0$ is moreover $ \subseteq $-maximal 
inside $\mathfrak{S}_0$.

Up to rescaling \eqref{eq:absurd-lin-comb}
with a suitable $\lambda \underline{Y}^{\underline{t}} \in A ^{\times}$,
with $\lambda \in \F ^{\times}$ and $\underline{t} \in \mathbb{Z}^f$,
we may assume that
\begin{equation}
  \label{eq:coeff-with-hot}
a_{J_0 , s_0 } = \underline{Y}^{- \underline{e}^{J_0 \setminus \partial J_0 }}
+ \left( \text{terms of degree }\ge - \left | J_0 \setminus \partial J_0  \right | \text{ and not in } \F \underline{Y}^{- \underline{e}^{J_0 \setminus \partial J_0 }} \right),
\end{equation}
which has degree $- \left | J_0 \setminus \partial J_0  \right | $.
As $(J_0 , s_0 ) \in  \mathfrak{S}_0 $,
we have $\deg(a_{J,s}) \ge - |J_0 | + | \partial J| \ge -f$
for all $(J,s) \in \mathfrak{S}$,
and moreover by the maximality of $J_0$
we have 
$\deg(a_{J,s}) \ge - |J_0 | + | \partial J| + 1
 = - (|J \setminus \partial J| - |J \setminus J_0 | -1 )$
for all $(J,s) \in \mathfrak{S}$
such that $J \supsetneq J_0$.

By \cite[Proposition~3.2.3]{BHHMS3},
we can identify \eqref{eq:absurd-lin-comb} with a sequence
$(z_k)_{k \ge 0}$ of elements of $\pi_1 $ with
$z_k \in \pi_1  \left[ \mI^{kf - |J ^{\mathrm{sh}}|+1} \right]$,
in particular $z_0 \in \pi_1 [ \mI ] = \pi_1^{I_1 } \subseteq \pi_1^{K_1 }$,
which by \Cref{thm:srep-structure} and by
our choice of the vectors $w_s \in \V$ can uniquely be written as
$z_0 =  \sum_{s = 1}^{r} z_0 (s)$,
where $z_0 (s) \in \pi_1^{K_1 }$ is such that
$\ip(z_0 (s)) \in \F w_s \otimes_{\F} \DZero$.

By \cite[Lemma~3.8.1(i)]{BHHMS3}
(whose proof does not use $r \ge 1$), 
for every $\lambda \in \F ^{\times}$ and $\underline{t} \in \mathbb{Z}^f$ we have
\begin{equation}
  \label{eq:monomial-action-1}
 \lambda \underline{Y}^{\underline{t}} x_J(w_s)
= \left( \lambda \underline{Y}^{\underline{\ell} + \underline{t} - \underline{k}} x_{J, \ell}(w_s) \right)_{k \ge 0},
\end{equation}
for $\ell \gg _k 0$.
By \Cref{thm:xJi-def}(ii), \eqref{eq:monomial-action-1}
becomes
\begin{equation}
  \label{eq:monomial-action-2}
 \lambda \underline{Y}^{\underline{t}} x_J(w_s)
= \left( \lambda  x_{J, \underline{k} - \underline{t}}(w_s) \right)_{k \ge 0}.
\end{equation}
We now claim that 
\begin{equation}
  \label{eq:z0-linewise}
z_0(s) = 
\sum_{ J \subseteq  \mathcal{J} \mid
r_1(|J|) < s } \left( a_{J,s} x_J(w_s) \right)_0.
\end{equation}

We know that $\deg(a_{J,s}) \ge -f$,
so for each monomial $\lambda \underline{Y}^{\underline{t}}$
(where $\lambda \in \F ^{\times}$, 
$\underline{t} \in  \mathbb{Z}^f$)
in $a_{J,s}$ we have $\| -\underline{t} \| \le f$.
If there exists $j \in \mathcal{J}$ such that 
$-t_j \le -1$, then 
$(\lambda \underline{Y}^{\underline{t}} x_J(w_s))_0 = 0$
by \eqref{eq:monomial-action-2}
and (the $r \ge 1$ analogue of) 
\cite[Proposition~7.2]{Wang24}.
So, we can assume that $-\underline{t} \ge 0$,
which together with $\| - \underline{t} \| \le f$
implies $-\underline{t} \le \underline{f}$.
Therefore, we can apply \eqref{eq:xJ-def-1}
(with $\underline{i} = - \underline{t}$)
to deduce that 
$( \lambda \underline{Y}^{\underline{t}} x_J(w_s))_0
\overset{\eqref{eq:monomial-action-2}}{= }
 \lambda  x_{J, -\underline{t}}(w_s) \in 
\F w_s \otimes_{\F} \DZero$
for all  monomials
$\lambda \underline{Y}^{\underline{t}}$ in $a_{J,s}$.
In particular,
$\ip \left( a_{J,s} x_J(w_s) \right)_0 \in 
\F w_s \otimes_{\F} \DZero$,
which proves \eqref{eq:z0-linewise}.

If we take $s = s_0 $, 
then \eqref{eq:z0-linewise} is a nonempty sum.
Consider the $\GR$-subrepresentation of $\DZero$
\[
V ^{\mathrm{Wang}} \defeq 
\begin{cases}
\sum_{J \not\supseteq J_0 } I \left( \sigma_{\underline{e}^{J ^{\mathrm{ss}}}}, \sigma_{\underline{e}^{J ^{\mathrm{ss}}} + \underline{\varepsilon}^J} \right) 
&  \text{if } \Jrep \neq \mathcal{J}, \\
\sum_{J \neq  J_0 } I \left( \sigma_{\underline{e}^{J ^{\mathrm{ss}}}}, \sigma_{\underline{e}^{J ^{\mathrm{ss}}} + \underline{\varepsilon}^J} \right) 
= \bigoplus_{J \neq J_0 } \Dx[\sigma_J]
&  \text{if } \Jrep = \mathcal{J} ,
\end{cases}
\]
appearing in the paragraph before
\cite[Eq.~(85)]{Wang24}
(we include the case $\Jrep = \mathcal{J}$,
even though we are assuming
$\rep$ reducible nonsplit,
so we can refer to it below).
We show as in the proof of
\cite[Theorem~9.2]{Wang24}
that $V ^{\mathrm{Wang}}$
has Jordan-H\"older constituents 
$\sigma_{\underline{b}}$
with $\{j \mid b_j \ge 1\} \not \supseteq J_0 $
if $\Jrep \neq \mathcal{J}$,
and
with $\{j \mid b_j \ge 1\} \neq  J_0 $
if $\Jrep = \mathcal{J}$.
So, it follows from 
\Cref{lemma:ss-Serre-weight-mult}
that
$\ip (\pi_1^{K_1 }) \cap 
(\F w_{s_0} \otimes_{\F} \DZero)
\subseteq \F w_{s_0} \otimes_{\F} V ^{\mathrm{Wang}}$.

The following claim
parallels the Claim 
in the proof of 
\cite[Theorem~9.2]{Wang24},
and the proof of \emph{loc.\ cit.}\ goes through
in our case,
\emph{mutatis mutandis}.
\paragraph{Claim.} 
We have the following properties:
\begin{enumerate}[(i)]
\item 
  \label{eq:claim-lincomb-i}
If $J \not\supseteq J_0 $ and $ \| \underline{i} \| \le f$,
then $\ip \left( \xJ(w_{s_0 }) \right) \in \F w_{s_0 }\otimes_{\F}V ^{\mathrm{Wang}}$.

\item 
  \label{eq:claim-lincomb-ii}
If $J \supsetneq J_0 $ and 
$ \| \underline{i} \| \le |J \setminus \partial J| - |J \setminus J_0 |  -1$,
then $\ip \left( \xJ(w_{s_0 }) \right) \in \F w_{s_0 } \otimes _{\F} V ^{\mathrm{Wang}}$.

\item 
  \label{eq:claim-lincomb-iii}
If $J =  J_0 $ and 
$ \| \underline{i} \| \le |J_0  \setminus \partial J_0 |$,
then we have $\ip \left( \xJ(w_{s_0 }) \right) \in \F w_{s_0 } \otimes_{\F} V ^{\mathrm{Wang}}$
if $\underline{i} \neq \underline{e}^{J_0 \setminus \partial J_0 }$,
and 
$\ip ( 
\xJ[J, \underline{e}^{J_0 \setminus \partial J_0 }]
(w_{s_0 }) ) \in
(\F w_{s_0 } \otimes_{\F} \DZero)
\setminus
\left( \F w_{s_0 } \otimes_{\F} V ^{\mathrm{Wang}} \right)$.
\end{enumerate}
It follows from the line after
\eqref{eq:coeff-with-hot} 
that $z_0(s_0 ) =
\sum_{ J \subseteq  \mathcal{J} :
r_1(|J|) < s_0} \left( a_{J,s_0 } x_J(w_{s_0 }) \right)_0$
can be expressed as a linear combination of $\xJ(w_{s_0 })$
satisfying the assumptions of 
\ref{eq:claim-lincomb-i},
\ref{eq:claim-lincomb-ii},
\ref{eq:claim-lincomb-iii}
of the above claim, with exactly one of the terms equal to $\xJ[J_0 , \underline{e}^{J_0 \setminus \partial J_0 }]$,
hence $\ip ( z_0 (s_0 )  ) \in
(\F w_{s_0 } \otimes_{\F} \DZero) \setminus 
\left( \F w_{s_0 } \otimes_{\F} V ^{\mathrm{Wang}} \right)$.
On the other hand, 
we have $ \ip \left( z_0 (s_0 )  \right) \in
\ip (\pi_1^{K_1 }) \cap (\F w_{s_0} \otimes_{\F} \DZero)
\subseteq \F w_{s_0} \otimes_{\F} V ^{\mathrm{Wang}}$
by construction, absurd.

Assume now that $\rep$ is reducible split.
In this case, we let
$(w_{\ell,s} \mid 1 \le s \le r)$ be 
an $\F$-basis of $\V$
such that $(w_{\ell, s} \mid 1  \le s \le  r_1(\ell))$
is an $\F$-basis of $\Vell[\pi_1 , \ell]$,
and we show that 
$\mathcal{B} \defeq 
\left( x_J(w_{|J|, s}) \mid J \subseteq \mathcal{J}, \ 1 \le s \le r_1(|J| ) \right)$
is an $A$-basis of $\Hom_{A}(\DA(\pi_1 ), A)$.
Notice first that
$\left( x_J(w_{|J|, s}) \mid J \subseteq \mathcal{J}, 
\ 1 \le s \le r \right)$
is an $A$-basis of $\Hom_{A}(\DA(\pi ), A)$,
because it differs from the basis of
\Cref{thm:xJ-basis}
by (an invertible) linear change of coordinates,
cf.\ \Cref{rmk:xJ-linear}.

After substituting $x_J(w_s)$
with $x_J(w_{|J|,s})$,
the same proof goes through without problems,
except that \eqref{eq:absurd-lin-comb}
becomes 
$\sum_{ J \subseteq  \mathcal{J}} 
\sum_{ r_1(|J|)+1 \le s \le r} a_{J,s} x_J(w_{|J|,s})
\in \Hom_{A}(\DA (\pi_1 ), A)$
and that,
in the paragraph after
\eqref{eq:coeff-with-hot},
we uniquely write $z_0 $ as
$z_0 = \sum_{\ell = 0}^{f} \sum_{s = 1}^{r_1 (\ell)}  
z_0 (\ell, s)$,
where $z_0 (\ell, s) \in \pi_1^{K_1 }$
is such that
$\ip(z_0 (\ell, s)) \in 
\F w_{\ell, s} \otimes_{\F} \DZeroEll[\ell]$
by \cite[Corollary~3.1.6]{Ber}
applied to $\pi' = \pi_1$.
Then, \eqref{eq:z0-linewise}
becomes
$z_0 (\ell, s) = \sum_{|J| = \ell}
\left( a_{J,s} x_J (w_{|J|, s}) \right)_0 $,
and in the paragraph
following \eqref{eq:z0-linewise}
$\F w_s \otimes_{\F} \DZero$
is replaced by
$\F w_{|J|,s} \otimes_{\F} \DZeroEll[|J|]$
both times it occurs.

We then take $(J,s) = (J_0 , s_0)$,
the above Claim still holds,
and we reach a contradiction by deducing that
$\ip (z_0 (\ell, s)) \in 
(\F w_{|J_0 |, s_0 } \otimes_{\F} \DZeroEll)
\setminus (\F w_{|J_0 |, s_0 } 
\otimes_{\F} V ^{\mathrm{Wang}})$
and 
$\ip (z_0 (\ell, s)) \in 
\ip (\pi_1^{K_1 }) \cap 
(\F w_{|J_0 |, s_0 } \otimes_{\F} \DZeroEll)
\subseteq 
\F w_{|J_0 |, s_0 }
\otimes_{\F} V ^{\mathrm{Wang}}$.
\end{proof}

The following corollary
will be crucially used
in Step 4 of the proof of 
\Cref{prop:CM-via-N},
and in the proof of
\Cref{thm:gen-by-inv}(iii).
\begin{corollary}
  \label{cor:Wang}
Assume that $\rep$ is $ \max \{6,2f + 1\}$-generic,
let $\pi_1$
be a subrepresentation of $\pi$.
Then,
\begin{equation}
  \label{eq:Wang-rk-Dvee}
\dimFX \Dvee (\pi_1 ) = 
\sum_{\ell = 0}^{f} 
\rFil[\pi_1 , \ell]  \cdot 
\binom{f}{\ell}.
\end{equation}
\end{corollary}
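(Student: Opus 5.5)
The plan is to combine \Cref{thm:rk-subrep} with the comparison between the multivariable $(\varphi,\mathcal{O}_K^\times)$-module $\DA(-)$ and the cyclotomic $(\varphi,\Gamma)$-module $\Dvee(-)$. As announced at the beginning of \Cref{sec:Wang-with-mult}, the identity $\dimFX \Dvee(\pi_1) = \rk_A \DA(\pi_1)$ reduces \eqref{eq:Wang-rk-Dvee} directly to \eqref{eq:rk-subrep}. The work is therefore in justifying that this identity applies to $\pi_1$ and not only to $\pi$.

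First, I check that $\pi_1$ satisfies the hypotheses under which the comparison of \cite{BHHMS3} (its main comparison theorem between $\DA$ and $\Dvee$) is available. The comparison requires an admissible smooth representation with a central character such that $\DA(\pi_1)$ is a finite projective (equivalently free) étale $A$-module. Admissibility and the central character are inherited from $\pi$. For finiteness I would use \Cref{thm:rk-subrep} itself: its proof exhibits the explicit $A$-basis $\mathcal{B}$ of $\Hom_A(\DA(\pi_1),A)$, so $\Hom_A(\DA(\pi_1),A)$ is free of finite rank. To pass to $\DA(\pi_1)$, I argue as follows.
\begin{itemize}
\item $\DA(\pi_1)$ is a quotient of $\DA(\pi)$, by exactness of $\DA$ applied to $\pi_1 \hookrightarrow \pi$ (the dual map $\pi^\vee \twoheadrightarrow \pi_1^\vee$ stays surjective after completed tensor product).
\item The étale $\varphi$-structure of $\DA(\pi)$ then passes to $\DA(\pi_1)$.
\item Étale $\varphi$-modules over $A$ that are finitely generated are projective, as in \cite{BHHMS3}.
\end{itemize}
Hence $\DA(\pi_1)$ is free of rank $\rk_A \Hom_A(\DA(\pi_1),A) = \sum_\ell r_1(\ell)\binom{f}{\ell}$.

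Next, I invoke the result of \cite{BHHMS3} identifying $\Dvee(\pi_1)$ with the étale $(\varphi,\Gamma)$-module obtained from $\DA(\pi_1)$ by the ``trace'' or specialisation functor along $A \to \FX$ (via $N_0 \twoheadrightarrow N_0/N_1$). That functor multiplies ranks by one, so $\dimFX \Dvee(\pi_1) = \rk_A \DA(\pi_1)$.

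The main obstacle is that the comparison in \cite{BHHMS3} may be stated under hypotheses formulated for $\pi$ itself, for instance via the good-filtration and $\grm(\pi^\vee)$-annihilated-by-$J$ conditions. I would first try to check that these hold for $\pi_1$: $\grm(\pi_1^\vee)$ is a quotient of $\grm(\pi^\vee)$ up to a change of filtration, and the induced filtration is good by \cite[Theorem~I.5.7]{LvO}, so it is still killed by a power of $J$. That is what the comparison actually uses. Failing this, I would instead combine the two statements separately for $\pi$ and $\pi/\pi_1$ using exactness of both functors, since the comparison is compatible with short exact sequences.
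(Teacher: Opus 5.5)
Your overall reduction is the same as the paper's: the corollary is \Cref{thm:rk-subrep} combined with the comparison $\dimFX \Dvee(\pi_1)=\rk_A\DA(\pi_1)$, which holds once $\pi_1\in\mathcal{C}$ and $\DA(\pi_1)$ is étale. Your handling of $\pi_1\in\mathcal{C}$ is fine: the $\mI$-adic filtration on $\pi_1^\vee$ is exactly the quotient filtration, so $\gr(\DA(\pi_1))$ is a quotient of $\gr(\DA(\pi))$.

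The gap is the étaleness of $\DA(\pi_1)$, which is precisely the hypothesis the comparison needs. Your claim that the étale structure ``passes'' to the quotient is not formal. Let $K$ be the kernel of $q\colon\DA(\pi)\twoheadrightarrow\DA(\pi_1)$; it is $\beta$-stable, since $\ker(1\otimes q)=A\otimes_{\varphi,A}K$ by flatness of $\varphi$.
\begin{itemize}
\item Right exactness of $A\otimes_{\varphi,A}(-)$ only gives that $\beta$ on $\DA(\pi_1)$ is surjective.
\item The snake lemma identifies $\ker\beta$ on $\DA(\pi_1)$ with $\coker\bigl(\beta|_K\colon K\to A\otimes_{\varphi,A}K\bigr)$, and you say nothing about this cokernel.
\end{itemize}
If $A$ were a field (as when $f=1$), equality of dimensions would finish the argument, but for $f>1$ it is not. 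Your next step (finitely generated étale implies projective) depends on this unproved étaleness. Your fallback through $\pi/\pi_1$ hits the same obstacle, since nothing is known a priori about $\DA(\pi/\pi_1)$.

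The paper proves étaleness directly in \Cref{thm:et-and-rk}. It uses the $A$-basis $\mathcal{B}=\bigl(x_J(w_s)\mid J\subseteq\mathcal{J},\ 1\le s\le r(\pi_1,|J|)\bigr)$ of $\Hom_A(\DA(\pi_1),A)(1)$ from the proof of \Cref{thm:rk-subrep}. By \Cref{prop:Mat-rep}, the matrix of $\varphi$ on $\mathcal{B}$ is upper triangular, for any order refining inclusion of the $J$'s, with invertible diagonal entries. Hence $\beta$ is an isomorphism, and the corollary follows from the comparison theorem.

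For $r>1$, this same computation (with $\pi_1=\pi$) is also where the paper obtains étaleness of $\DA(\pi)$, which you took as given. Once you have that tool, the detour through quotients is superfluous. If you want to keep your route, you must supply a real argument. For instance, suppose $A$ is noetherian and faithfully flat over itself via $\varphi$; you would need to verify both properties. Define the chain $K_0=K$, $K_{n+1}=\beta^{-1}(A\otimes_{\varphi,A}K_n)$ inside $\DA(\pi)$. It is increasing, so it stabilises. Faithful flatness then gives $K_1=K$ by descending induction, i.e.\ $\beta(K)=A\otimes_{\varphi,A}K$, and so $\DA(\pi_1)$ is étale.
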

\begin{proof}
We know from \Cref{thm:et-and-rk}
that $\DA (\pi_i) = \DA (\pi_i) ^{\mathrm{\acute{e}t}}$,
and conclude from
\cite[Theorem~1.13]{BHHMS2}.
\end{proof}

\begin{remark}
  \label{rmk:D-eq-m-non-ss}
The statement of
\cite[Proposition~3.87(i)]{BHHMS2}
also holds when $\rep$ is semisimple, the same proof
goes through \emph{verbatim}.
On the other hand, 
\cite[Proposition~3.87(ii)]{BHHMS2}
does not hold in the non-semisimple case.
\Cref{cor:Wang} gives
the correct replacement.
\end{remark}

\begin{itemize}
\item \cite[Corollary~9.3]{Wang24}:
Implied by
\Cref{thm:gen-by-inv}(i) below.
\end{itemize}

Following \cite{Wang24}
(cf.\ the line after \cite[Eq.~(1)]{Wang24}),
we let $\mathcal{C}$ be 
the abelian category of admissible smooth
representations $\overline{\pi}$ of $\GF$ over $\F$
with a central character and such that 
$\gr(\DA(\overline{\pi}))$
is a finitely generated $\gr(A)$-module
(where we endow $\overline{\pi}^\vee$
with the $\mI$-adic filtration,
which induces a filtration on $\DA (\overline{\pi})$
as in the paragraph before
\Cref{thm:xJ-basis}).
One has the following generalisation of
\cite[Theorem~1.1]{Wang24}.
\begin{theorem}
  \label{thm:et-and-rk}
Assume that $\rep$ is $\max \{6, 2f +1 \}$-generic.
Let $\pi_1 \subseteq \pi$ be a subrepresentation.
Then $\pi_1  \in \mathcal{C}$,
the $A$-module map $\beta$ 
of \eqref{eq:beta-DA-definition}
(for $\overline{\pi} = \pi_1$)
is an isomorphism,
and
$\rk_A \DA(\pi_1 ) = \sum_{0 \le \ell \le f} 
\rFil[\pi_1, \ell] \binom{f}{\ell}$.
\end{theorem}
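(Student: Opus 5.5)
The plan is to follow the proof of \cite[Theorem~1.1]{Wang24} and to feed in the $r \ge 1$ inputs established above; the rank formula is already \Cref{thm:rk-subrep}, so only two things remain. First we show $\pi_1 \in \mathcal{C}$. Then we show that $\beta$ is an isomorphism for $\overline{\pi} = \pi_1$.

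\textbf{Finite generation.} We start from $\pi$ itself. The isomorphism $\theta_A$ from the proof of \Cref{thm:xJ-basis} identifies $\gr(\DA(\pi))$ with $\bigoplus_{J \subseteq \mathcal{J}} \V^\vee \otimes_{\F} \gr(A)$. This module is free of rank $r \cdot 2^f$, hence finitely generated, and so $\pi \in \mathcal{C}$.

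For $\pi_1$, note that $\pi_1^\vee$ is a quotient of $\pi^\vee$. We give $\pi_1^\vee$ the quotient filtration, which is good and therefore may replace the $\mI$-adic one in computing $\gr(\DA(\pi_1))$. With this filtration, $\gr(\DA(\pi_1))$ is a quotient of $\gr(\DA(\pi))$, because localisation at $\prod_j y_j$ and completion are exact on the graded side. Hence it is finitely generated. Since finite generation of the associated graded does not depend on the choice of good filtration, we get $\pi_1 \in \mathcal{C}$. This is the same argument as for subobjects in \cite{Wang24}, which is stated for subquotients of objects of $\mathcal{C}$.

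\textbf{$\beta$ is an isomorphism.} For $\pi$, use the basis $(x_J(w_s))$ of $\Hom_A(\DA(\pi),A)$ from \Cref{thm:xJ-basis}. \Cref{prop:Mat-rep}(i) says that $\Mat(\varphi)$ is block diagonal in $s$, and each diagonal block equals Wang's matrix for $r=1$. That matrix is invertible over $A$ (this is \cite[Proposition~8.3]{Wang24} and the argument after it). So the dual map $\beta^\vee$ is an isomorphism, and therefore so is $\beta$ for $\pi$.

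For $\pi_1$, use the adapted basis $\mathcal{B}$ constructed in the proof of \Cref{thm:rk-subrep}. For each $s$, the set $\{J : s \le r_1(|J|)\}$ is stable under $J \mapsto J-1$ and, when $\Jrep \ne \mathcal{J}$, also under subsets, by \Cref{lemma:ss-Serre-weight-mult}. By \Cref{prop:Mat-rep}(i), the nonzero entries $\Mat(\varphi)_{(J',s),(J+1,s)}$ have $J^{\mathrm{ss}} \subseteq J' \subseteq J$, and $J$ is a shift of $J+1$. Together these show that the $A$-span of $\mathcal{B}$ is stable under $\varphi$. The resulting matrix on $\mathcal{B}$ is block upper-triangular, and its diagonal blocks are $\gamma_{J+1,J}\underline{Y}^{-\underline{c}^J}$, which are units. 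Hence $\varphi$ restricted to $\Hom_A(\DA(\pi_1),A)$ is an isomorphism after linearisation, and dualising gives that $\beta$ is an isomorphism for $\pi_1$. Here we use that $\DA(\pi_1)$ is free, being a quotient of $\DA(\pi)$ whose $A$-dual is the direct summand spanned by $\mathcal{B}$.

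\textbf{Main obstacle.} The hardest step is checking carefully that the triangular shape survives in the reducible split case. There $\mathcal{B}$ uses the bases $w_{|J|,s}$, which vary with $|J|$, so $\Mat(\varphi)$ is no longer literally block diagonal in $s$. We plan to handle this through \Cref{rmk:xJ-linear}, which gives linearity in $w$. Since $|J+1| = |J|$, the relevant change of basis only mixes the $\VFil[\pi_1,|J|]$ with one another consistently, so the $A$-span of $\mathcal{B}$ should still be $\varphi$-stable.
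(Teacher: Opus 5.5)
Your overall route matches the paper's. The rank is \Cref{thm:rk-subrep}, and $\beta$ is handled through the adapted basis $\mathcal{B}$ of $\Hom_A(D_A(\pi_1),A)(1)$. On $\mathcal{B}$, $\Mat(\varphi)$ is triangular with unit diagonal entries $\gamma_{J+1,J}\underline{Y}^{-\underline{c}^J}$, once the columns are reindexed by $J+1\mapsto J$; this is a bijection of $\mathcal{B}$ because $|J+1|=|J|$.

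For $\pi_1\in\mathcal{C}$ you argue differently from the paper, which points to the finite $\gr(A)$-basis of the dual. You instead use the surjection $\grm(\pi^\vee)\twoheadrightarrow\grm(\pi_1^\vee)$ together with $\gr(D_A(-))\cong\grm((-)^\vee)[1/\prod_j y_j]$. This is correct and more direct. You do not even need to change the good filtration: the filtration on $\pi_1^\vee$ induced by the $\mI$-adic one on $\pi^\vee$ \emph{is} the $\mI$-adic filtration of $\pi_1^\vee$.

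Your ``main obstacle'' in the split case also dissolves. There $J^{\mathrm{ss}}=J$, and \Cref{thm:xJi-def}(iii) holds for every $w\in V$. So $\varphi$ sends $x_{J+1}(w_{|J|,s})$ to a unit multiple of $x_J(w_{|J|,s})$, and the matrix on $\mathcal{B}$ is monomial with unit entries.

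The step that fails as written is the freeness of $D_A(\pi_1)$, justified as ``being a quotient of $D_A(\pi)$ whose $A$-dual is the direct summand spanned by $\mathcal{B}$''. A quotient of a free module can carry torsion that is invisible in the $A$-dual; think of $A\twoheadrightarrow A/aA$ for a nonzero non-unit $a$. So the inference is invalid.

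What your data actually give is that $D_A(\pi_1)\to\Hom_A(\Hom_A(D_A(\pi_1),A),A)\cong A^{|\mathcal{B}|}$ is surjective. This holds because $D_A(\pi)$ is free and the double dual of $D_A(\pi)\twoheadrightarrow D_A(\pi_1)$ is a split surjection. The kernel of this map consists of the elements killed by every linear form. It must vanish before étaleness of the dual can be transported back to $\beta$.

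The paper does not prove this either. It takes ``$\beta$ is an isomorphism iff $\Hom_A(D_A(\pi_1),A)(1)$ is étale'' as known from the $r=1$ setting, and you can do the same.

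If you want a proof instead, you need an independent bound of $|\mathcal{B}|$ on the number of generators of $D_A(\pi_1)$. For instance, in the nonsplit case take the surjection $N_1\twoheadrightarrow\grm(\pi_1^\vee)$ from Step~1 of the proof of \Cref{prop:CM-via-N}. That step does not use the present theorem, though it is stated under stronger genericity. Localising it at $\prod_j y_j$ gives a surjection onto $\gr(D_A(\pi_1))$ from a free $\gr(A)$-module of rank $\sum_\ell r(\pi_1,\ell)\binom{f}{\ell}=|\mathcal{B}|$. Then $A^{|\mathcal{B}|}\twoheadrightarrow D_A(\pi_1)\twoheadrightarrow A^{|\mathcal{B}|}$ is a surjective endomorphism of a finite free module over the commutative ring $A$. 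It is therefore an isomorphism, and $D_A(\pi_1)$ is free.
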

\begin{proof}
It is clear that $\pi_1$
is admissible smooth with a central character,
and it follows from the proof of \Cref{thm:xJ-basis}
and \Cref{thm:rk-subrep} that $\gr(\DA(\pi_1))$ 
is a finitely generated $\gr(A)$-module
(indeed, we provide a finite basis
of its $\gr(A)$-linear dual).
Hence, $\pi_1 \in \mathcal{C}$.

It only remains to show that $\beta$ is an isomorphism, 
or equivalently that $\Hom_A(\DA(\pi_1 ), A)(1)$
is an étale $(\varphi, \mathcal{O}_K ^{\times})$-module.
Consider the $A$-basis
$\mathcal{B}  \defeq  \left( x_J(w_s) \mid J \subseteq \mathcal{J}, \ 1 \le s \le \rFil[\pi_1, |J|]  \right)$
of $\Hom_A(\DA(\pi_1 ), A)(1)$
that appears in the proof of \Cref{thm:rk-subrep}.
It is enough to show that $\Mat(\varphi) \in \GL_{|\mathcal{B}|}(A)$ 
is invertible.
It follows from \Cref{prop:Mat-rep} that
$\Mat(\varphi)$ is upper triangular 
(for any total order $ \preceq $ on $\mathcal{B}$ 
such that $J' \subseteq J$ implies $(J', s') \preceq (J, s)$ for all $(J,s), (J',s') \in \mathcal{B}$) 
with invertible entries on the diagonal.
Hence, $\Mat(\varphi)$ is invertible.
\end{proof}

\paragraph{Extending \cite[Appendix~A]{Wang24}:}
After substituting $\ip^{-1}(w \otimes v_J)$ for $v_J$
and $\xJ(w)$ for $\xJ$ as previous indicated,
all the proofs in this section
go through when $r \ge 1$.
\paragraph{Extending \cite[Appendix~B]{Wang24}:}
\begin{itemize}
\item \cite[Lemma~B.1]{Wang24}:
its proof works \emph{verbatim}.
Indeed, the same proof holds for any $\pi \in \mathcal{C}$
such that $\gr(\pi)$ is annihilated by 
$(y_jz_j, z_jy_j\mid 0 \le j \le f-1)$.

\item \cite[Lemmas~B.2,B.3,B.4]{Wang24}:
the same proofs, \emph{mutatis mutandis}, 
go through when $r \ge 1$.
Note that the argument 
in the second paragraph of the proof of
\cite[Proposition~3.5.1]{BHHMS3}
(which is invoked in the proof of 
\cite[Lemma~B.2]{Wang24})
does not use $r = 1$.
\end{itemize}

\paragraph{Extending \cite[Appendix~C]{Wang24}:}
This section contains the only result that we were not able generalise to $r \ge 1$.
Fortunately, it is not used in the rest of the paper.
\begin{itemize}
\item \cite[Lemma~C.1, Lemma~C.2]{Wang24}:
they only concern the structure of the ring $A$,
hence they do not need the hypotheses $r = 1$.
\end{itemize}

\paragraph{Extending \cite[Appendix~D]{Wang24}:}
everything extends \emph{verbatim}.

\begin{remark}
  \label{eq:Wang-not-extended}
Let $\Mat(\varphi)'$ and $\Mat(a)'$ 
be the matrices for the action of $\varphi$ and 
$a \in \mathcal{O}_K ^{\times}$ on 
$\Hom_A(\DA(\pi), A)(1)$ with respect to the basis
\begin{equation}
  \label{eq:basis-for-computations}
\mathcal{B}' 
\defeq 
\left( \underline{Y}^{- \underline{r}^{J^c}} x_J(w_s) 
\ \middle|\  J \subseteq \mathcal{J}, 
\ 1 \le s \le r \right).
\end{equation}
We were not able
to extend
\emph{loc.\ cit.}\ for $r \ge 1$,
the main obstacle being
that we do not know how to prove that
$(\Mat(a)')_{(J', s'), (J, s)}$
vanishes for $s \neq s'$.
This is the only result in 
\cite{Wang24}
that we could not extend.
\end{remark}

\section{Cohen-Macauliness and finite length}
  \label{sec:CM-and-fl}
The goal of this section is to show,
in \Cref{prop:CM-via-N}(ii) below,
that $\pi$ and all its nontrivial subquotients 
are Cohen-Macaulay
of grade $2f$ and,
in \Cref{cor:fl} below,
that $\pi$ has finite length,
with an explicit upper bound
depending on the numerical invariants
\eqref{eq:VFil-rFil-def}.

Throughout this section we assume that 
$\rep$ is reducible nonsplit and $0$-generic,
and we assume that $\pi$ satisfies
hypotheses
\ref{hypothesis:i} to \ref{hypothesis:iv}
of \Cref{sec:hypotheses}.

\subsection{Cohen-Macauliness}
  \label{sec:CM}
The goal of this section is to prove
that $\pi$ and all its nontrivial subquotients
are Cohen-Macaulay of grade $2f$;
we start with a general lemma
on graded modules.

For  a graded ring $S$ 
(not necessarily commutative),
a graded $S$-module $N$,
and an integer $d$,
we let $N(d)$ be the $S$-module $N$
with grading $N(d)_j \defeq N _{j + d}$.
The following lemma shows that 
the isomorphisms of \cite[Lemma~3.65]{BHHMS2}
are graded, up to shift.
We will use it in \Cref{lemma:fraka-dual} below.
\begin{lemma}
\label{lemma:graded-decalage}
\begin{enumerate}[(i)]
\item 
Let $S$ be a noetherian graded ring
(not necessarily commutative),
suppose there exists
an $S$-regular sequence of homogeneous
central elements
$(u _1, \dots, u_s)$
for some integer $s \ge 0$,
let $U$ be the ideal 
generated by $u_1 , \dots , u_s$,
and set 
$d_U \defeq \sum _{s' = 1}^s \deg u _{s'}$.
Let $N$ be a finitely generated graded $S$-module 
annihilated by $U$.
Then,
we have an isomorphism
of graded $S$-modules for $i \ge 0$
\[
  \Ext_S ^{s + i}(N , S) \cong
  \Ext_{S/U}^{i}( N, S/U )(d_U).
\]
\item 
If $M$ is a finitely generated graded $\grL$-module
with compatible $H$-action
that is annihilated by $(h_0 , \dots , h_{f-1})$,
then we have an isomorphism
of graded $\grL$-modules with compatible
 $H$-action for $i \ge 0$
\[
  \Ext _{\grL} ^{f + i}(M , \grL) \cong
  \Ext_{R}^{i}( M, R )(-2f).
\]

If moreover $M$ is annihilated by $J$,
then we have an isomorphism
of graded $\grL$-modules with compatible
 $H$-action for $i \ge 0$
\[
  \Ext _{\grL} ^{2f + i}(M , \grL) \cong
  \Ext_{R}^{f + i}( M, R )(-2f) \cong 
  \Ext_{\Rbar}^{i}( M, \Rbar )(-4f).
\]
\end{enumerate}
\end{lemma}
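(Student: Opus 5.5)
The plan is a graded version of Rees' lemma, proved by induction on $s$, and then specialised to $\grL$. Part (i) reduces to the case $s=1$: if $(u_1,\dots,u_s)$ is a regular sequence of homogeneous central elements, then $u_2,\dots,u_s$ remain central and form a regular sequence on $S' \defeq S/(u_1)$, which is again a noetherian graded ring. Writing $d_U = \deg u_1 + d_{U'}$, with $U'$ the image of $U$ in $S'$, and composing the isomorphism for $(S,u_1)$ with the one for $(S',U')$ gives the claim, with the shifts adding up.

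For $s=1$, set $u \defeq u_1$ and $d \defeq \deg u$. Since $u$ is central and regular, left multiplication by $u$ gives a short exact sequence of graded $S$-bimodules
\[
0 \to S(-d) \xrightarrow{u} S \to S/uS \to 0 .
\]
I will compare the two contravariant $\delta$-functors $N \mapsto \Ext^{i+1}_S(N,S)(-d)$ and $N \mapsto \Ext^{i}_{S/uS}(N,S/uS)$ on the category of finitely generated graded $S/uS$-modules. Three checks are needed.
\begin{enumerate}[(a)]
\item In degree $0$, the long exact sequence for $\Hom_S(N,-)$ applied to the sequence above gives $\Hom_S(N,S)=0$, because $u$ is regular on $S$ and kills $N$. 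Multiplication by $u$ induces zero on $\Ext^1_S(N,S)$, again because $uN=0$. Hence $\Hom_{S/uS}(N,S/uS) = \Hom_S(N,S/uS) \cong \Ext^1_S(N,S)(-d)$, naturally in $N$.
\item Both functors vanish in positive degree on graded free $S/uS$-modules. This is immediate from the length-one graded free resolution of $S/uS$ above, which computes $\Ext^1_S(S/uS,S) \cong (S/uS)(d)$ and $\Ext^{\ge 2}_S(S/uS,S)=0$.
\item Both functors are effaceable, so they are universal $\delta$-functors.
\end{enumerate}
Together these give $\Ext^{1+i}_S(N,S) \cong \Ext^i_{S/uS}(N,S/uS)(d)$ as graded modules. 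The only point needing care is the direction of the shift. This is fixed by the computation $\Ext^1_S(S/uS,S)\cong (S/uS)(d)$ together with the convention $N(d)_j = N_{j+d}$.

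For (ii), I will apply (i) twice. First take $S=\grL$ and $U=(h_0,\dots,h_{f-1})$. The elements $h_i$ are central by \eqref{eq:grL-commutator-relations}. They form a regular sequence because $\grL$ is an Auslander-regular domain which is an iterated extension, and at each stage $\grL/(h_0,\dots,h_{k-1})$ is still a domain; alternatively one can use a PBW-type basis. With the grading convention of \cite{BHHMS2}, in which $\mI^n/\mI^{n+1}$ sits in degree $-n$, each $h_i$ has degree $-2$, so $d_U=-2f$. Since $\grL/U = R$, this gives the first isomorphism. Next, for $M$ annihilated by $J$, apply (i) to $S=R$ and $U=(y_0z_0,\dots,y_{f-1}z_{f-1})$. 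These elements form a regular sequence of homogeneous elements of degree $-2$ in the commutative polynomial ring $R$, with quotient $\Rbar$. This yields $\Ext^{f+i}_R(M,R)\cong \Ext^i_{\Rbar}(M,\Rbar)(-2f)$. Combining with the first isomorphism applied in degree $f+(f+i)$ gives the total shift $-4f$.

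$H$-equivariance comes for free. The elements $h_i$ and $y_iz_i$ are $H$-eigenvectors of trivial weight, since $y_i$ and $z_i$ have inverse eigencharacters $\alpha_i^{\pm1}$. So the short exact sequences above are $H$-equivariant, and the whole $\delta$-functor argument can be run in the category of graded modules with compatible $H$-action, which has enough projectives of the form $\grL\otimes\chi$.

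I expect the main obstacle to be bookkeeping rather than substance: getting the sign of the shift right under the negative grading convention, and justifying that $h_0,\dots,h_{f-1}$ is a regular sequence in the noncommutative ring $\grL$.
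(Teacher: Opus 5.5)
Your proof is correct, but it takes a different route from the paper's.

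The paper first computes $\Ext^\bullet_S(S/U,S)$ by induction on $s$: it vanishes outside degree $s$, and $\Ext^s_S(S/U,S)\cong(S/U)(d_U)$. It then deduces (i) for the whole sequence at once from the change-of-rings Grothendieck spectral sequence $\Ext^i_{S/U}(N,\Ext^j_S(S/U,S))\Rightarrow\Ext^{i+j}_S(N,S)$, which is concentrated in the single row $j=s$. You instead remove one central element at a time, and prove the one-element case (Rees' lemma) by comparing universal $\delta$-functors on finitely generated graded $S/uS$-modules.

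Your route is more elementary. It uses only graded free resolutions of finitely generated graded modules, which is exactly the setting where graded and ungraded $\Hom$ agree. It avoids injectives and the acyclicity check that the spectral sequence needs. It also gives naturality, and hence the $H$-equivariance in (ii), directly from the uniqueness of morphisms of universal $\delta$-functors. One point you use implicitly and should state: $N\mapsto\Ext^{i+1}_S(N,S)$ really is a $\delta$-functor on $S/uS$-modules. This holds because $\Hom_S(N,S)=0$ for such $N$, so the long exact $\Ext_S$-sequence can be started at $\Ext^1_S$. In exchange, the paper's route isolates the structural fact that $\Ext^\bullet_S(S/U,S)$ is concentrated in degree $s$, and handles the whole regular sequence in one step.

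Your shift bookkeeping is right for the convention $N(d)_j=N_{j+d}$: the sequence $0\to S(-d)\xrightarrow{u}S\to S/uS\to 0$ gives $\Ext^1_S(S/uS,S)\cong(S/uS)(d)$. The paper's displayed short exact sequence carries the shift $(-\deg u_s)$ where $(\deg u_s)$ is meant, but its conclusion $(S/U)(d_U)$ agrees with yours. For (ii), your PBW argument for the regularity of $(h_0,\dots,h_{f-1})$ is sound; the paper simply asserts it. Your total shift $-4f$ agrees with the paper.
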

The finiteness assumptions in the lemma
are only there to ensure that
the functor
$\Hom_{S}(-, -)$
coincides with
$\bigoplus_{i \in \mathbb{Z}} \Hom_{S}(-, -)_i$
(i.e.\ the \emph{graded} hom functor).
\begin{proof}
(i)
We follow the proof of
\cite[Lemma~5.1.3]{BHHMS1},
but need to be careful about the grading.
First, we show by induction on $s$ that
$\Ext _{S} ^{i}( S/U, S) \cong (S/U)(d_U)$
if $i = s$ and $0$ otherwise.
The case $s = 0$ is immediate.
For $s > 0$, let $U' \defeq (u_1 , \dots , u _{s-1})$.
By assumption we have a short exact sequence
of graded $S$-modules
$ 0 \to S/U' \xrightarrow{u_s \cdot }
(S/U')(- \deg u_s) \to (S/U)(- \deg u_s) \to 0$,
taking $\Hom_{S}( -, S )$ and using
the inductive hypotheses
we find that
$\Ext _{S} ^{i}(S/U, S)$ vanishes
for all $i \ge 0$ except possibly 
 $i = s-1, s$,
and moreover we have an exact sequence 
of graded $S$-modules
\[
  0 \to \Ext_{S}^{s-1}( S/U , S )
  \to \Ext_{S}^{s-1}( S/U', S ) 
  \xrightarrow{u_s \cdot }
  \Ext_{S}^{s-1}( S/U', S )(\deg u_s) 
  \to \Ext_{S}^{s}( S/U, S) \to 0.
\]
Again by the inductive hypothesis,
we have the following graded isomorphisms
of $S$-modules
\begin{align*}
 \Ext_{S}^{s-1}( S/U , S ) & \cong 
 \ker 
 \left( (S/U')(d _{U'}) \xrightarrow{u_s \cdot }
 (S/U')(d_U)\right) = 0,\\ 
 \Ext_{S}^{s}( S/U , S ) & \cong 
 \coker \left( (S/U')(d _{U'}) \xrightarrow{u_s \cdot }
 (S/U')(d_U)\right) = (S/U)(d_U).
\end{align*}

Since $N$ is annihilated by $U$,
we have an equality of functors
$\Hom_{S/U}( N, \Hom_{S}( S/U, - )  ) 
  = \Hom_{S}( N, -)$
(we always implicitly consider the graded hom functor,
hence both sides take values in
the category of graded $S$-modules),
and so there is a Grothendieck spectral sequence 
$E ^{i,j}_2 =
\Ext_{S/U}^{i}( N, \Ext_{S}^{j}( S/U, S )) 
\Rightarrow \Ext_{S}^{i + j}( N, S )$,
which by 
the previous paragraph degenerates to
the graded $S$-module isomorphism
$ \Ext_S ^{s + i}(N , S) \cong
\Ext_{S/U}^{i}( N, S/U )(d_U)$.

(ii)
A direct consequence of (i),
using that $\deg h_j = -2 = \deg y_j z_j$
for $j  \in \{ 0, \dots, f-1 \} $,
that $(h_0, \dots , h_{f-1})$
is a regular sequence of central elements in $\grL$,
and that $(y_0 z_0 , \dots , y_{f-1} z _{f-1})$
is a regular sequence in the commutative ring $R$
(cf.\ \eqref{eq:R-definition}
and \eqref{eq:R-bar-definition}).
Compatibility with the $H$-action can be seen
by retracing the proof of (i)
and using that $H$ acts trivially on 
 $h_j$ and $y_j z_j $,
for $j  \in \{ 0, \dots, f-1 \} $
(or by using
 \cite[Lemma~3.65]{BHHMS2} directly).
\end{proof}

The following definitions will be needed
to state \Cref{lemma:fraka-dual} below.
\begin{definition}[{\cite[Definition~3.57]{BHHMS2}}]
  \label{def:tj}
To $\lambda \in  \mathscr{P}$
we associate an ideal 
$\mathfrak{a}(\lambda)$ of $R$
(cf.\ \eqref{eq:R-definition})
as follows:
$\mathfrak{a}(\lambda)=
(t_0, \dots, t_{f-1})$, where
\[
  t_j = 
\begin{cases}
  z_j & \text{if }\lambda_j(x_j) \in \{x_j,p-3-x_j\}, j \in J_{\rep}, \\
  y_j & \text{if }\lambda_j(x_j) \in \{x_j+2,p-1-x_j\}, j \in J_{\rep},\\
  y_jz_j & \text{if }\lambda_j(x_j) \in \{x_j,p-1-x_j\}, j \notin J_{\rep} , \\
  y_jz_j & \text{if }\lambda_j(x_j) \in \{x_j+1,p-2-x_j\}.
\end{cases}
\]
For $t_j $ as above,
we also set
$d_\lambda \defeq | \{j \mid t_j = y_j z_j \}|$.
\end{definition}
Observe that $(y_jz_j \mid 0 \le j \le f-1) \subseteq \mathfrak{a}(\lambda)$
for all $\lambda \in \mathscr{P}$.
Whenever convenient, we will identify $\mathfrak{a}(\lambda)$ 
with the unique ideal of $\Rbar$ 
(cf.\ \eqref{eq:R-bar-definition})
that pulls back to $\mathfrak{a}(\lambda)$ in $R$.

\begin{definition}[{\cite[Definition~4.2.4]{BHHMS4}}]
  \label{def:IJJ-def}
Suppose that $J_1, J_2$ are disjoint subsets 
of $\{0, \dots,f-1\}$ and that $d \in \mathbb{Z}$.
We define the ideal $I(J_1,J_2, d)$ of $\Rbar$
as follows: if $d \ge 1$ we let 
\begin{equation}
  \label{eq:IJJ-def}
I(J_1, J_2, d) \defeq \left( 
\prod_{j \in J_1'} y_j 
\prod_{j \in J_2'} z_j \,\middle|\,
J_1' \subseteq J_1, J_2' \subseteq J_2, 
|J_1'|+ |J_2'| =d\right);
\end{equation}
if $d \le 0$ we let $I(J_1, J_2, d) \defeq \Rbar$.
\end{definition}

For $\lambda \in \mathscr{P}$ and
$\ell \in \{-1, \dots,f\}$, define the ideal
of $\Rbar$
\begin{equation}
  \label{eq:a-fil-def}
\atwo[\ell](\lambda) \defeq 
\mathfrak{a}(\lambda)+
I(J_1(\lambda), J_2(\lambda), \ell
+ 1 - |J_\lambda|),
\end{equation}
where
\begin{align}
  \label{eq:J1-def}
 J_1 (\lambda) & \defeq  \{j \in  J_{\rep}^c  \mid 
\lambda_j(x_j)= p-1-x_j\} ,\\
  \label{eq:J2-def}
 J_2 (\lambda) & \defeq  \{j \in J_{\rep}^c  \mid 
\lambda_j(x_j)= x_j\},
\end{align}
and where $J_\lambda$
is defined in \eqref{eq:J-lambda-def}.
Observe that $J_\lambda$, $J_1 (\lambda)$
and $J_2 (\lambda)$
are pairwise disjoint subsets of
$\{ 0, \dots, f-1\}$.
In particular, 
$\atwo[-1] (\lambda) = \Rbar$
and
$\atwo[f] (\lambda) = \mathfrak{a} (\lambda)$.
More generally,
$\atwo[\ell] (\lambda) = \Rbar$
if and only if 
$\ell \le |J_\lambda| -1$,
and $\atwo[\ell] (\lambda) = \mathfrak{a} (\lambda)$ 
if and only if
$\ell \ge |J_1 (\lambda)| + |J_2  (\lambda)|
+ |J_\lambda|$.

\begin{definition}[{\cite[Definition~3.62]{BHHMS2}}]
  \label{def:lambda-star}
Given $\lambda \in \mathscr{P}$,
we define another $f$-tuple $\lambda^\ast$
as follows:
\[
\lambda^\ast_j(x_j) \defeq 
\begin{cases}
p-3-\lambda_j(x_j) & \text{if }t_j = z_j, \\
p+1-\lambda_j(x_j) & \text{if }t_j = y_j, \\
p-1- \lambda_j(x_j) & \text{if }t_j = y_jz_j.
\end{cases}
\]
\end{definition}
It is shown in 
\cite[Lemma~3.63(i)]{BHHMS2} 
that $\lambda^\ast \in \mathscr{P}$
and that
$\mathfrak{a}(\lambda^\ast)=
\mathfrak{a}(\lambda)$.
From this it follows, using
\eqref{eq:J1-def}, \eqref{eq:J2-def}, \Cref{def:lambda-star}
and the fact that $j \notin J_{\rep}$ implies $t_j = y_jz_j$
(cf.\ \Cref{def:tj}), that
\begin{equation}
  \label{eq:atwo-lambda-star}
\atwo[\ell](\lambda^\ast) = 
\mathfrak{a}(\lambda) +
I(J_2 (\lambda), J_1 (\lambda), \ell+1- |J_{\lambda^\ast} |).
\end{equation}
Moreover, by \cite[Lemma~4.1.4]{BHHMS4} we have
for all $\lambda \in  \mathscr{P}$
\begin{equation}
  \label{eq:lambda-lambda-ast-identity}
|J_{\lambda}| + |J_{\lambda^*}| + |J_1(\lambda) | + |J_2 (\lambda)| 
= f.
\end{equation}

Following 
\cite[Lemma~3.64]{BHHMS2},
we define the $\mathbb{F}_{q} ^{\times}$-character 
$\eta(a) \defeq \chi_{\lambda}
\left( \left( \begin{smallmatrix}
a & 0 \\
0 & a
\end{smallmatrix} \right) \right)$,
which doesn't depend on the choice of 
$\lambda \in \mathscr{P}$
(for example because $\pi$ has 
a central $K^\times $-character,
hence $\pi^{I_1 } \cong \V \otimes_{\F}\DOne$
has a central $\mathbb{F}_q ^{\times}$-character).
If we set
$\Eg[j](-) \defeq  \Ext^j_{\grL} (-, \grL)$,
then the following lemma is a variation on 
\cite[Proposition~3.66]{BHHMS2}
that will be needed in the proof of 
\Cref{prop:CM-via-N}.
\begin{lemma}
  \label{lemma:fraka-dual}
For $\lambda \in \mathscr{P}$ and for
$-1 \le \ell \le f$ we have
an isomorphism
\begin{equation}
  \label{eq:fraka-dual}
\Eg\left(\chi_\lambda^{-1} \otimes
\frac{\atwo[\ell](\lambda)}
{\mathfrak{a}(\lambda)}\right)
\cong 
\left( \chi_{\lambda^\ast}^{-1} 
\otimes \eta\right) \otimes 
(R/\atwo[f-1-\ell](\lambda^\ast)) (-4 f + 
(f - d_\lambda ))
\end{equation}
of graded $\grL$-modules with compatible $H$-action,
where $d_\lambda $
is defined in \Cref{def:tj}.
\end{lemma}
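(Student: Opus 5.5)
The plan is to reduce to a computation over the commutative ring $R$, and then to a monomial computation over the complete intersection $R/\mathfrak{a}(\lambda)$.

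\textbf{Step 1: passing to $R$.} Set $M \defeq \chi_\lambda^{-1}\otimes \atwo[\ell](\lambda)/\mathfrak{a}(\lambda)$. This module is annihilated by $J$, since $(y_jz_j)_j\subseteq\mathfrak{a}(\lambda)$ and the $h_j$ act by $0$ on $\Rbar$. So \Cref{lemma:graded-decalage}(ii) gives
\[
\Eg(M)\cong \Ext^f_R(M,R)(-2f),
\]
compatibly with $H$.

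\textbf{Step 2: passing to $R/\mathfrak{a}(\lambda)$.} I then apply \Cref{lemma:graded-decalage}(i) over the commutative ring $S=R$ with $U=\mathfrak{a}(\lambda)=(t_0,\dots,t_{f-1})$. This is a regular sequence of homogeneous elements in distinct variables. The sum of degrees is $d_U=-(f-d_\lambda)-2d_\lambda$, since $\deg y_j=\deg z_j=-1$. With $i=0$ this gives
\[
\Ext^f_R(M,R)\cong \Hom_{R/\mathfrak{a}(\lambda)}\bigl(M,R/\mathfrak{a}(\lambda)\bigr)(d_U).
\]
So everything reduces to computing a graded Hom over
\[
R/\mathfrak{a}(\lambda)=\bigotimes_j \F[y_j,z_j]/(t_j).
\]

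\textbf{Step 3: the monomial computation.} By construction, $M$ is the image in $R/\mathfrak{a}(\lambda)$ of the monomial ideal $I(J_1(\lambda),J_2(\lambda),d)$, where $d=\ell+1-|J_\lambda|$. The indices in $J_1(\lambda)\cup J_2(\lambda)\subseteq\Jrep^c$ have $t_j=y_jz_j$. In each factor $\F[y,z]/(yz)$ one has $\Hom(yR_j,R_j)\cong R_j/(z)$ up to shift, and dually for $z$. By multilinearity over the tensor product, I expect the Hom from the ideal generated by all size-$d$ products to be computed by an inclusion–exclusion/colon argument. The expected answer is
\[
\Hom\bigl(\overline{I(J_1,J_2,d)},R/\mathfrak{a}\bigr)\cong \bigl(R/(\mathfrak{a}(\lambda)+I(J_2,J_1,e))\bigr)(s),
\]
with the roles of $y$ and $z$ swapped, $e=|J_1|+|J_2|-d+1$, and an explicit shift $s$. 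I would prove this by induction on $|J_1|+|J_2|$, splitting off one index $j$ and using the short exact sequences $0\to z_jR_j\to R_j\to R_j/z_j\to0$ (and their $y$-analogues) in the $j$-th factor.

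\textbf{Step 4: identifying the answer with the right-hand side.} Using \eqref{eq:lambda-lambda-ast-identity},
\[
e=|J_1|+|J_2|+|J_\lambda|-\ell=f-\ell-|J_{\lambda^*}|=(f-1-\ell)+1-|J_{\lambda^*}|.
\]
Combined with \eqref{eq:atwo-lambda-star} and $\mathfrak{a}(\lambda^*)=\mathfrak{a}(\lambda)$, this shows that the quotient in Step 3 is exactly $R/\atwo[f-1-\ell](\lambda^*)$. The $H$-character $\chi_{\lambda^*}^{-1}\otimes\eta$ I would obtain by comparison with the case $\ell=f$. That case is exactly \cite[Proposition~3.66]{BHHMS2}, and the $H$-action is unchanged when restricting to the submodule, since all the isomorphisms above are $H$-equivariant and the generators are $H$-eigenvectors. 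Similarly, the shift should be fixed by matching degrees with that base case, giving $-4f+(f-d_\lambda)$ in total.

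\textbf{Main obstacle.} I expect the main obstacle to be Step 3. This requires checking that the Hom out of a non-principal monomial ideal in the non-domain $\F[y,z]/(yz)^{\otimes}$ is cyclic and of the stated form, and keeping track of the grading shift exactly. I would first try the case $d=1$ and $|J_1|+|J_2|=1$ by hand, and then run the induction.
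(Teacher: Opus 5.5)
Your reduction in Steps~1, 2 and~4 is sound as a graded statement (the $H$-twist is discussed below), and it takes a different route from the paper. The paper uses \Cref{lemma:graded-decalage}(ii) to get all the way to $\Hom_{\Rbar}(-,\Rbar)(-4f)$. It then identifies $\Hom_{\Rbar}(\atwo[\ell](\lambda)/\mathfrak{a}(\lambda),\Rbar)$ with $\Rbar[\mathfrak{a}(\lambda)]/\Rbar[\atwo[\ell](\lambda)]$ by the snake lemma, and finally divides by $t'=\prod_j y_jz_j/t_j$; the degree and $H$-weight of $t'$ produce the shift $f-d_\lambda$ and the twist $\eta_\lambda^{-1}$. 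In your Koszul reduction over $R$ with $U=\mathfrak{a}(\lambda)$, these come instead from $d_U=-f-d_\lambda$ and from $\Ext^f_R(R/\mathfrak{a}(\lambda),R)$. Your own numbers already force $s=0$, since $-2f+d_U=-4f+(f-d_\lambda)$.

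The genuine gap is Step~3, which carries essentially all the content and which you only state as an expectation. Two facts are needed.
\begin{enumerate}[(a)]
\item Every $R/\mathfrak{a}(\lambda)$-linear map $\bar I\to R/\mathfrak{a}(\lambda)$ must be multiplication by an element, so that $\Hom(\bar I,R/\mathfrak{a}(\lambda))\cong (R/\mathfrak{a}(\lambda))/\operatorname{Ann}(\bar I)$. This is not formal: on $(y,z)\subseteq\F[y,z]/(yz)$, the map $y\mapsto y$, $z\mapsto 0$ is not a multiplication. So the paper's Step~1, stated for arbitrary monomial ideals, also has to be read with the restriction that follows. The fact holds here because for each $j$ only one of $y_j,z_j$ occurs in the generators. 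This gives $\operatorname{Ann}(\lcm(u,u'))=\operatorname{Ann}(u)+\operatorname{Ann}(u')$ for generators $u,u'$, and together with distributivity of the lattice of monomial ideals it makes the induction on the number of monomial generators (the paper's Step~1) go through.
\item You need $\operatorname{Ann}(\bar I)=\overline{I(J_2,J_1,e)}$. A monomial kills every $\prod_{j\in J_1'}y_j\prod_{j\in J_2'}z_j$ with $|J_1'|+|J_2'|=d$ if and only if it is divisible by at least $e=|J_1|+|J_2|-d+1$ of the partner variables ($z_j$ for $j\in J_1$, $y_j$ for $j\in J_2$). This pigeonhole count is the counterpart of the paper's identity $\Rbar[\atwo[\ell](\lambda)]=t'\,\atwo[f-1-\ell](\lambda^*)$.
\end{enumerate}
Your Step~4 only matches indices once (b) is known. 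The induction you propose, splitting off one index via $0\to z_jR_j\to R_j\to R_j/z_j\to 0$, would also need control of the connecting maps and $\Ext^1$ terms, which you do not address. The generator-by-generator argument plus the explicit annihilator is much more direct.

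There are also two smaller points.
\begin{itemize}
\item The case that coincides with \cite[Proposition~3.66]{BHHMS2} is $\ell=-1$, where $\atwo[-1](\lambda)=\Rbar$. For $\ell=f$ one has $\atwo[f](\lambda)=\mathfrak{a}(\lambda)$ and both sides vanish, so nothing can be read off there.
\item \Cref{lemma:graded-decalage}(i) is not $H$-equivariant for $U=\mathfrak{a}(\lambda)$, because the $t_j\in\{y_j,z_j\}$ carry nontrivial $H$-weights. Step~2 holds only up to a twist by the character through which $H$ acts on $\Ext^f_R(R/\mathfrak{a}(\lambda),R)\cong R/\mathfrak{a}(\lambda)$, and this character is independent of $M$. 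Stated this way, and using (a) to see that $\Hom(\bar I,R/\mathfrak{a}(\lambda))$ is generated by the inclusion $\bar I\hookrightarrow R/\mathfrak{a}(\lambda)$, your comparison with the case $\ell=-1$ does determine the character $\chi_{\lambda^\ast}^{-1}\otimes\eta$.
\end{itemize}
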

\begin{proof}
By \Cref{lemma:graded-decalage}(ii)
applied to $i = 0, M = \chi_{\lambda^\ast}^{-1}
\otimes R/\mathfrak{a}(\lambda)$,
the left hand side of 
\eqref{eq:fraka-dual} is isomorphic to
$\Hom_{\Rbar}
\left(\chi_\lambda^{-1} \otimes
\frac{\atwo[\ell](\lambda)}
{\mathfrak{a}(\lambda)}, \Rbar\right)(-4 f)$
as a graded $\grL$-module with compatible $H$-action.

Twisting by $\chi_\lambda^{-1} $ and using 
\cite[Lemma~3.64]{BHHMS2},
it is enough to construct an isomorphism
of graded $\grL$-modules with compatible $H$-action
\[
\Hom_{\Rbar}
\left( \frac{\atwo[\ell](\lambda)}
{\mathfrak{a}(\lambda)}, \Rbar\right)
\cong 
\eta_{\lambda}^{-1} \otimes 
R/\atwo[f-1-\ell](\lambda^\ast)(f - d_\lambda),
\]
where $\eta_\lambda$ is the character of $H$
acting on $ \prod _{j = 0}^{f-1} t_j$
(cf.\ \Cref{def:tj} for the definition of $t_j$).

\paragraph{Step 1.}
Consider the natural graded $\Rbar$-module maps
\begin{align}
  \label{eq:mult-map-def}
\Rbar &\to \Hom_{\Rbar}
\left( \mathfrak{a}(\lambda), \Rbar\right),
& \Rbar &\to \Hom_{\Rbar}
\left( \atwo[\ell](\lambda), \Rbar\right),\\
a &\mapsto (m_a: x \mapsto ax), &
\nonumber
a &\mapsto (m_a: x \mapsto ax),
\end{align}
with compatible $H$-action,
and with respective kernels
$\Rbar[ \mathfrak{a}(\lambda)]$ and $\Rbar[ \atwo[\ell](\lambda)]$.
We claim that the maps \eqref{eq:mult-map-def}
are surjections.
More generally, let
$\mathfrak{b}$ be an ideal of 
$\Rbar \cong 
\F [y_j, z_j\mid 0 \le j \le f-1] / (y_j z_j\mid 0 \le j \le f-1)$
generated by monomials,
or more precisely by the image in $\Rbar$ 
of a set of monomials
$\{u_1 , \dots, u_n\}$ of
$R \cong 
\F [y_j, z_j\mid 0 \le j \le f-1]$
(this is true for
$\mathfrak{b} \in \left\{ \mathfrak{a}(\lambda), \atwo[\ell](\lambda) \right\}$,
cf.\ \Cref{def:tj} and \Cref{def:IJJ-def}).
We claim that the natural graded $\Rbar$-module map
with compatible $H$-action
\begin{align}
  \label{eq:mult-map-gen-def}
\frac{\Rbar }{\Rbar [\mathfrak{b}]}&\to \Hom_{\Rbar}
\left( \mathfrak{b}, \Rbar\right), \\
\nonumber
a &\mapsto (m_a: x \mapsto ax)
\end{align}
is an isomorphism.

We prove this by induction on the number $n$ of (monomial) generators of $\mathfrak{b}$.
The case when $\mathfrak{b} = (u_1)$ is generated by a single monomial is immediate.
Now assume the claim true for $n$,
and let $\mathfrak{b} = (u_1 , \dots, u_{n+1})$
be generated by $n+1$ monomials.
If we set $\mathfrak{b}_0 \defeq (u_1 , \dots, u_n)$,
then we have the following commutative diagram with exact rows
\begin{equation}
  \label{eq:diagram-b-induction}
\begin{tikzcd}[column sep = small]
0 \ar[r] & \frac{\Rbar}{\Rbar[\mathfrak{b}]} \ar[r]\ar[d] 
&\frac{\Rbar}{\Rbar[\mathfrak{b}_0 ]}\oplus  \frac{\Rbar}{\Rbar[u_{n+1}]} \ar[r]\ar[d, "\wr"] 
& \frac{\Rbar}{\Rbar[\mathfrak{b}_0] + \Rbar[ u_{n+1}]} \ar[r]\ar[d] & 0 \\
0 \ar[r] & \Hom_{\Rbar}(\mathfrak{b}, \Rbar) \ar[r] 
& \Hom_{\Rbar}(\mathfrak{b}_0 \oplus (u_{n+1}), \Rbar) \ar[r] 
&  \Hom_{\Rbar}(\mathfrak{b}_0 \cap (u_{n+1}), \Rbar),
\end{tikzcd}
\end{equation}
where the middle vertical map is an isomorphism
by the inductive hypothesis,
and where the top row is exact
by a version of the Chinese remainder theorem
(we are using that
$\Rbar [\mathfrak{b}] =
\Rbar [\mathfrak{b}_0 ] \cap \Rbar [u_{n+1}]$).
By the snake lemma, it is enough to show
that the vertical map on the right is injective to conclude.
First, we remark that
$\mathfrak{b}_0 \cap (u_{n+1}) = 
(u_1, \dots, u_n) \cap (u_{n+1}) = (\lcm(u_1 , u_{n+1}), \dots, \lcm(u_n, u_{n+1}))$,
where the least common multiple is computed inside
the unique factorisation domain $R$.
Indeed, 
we can compute this intersection 
by pulling back the two ideals on the left
to $R \cong \F[y_j, z_j \mid 0 \le j \le f-1]$,
in which case it becomes a standard fact of monomial ideals.

Then, we show that 
$\Rbar[\mathfrak{b}_0 ] + \Rbar[u_{n+1}] = \Rbar[\mathfrak{b} \cap (u_{n+1})]$,
and in fact they are both equal (as sets) to 
$\Rbar[\mathfrak{b}_0 ] \cup   \Rbar[u_{n+1}]$:
we always have
$\Rbar[\mathfrak{b}_0 ] + \Rbar[u_{n+1}] \subseteq  \Rbar[\mathfrak{b} \cap (u_{n+1})]$
and $\Rbar[\mathfrak{b}_0 ] \cup \Rbar[u_{n+1}]
\subseteq \Rbar[\mathfrak{b}_0 ] +\Rbar[u_{n+1}]$,
we are only left to prove
$\Rbar[\mathfrak{b} \cap (u_{n+1})] \subseteq 
\Rbar[\mathfrak{b}_0 ] \cup \Rbar[u_{n+1}]$.

When $n =1$, this amounts to the statement that,
for all $a \in \Rbar$,
 $(u_1 a = 0) \text{ or } (u_2 a = 0)$ is equivalent to $ \lcm(u_1, u_2 )a = 0$.

In the general case, we have
\begin{align*}
\Rbar[\mathfrak{b}_0  \cap (u_{n+1})]  &= \Rbar[(u_1, \dots, u_n) \cap (u_{n+1})] = 
\Rbar[(\lcm(u_1 , u_{n+1}), \dots, \lcm(u_n, u_{n+1}))] \\
&= \bigcap_{i = 1} ^n \Rbar[\lcm(u_i, u_{n+1})] \overset{(*)}{=}
\bigcap_{i = 1} ^n \left(\Rbar[u_{i}] \cup \Rbar[u_{n+1}]   \right) 
=  \left(\bigcap_{i = 1} ^n\Rbar[u_{i}]\right) \cup \Rbar[u_{n+1}]    \\
&=  \Rbar[\mathfrak{b}_0] \cup \Rbar[u_{n+1}],
\end{align*}
where in $(*)$ we have used the case $n = 1$.

Hence, the vertical map on the right in \eqref{eq:diagram-b-induction} becomes
$\frac{\Rbar}{\Rbar[\mathfrak{b}_0 \cap (u_{n+1})]}
\to  \Hom_{\Rbar}(\mathfrak{b}_0 \cap (u_{n+1}), \Rbar)$,
which is an isomorphism by the inductive hypothesis.

\paragraph{Step 2.}
We show that there is 
a graded $\Rbar$-module isomorphism
$\frac{\Rbar[\mathfrak{a}(\lambda)]}{\Rbar[\mathfrak{a}^{\ell}(\lambda)]}
\xrightarrow{\sim} 
\Hom_{\Rbar}(\frac{\atwo[\ell](\lambda)}{\mathfrak{a}(\lambda)}, \Rbar)$
with compatible $H$-action.

Consider the following commutative diagram 
of graded $\Rbar$-modules
with exact rows
\[
\begin{tikzcd}
0 \ar[r] & \frac{\Rbar[\mathfrak{a}(\lambda)]}{\Rbar[\mathfrak{a}^{\ell}(\lambda)]} \ar[r]\ar[d] 
& \frac{\Rbar}{\Rbar[\atwo[\ell](\lambda)]} \ar[r]\ar[d, "\wr"] 
& \frac{\Rbar}{\Rbar[\mathfrak{a}(\lambda)]} \ar[r]\ar[d, "\wr"] & 0 \\
0 \ar[r] & \Hom_{\Rbar}(\frac{\atwo[\ell](\lambda)}{\mathfrak{a}(\lambda)}, \Rbar) \ar[r] 
& \Hom_{\Rbar}(\atwo[\ell](\lambda), \Rbar) \ar[r] 
&  \Hom_{\Rbar}(\mathfrak{a}(\lambda), \Rbar)
\end{tikzcd}
\]
where the vertical isomorphisms in the middle and on the right
come from Step 1.
We conclude that the vertical map on the left is
an isomorphism by applying the snake lemma.
Compatibility with the $H$-action 
comes from the fact that 
$ \frac{\Rbar}{\Rbar[\atwo[\ell](\lambda)]}
\xrightarrow{\sim}
\Hom_{\Rbar}(\atwo[\ell](\lambda), \Rbar)$
is compatible with $H$-action,
again by Step 1.

\paragraph{Step 3.}
We produce
a graded $\Rbar$-module isomorphism
$(\eta_{\lambda}^{-1} \otimes \Rbar/\atwo[f-1-\ell](\lambda^*))(f - d_\lambda)
\xrightarrow{\sim} 
\frac{\Rbar[\mathfrak{a}(\lambda)]}{\Rbar[\atwo[\ell](\lambda)]}$
with compatible $H$-action,
thus concluding the proof.

Set $t_j' \defeq y_jz_j/t_j$,
and set $t' \defeq \prod_{j = 0}^{f-1}t_j'$.
First, we show the equalities
$t' \Rbar =  \Rbar[\mathfrak{a}(\lambda)]$
and
$ t'\atwo[f-1- \ell](\lambda^*) =  
\Rbar[\atwo[\ell](\lambda)]$
of ideals of $\Rbar$.
For the first equality, we have
\begin{align}
  \label{eq:t'-computation}
\Rbar [\mathfrak{a}(\lambda)] & = \bigcap_{j = 0}^{f-1} \Rbar[t_j] = \bigcap_{j = 0}^{f-1} \Rbar t_j' = t' \Rbar,
\end{align}
where on the last equality 
we have used that the $t_j'$ are pairwise coprime.

We prove the inclusion 
$ t'\atwo[f-1- \ell](\lambda^*) \subseteq 
\Rbar[\atwo[\ell](\lambda)]$.
By \eqref{eq:atwo-lambda-star}
and using the fact that $t' t_j = 0$ for all $j \in \{0, \dots,f-1\}$,
we only need to check that,
for fixed
$J_1' \subseteq J_1 (\lambda),
J_2' \subseteq J_2 (\lambda)$
such that $|J_1'|+ |J_2'| = f- \ell - |J_{\lambda^*}|$,
we have
$t' \prod_{j \in J_1'} z_j \prod_{j \in J_2'} y_j 
 \in \Rbar[\atwo[\ell] (\lambda)]$ .
Since again $t' t_j = 0$ 
for all $j \in \{0, \dots,f-1\}$,
by \eqref{eq:a-fil-def} we only need to check that 
\[
t' \prod_{j \in J_1'} z_j \prod_{j \in J_2'} y_j
\prod_{j \in J_1''} y_j \prod_{j \in J_2''} z_j = 0
\]
for all $J_1'' \subseteq J_1(\lambda), J_2'' \subseteq J_2(\lambda)$
such that $|J_1''|+ |J_2''| = \ell + 1 - |J_{\lambda}|$.

This certainly holds if $J_1 ' \cap J_1 '' \neq \emptyset $
or if $J_2 ' \cap J_2 '' \neq \emptyset $,
and if 
$J_1 ' \cap J_1 '' = J_2 ' \cap J_2 '' =  \emptyset $
then
\begin{align*}
|J_1 (\lambda)| + |J_2(\lambda) | &\ge 
|J_1 ' \sqcup J_1 ''| + |J_2 ' \sqcup J_2 '' | 
= |J_1 '| + |J_2 ' |+ | J_1 ''| +  | J_2 '' | =\\
&=(\ell + 1 - |J_{\lambda^*}|) + (f - \ell - |J_{\lambda}|)
= f + 1 - |J_{\lambda^*}| - |J_{\lambda}|,
\end{align*}
which contradicts \eqref{eq:lambda-lambda-ast-identity}.

We prove the inclusion 
$ \Rbar[\atwo[\ell](\lambda)] \subseteq 
t'\atwo[f-1- \ell](\lambda^*)$. 
Write $\Rbar [\atwo[\ell](\lambda)]$ as
\begin{align}
\Rbar[\atwo[\ell](\lambda)]
&\overset{\eqref{eq:atwo-lambda-star}}{=} 
\Rbar[\mathfrak{a}(\lambda)] \cap 
\bigcap_{
\substack{
J_1'' \subseteq J_1(\lambda),\ J_2'' \subseteq J_2(\lambda)
\\
|J_1''|+ |J_2''| = \ell + 1 - |J_{\lambda}|}
}\Rbar 
\left[\prod_{j \in J_1 ''} y_j  \prod_{j \in J_2 ''} z_j\right]
\\
  \label{eq:annihilator-ugly-intersection}
& \overset{\eqref{eq:t'-computation}}{=}
t' \Rbar \cap 
\bigcap_{
\substack{
J_1'' \subseteq J_1(\lambda),\ J_2'' \subseteq J_2(\lambda)
\\
|J_1''|+ |J_2''| = \ell + 1 - |J_{\lambda}|}
}
\big( 
\left(z_j\mid j \in J_1 ''\right) + 
\left(y_j \mid j \in J_2 ''\right)
 \big).
\end{align}
It is clear from this description that
$\Rbar [\atwo[\ell](\lambda)]$
is a monomial ideal in $\Rbar$
generated by squarefree monomials,
so it suffices to show that every squarefree monomial
$u \in \Rbar [\atwo[\ell](\lambda)]$
belongs to $t' \atwo[f-1- \ell](\lambda^*)$,
or even that it belongs to 
$\atwo[f-1- \ell](\lambda^*)$.
We can uniquely write 
$u \in \Rbar [\atwo[\ell](\lambda)]$
as a product
$u = \prod_{j \in \widetilde{J}_1 } z_j 
\prod_{j \in \widetilde{J}_2 } y_j$,
for some $\widetilde{J}_1 ,\ \widetilde{J}_2 \subseteq 
\{0, \dots, f-1\}$
such that 
$\widetilde{J}_1 \cap \widetilde{J}_2 = \emptyset$,
by \eqref{eq:atwo-lambda-star}
it suffices to show that, if we set
$J_1 ' \defeq \widetilde{J}_1 \cap J_1 (\lambda)$,
$J_2 ' \defeq \widetilde{J}_2 \cap J_2 (\lambda)$,
then we have $|J_1 '| + |J_2 '| \ge 
f- \ell -|J_{\lambda^*}|$.
Suppose for the sake of contradiction that
$|J_1 '| + |J_2 '| <
f- \ell -|J_{\lambda^*}|$.
It follows that 
\[
|J_1 (\lambda) \setminus J_1 '| + 
|J_2 (\lambda) \setminus J_2 '| > 
|J_1 (\lambda)| + |J_2 (\lambda)| - 
(f - \ell -|J_{\lambda^*}| )
\overset{\eqref{eq:lambda-lambda-ast-identity}}{=}
\ell - |J_\lambda|
,
\]
hence we can find 
$J_1'' \subseteq J_1(\lambda),\ J_2'' \subseteq J_2(\lambda)$
such that $J_1 '' \cap J_1 ' = \emptyset $,
$J_2 '' \cap J_2 ' = \emptyset $,
and such that 
$|J_1''|+ |J_2''| = \ell + 1 - |J_{\lambda}|$.
By \eqref{eq:annihilator-ugly-intersection}
(and since $u$ is a monomial),
we either have $z_j | u$ for some
$j \in J_1 ''$,
or $y_j | u$ for some
$j \in J_2 ''$.
In the first case this contradicts 
$J_1 '' \cap J_1 ' = \emptyset $ 
and the definition of $J_1'$,
while in the second case 
this contradicts 
$J_1 '' \cap J_1 ' = \emptyset $ 
and the definition of $J_2'$.

Recall that $\eta_\lambda$ is the character of $H$
acting on $\prod_{j = 0}^{f-1} t_j$,
hence that
$\eta_\lambda^{-1}$ is the character of $H$
acting on $t' = \prod_{j = 0}^{f-1} t_j' = \prod_{j = 0}^{f -1 } (y_j z_j/t_j)$.
We can now consider the following commutative diagram with exact rows
of graded $\Rbar$-modules with compatible $H$-action
\[
\begin{tikzcd}[cramped]
0  \ar[r] & \eta_\lambda^{-1} \otimes 
{\atwo[f-1- \ell](\lambda^*)} 
\ar[r]\ar[d, "{t' \cdot}"', two heads] 
& \eta_\lambda^{-1} \otimes 
\Rbar
\ar[r]\ar[d, "{t'\cdot}"', two heads] 
& \eta_\lambda^{-1} \otimes 
{\frac{\Rbar}{\atwo[f-1- \ell](\lambda^*)}}
\ar[r]\ar[d, "{t' \cdot }"', two heads] & 0 \\
0 \ar[r] &  {\Rbar {[\atwo[\ell](\lambda) ]} } (\deg t')
 \ar[r] 
&  {\Rbar[\mathfrak{a}(\lambda)]} (\deg t')\ar[r] 
& \frac{\Rbar[\mathfrak{a}(\lambda)]}
{\Rbar {[\atwo[\ell](\lambda)]}} ( \deg t')\ar[r] & 0,
\end{tikzcd}
\]
where the vertical maps are well defined 
and surjective
by the previous paragraphs.
After observing that $\deg t' = -f + d_\lambda$,
we conclude by showing 
that the vertical arrow on the right is injective.
By the snake lemma,
it is enough to show that 
$\atwo[f-1- \ell](\lambda^*)[t'] = \Rbar [t']$,
which follows from 
$\Rbar [t'] = \mathfrak{a}(\lambda) =
\mathfrak{a}(\lambda^*) \subseteq \atwo[f-1- \ell](\lambda^*)$.
\end{proof}

For $\ell \in \{-1, \dots, f\}$ and
$\lambda \in \mathscr{P}_\ell$,
define
\begin{equation}
  \label{eq:Nfil-def}
\Rfil[\ell](\lambda) \defeq 
\chi_\lambda^{-1} \otimes 
\frac{\Rbar}{\atwo[\ell](\lambda)},
\quad 
\Nfil[\ell] \defeq 
\bigoplus _{\lambda \in \mathscr{P}}
\Rfil[\ell](\lambda)
\end{equation}
which come with natural surjections
of $\Rbar$-modules with compatible $H$-action
$\Rfil[\ell'](\lambda) \twoheadrightarrow 
\Rfil[\ell] (\lambda)$,
$\Nfil[\ell'] \twoheadrightarrow 
\Nfil[\ell]$
for $\ell \le \ell'$.
When $\ell = f$, we set
\begin{equation}
  \label{eq:Nnull-def}
\Nnull \defeq \Nfil[f] = \bigoplus_{\lambda \in \mathscr{P}} 
\left( \chi_\lambda^{-1} \otimes_{\F} \frac{\Rbar}{\mathfrak{a}(\lambda)} \right)
\end{equation}

Observe that a minimal prime $\mathfrak{q} \subseteq \Rbar$
is uniquely of the form $\mathfrak{q} = (v_0 , \dots, v_{f-1})$,
for $v_j \in \left\{ y_j, z_j \right\}$.
Then, we define 
\begin{equation}
  \label{eq:Yq-Zq-def}
Y(\mathfrak{q}) = \{j \in \left\{ 0, \dots, f-1 \right\} \mid v_j = y_j\}, \quad 
Z(\mathfrak{q}) = \{j \in \left\{ 0, \dots, f-1 \right\} \mid v_j = z_j\}.
\end{equation}

We recall here 
\cite[Definition~3.2.4]{Ber}
(in \emph{loc.\ cit.}\ we assume
$\rep$ semisimple,
even though the definition does not need
this assumption).
By hypothesis \ref{hypothesis:iii}
and by \cite[Corollary~1.8]{Koh17},
we can fix once and for all
a $\GF$-equivariant isomorphism 
\begin{equation}
	\label{eq:kappa-pi-def}
\kappa_\pi \colon \pi \xrightarrow{\sim}
\E(\pi^\vee)^\vee \otimes (\mytwist)
\end{equation}
of $\Lambda$-modules.
Let $\pi_1$ be a subrepresentation of $\pi$,
and set $\pi_2  \defeq \pi/ \pi_1$.
Since $\Lambda$ is Auslander regular 
(cf.\ the discussion before
\Cref{def:multiplicity})
and using hypothesis \ref{hypothesis:iii},
it follows from
\cite[Cor.~III.2.1.6]{LvO}
that $\pi_2 ^{\vee}$ 
is of grade $\ge 2f$,
hence $\E[2f -1] (\pi_2 ^{\vee})=0$.
In particular, the natural inclusion
$\pi_1  \subseteq \pi$ induces a surjection
$ \E(\pi^\vee)^\vee 
\twoheadrightarrow 
\E(\pi_1 ^{\vee})^\vee$. 

\begin{definition}
	\label{def:conjugate-srep}
We define the subrepresentation
$ \widetilde{\pi}_1 \subseteq \pi$
as 
\begin{equation}
	\label{eq:conjugate-def}
\widetilde{\pi}_1  \defeq 
\ker \left( \pi 
\xrightarrow[\sim]{\kappa_\pi}
\E(\pi^\vee)^\vee \otimes (\mytwist)
\twoheadrightarrow 
\E(\pi_1 ^{\vee})^\vee \otimes (\mytwist) \right) .
\end{equation}
\end{definition}

If $F'$, $F''$
are two good filtrations
(in the sense of \cite[§I.5]{LvO})
on a finitely generated $\Lambda$-module
$M$, such that $\gr_{F'}(M)$
(or equivalently $\gr_{F''} (M)$,
by the discussion before
\cite[Proposition~3.1.2.11]{BHHMS2})
is annihilated by some power of $J$,
then 
$\gr_{F'}(M)$ and $\gr_{F''} (M)$
are finitely generated as $\grL$-modules
by \cite[Lemma~I.5.4]{LvO},
and moreover
$\mathcal{Z}(\gr_{F'}(M)) =\mathcal{Z}(\gr_{F''} (M))$
 by \cite[Lemma~3.3.4.3]{BHHMS2}.
Therefore, we can define 
$m_{\mathfrak{q}} (M)$
to be  $m_{\mathfrak{q}} (\gr_F(M))$
and $\mathcal{Z}(M)$
to be  $\mathcal{Z}(\gr_F(M))$,
where $F$ is any good filtration on $M$.
The proof of \cite[Lemma~3.2.7]{Ber}
goes through for $\rep$ non-semisimple,
and we obtain the following lemma.
\begin{lemma}
	\label{lemma:Z-equal-Z}
Keeping the above notation, we have
\[
\mathcal{Z}(\pi_2 ^{\vee})
=
\mathcal{Z}(\widetilde{\pi}_1 ^{\vee}).
\]
\end{lemma}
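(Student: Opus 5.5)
The plan is to reduce the statement to a single identity of cycles, namely
\[
\mathcal{Z}\big(\E(\pi_1^{\vee})\big) = \mathcal{Z}(\pi_1^{\vee}),
\]
and deduce everything else from additivity of $\mathcal{Z}$ along short exact sequences. Additivity is inherited from the additivity of $m_{\mathfrak q}$ (cf.\ \cite[Lemma~3.32]{BHHMS2}), after choosing good filtrations that are compatible in short exact sequences.

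First I dualise the two defining sequences. The exact sequence $0\to\pi_1\to\pi\to\pi_2\to 0$ gives $0\to\pi_2^\vee\to\pi^\vee\to\pi_1^\vee\to 0$, hence
\[
\mathcal{Z}(\pi_2^\vee)=\mathcal{Z}(\pi^\vee)-\mathcal{Z}(\pi_1^\vee).
\]
By \Cref{def:conjugate-srep}, and since $\pi\to\E(\pi_1^\vee)^\vee\otimes(\mytwist)$ is surjective (as explained just before that definition), we have $\pi/\widetilde\pi_1\cong \E(\pi_1^\vee)^\vee\otimes(\mytwist)$. Dualising gives
\[
0\to \E(\pi_1^\vee)\otimes(\mytwist)^{-1}\to\pi^\vee\to\widetilde\pi_1^\vee\to 0 .
\]
Twisting by a character does not change $\gr$ as a $\grL$-module, so
\[
\mathcal{Z}(\widetilde\pi_1^\vee)=\mathcal{Z}(\pi^\vee)-\mathcal{Z}(\E(\pi_1^\vee)).
\]
Comparing the two displayed identities, it suffices to prove $\mathcal{Z}(\E(\pi_1^\vee))=\mathcal{Z}(\pi_1^\vee)$.

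To prove this, I fix a good filtration on $M\defeq\pi_1^\vee$, for instance the one induced by the $\mI$-adic filtration on $\pi^\vee$. Then $\gr M$ is a quotient of $\grm(\pi^\vee)$ and is therefore annihilated by $J$, by hypothesis \ref{hypothesis:ii} (cf.\ the end of \Cref{sec:hypotheses}). The spectral sequence of \cite[\S III.2]{LvO} for filtered Ext equips $\E(M)$ with a good filtration such that $\gr\E(M)$ is a subquotient of $\Eg(\gr M)$. The error terms in that subquotient come from the differentials, which involve the groups $\Eg[2f\pm1](\gr M)$. Here $\Eg[2f-1](\gr M)=0$ because $\gr M$ has grade $\ge 2f$, and $\Eg[2f+1](\gr M)$ has grade $\ge 2f+1$ by Auslander-regularity of $\grL$. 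Modules of grade $>2f$ have dimension too small to have support at any minimal prime $\mathfrak q$ of $\Rbar$, so they contribute nothing to $\mathcal{Z}$. Hence
\[
\mathcal{Z}(\E(M))=\mathcal{Z}\big(\Eg(\gr M)\big).
\]
This is the same mechanism used in \cite[Lemma~3.2.7]{Ber} and in \cite[\S3.3]{BHHMS2}.

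It then remains to show $\mathcal{Z}(\Eg(N))=\mathcal{Z}(N)$ for a finitely generated graded $\grL$-module $N$ annihilated by $J$. By \Cref{lemma:graded-decalage}(ii), up to grading shift $\Eg(N)\cong\Ext^{f}_R(N,R)$. I localise at a minimal prime $\mathfrak q$ of $\Rbar$, viewed as a height-$f$ prime of the polynomial ring $R$, so that $R_{\mathfrak q}$ is regular local of dimension $f$. Then $N_{\mathfrak q}$ has finite length, and $\Ext^f_{R_{\mathfrak q}}(N_{\mathfrak q},R_{\mathfrak q})$ is its Matlis dual, which has the same length. Lengths computed via the $J$-adic filtration agree with the multiplicity $m_{\mathfrak q}$ of \Cref{def:multiplicity}, because $m_{\mathfrak q}$ is additive, so the cycles coincide.

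I expect the main obstacle to be the filtered-to-graded comparison: making rigorous that the subquotient $\gr\E(M)\subseteq\Eg(\gr M)$ has the same cycle. This requires checking that every correction term has grade $>2f$, using Auslander-regularity of $\grL$ together with the fact that $\pi^\vee$, and hence $\gr M$, has grade exactly $2f$ by hypothesis \ref{hypothesis:iii}. Since $\pi^\vee$ has grade exactly $2f$, the quotient $M=\pi_1^\vee$ has grade exactly $2f$ whenever it is nonzero, which makes the spectral sequence argument go through.
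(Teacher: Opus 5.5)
Your proof is correct. The paper gives no independent argument here; it defers to \cite[Lemma~3.2.7]{Ber}, and your route is the natural one behind it: additivity of $\mathcal{Z}$ on the two dualised sequences reduces the claim to $\mathcal{Z}(\E(\pi_1^\vee))=\mathcal{Z}(\pi_1^\vee)$, which then follows by comparing $\gr\E(\pi_1^\vee)$ with $\Eg(\gr\pi_1^\vee)$ and using the d\'ecalage of \Cref{lemma:graded-decalage}(ii).

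The step you flag as the main obstacle can be avoided. You only need the elementary fact that $\gr\E(M)$ is a subquotient of $\Eg(\gr M)$, which gives the inequality $\mathcal{Z}(\E(M))\le\mathcal{Z}(M)$. Taking $M=\pi_1^\vee$ yields $\mathcal{Z}(\widetilde\pi_1^\vee)\ge\mathcal{Z}(\pi_2^\vee)$. Taking $M=\pi_2^\vee$, together with the injection $\widetilde\pi_1^\vee\otimes(\mytwist)\cong\E(\pi^\vee)/\E(\pi_1^\vee)\hookrightarrow\E(\pi_2^\vee)$, yields the reverse inequality. So no control of the spectral sequence differentials is needed.
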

\begin{remark}
	\label{rem:conjugate-when-CM}
Keep the notation of \Cref{def:conjugate-srep},
and assume
$\E[2f+1](\pi_1 ^{\vee} )=0$.
Then, arguing as in 
\cite[Remark~3.2.5]{Ber}
we obtain
the following commutative diagram
of $\Lambda$-modules
with exact rows:
\begin{equation}
  \label{eq:conjugate-when-CM}
\begin{tikzcd}
0 \ar[r] & \E(\pi_1 ^{\vee})\ar[r]
&  \E(\pi^\vee)  
\ar[r]\ar[d,"{ \kappa_\pi^\vee \otimes (\mytwist) }"', "{\wr}"] 
& 
\E(\pi_2 ^{\vee}) 
\ar[d,"{ \kappa_\pi^\vee \otimes  (\mytwist)}", "{\wr}"'] \ar[r]
& 0 \\
& 
& \pi^\vee \otimes (\mytwist)  \ar[r]
&\widetilde{\pi}_1 ^{\vee}\otimes (\mytwist)
\end{tikzcd}\end{equation}
\end{remark}

The following proposition
establishes the Cohen-Macaulayness
of $\pi$ and of all 
its nontrivial subrepresentations and quotients,
generalising
\cite[Proposition~4.4.3]{BHHMS4} for $r \ge 1$.
Its corollary \Cref{cor:CM-via-N-subquotients}
will be used in the proof of
\Cref{thm:gen-by-inv} below.
\begin{proposition}
  \label{prop:CM-via-N}
Assume that $\rep$ is 
$\max \{9,2f + 1\}$-generic.
Let $\pi_1 \subseteq \pi$
be a subrepresentation of $\pi$ and let
$\pi_2 \defeq \pi/\pi_1$.
Endow $\pi_2^{\vee }$
with the submodule filtration $F$
induced from the 
$\mI$-adic filtration on $\pi ^\vee$.

For all $\ell \in \{-1, \dots, f \}$,
pick a complementary subspace
$W_1(\ell)$ of $\VFil[\pi_1,\ell+1]$ 
inside $\VFil[\pi_1,\ell]$ 
(cf.\ \eqref{eq:VFil-rFil-def})
and set
\begin{align}
  \label{eq:N-and-N1-def}
N \defeq \V^\vee \otimes_{\F} \Nnull ,
\quad
N_1 \defeq \bigoplus_{\ell=0}^f
\left( W_1(\ell)^\vee \otimes_{\F} 
\Nfil[\ell]\right), \quad
N_2 \defeq \ker (N \twoheadrightarrow N_1).
\end{align}
\begin{enumerate}[(i)]
\item 
There is a commutative diagram 
\begin{equation}
  \label{diag:thetas}
\begin{tikzcd}
0 \ar[r] & \gr_F( \pi _2 ^\vee   ) \ar[r] &  
\gr_{\mI} ( \pi ^\vee ) \ar[r] &
\gr_{\mI} ( \pi_1 ^\vee ) \ar[r] & 0 \\
0 \ar[r] & N_2 \ar[u, "\theta_2"]  \ar[r] 
&N \ar[u, " \theta "] \ar[r]  & 
N_1  \ar[u, " \theta _1"] \ar[r] & 0
\end{tikzcd}
\end{equation}
of 
graded $\Rbar$-modules with compatible $H$-action
with exact rows such that
the vertical maps are isomorphisms.

\item 
Assume now that $0 \subsetneq \pi_1 \subsetneq \pi$
is a proper subrepresentation of $\pi$.
Then, both $\gr_{\mI} (\pi_1 ^\vee ) $and
$\gr_{F}  (\pi_2 ^\vee ) $
are Cohen-Macaulay
$\grL   $-modules
of grade $2f $.
In particular, 
$\pi_1 ^\vee  $
and
$\pi_2 ^\vee  $
are Cohen-Macaulay $\Lambda $-modules of grade $2f $.
\end{enumerate}
\end{proposition}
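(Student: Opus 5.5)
I follow the structure of the proof of \cite[Proposition~4.4.3]{BHHMS4}, working ``linewise'' as in \Cref{thm:srep-structure}. The plan has five steps, of which Step~3 is the one I expect to be hardest.

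\textbf{Step 0: the isomorphism $\theta$.} I start from the isomorphism $\theta \colon N = \V^\vee \otimes_{\F} \Nnull \xrightarrow{\sim} \grm(\pi^\vee)$ of graded $\Rbar$-modules with compatible $H$-action. This is the isomorphism of \cite[Eq.~(14)]{BHHMS4}, already used in the proof of \Cref{thm:xJ-basis}. I choose it so that it is compatible with $\ip$ and with the injection $i$ of \eqref{eq:i-tau-iso-def}. Concretely, the generator $u_s \otimes 1$ of the $\lambda$-summand should go to the dual vector $e_{\lambda,s} = \ip^\vee(u_s \otimes e_\lambda)$ in degree $0$. This choice makes $\theta$ respect the decomposition of $\V$ into lines.

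\textbf{Step 1: the quotient map.} Let $F$ be the submodule filtration on $\pi_2^\vee$. The filtration induced on $\pi_1^\vee$ by the $\mI$-adic filtration of $\pi^\vee$ is the quotient filtration. Since the $\mI$-adic filtration on a quotient is the quotient filtration, this is the $\mI$-adic filtration of $\pi_1^\vee$. So the top row of \eqref{diag:thetas} is exact. I then show that the composite $N \to \grm(\pi^\vee) \to \grm(\pi_1^\vee)$ factors through $N_1$, i.e.\ kills $W_1(\ell)^\vee \otimes \atwo[\ell](\lambda)/\mathfrak{a}(\lambda)$. Dually, a monomial $y^a z^b e_{\lambda,s}$ with $w_s \in W_1(\ell)$ lying in $\atwo[\ell](\lambda)$ corresponds to a character $\chi \in \JH(\pi[\mI^{f+1}])$. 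This character is either of the form $\chi_{\lambda'}$ with $\lambda' \in \mathscr{P}^{\mathrm{ss}}\setminus\mathscr{P}$ and $\ell(\lambda') > \ell$, or is not allowed by $\VFil[\pi_1,\ell']$.

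\textbf{Step 2: the factorisation.} The required vanishing then follows from \Cref{prop:char-only-occurs} together with \Cref{eq:I1-subrep}: the character cannot occur in $\pi_1[\mI^{f+1}]$ along the line $w_s$. This gives a surjection $\theta_1 \colon N_1 \twoheadrightarrow \grm(\pi_1^\vee)$, where surjectivity holds because $N \to \grm(\pi_1^\vee)$ is onto. Restricting $\theta$ to kernels then gives $\theta_2$, injective, from $N_2$ into $\gr_F(\pi_2^\vee)$.

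\textbf{Step 3: equality of multiplicities (main obstacle).} It remains to show that $\theta_1$ is injective. I compare multiplicities at the minimal prime $\mathfrak{p}_0$, using the ranks of $(\varphi,\Gamma)$-modules. By \Cref{cor:Wang}, $\dimFX \Dvee(\pi_1) = \sum_\ell \rFil[\pi_1,\ell]\binom{f}{\ell}$. By \cite[Theorem~1.13]{BHHMS2} and \cite[Lemma~3.1]{BHHMS2} this equals $m_{\mathfrak{p}_0}(\grm(\pi_1^\vee))$. The quantity $m_{\mathfrak{p}_0}(N_1)$ can be computed directly from \eqref{eq:Nfil-def}. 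Under the bijection \eqref{eq:lambda-bij-Jsh}, only the $\lambda$ with $y_j \notin \atwo[\ell](\lambda)$ for all $j$ contribute, and there are $\binom{f}{\ell}$ such $\lambda$ for each $\ell$. This yields the same number. Equality at $\mathfrak{p}_0$ is not yet enough, so I would then use the $\varphi$-symmetry and duality. By hypothesis \ref{hypothesis:iii} and \Cref{lemma:Z-equal-Z}, together with \Cref{lemma:fraka-dual}, the module $N_1$ is dual (up to twist and shift) to the analogous module attached to $\widetilde{\pi}_1$. This should control all minimal primes $\mathfrak{q}$ via the action of $\Pi$, which permutes the $Y(\mathfrak{q})$, $Z(\mathfrak{q})$ of \eqref{eq:Yq-Zq-def}. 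Once the cycles $\mathcal{Z}(N_1)$ and $\mathcal{Z}(\grm(\pi_1^\vee))$ agree, the kernel of $\theta_1$ has dimension $<f$. Since $N_1$ is a direct sum of modules $\Rbar/\atwo[\ell](\lambda)$, each Cohen--Macaulay of dimension $f$, it has no nonzero submodule of smaller dimension. Hence $\ker\theta_1 = 0$. The snake lemma then makes $\theta_2$ an isomorphism. This is the step I expect to be hardest: controlling every prime $\mathfrak{q}$, and not just $\mathfrak{p}_0$, when $r > 1$, since the numerics alone no longer determine the $\VFil[\pi_1,\ell]$.

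\textbf{Step 4: part (ii).} Each $\Rbar/\atwo[\ell](\lambda)$ is Cohen--Macaulay of dimension $f$; I would check this through the Ext computation of \Cref{lemma:fraka-dual}, applied to the short exact sequences $0 \to \atwo[\ell]/\mathfrak{a} \to \Rbar/\mathfrak{a} \to \Rbar/\atwo[\ell] \to 0$. By \Cref{lemma:graded-decalage}(ii), $\grm(\pi_1^\vee) \cong N_1$ is then Cohen--Macaulay of grade $2f$ over $\grL$. For $N_2$, I use the exact sequence $0 \to N_2 \to N \to N_1 \to 0$. Since both $N$ and $N_1$ are Cohen--Macaulay of grade $2f$, the long exact Ext sequence gives $\Eg[i](N_2) = 0$ for $i \neq 2f$. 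The $\Lambda$-statements follow from \cite[Proposition~III.2.2.4]{LvO}.
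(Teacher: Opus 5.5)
Your Steps 0--2 and your closing argument (a nonzero submodule of $N_1$ cannot have zero cycle) agree with Steps 1 and 4 of the paper's proof. Step 3, however, has a genuine gap, and it is the heart of the proposition.

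Your $\mathfrak{p}_0$-computation is correct. But it only sees the summands $\Rbar/\atwo[\ell](\lambda)$ with $y_j\notin\mathfrak{a}(\lambda)$ for all $j$, so it only gives $\mpp(\ker\theta_1)=0$. The mechanism you propose for the other minimal primes does not work. Conjugation by $\Pi$ exchanges $y_j$ and $z_j$ for all $j$ simultaneously. It therefore only relates $\mathfrak{p}_0=(z_0,\dots,z_{f-1})$ to $(y_0,\dots,y_{f-1})$, and for $f\ge 2$ the mixed primes are never reached.

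As for duality, \Cref{lemma:fraka-dual} gives $\Eg(N_2)\cong N_2'\otimes\eta$, where up to shifts $N_2'=\bigoplus_{\ell}W_1(f-1-\ell)\otimes_{\F}\bigl(\bigoplus_{\lambda}\chi_\lambda^{-1}\otimes R/\atwo[\ell](\lambda)\bigr)$. This matches the module $\widetilde{N}_1$ attached to $\widetilde{\pi}_1$ only if $\rFil[\widetilde{\pi}_1,\ell]=r-\rFil[\pi_1,f-\ell]$ for every $\ell$. For $r>1$ this does not follow from rank counts, which only control the $\binom{f}{\ell}$-weighted sum. Your proposal gives no way to obtain it.

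The paper supplies two ideas you are missing.

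First, it never compares multiplicities prime by prime. It uses the sandwich
$\mathcal{Z}(N_2')=\mathcal{Z}(N_2)\le\mathcal{Z}(\gr_F(\pi_2^\vee))=\mathcal{Z}(\grm(\widetilde{\pi}_1^\vee))\le\mathcal{Z}(\widetilde{N}_1)=\mathcal{Z}(N_2')$.
Here the inequalities are the injectivity of $\theta_2$ and the surjectivity of $\widetilde{\theta}_1\colon\widetilde{N}_1\twoheadrightarrow\grm(\widetilde{\pi}_1^\vee)$ (your Step 1 applied to $\widetilde{\pi}_1$). The middle equality is \Cref{lemma:Z-equal-Z}, and the last equality is the dimension identity above. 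This forces $\mathcal{Z}(\gr_F(\pi_2^\vee))=\mathcal{Z}(N_2)$ at all minimal primes at once.

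Second, it proves the dimension identity in two stages.
\begin{itemize}
\item The termwise inequality $\rFil[\widetilde{\pi}_1,\ell_0]\ge r-\rFil[\pi_1,f-\ell_0]$ comes from the surjection $\gr_{F'}(\widetilde{\pi}_1^\vee\otimes\eta)\twoheadrightarrow\Eg(N_2)$, induced by $\kappa_\pi$ and the canonical filtration on $\E(\pi^\vee)$. Since $\F\otimes_{\grL}N_2'$ sits in degrees $[3f,4f]$ and $\gr_{F'}(\widetilde{\pi}_1^\vee)$ in degrees $\le 4f$, $\F\otimes_{\grL}N_2'$ is an $H$-quotient of $\widetilde{\pi}_1^\vee/\mI^{f+1}\widetilde{\pi}_1^\vee$. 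By multiplicity freeness of $\bigoplus_{i=0}^{f}(\Nnull)_{-i}$, it is then a quotient of $(\widetilde{N}_1)_0$.
\item Equality follows from $\dimFX\Dvee(\widetilde{\pi}_1)=\dimFX\Dvee(\pi)-\dimFX\Dvee(\pi_1)$, via \Cref{cor:Wang} and \Cref{lemma:Z-equal-Z}. This is where your $\mathfrak{p}_0$-computation is actually used.
\end{itemize}

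Two smaller points in your Step 4. \Cref{lemma:fraka-dual} only computes the top Ext, so the Cohen--Macaulayness of $\Rbar/\atwo[\ell](\lambda)$ should be quoted from Step 3 of the proof of \cite[Proposition~4.4.3]{BHHMS4}. Also, $\Eg[2f-1](N_2)=0$ requires that the submodule $N_2\subseteq N$ has grade $\ge 2f$ (purity). The long exact sequence alone does not give it.
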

\begin{remark}
  \label{rmk:F-not-mI}
As in \cite[Remark~4.4.4]{BHHMS4},
if $0 \subsetneq \pi_1 \subsetneq \pi$
is a proper subrepresentation of $\pi$,
then we remark that $F$ does not coincide with the
$\mI$-adic filtration in general,
for example because $\pi_2^{I_1 }$ 
is in general larger than
$\pi^{I_1 }/ \pi_1^{I_1 }$.
Indeed, when $\Jrep =  \emptyset$,
we see from \eqref{eq:srep-I1-decomposition} and
\Cref{rmk:I1-does-not-reconstruct-pi}
that $\pi^{I_1 }/ \pi_1^{I_1 }$ vanishes
if $\pi_1 $ is such that $\VFil[\pi_1 ,\ell] = \V$ 
for all $\ell \le f/2$,
whereas $\pi_2^{I_1}$ is always nonzero
(as $I_1 $ is a pro-$p$ group).
\end{remark}

\begin{proof}
We broadly follow the proof of \cite[Proposition~4.4.3]{BHHMS4},
but in Step 3 our argument adapts 
the proof of \cite[Proposition~3.2.2]{BHHMS4}
to allow us to obtain the inequality
\eqref{eq:hype-mult-inequality} below.

We take the graded $\Rbar$-module map 
$\theta \colon N \to \grm (\pi^\vee )$
with compatible $H$-action 
of \eqref{diag:thetas}
to be given by the isomorphism of 
\cite[Eq.~(14)]{BHHMS4}.
For an explicit construction of $\theta$,
cf.\ Step 1 and 2 of the proof of
\cite[Lemma~3.2.1]{Ber}, applied to $\pi_2 = \pi$
(although \emph{loc.\ cit.}\ assumes 
$\rep$ semisimple, 
the argument goes through \emph{verbatim});
notice that the construction of 
\emph{loc.\ cit.}\ only depends 
on the choice of a basis
$\mathcal{B}$ of the
$\F$-vector space
$\left( \pi^{I_1 } \right)^\vee $
consisting of $H$-eigenvectors.

Recall the integer $\rFil[\pi_1 , \ell]$ 
defined in \eqref{eq:VFil-rFil-def},
and pick a basis $(w_s \mid 1 \le s \le r)$
of $\V$ adapted to the direct sum decomposition
$\V = \bigoplus_{\ell =  -1} ^f W_1(\ell)$,
in the sense that 
$(w_s \mid \rFil[\pi_1 , \ell +1]+1 \le s \le 
\rFil[\pi_1 , \ell])$
is an $\F$-basis of $W_1 (\ell)$
for all $\ell \in \{-1, \dots, f\}$.
Let $(u_s \mid 1 \le s \le r)$
be the dual basis of 
$(w_s \mid 1 \le s \le r)$
in $\V^\vee$.
Our choice of $\F$-basis $\mathcal{B}$ 
of $\left( \pi^{I_1 } \right)^\vee $
is as follows:
recall the $\F$-basis 
$(v_\lambda)_{\lambda \in \mathscr{P}}$ 
of $\DOne$ defined in the paragraph of
\eqref{eq:v-iso-def},
and let 
$(e_\lambda)_{\lambda \in \mathscr{P}}$ 
be its dual basis in $\DOne^\vee$.
Finally, define
\begin{equation}
    \label{eq:explicit-dual-basis}
e_{\lambda, s} \defeq \ip^\vee 
(u_s \otimes e_\lambda) \in \left( \pi^{I_1 } \right)^\vee, \quad \mathcal{B} \defeq \left( e_{\lambda, s}
\mid \lambda \in \mathscr{P},\ 
1 \le s \le r\right),
\end{equation}
where $\ip^\vee \colon \V^\vee \otimes_{\F}\DOne^\vee 
\xrightarrow{\sim}
(\pi^{I_1})^\vee $
is the $\F$-linear dual of 
$\ip \colon \pi^{I_1 } \xrightarrow{\sim} \DOne$.

Unwinding definitions, we have
\begin{equation}
    \label{eq:N-unwinding}
N \cong \bigoplus_{\ell = -1} ^f 
\left( W_1(\ell)^\vee \otimes_{\F} \Nnull \right)
\overset{\eqref{eq:Nnull-def}}{\cong}
\bigoplus_{\ell = -1} ^f 
\bigoplus_{\lambda \in \mathscr{P}} 
\left( W_1(\ell)^\vee \otimes_{\F} 
\Rbar (\lambda) \right),
\end{equation}
and $\theta$ is characterised as 
the unique $\Rbar$-linear
homomorphism sending
$u_s \otimes 1 \in  W_1(\ell)^\vee \otimes_{\F} 
\Rbar (\lambda)
\overset{\eqref{eq:N-unwinding}}{\subseteq} N$
to $e_{\lambda,s} \in 
\left( \pi^{I_1 } \right)^\vee
= \grm \left( \pi^\vee  \right)_0 
\subseteq \grm \left( \pi^\vee  \right)$,
for all $\lambda \in \mathscr{P},\ 1 \le s \le r$.

\paragraph{Step 1.} 
We show that the composition
\[
N \xrightarrow{\theta} \grm \left( 
\pi^\vee \right) \twoheadrightarrow 
\grm \left( \pi_1^\vee  \right)
\]
factors via $N \twoheadrightarrow N_1$.

Fix 
$\lambda \in \mathscr{P}$,
$\ell \in \{-1, \dots, f\}$,
and $s \in 
\{\rFil[\pi_1 , \ell +1 ] + 1 , \dots, \rFil[\pi_1 , \ell]\}$.
By construction of $\theta$ above,
it suffices to show that
the ideal $\atwo[\ell](\lambda)$
of $\Rbar$ annihilates
the image of $e_{\lambda,s}$ in
$\big( \pi_1 ^{I_1 } \big)^\vee
= \grm \left( \pi_1 ^\vee  \right)_0 
\subseteq \grm \left( \pi_1 ^\vee  \right)$,
which we continue to denote by $e_{\lambda,s}$
by an abuse of notation.
We already know by \cite[Theorem~3.67]{BHHMS2}
(whose hypotheses are satisfied by (i) and (ii) of
\Cref{sec:hypotheses})
that the ideal $\mathfrak{a}(\lambda)$ annihilates
$e_{\lambda,s} \in \grm \left( \pi^\vee  \right)$,
hence a fortiori it annihilates its image in
$\grm \left( \pi_1 ^\vee  \right)$.
By \eqref{eq:a-fil-def}, we are left to show that 
$I(J_1(\lambda),J_2(\lambda), \ell+1- |J_{\lambda} |)$
(defined in \eqref{eq:IJJ-def})
also annihilates
$e_{\lambda,s} \in 
\grm \left( \pi_1 ^\vee  \right)$.
So, let us consider a monomial 
$t \defeq \prod_{j \in J_1 '} y_j\prod_{j \in J_2 '} z_j$,
with $J_1 ' \subseteq J_1(\lambda) , J_2 ' \subseteq J_2 (\lambda)$,
$|J_1 '| + |J_2 '| = \ell +1 - |J_\lambda|$.

If we define $\lambda' \in \mathscr{P}^{\mathrm{ss}}$ by
\begin{equation}
  \label{eq:lambda-prime-def}
\lambda_j'(x_j) \defeq 
\begin{cases}
\lambda_j(x_j)-2 & \text{if }j \in J_1 ', \\
\lambda_j(x_j)+2 &  \text{if }j \in J_2 ',\\
\lambda_j(x_j) & \text{otherwise},
\end{cases}
\end{equation}
then one checks as in the proof of 
\cite[Proposition~4.4.3]{BHHMS4}
that multiplication by $t$
sends the $\chi_\lambda^{-1}$-eigenspace of
$\grm(\pi_1^\vee )$ to the 
$\chi_{\lambda'}^{-1}$-eigenspace of $\grm(\pi_1^\vee )$,
that $\lambda' \in \mathscr{P} ^{\mathrm{ss}} \setminus \mathscr{P}$,
and that
$|J_{\lambda'}| = |J_\lambda| + (\ell + 1 - |J_\lambda|)
= \ell +1$.

We see that
$t e_{\lambda,s} \in \grm (\pi^\vee _1) $ 
is homogeneous of degree
$\deg t + \deg e_{\lambda,s} 
= (|J_1 '| + |J_2 '|) + 0  \le f$
(because $\grm (\pi^\vee _1)$
is a graded $\Rbar$-module),
hence 
$t e_{\lambda,s} \in 
\bigoplus_{m = 0} ^f \grm (\pi_1^\vee )_{-m}
\subseteq \grm (\pi_1^\vee )$.
Observe that
$\bigoplus_{m = 0} ^f \grm (\pi_1^\vee )_{-m}
\cong \grm (\pi_1 [\mI^{f+1}]^\vee )$
and that
$\bigoplus_{m = 0} ^f \grm (\pi^\vee )_{-m}
\cong \grm (\pi [\mI^{f+1}]^\vee )$,
and consider the following commutative diagram
of $\Rbar$-modules
\begin{equation}
    \label{eq:i-funny-diagram}
\begin{tikzcd}
\grm (\pi [\mI^{f +1 }]^\vee ) \ar[r, "i^\vee ", "\sim"']\ar[d, two heads] 
& \grm (\tau^{(f +1 )} (\pi)[\mI^{f +1 }]^\vee ) 
\ar[d, two heads] \\
\grm (\pi_1  [\mI^{f +1 }]^\vee ) \ar[r, "i^\vee ", "\sim"'] &   
\grm ( i^{-1} (\pi_1  [\mI^{f +1 }])^\vee ),
\end{tikzcd}
\end{equation}
where the $I$-representation 
$\tau^{(f +1 )} (\pi)[\mI^{f +1 }]^\vee $
is defined in \eqref{eq:tau1d-def}
applied to $\pi_1 = \pi$
(note that the genericity condition of
\emph{loc.\ cit.}\ is satisfied here),
and $i$ is the $I$-equivariant
injection \eqref{eq:i-tau-iso-def},
inducing an isomorphism
$i \colon  \tau^{(f+1)}(\pi)[\mI^{f+1}] 
\xrightarrow{\sim} \pi[\mI^{f+1}]$
by the line below \eqref{eq:i-tau-iso-def},
hence an isomorphism
$i \colon  i^{-1} (\pi_1 [\mI^{f +1 }])
\xrightarrow{\sim} \pi_1 [\mI^{f+1}]$.
Note that $i^\vee $ sends
$e_{\lambda,s } \in \grm (\pi_1  [\mI^{f +1 }]^\vee )$
to
\begin{align}
    \label{eq:e-lambda-s-computation}
i^\vee (e_{\lambda,s})
&\overset{\eqref{eq:explicit-dual-basis}}{= }
i^\vee (\ip^\vee (u_s \otimes e_\lambda))
= (\ip \circ i)^\vee (u_s \otimes e_\lambda)
\overset{\eqref{eq:i-ip-compatibility}}{= }
(\id_{\V} \otimes \alphaI)^\vee (u_s \otimes e_\lambda)
\\ 
\nonumber
&= 
\id_{\V^\vee } (u_s) \otimes \alphaI^\vee (e_\lambda)
= u_s \otimes \alphaI^\vee (e_\lambda)
\in 
\grm ( i^{-1} (\pi_1  [\mI^{f +1 }])^\vee )_0.
\end{align}
By \eqref{eq:e-lambda-s-computation},
and using that
$i^\vee \colon \grm (\pi_1  [\mI^{f +1 }]^\vee
\xrightarrow{\sim}   
\grm ( i^{-1} (\pi_1  [\mI^{f +1 }])^\vee )$
is an $\Rbar$-module isomorphism,
it is enough to show that every 
$t \in I(J_1 (\lambda), J_2 (\lambda), \ell +1)$
as in the second paragraph of Step 1 annihilates
$u_s \otimes \alpha^\vee (e_\lambda) \in 
\grm ( i^{-1} (\pi_1  [\mI^{f +1 }])^\vee )_0$.

Let us fix some notation:
for $\mu \in \mathscr{P}$, we let
$T_\mu \defeq \tau_\mu^{(f +1 )} [\mI^{f +1 }]$,
and we let $M_\mu$ be the unique 
$I$-subrepresentation of 
$\V \otimes_{\F} T_\mu$
such that 
$i^{-1} (\pi_1 [\mI^{f +1 }])
= \bigoplus_{\mu \in \mathscr{P}} M_\mu$,
by \cite[Lemma~3.1.5(ii)]{Ber}
applied to $S = \F[I],
D = \bigoplus_{\mu \in \mathscr{P}} T_\mu, 
W = \V, M = i^{-1} (\pi_1 [\mI^{f +1 }])$.
Then, the rightmost vertical surjection of
\eqref{eq:i-funny-diagram}
can be rewritten as
\[
\bigoplus_{\mu \in \mathscr{P}} \left( 
\V^\vee \otimes_{\F} \grm(T_\mu^\vee) \right)
\cong 
\grm (\tau^{(f +1 )} (\pi)[\mI^{f +1 }]^\vee ) 
\twoheadrightarrow 
\grm ( i^{-1} (\pi_1  [\mI^{f +1 }])^\vee )
\cong 
\bigoplus_{\mu \in \mathscr{P}} 
\grm(M_\mu^\vee).
\]
If we consider
$u_s \otimes \alpha^\vee (e_\lambda) \in 
\grm (\tau^{(f+1)}(\pi)[\mI^{f+1}]^\vee )$,
then it lies in
$\F u_s \otimes_{\F} \grm (T_\lambda^{\vee})$
(for example, because 
$u_s \otimes \alpha^\vee (e_\lambda)$
is homogeneous of degree $0$, 
and an $H$-eigenvector of eigencharacter 
$\chi_\lambda^{-1}$);
the image of
$u_s \otimes \alpha^\vee (e_\lambda)$ in 
$\grm ( i^{-1} (\pi_1  [\mI^{f +1 }])^\vee )$
then lies in
$\grm (M_\lambda^\vee )$.
In particular,
if we denote by $\psi$ the composition
$\psi \colon \F u_s \otimes_{\F} T_\lambda^\vee 
\subseteq \V^\vee \otimes_{\F} T_\lambda^\vee 
\twoheadrightarrow M_\lambda^\vee $,
then $u_s \otimes \alpha^\vee (e_\lambda)$ in
$\grm ( i^{-1} (\pi_1  [\mI^{f +1 }])^\vee )$
lies in the image of 
$\grm (\psi) \colon 
\F u_s \otimes_{\F} \grm (T_\lambda^\vee)
\to \grm (M_\lambda^\vee)$,
hence the same is true of 
$t (u_s \otimes \alpha^\vee (e_\lambda))$.
Recall that 
$t (u_s \otimes \alpha^\vee (e_\lambda))$
is a (possibly zero) 
$H$-eigenvector of $H$-eigencharacter
$\chi_{\lambda'}^{-1}$
(cf.\ \eqref{eq:lambda-prime-def}
for the definition of 
$\lambda' \in \mathscr{P} ^{\mathrm{ss}}$).
To show that 
$t (u_s \otimes \alpha^\vee (e_\lambda))$ vanishes,
it suffices to show that the $H$-character
$\chi_{\lambda'}^{-1}$ does not occur in
$\im (\grm (\psi))$,
or equivalently in $\im (\psi)$.

Set $Q \defeq \im \psi$. It is easier to work with duals,
i.e.\ to show that 
$\chi_{\lambda'}$ does not occur in $Q^\vee $.
Taking duals of the following commutative diagram
of $I$-representations
\[
\begin{tikzcd}
\F u_s \otimes_{\F} T_\lambda^\vee  
\ar[d, two heads]\ar[r, phantom, " \subseteq "]
\ar[rd, "\psi"] & 
V^\vee \otimes_{\F} T_\lambda^\vee \ar[d, two heads] \\
Q \ar[r, hook] & M_\lambda^\vee,
\end{tikzcd}
\]
we see that we can describe
$Q^\vee \hookrightarrow \F w_s \otimes_{\F} T_\lambda$
as the image of the composition 
\[
M_\lambda \hookrightarrow \V \otimes_{\F} T_\lambda
\twoheadrightarrow \F w_s \otimes_{\F} T_\lambda,
\]
where the last surjection is given by the projection
$\V \otimes_{\F} T_\lambda
= \bigoplus_{s' = 1} ^r \F w_{s'} \otimes_{\F} T_\lambda
\twoheadrightarrow \F w_s \otimes_{\F} T_\lambda$
onto the $s$-th coordinate.
By \Cref{prop:char-only-occurs} 
applied to $\lambda = \lambda'$ (and $d = \ell + 1$),
we deduce that $\chi_{\lambda'}$ can only occur in
the $I$-subrepresentation 
$\left( \VFil[\pi_1 ,\ell +1] \otimes_{\F} T_\lambda \right)
\cap M_\lambda$ of $M_\lambda$.
Therefore, to show that $\chi_{\lambda'}$ does not occur in
$Q^\vee $ it is enough to show that the composition
\[
\left( \VFil[\pi_1 ,\ell +1] \otimes_{\F} T_\lambda \right)
\cap M_\lambda
\subseteq 
M_\lambda \hookrightarrow \V \otimes_{\F} T_\lambda
\twoheadrightarrow \F w_s \otimes_{\F} T_\lambda
\]
vanishes, and it is a fortiori enough 
to show that the composition
$\VFil[\pi_1 ,\ell +1] \otimes_{\F} T_\lambda 
\hookrightarrow \V \otimes_{\F} T_\lambda
\twoheadrightarrow \F w_s \otimes_{\F} T_\lambda$
vanishes,
which in turn follows from the fact that
$\VFil[\pi_1 ,\ell +1] \cap \F w_s = 0$.
Indeed, recall that $s \in 
\{\rFil[\pi_1 , \ell +1 ] + 1 , \dots, \rFil[\pi_1 , \ell]\}$,
hence
$w_s \in W_1 (\ell)$ by our choice of $\mathcal{B}$ 
(cf.\ the paragraph of \eqref{eq:explicit-dual-basis}),
and recall that
$\VFil[\pi_1 ,\ell +1] \cap W_1(\ell) = 0$
by definition of $W_1 (\ell)$
as a complementary subspace
of $\VFil[\pi_1,\ell+1]$ 
inside $\VFil[\pi_1,\ell]$.

As a consequence, we can construct
the graded $\Rbar$-module morphism 
$\theta_1 $ 
of \eqref{diag:thetas},
and the graded $\Rbar$-module morphism 
$\theta_2$ is constructed
by diagram chasing.

\paragraph{Step 2.}
In the notation of \Cref{lemma:fraka-dual},
we set
\begin{equation}
  \label{eq:def-N-prime-2}
N_2' \defeq \bigoplus_{\ell = 0} ^f W_1(f-1-\ell) 
\otimes_{\F} \left( 
 \bigoplus_{\lambda \in \mathscr{P}}
 \chi _{\lambda }^{-1}  \otimes 
 (R/\atwo[\ell] (\lambda))(-4f + (f-d_\lambda))\right),
\end{equation}
and claim that there exists 
an isomorphism 
of $\Rbar$-modules with compatible $H$-action
$\Eg(N_2) \cong N_2' \otimes 
\eta$.

Notice first that by \eqref{eq:a-fil-def}
and the line after \eqref{eq:IJJ-def}
we have $\atwo[-1](\lambda) = \Rbar$ 
for all $\lambda \in \mathscr{P}$,
in particular $\Nfil[-1] = 0$.
Therefore, we can and will extend the direct sum
in \eqref{eq:def-N-prime-2} to $\ell = -1$,
and likewise we can write
$N_1 = \bigoplus_{\ell=-1}^f
\left( W_1(\ell)^\vee \otimes_{\F} 
\Nfil[\ell]\right)$.
Using \eqref{eq:N-unwinding},
we compute
\begin{align}
  \label{eq:N2-computation-1}
N_2 &\cong  \ker \left( \bigoplus_{\ell = -1} ^f 
\left( W_1 (\ell)^{\vee} \otimes_{\F} \Nnull \right)
 \twoheadrightarrow 
\bigoplus_{\ell=-1}^f
\left( W_1(\ell)^\vee \otimes_{\F} 
\Nfil[\ell]\right)
 \right)\\
  \label{eq:N2-computation-2}
&
\overset{\eqref{eq:Nfil-def}}{\cong}  \bigoplus_{\ell = -1} ^f
\left(  W_1 (\ell)^\vee \otimes_{\F}  
\bigoplus_{\lambda \in \mathscr{P}} \left(
\chi_{\lambda}^{-1} \otimes  \frac{\atwo[\ell](\lambda)}{\mathfrak{a}(\lambda)} \right) \right).
\end{align}
We have the following chain of graded $\Rbar$-module
isomorphisms
with compatible $H$-action:
\begin{align}
  \label{eq:N-duality-computation}
\Eg(N_2) &\overset{\eqref{eq:N2-computation-2}}{\cong} 
\bigoplus_{\ell = -1} ^f \bigoplus_{\lambda \in \mathscr{P}}
W_1 (\ell) \otimes_{\F}
\Eg\left(
\chi_{\lambda}^{-1} \otimes \frac{\atwo[\ell](\lambda)}{\mathfrak{a}(\lambda)} 
\right) 
\\
  \nonumber
& \overset{\eqref{eq:fraka-dual}}{\cong} 
\bigoplus_{\ell = -1} ^f \bigoplus_{\lambda \in \mathscr{P}}
W_1 (\ell) \otimes_{\F}
  \left(   \chi_{\lambda^\ast}^{-1} \otimes
 R/\atwo[f-1-\ell](\lambda^\ast) \right)
 (-4f + (f - d_{\lambda ^\ast})) \otimes \eta \\
  \label{eq:extended-range-substitution}
& =
\bigoplus_{\ell' = -1} ^f \bigoplus_{\lambda' \in \mathscr{P}}
W_1 (f-1- \ell') \otimes_{\F}
 \left(  
\chi_{\lambda'}^{-1} \otimes 
R/\atwo[\ell'](\lambda')  \right)
 (-4f + (f - d_{\lambda'}))\otimes \eta 
\\
  \nonumber
& \overset{\eqref{eq:def-N-prime-2}}{\cong} 
N_2 ' \otimes \eta ,
\end{align}
where in \eqref{eq:extended-range-substitution} we have made the substitutions
$\ell' \defeq f-1 - \ell$,
$\lambda' \defeq \lambda^*$,
and used that $d _{\lambda } = d _{\lambda ^\ast}$
by \cite[Lemma~3.63(i)]{BHHMS2}.

If we set
\begin{equation}
  \label{eq:def-N-prime}
N' \defeq \V
\otimes_{\F} \left( 
 \bigoplus_{\lambda \in \mathscr{P}}
 \chi _{\lambda }^{-1}  \otimes 
 (R/\mathfrak{a} (\lambda))(-4f + (f-d_\lambda))\right),
\end{equation}
then by \eqref{eq:N-duality-computation}
applied to $\pi_1 = 0$ and $\pi_2 = \pi$ 
(which implies $N_2 = N$)
we have a graded $\Rbar$-module isomorphism
with compatible $H$-action
 $ \Eg (N) \cong N' \otimes \eta$,
and a graded $\Rbar$-linear surjection
with compatible $H$-action
\begin{equation}
\label{eq:N-prime-surjection}
N' \cong \Eg(N) \otimes \eta ^{-1}
\twoheadrightarrow 
\Eg(N_2 ) \otimes \eta ^{-1}
 \overset{\eqref{eq:N-duality-computation}}{\cong }N_2 '
\end{equation}
coming from the inclusion
$N_2 \overset{\eqref{eq:N-and-N1-def}}{ = }
\ker (N \twoheadrightarrow N_1 ) \subseteq N$.
Notice that $N'$ is abstractly isomorphic to $N$,
as $\Rbar$-modules (without grading)
with compatible $H$-action.

\paragraph{Step 3.}
Let $\widetilde{\pi}_1 \subseteq \pi$
be the subrepresentation of $\pi$
defined in \eqref{eq:conjugate-def}.
For all $\ell \in \{-1, \dots, f \}$,
pick a complementary subspace
$\widetilde{W}_1(\ell)$ of $\VFil[\widetilde{\pi}_1,\ell+1]$ 
inside $\VFil[\widetilde{\pi}_1,\ell]$,
and set
\begin{equation}
  \label{eq:def-N-tilde}
\widetilde{N}_1 \defeq \bigoplus_{\ell = -1} ^f
\widetilde{W}_1(\ell)^\vee  \otimes_{\F} \Nfil[\ell].
\end{equation}
We prove that 
$\widetilde{N}_1$ and $ N_2'$
are abstractly isomorphic
as $\Rbar$-modules 
(without grading)
with compatible $H$-action,
which is equivalent to showing that
 $\dim _{\F} W_1(f-1- \ell ) = 
 \dim _{\F} \widetilde{W}_1(\ell)$
for all $\ell  \in \{ 0, \dots, f \} $.

Recall the $\mathbb{F}_q ^{\times}$-character
$\eta$ defined in the paragraph before 
\Cref{lemma:fraka-dual} 
and notice that, for all 
$a \in \mathbb{F}_q ^{\times}$
we have 
$\label{eq:eta-mytwist-iso}
\eta 
\left( 
a
\right)
= 
(\mytwist) 
\left( 
[a]
\right)$,
cf.\ the paragraph before
\eqref{eq:weight-characterisation}.
Therefore, 
whenever we consider twists for the $H$-action,
we will freely interchange $\eta$ with $\mytwist$
in the rest of the proof.
Recall that if $M$ is a finitely generated 
left $\Lambda$-module with a good filtration,
then for all $i \ge 0$
the right $\Lambda$-module 
$\E[i](M)$ carries a canonical and functorial 
good filtration, cf.\ \cite[\S A]{BHHMS4}.
In particular, we endow
$\E \!( \pi _{1} ^\vee  )$
and $\E \!( \pi ^\vee )$
with this filtration,
we let $F'$ be the unique filtration on 
$\pi ^\vee \otimes \eta$
that makes 
$\E (\pi ^{\vee})
\xrightarrow[\sim]{\kappa _\pi ^{\vee}} 
\pi ^{\vee} \otimes \eta$
a filtered isomorphism,
and we endow the quotient
$\pi ^{\vee} \otimes \eta 
\twoheadrightarrow 
 \widetilde{\pi }_1 ^{\vee} \otimes \eta$
with the induced filtration,
which we still denote by $F'$.
As in Step 2 of the proof of
\cite[Proposition~3.2.2]{BHHMS4}
we can construct the following commutative diagram 
with exact rows
of graded $\grL$-modules with compatible $H$-action:
\begin{equation}
\label{diagr:grE-to-EgN}
\begin{tikzcd}
  0 \ar[r] & \gr(\E \!( \pi _{1} ^\vee ) ) \ar[r] \ar[d, hook]  & 
  \gr(\E \!( \pi ^\vee )) \ar[d, "\wr"] \ar[r]   &
 \gr _{F'} (\widetilde{ \pi }_1 ^\vee \otimes  \eta) 
  \ar[dd, two heads, "\gamma"]  \ar[r] & 0 \\
  0 \ar[r] & \Eg \!( \grm (\pi _{1} ^\vee )) \ar[r] \ar[d, hook]  & 
   \Eg\!( \grm (\pi ^\vee )) \ar[d, equals]  & \\
  0  \ar[r] & \Eg (N_1) \ar[r] & 
  \ar[r]\Eg (N) \ar[r] & 
  \ar[r] \Eg (N_2) \ar[r] & 0 
\end{tikzcd}
\end{equation}
(this construction extends 
\emph{verbatim} to $\rep$ non-semisimple
and to $r \ge 1$).
There is a also a surjection
\begin{equation}
\label{eq:theta-tilde-def}
\widetilde{\theta}_1 \colon 
\widetilde{N}_1 \twoheadrightarrow 
\grm \left( 
\widetilde{\pi}_1^\vee  \right)
\end{equation}
of graded $\Rbar$-modules 
with compatible $H$-action,
by Step 1 applied to 
$\pi_1 = \widetilde{\pi}_1$
(note that here $\widetilde{\pi}_1^\vee$
carries the $\mI$-adic filtration).

We claim that there exists a surjective homomorphism 
of $H $-representations
$\F \otimes _{\grL} \widetilde{N}_1 =
(\widetilde{N}_1)_0 
 \twoheadrightarrow  
\F \otimes _{\grL} {N}'_2$.
Consider the composition
 $ \gr _{F'} (\widetilde{\pi }_1 ^\vee \otimes \eta )
 \xtwoheadrightarrow{\gamma}
 \Eg (N_2 ) \overset{\eqref{eq:N-duality-computation}}{\cong }
 N_2' \otimes \eta \twoheadrightarrow 
 \F \otimes_{\grL} N_2 ' \otimes \eta$,
by \eqref{eq:def-N-prime-2}
the right-hand side is supported
in degrees $ [ 3f ; 4f ] $.
After twisting by $\eta ^{-1}$,
by the semisimplicity of $\F [H]$
we deduce that 
$\F \otimes_{\grL} N_2 '$
is a quotient 
(as an $H$-representation over $\F$)
 of $F' _{4f}(\widetilde{\pi }_1 ^\vee)/
 F' _{3f - 1}(\widetilde{\pi }_1 ^\vee)$.
Moreover,
by
\eqref{eq:N-duality-computation}
and by the graded $\grL$-module isomorphism 
 $\gr(\E (\pi ^\vee)) \cong \Eg (N)$
of \eqref{diagr:grE-to-EgN}
applied to $\pi_1 = \pi$,
we see that 
 $\gr(\E (\pi ^\vee))$
is supported in degrees 
$ \le  4f $,
in particular so are
$\gr _{F'}(\widetilde{\pi}_1  ^\vee))$
and 
 $F' _{4f}(\widetilde{\pi }_1 ^\vee) = 
 \widetilde{\pi }_1 ^\vee$.
Since $\Lambda$ carries the
 $\mI$-adic filtration,
by definition of filtered $\Lambda$-module
we have
 $\mI ^{f +1} \widetilde{\pi }_1 ^\vee = 
\mI ^{f +1} F' _{4f}\widetilde{\pi }_1 ^\vee
\subseteq F' _{3f - 1}(\widetilde{\pi }_1 ^\vee)$,
hence a natural surjection of $\Lambda $-modules
with compatible $H$-action
 $\widetilde{\pi }_1 ^\vee/
 \mI ^{f +1} \widetilde{\pi }_1 ^\vee
 \twoheadrightarrow
 \widetilde{\pi }_1 ^\vee/
 F' _{3f - 1}(\widetilde{\pi }_1 ^\vee)$,
and a fortiori 
$\F \otimes_{\grL} N_2 '$
is a quotient of
 $\widetilde{\pi }_1 ^\vee/
 \mI ^{f +1} \widetilde{\pi }_1 ^\vee$
(or equivalently of
$ \bigoplus_{i = 0}^f 
\grm(\widetilde{\pi }_1 ^\vee) _{- i}$)
as an $H$-representation.
By \eqref{eq:theta-tilde-def},
this implies that
$\F \otimes_{\grL} N_2 '$
is a quotient of
$ \bigoplus_{i = 0}^f 
 (\widetilde{N}_1 ) _{- i}$
as an $H$-representation.
Pick an $H$-equivariant surjection
$ \bigoplus_{i = 0}^f 
(\widetilde{N}_1 ) _{- i}
\twoheadrightarrow \F \otimes_{\grL} N_2 '$
of $\F$-vector spaces.

We now show that the composition
$( \widetilde{N}_1 )_0 
 \subseteq \bigoplus_{i = 0}^f 
(\widetilde{N}_1 ) _{- i}
\twoheadrightarrow \F \otimes_{\grL} N_2 '$
is still surjective.
It is enough to show that 
$ \JH(\F \otimes_{\grL} N_2') \cap 
\JH( (\widetilde{N}_1 ) _{- i})
  = \emptyset$
for $1 \le i \le f$.
By construction, 
there are graded $\grL$-module surjections
with compatible $H$-action
$N \twoheadrightarrow \widetilde{N}_1$
and $\F \otimes_{\grL} N' 
\overset{\eqref{eq:N-prime-surjection}}
{\twoheadrightarrow} \F \otimes_{\grL} N_2'$,
hence it suffices to show that 
$ \JH(\F \otimes_{\grL} N') \cap 
\JH( N  _{- i})
  = \emptyset$
for $1 \le i \le f$.
Since $N'$ is abstractly isomorphic to $N$
as $\Rbar$-modules (without grading)
with compatible $H$-action,
we have an $H$-equivariant isomorphism
 $\F \otimes_{\grL} N' \cong 
\F \otimes_{\grL} N \cong N_0$.
By definition of $N$ (cf.\ \eqref{eq:N-and-N1-def})
it suffices to show that
$ \JH(\Nnull _0) \cap \JH( \Nnull _{- i})
  = \emptyset$,
which holds because
$\bigoplus_{i=0} ^f (\Nnull)_{-i}$
is a multiplicity free $H$-representation
(it follows from \cite[Lemma~2.3.7]{BHHMS4}
with $n = f+1$,
using that $\rep$ is $(2f+1)$-generic).

By the line after \eqref{eq:J2-def} we have
 $\F \otimes_{\grL} \widetilde{N}_1 
 \overset{\eqref{eq:def-N-tilde}}{\cong}
 \bigoplus_{\ell = 0} ^f \widetilde{W}_1(\ell )^\vee
 \otimes_{\F}
 \left( \bigoplus_{\lambda \in \mathscr{P}\mid
 \ell (\lambda ) \le \ell }
 \chi _\lambda ^{-1} \right) $
and 
 $\F \otimes_{\grL} N_2' 
 \overset{ \eqref{eq:def-N-prime-2}}{\cong}
 \bigoplus_{\ell = 0} ^f W_1(f-1- \ell )
 \otimes_{\F}
 \left( \bigoplus_{\lambda \in \mathscr{P}\mid
 \ell (\lambda ) \le \ell }
 \chi _\lambda ^{-1}  \right) $
as $H$-representations over $\F$.
Fix $\ell_0  \in \{ 0, \dots, f \} $,
and fix $\lambda _0 \in \mathscr{P}$
such that $\ell (\lambda_0) = \ell_0 $.
It follows
from the $H$-equivariant surjection
$(\widetilde{N}_1)_0 \twoheadrightarrow 
\F \otimes_{\grL} {N}'_2$
that we have 
$[(\widetilde{N}_1)_0 : \chi _{\lambda_0} ^{-1} ]
\ge [\F \otimes_{\grL}{N}'_2 : \chi _{\lambda_0}^{-1}]$.
We compute both sides to be
\begin{align}
  \label{eq:hype-mult-inequality}
\dim_{\F} 
\VFil[\widetilde{\pi}_1, \ell_0 ]
=
\sum_{\ell = \ell_0}^{f} 
\dim_{\F} 
\widetilde{W}_1  (\ell)
&\ge 
\sum_{\ell = \ell_0}^{f} 
\dim_{\F} 
W_1 (f-1-\ell)
=
\sum_{\ell' = -1}^{f-1-\ell_0} 
\dim_{\F} 
W_1 (\ell')
\\
\nonumber
&= r - \sum_{\ell' = f-\ell_0}^{f} 
\dim_{\F} 
W_1 (\ell')
= r - \dim_{\F} 
\VFil[\pi_1, f - \ell_0 ],
\end{align}
where we have used the substitution
$\ell' = f-1-\ell$ in the second equality,
and where the first and last equalities
follow from
$ \VFil[\widetilde{\pi}_1, \ell_0 ]
=
\bigoplus_{\ell = \ell_0}^{f} 
\widetilde{W}_1  (\ell)$
and
$\VFil[\pi_1 , f- \ell_0 ] 
=\bigoplus_{\ell' = f-\ell_0}^{f} 
W_1 (\ell')$,
using the description of
$\widetilde{W}_1 (\ell)$
and $W_1 (\ell')$
as complementary subspaces.

We show that
\eqref{eq:hype-mult-inequality}
is an equality for all $\ell_0
\in {0, \dots, f}$.
Recall the characteristic cycle $\mathcal{Z}(N)$
of an $R$-module $N$,
defined in \cite[Definition~3.79]{BHHMS2},
and consider
\begin{equation}
  \label{eq:Z-eq-easy}
\mathcal{Z}(\widetilde{\pi}_1^\vee )
= \mathcal{Z}(\pi_2^\vee )
= \mathcal{Z}(\pi^\vee )-
\mathcal{Z}(\pi_1^\vee ),
\end{equation}
where the first equality comes from
\Cref{lemma:Z-equal-Z}
and where the second equality comes from
the additivity of $\mathcal{Z}(-)$
(cf.\ \cite[Lemma~3.82]{BHHMS2}).

For $\ell \in \{0, \dots, f\}$,
recall that 
$\rFil[\widetilde{\pi}_1, \ell] = 
\dim_{\F} 
\VFil[\widetilde{\pi}_1, \ell ]$
and
$\rFil[\pi_1, \ell]= 
\dim_{\F} 
\VFil[\pi_1, \ell ]$
by \eqref{eq:VFil-rFil-def}.
We have
\begin{align*}
\sum_{\ell = 0}^{f} \rFil[\widetilde{\pi}_1, \ell] 
\binom{f}{\ell} 
\overset{ \eqref{eq:Wang-rk-Dvee} }{=}
&
\dimFX \Dvee(\widetilde{\pi}_1)
=
\mpp(\grm(\widetilde{\pi}_1^\vee ))
\overset{\eqref{eq:Z-eq-easy}}{=}
\mpp(\grm(\pi^\vee ))- \mpp(\grm(\pi_1^\vee ))=\\
&= 
\dimFX \Dvee(\pi) -
\dimFX \Dvee(\pi_1)
\overset{ \eqref{eq:Wang-rk-Dvee} }{=}
\sum_{\ell = 0}^{f} (r -\rFil[\pi_1, f- \ell]) 
\binom{f}{\ell},
\end{align*}
where the unlabeled equalities
come from 
\cite[Proposition~3.87(i)]{BHHMS2}
(cf.\ \Cref{rmk:D-eq-m-non-ss}).
This forces equality to always hold in 
\eqref{eq:hype-mult-inequality}.

\paragraph{Step 4.}
We now show (i) and (ii) of the statement
by proving that 
$N_1$ and $N_2 = \ker(N \twoheadrightarrow N_1)$
are either zero
or Cohen-Macaulay of grade $2f$,
and proving that
$\theta_1 \colon N_1 \twoheadrightarrow 
\grm(\pi_1^\vee)$ is an isomorphism
(for (ii) we only need 
that $N_1$ is nonzero
whenever $\pi_1 $ is nonzero,
and that $N_2$ is nonzero
whenever $\pi_2 $ is nonzero).
This implies that
$\pi_1 ^\vee$ 
and $\pi_2 ^\vee$
are either zero 
or Cohen-Macaulay $\Lambda $-modules of grade $2f$
by \cite[Proposition~III.2.2.4]{LvO}.

Set
\begin{align}
  \label{eq:N1-prime-def}
N_1' & \defeq  
\ker \left( 
\V \otimes_{\F} \Nnull 
\twoheadrightarrow 
\bigoplus_{\ell=-1}^f
\left( W_1(f-1 - \ell) \otimes_{\F} 
\Nfil[\ell]\right)
 \right)\\
 \nonumber
&= \ker \left( \bigoplus_{\ell = -1} ^f 
\left( W_1 (f -1 - \ell) \otimes_{\F} \Nnull \right)
 \twoheadrightarrow 
\bigoplus_{\ell=-1}^f
\left( W_1(f-1- \ell) \otimes_{\F} 
\Nfil[\ell]\right)
 \right)\\
 \nonumber
&
\overset{\eqref{eq:Nfil-def}}{\cong}  
\bigoplus_{\ell = -1} ^f
\left(  W_1 (f-1 - \ell) \otimes_{\F}  
\bigoplus_{\lambda \in \mathscr{P}} \left(
\chi_{\lambda}^{-1} \otimes  \frac{\atwo[\ell](\lambda)}{\mathfrak{a}(\lambda)} \right) \right).
\end{align}
A computation entirely analogous to
the one in \eqref{eq:N-duality-computation}
shows that $\Eg (N_1') \cong N_1 \otimes 
\eta$.
We can then apply
\cite[Proposition~III.4.2.8(1)]{LvO}
with $R = \grL$, $M = N_1'$
to conclude that
$N_1$ is \emph{pure} of grade $2f$ over $\grL$ 
in the sense of
\cite[Definition~III.4.2.7]{LvO}.
In particular (taking $\pi_1 = \pi$),
we find that $N$ is pure of grade $2f$ over $\grL$,
and by \cite[Proposition~III.4.2.9]{LvO}
we conclude that
$N_2 \subseteq N$
is also pure of grade $2f$ over $\grL$.

It is proven in Step 3 of the proof of
\cite[Proposition~4.4.3]{BHHMS4}
that the $\grL$-module
$\Nfil[\ell]$ of \eqref{eq:Nfil-def}
(which only depends on $\rep$, 
not on $\pi$ nor $r$)
is either zero (for $ \ell = -1$) 
or Cohen-Macaulay of grade $2f$
(for $ \ell \in \{0, \dots, f\} $).
It follows from
\eqref{eq:N-and-N1-def}
that the same holds for $N$ and $N_1$.
Using the bottom row of \eqref{diag:thetas}
together with $N$, $N_1 $ 
Cohen-Macaulay over grade $2f$ over $\grL$,
one shows that $\Eg[i] (N_2 ) = 0$
for $i > 2f$,
then one concludes
from the fact that $N_2 $ is of grade $2f$ over $\grL$.
Now, we claim that
$\mathcal{Z}(\grm(\pi_1^\vee ))
=\mathcal{Z}(N_1)$.
By the additivity of $\mathcal{Z}(-)$
(cf.\ \cite[Lemma~3.82]{BHHMS2})
and the fact that $\theta$ is an isomorphism,
the claim is equivalent to
$\mathcal{Z}(\grm(\pi_2^\vee ))
=\mathcal{Z}(N_2)$.

We have the chain of inequalities
\begin{align}
\mathcal{Z}(N_2') = 
\mathcal{Z}(N_2) \overset{\theta_2}{\le }
\mathcal{Z}(\gr_F (\pi_2^\vee )) \overset{
\eqref{eq:Z-eq-easy}}{=}
\mathcal{Z}(\grm (\widetilde{\pi}_1^\vee )) 
\overset{\widetilde{\theta}_1}{\le}
\mathcal{Z}( \widetilde{N}_1)
= \mathcal{Z}(N_2'),
\end{align}
where the first equality follows from 
Step 2,
and where the last equality follows from
 \eqref{eq:hype-mult-inequality},
which we showed to be an equality,
together with the definitions
\eqref{eq:def-N-prime-2}
and \eqref{eq:def-N-tilde}.
This forces equality everywhere.
If we set $S \defeq \ker (\theta_1) \subseteq N_1$,
then by the additivity of $\mathcal{Z} (-)$
we have $\mathcal{Z}(S) = 0$.
Suppose for the sake of contradiction 
that $S \neq 0$.
Then,
\cite[Proposition~III.4.2.9]{LvO}
(applied to $M = N_1 $, $N = S$)
implies that $S$ is also pure of grade $2f$ over
$\grL$,
hence of grade $0$ over $\Rbar$
by the second statement in 
\cite[Lemma~3.65]{BHHMS2}.
This implies
$\Ext^0_{\Rbar}(S, \Rbar)=
\Hom_{\Rbar}(S, \Rbar)\neq 0$,
so we can fix 
a nonzero $\Rbar$-linear homomorphism 
$h \colon S \to \Rbar.$

It follows from $\mathcal{Z}(S) = 0$
that for all minimal primes
$\mathfrak{q}$ of $\Rbar$,
we have
$\Hom_{\Rbar}(S, \Rbar)_{\mathfrak{q}} 
\cong 
\Hom_{\Rbar}(S_{\mathfrak{q}} , \Rbar_{\mathfrak{q}})
=0$,
hence $h_{\mathfrak{q}} = 0$
and in particular
$\im(h)_{\mathfrak{q}} = \im(h_{\mathfrak{q}} ) = 0$
by the exactness of localisation.
So, $I_h \defeq \im(h) \subseteq \Rbar$
is an ideal of $\Rbar$
whose localisation at all minimal primes is zero.
But $\Rbar$ is reduced, 
hence the localisation $\Rbar_{\mathfrak{q}} $
is a field (being artinian local and reduced),
which implies that the localisation map
$ \Rbar \to \Rbar_{\mathfrak{q}} $
has kernel $\mathfrak{q}$.
This implies that $I_h \subseteq \mathfrak{q}$
for all $\mathfrak{q} \subseteq \Rbar$
minimal, hence $I_h \subseteq 
\bigcap_{\mathfrak{q}\ \text{minimal}} \mathfrak{q} = \sqrt{0} =0$,
using that $\Rbar$ is reduced.

In conclusion,
$I_h = \im(h) = 0$, so $h = 0$, contradition.
We then must have $S=0$,
i.e.\ $\theta_1 \colon N_1 \twoheadrightarrow \grm ( \pi _1 ^\vee ) $
is an isomorphism.
\end{proof}
\begin{remark}
\label{rmk:gr-F-prime}
It follows from
\Cref{prop:CM-via-N}(ii)
and \cite[Proposition~3.84]{BHHMS2}
that the vertical arrow
$ \gr(\E( \pi _{1} ^\vee ) ) 
 \hookrightarrow 
\Eg ( \grm (\pi _{1} ^\vee )) $
of \eqref{diagr:grE-to-EgN}
is an isomorphism,
and it follows from
\Cref{prop:CM-via-N}(i)
that the vertical arrow
$\Eg ( \grm (\pi _{1} ^\vee ))
 \hookrightarrow \Eg (N_1)$
of \eqref{diagr:grE-to-EgN}
is an isomorphism.
We deduce that the map $\gamma $
in \eqref{diagr:grE-to-EgN}
is an isomorphism
of graded $\grL$-modules with compatible $H$-action.
Keep the notations from the proof of
 \Cref{prop:CM-via-N}.
Composing with
 \eqref{eq:N-duality-computation}
and using \eqref{eq:def-N-prime-2},
we find
\[
\gr _{F'}( \widetilde{\pi }_1 ^\vee) \cong   
\bigoplus_{\ell = 0} ^f W_1(f-1-\ell) 
\otimes_{\F} \left( 
 \bigoplus_{\lambda \in \mathscr{P}}
 \chi _{\lambda }^{-1}  \otimes 
 (R/\atwo[\ell] (\lambda))(-4f + (f-d_\lambda))\right)
\]
as graded $\grL$-modules with compatible $H$-action.
Comparing with
$\grm ( \widetilde{\pi }_1 ^\vee) \cong   
\bigoplus_{\ell = 0} ^f \widetilde{W}_1(\ell)^\vee
\otimes_{\F} \left( 
 \bigoplus_{\lambda \in \mathscr{P}}
 \chi _{\lambda }^{-1}  \otimes 
(R/\atwo[\ell] (\lambda))\right)$
(coming from 
\Cref{prop:CM-via-N}(i)
applied to $\pi_1 = \widetilde{\pi }_1$),
and using that
 $\widetilde{W}_1 (\ell )^\vee$
and $W_1 (f - 1 - \ell )$
are $\F$-vector spaces of the same dimension
(cf.\ Step 3 of the proof of
 \Cref{prop:CM-via-N}),
we see that
 $\grm ( \widetilde{\pi }_1 ^\vee)$
and
 $\gr _{F'} ( \widetilde{\pi }_1 ^\vee)$
only differ from certain shifts in the grading,
which sheds some light on
 \cite[Remark~2.1.4]{BHHMS4}.
\end{remark}

We prepare to give a variation of \eqref{diag:thetas}
for subquotients, i.e.\ when the term
$\grm (\pi^\vee )$ in the middle
is replaced with $\grm (\pi_0 ^\vee)$,
for some subrepresentation $\pi_0$
of $\pi$ containing $\pi_1$.
We begin with an elementary linear algebra lemma.
\begin{lemma}
  \label{lemma:double-filtration}
Let $U$ be a finite-dimensional $\F$-vector space.
Let $m \ge 0$ be a nonnegative integer,
let
$U_1(m) \subseteq \dots \subseteq 
U_1 (0) \subseteq U_1 (-1) = U$
and
$U_0(m) \subseteq \dots \subseteq 
U_0 (0) \subseteq U_0 (-1) = U$
be two $(m+2)$-step filtrations of $U$,
and assume that $U_1 (\ell) \subseteq U_0 (\ell)$
for all $\ell \in \{-1, \dots, m\}$.
Then, there exists a family
\begin{equation}
  \label{eq:family-of-bases}
\{
 \mathcal{B}(\ell_0 , \ell_1 )
\mid 
\ell_0 , \ell_1  \in \{-1, \dots, m\},
 \ell_0  \ge \ell_1 \}
\end{equation}
of (possibly empty) 
pairwise disjoint subsets of $U$ such that, 
for all $\ell \in \{-1, \dots, m\}$
and $i \in \{ 0, 1\}$,
the subset 
$\bigsqcup_{
-1 \le \ell_1 \le \ell_0 \le f \ \mid \ 
\ell_i \ge \ell}
\mathcal{B}(\ell_0, \ell_1) $
of $U$
is an $\F$-basis of $U_i (\ell)$.
\end{lemma}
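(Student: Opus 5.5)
Observe first that in the statement the upper bound in the index set should read $m$ rather than $f$. The lemma is a statement about a pair of flags in $U$, and the tool is the standard fact that two flags admit a common adapted basis. I will first reduce to that fact, and then read off the required blocks $\mathcal{B}(\ell_0,\ell_1)$.

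Step 1. Take $\mathcal{F}_0$ to be the flag $U_0(m)\subseteq\dots\subseteq U_0(-1)=U$ and $\mathcal{F}_1$ the flag $U_1(m)\subseteq\dots\subseteq U_1(-1)=U$. I will construct a basis $\mathcal{B}$ of $U$ that is compatible with both flags, i.e.\ $\mathcal{B}\cap U_i(\ell)$ is a basis of $U_i(\ell)$ for every $i\in\{0,1\}$ and every $\ell$. The plan is the usual Zassenhaus-type construction. For each pair $(a,b)$, take the subspaces $U_0(a)\cap U_1(b)$. Then
\[
\mathrm{gr}_{a,b}\defeq \frac{U_0(a)\cap U_1(b)}{U_0(a+1)\cap U_1(b)+U_0(a)\cap U_1(b+1)},
\]
with the conventions $U_i(m+1)=0$. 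Choose lifts of bases of each $\mathrm{gr}_{a,b}$ and let $\mathcal{B}$ be their union. That this union is a basis adapted to every $U_0(a)\cap U_1(b)$, and hence to each $U_i(\ell)$ (take $b=-1$ or $a=-1$), follows by induction on $\dim U$. Alternatively, one can use the Bruhat decomposition: two flags in a vector space are always simultaneously adapted to some basis. In either form this is classical, so I will cite it or sketch it rather than reprove it. I expect the verification that the lifts span $U_0(a)\cap U_1(b)$ to be the only nontrivial point, and it is handled by a descending induction on $(a,b)$, using that intersections with a fixed subspace distribute over the sums appearing here. This distributivity holds because every subspace in play is spanned by the chosen basis vectors, and that is exactly what the induction maintains.

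Step 2. For $v\in\mathcal{B}$ set $\ell_i(v)\defeq\max\{\ell: v\in U_i(\ell)\}$. Adaptedness gives that $\{v\in\mathcal{B}: \ell_i(v)\ge\ell\}$ is a basis of $U_i(\ell)$. The hypothesis $U_1(\ell)\subseteq U_0(\ell)$ gives $\ell_1(v)\le\ell_0(v)$. Define $\mathcal{B}(\ell_0,\ell_1)\defeq\{v\in\mathcal{B}:\ell_0(v)=\ell_0,\ \ell_1(v)=\ell_1\}$. These sets are pairwise disjoint and indexed by $\ell_0\ge\ell_1$. The union over the pairs with $\ell_i\ge\ell$ is precisely $\{v:\ell_i(v)\ge\ell\}$, which is a basis of $U_i(\ell)$, as required.

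The main obstacle is only Step 1, namely producing a basis simultaneously adapted to two flags. If the inductive argument becomes messy, I would instead induct on $m$ directly. First pick a basis of $U_0(m)\cap U_1(m)=U_1(m)$, then extend it successively through $U_0(\ell)\cap U_1(\ell')$ in decreasing lexicographic order, checking adaptedness at each stage.
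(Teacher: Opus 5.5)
Your proposal is correct and is essentially the paper's argument, and you are right that the bound $f$ in the statement should read $m$. The paper builds $\mathcal{B}(\ell_0,\ell_1)$ by descending induction on $\ell_0$: it chooses a basis of $U_0(\ell_0)/U_0(\ell_0+1)$ adapted to the filtration induced by the subspaces $U_0(\ell_0)\cap U_1(\ell_1)$, whose successive quotients are exactly your pieces $\mathrm{gr}_{\ell_0,\ell_1}$ by the modular law, and then lifts. Your Step 2, sorting a bi-adapted basis by exact levels, just repackages that indexing. Your choice of lifts inside $U_0(a)\cap U_1(b)$, rather than arbitrary preimages in $U$ as the paper loosely phrases it, is exactly what makes the $U_1$-half of the claim hold.
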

\begin{proof}
Set $U(\ell_0 , \ell_1) \defeq 
U_0(\ell_0) \cap U_1(\ell_1)$
for every 
$ \ell_0, \ell_1  \in \{-1, \dots, m\}$
such that $\ell_1 \le  \ell_0$.
We define $ \mathcal{B}(m, m),
 \mathcal{B}(m, m-1), \dots, \mathcal{B}(m, 0),
 \mathcal{B}(m, -1)$
to be an $\F$-basis of $U_0 (m)$
adapted to the filtration 
 $U_1(m) = U(m,m)
 \subseteq U (m, m-1)
 \subseteq \dots U(m, 0) \subseteq 
 U (m, -1) = U_0(m)$
 of $U_0(m)$,
in the sense that
$\bigsqcup_{  \ell' \ge \ell}
\mathcal{B}(m, \ell') $
is an $\F$-basis of $U_0(m, \ell)$
for all $\ell  \in \{ -1, \dots, m\}$.
For $m' < m$,
we define $ \{\overline{\mathcal{B}}(m', \ell)
\mid -1 \le \ell \le m'\}$
to be an $\F$-basis of $U_0 (m')/U_0(m'+1)$
adapted (in the same sense as above)
to the decreasing filtration 
\[
  \left( \frac{ U(m' ,\ell ) + U_0(m'+1) }{U_0(m' +1)}
  \cong U(m' ,\ell )/U(m' +1, \ell) \mid
  -1 \le \ell \le m'
  \right)
\]
of $U_0 (m')/U_0(m'+1)$.
Finally, we set $\mathcal{B}(m', \ell)$
to be any preimage in $U$ of 
$\overline{\mathcal{B}}(m', \ell)$.
One readily checks that this defines a family
as in \eqref{eq:family-of-bases}
satisfying all the desiderata.
\end{proof}

Let $\pi_1 \subseteq \pi_0$
be two subrepresentations of $\pi$, and let
$\pi_2 \defeq \pi_0 /\pi_1$.
Assume that $\rep$ is 
$6$-generic.
We can apply \Cref{lemma:double-filtration}
to $m = f$, $U_i (\ell) = \VFil[\pi_i,\ell]$
(where $\ell \in \{-1, \dots, f\}$ and $i \in \{0, 1\}$),
to obtain a family 
$\{
\mathcal{B}(\ell_0 , \ell_1 )
\mid 
\ell_0, \ell_1  \in \{-1, \dots, f\},\ 
 \ell_0 \ge \ell_1 \}$
as in \eqref{eq:family-of-bases}
(note that the hypothesis
$U_1 (\ell) \subseteq U_0 (\ell)$ is satisfied
by \Cref{cor:VFil-monotone}).
In particular, for $i \in \{0, 1\}$ 
and $\ell \in \{-1, \dots, f\}$,
we see that
\begin{equation}
  \label{eq:Wi-def}
W_i (\ell) \defeq \bigoplus_{
\substack{
-1 \le \ell_1 \le \ell_0 \le f \\ \mid \ 
\ell_i \ge \ell}} 
\bigoplus_{w \in \mathcal{B}(\ell_0, \ell_1)} 
\F w
\end{equation}
can be chosen to be the complementary subspace 
of $\VFil[\pi_i, \ell + 1]$ inside
$\VFil[\pi_i, \ell]$
that appears in \Cref{prop:CM-via-N}
applied to $\pi_1 = \pi_i$.
The following corollary to \Cref{prop:CM-via-N}
is in the spirit of \cite[Lemma~3.2.1(i)]{Ber},
and will be used in the proof of 
\Cref{thm:gen-by-inv} below.
\begin{corollary}
  \label{cor:CM-via-N-subquotients}
Keep the above notation,
and assume moreover that $\rep$ is 
$\max \{9,2f + 1\}$-generic.
For $i \in \{0, \dots, 1\}$, set 
\begin{equation}
  \label{eq:Ni-def}
N_i \defeq \bigoplus_{\ell=-1}^f
\left( W_i(\ell)^\vee \otimes_{\F} 
\Nfil[\ell]\right).
\end{equation}
Then, there is a surjection
$h \colon N_0 \twoheadrightarrow N_1 $ 
of $R$-modules with compatible $H$-action,
fitting in a commutative square 
\begin{equation}
  \label{diag:thetas-subquot}
\begin{tikzcd}
\grm ( \pi_0 ^\vee ) \ar[r, two heads] &
\grm ( \pi_1 ^\vee ) \\
N_0  \ar[u, " \theta_0", "\wr"'] \ar[r, "h"', two heads]  & 
N_1  \ar[u, " \theta _1"', "\wr"]
\end{tikzcd}
\end{equation}
of graded
$R$-modules with compatible $H$-action
such that the vertical maps are isomorphisms.
\end{corollary}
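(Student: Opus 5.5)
The plan is to build $\theta_0$ and $\theta_1$ from \Cref{prop:CM-via-N}(i), applied with $\pi_1 = \pi_0$ and with $\pi_1 = \pi_1$. Both use the common basis of $\V$ coming from the family $\mathcal{B}(\ell_0,\ell_1)$ of \Cref{lemma:double-filtration}.

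Let $(w_s)$ be the basis of $\V$ obtained as the union of all $\mathcal{B}(\ell_0,\ell_1)$, and let $(u_s)$ be its dual basis. For a basis vector $w_s \in \mathcal{B}(\ell_0,\ell_1)$ we have $w_s \in W_0(\ell_0)$ and $w_s \in W_1(\ell_1)$, by \eqref{eq:Wi-def}. The basis is adapted to both decompositions $\V=\bigoplus_\ell W_i(\ell)$, so the explicit description of $\theta$ in the proof of \Cref{prop:CM-via-N} applies for both $i=0,1$. Thus $\theta_i$ is the unique $\Rbar$-linear map sending $u_s\otimes 1$ in the $\lambda$-summand of $W_i(\ell_i)^\vee\otimes \Nfil[\ell_i]$ to the image of $e_{\lambda,s}$ in $\grm(\pi_i^\vee)_0=(\pi_i^{I_1})^\vee$. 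By \Cref{prop:CM-via-N}(i), both $\theta_0$ and $\theta_1$ are isomorphisms.

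Next, define $h$ summand by summand. For $w_s\in\mathcal{B}(\ell_0,\ell_1)$, the generator $u_s\otimes 1$ sits in $\Rbar/\atwo[\ell_0](\lambda)$ inside $N_0$ and in $\Rbar/\atwo[\ell_1](\lambda)$ inside $N_1$. Since $\ell_1\le\ell_0$, \eqref{eq:a-fil-def} gives $\atwo[\ell_0](\lambda)\subseteq\atwo[\ell_1](\lambda)$, because $I(J_1,J_2,d)$ decreases as $d$ grows. Hence there is a natural surjection $\Rbar/\atwo[\ell_0](\lambda)\twoheadrightarrow\Rbar/\atwo[\ell_1](\lambda)$, namely the map $\Rfil[\ell_0](\lambda)\twoheadrightarrow\Rfil[\ell_1](\lambda)$ noted after \eqref{eq:Nfil-def}. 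Take $h$ to be the direct sum of these maps, tensored with $\F u_s$. It is graded, $H$-equivariant and surjective.

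Finally, check commutativity of \eqref{diag:thetas-subquot}. Both composites are $\Rbar$-linear, and $N_0$ is generated by the elements $u_s\otimes 1$, so it suffices to check on these generators. The first composite sends $u_s\otimes1$ to the image of $e_{\lambda,s}$ under the restriction $(\pi_0^{I_1})^\vee\twoheadrightarrow(\pi_1^{I_1})^\vee$. That image is exactly the element of $(\pi_1^{I_1})^\vee$ called $e_{\lambda,s}$, since both are restrictions of $\ip^\vee(u_s\otimes e_\lambda)\in(\pi^{I_1})^\vee$. This is also $\theta_1(h(u_s\otimes1))$, so the square commutes.

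The main subtlety I expect is making sure the two applications of \Cref{prop:CM-via-N} really use this same basis. The construction in that proof depends only on a choice of complements $W_1(\ell)$ and of an adapted basis. \Cref{lemma:double-filtration} was set up precisely to provide a single basis adapted to both filtrations, so I expect this to go through, but it is the step to verify carefully.
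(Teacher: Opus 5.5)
Your proposal is correct and follows the paper's proof. The paper likewise takes one basis of $\V$ from \Cref{lemma:double-filtration} adapted to both filtrations, so that \Cref{prop:CM-via-N}(i) yields $\theta_0,\theta_1$ sending $u_s\otimes 1$ to the image of $\ip^\vee(u_s\otimes e_\lambda)$; it then defines $h$ basis-vector-wise and checks commutativity on these generators. Your description of $h$ as $\F u_s\otimes\Rfil[\ell_0](\lambda)\twoheadrightarrow\F u_s\otimes\Rfil[\ell_1](\lambda)$ for $w_s\in\mathcal{B}(\ell_0,\ell_1)$ is in fact the correct reading of the paper's construction: the index condition $\ell_i\ge\ell$ in \eqref{eq:Wi-def} literally gives $W_i(\ell)=\VFil[\pi_i,\ell]$, not a complement, and should be $\ell_i=\ell$.
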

\begin{proof}
For $\ell_0, \ell_1  \in \{-1, \dots, f\}$
such that
 $\ell_0 \ge \ell_1$,
let $\mathcal{B}^\vee (\ell_0, \ell_1)$
be the dual basis to 
$\mathcal{B}(\ell_0, \ell_1)$.
We begin by constructing the surjection 
$h \colon  N_0 \twoheadrightarrow N_1$
as
\begin{align*}
h \colon N_0 &\overset{\eqref{eq:Ni-def}}{= }
\bigoplus_{\ell=-1}^f
\big( W_0(\ell)^\vee \otimes_{\F} 
\Nfil[\ell]\big)
\overset{\eqref{eq:Wi-def}}{= }
\bigoplus_{\ell=-1}^f
\big(
\big( \bigoplus_{
\substack{
-1 \le \ell_1 \le \ell_0 \le f \\ \mid \ 
\ell_0 \ge \ell}} 
\bigoplus_{u \in \mathcal{B}^\vee(\ell_0, \ell_1)} 
 \F u \big)
\otimes_{\F} 
\Nfil[\ell]\big)
\\
& \twoheadrightarrow 
\bigoplus_{\ell=-1}^f
\big( 
\big( \bigoplus_{
\substack{
-1 \le \ell_1 \le \ell_0 \le f \\ \mid \ 
\ell_1 \ge \ell}} 
\bigoplus_{u \in \mathcal{B}^\vee(\ell_0, \ell_1)} 
 \F u \big)
\otimes_{\F} 
\Nfil[\ell]\big) 
\overset{\eqref{eq:Wi-def}}{= }
\bigoplus_{\ell=-1}^f
\big( W_1(\ell)^\vee \otimes_{\F} 
\Nfil[\ell]\big)
\overset{\eqref{eq:Ni-def}}{= }
N_1,
\end{align*}
where the surjection comes from 
the inclusion of sets
$\{(\ell_0 , \ell_1 )\mid 
\ell_0, \ell_1  \in \{-1, \dots, m\},
 \ell_0 \ge  \ell_1, \ell_1 \ge \ell \}
  \subseteq 
\{(\ell_0 , \ell_1 )\mid 
\ell_0, \ell_1  \in \{-1, \dots, m\},
 \ell_0 \ge  \ell_1, \ell_0 \ge \ell \}$.
For $i \in \{0, 1\}$,
by \Cref{prop:CM-via-N}(i)
applied to $\pi_1 = \pi_i$
we have an $\Rbar$-module isomorphism
$N_i \xrightarrow[\sim]{\theta_i} \grm (\pi_i^\vee )$ 
with compatible $H$-action.
By the paragraph before Step 1 of the proof of
\Cref{prop:CM-via-N}(i)
we see that,
for all 
 $i  \in \{ 0, \dots, 1 \} $,
 all
 $\ell   \in \{ -1, \dots, f \} $,
and all
$u \in \bigsqcup_{
-1 \le \ell_1 \le \ell_0 \le f \ \mid \ 
\ell_i \ge \ell}
\mathcal{B}^\vee(\ell_0, \ell_1)$,
$\theta_i$ sends 
$u \otimes 1 \in  \F u \otimes_{\F} 
\Rfil[
\ell](\lambda)
\subseteq N_i$
to $\ip^\vee (u \otimes e_\lambda) \in 
\big( \pi_i^{I_1 } \big)^\vee
= \grm \big( \pi_i^\vee  \big)_0 
\subseteq \grm \big( \pi_i^\vee  \big)$,
for all $\lambda \in \mathscr{P}$.
Then, the commutativity of \eqref{diag:thetas-subquot}
follows from a simple diagram chase.
\end{proof}

We conclude the section
by studying the construction
$\pi_1 \mapsto \widetilde{\pi}_1$
of \Cref{def:conjugate-srep}.
The following corollary
will be used in the proof of
\Cref{thm:PS}.
\begin{corollary}
  \label{cor:tilde-involution}
Assume that $\rep$ is 
$\max \{9,2f + 1\}$-generic.
Let $\pi_0, \pi_1$ be two subrepresentations of $\pi$,
and assume that $\pi_1 \subseteq \pi_0$.
Set $\pi_2 \defeq \pi_0 /\pi_1 $.
\begin{enumerate}[(i)]
\item 
If $\pi_1 = 0$, 
then $\widetilde{\pi}_1 = \pi$.
If $\pi_1 = \pi$, then $\widetilde{\pi}_1 = 0$.
\item 
We have 
$\widetilde{\pi}_0 \subseteq \widetilde{\pi}_1$.
Moreover, 
if we set
$\widetilde{\pi}_2  \defeq
\widetilde{\pi}_1 / \widetilde{\pi}_0 $,
then there is an isomorphism
$\E (\pi_2 ^{\vee })\cong 
\widetilde{\pi}_2 ^{\vee }\otimes (\mytwist)$
of $\Lambda$-modules with compatible
actions of $\GF$.
\item 
There exists a $\GF$-equivariant
isomorphism 
$\gamma \colon \pi \xrightarrow{\sim} \pi$,
only depending on $\pi$
and $\kappa_\pi$,
such that we have 
$\gamma(\widetilde{\widetilde{\pi}}_1) = \pi_1$.
\end{enumerate}
In particular, 
$\pi \mapsto \widetilde{\pi}_1$
defines an order-reversing
automorphism of the lattice
$\{\pi_1 \mid \pi_1 \subseteq \pi\}$,
ordered by inclusion.
\end{corollary}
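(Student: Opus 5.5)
The plan is to treat each part by direct unwinding of \Cref{def:conjugate-srep}, using the exactness provided by Cohen--Macaulayness (\Cref{prop:CM-via-N}(ii)) and the diagram \eqref{eq:conjugate-when-CM}.

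\textbf{Part (i).} If $\pi_1=0$, then $\E(\pi_1^\vee)=0$, so the kernel in \eqref{eq:conjugate-def} is all of $\pi$. If $\pi_1=\pi$, the map in \eqref{eq:conjugate-def} is just $\kappa_\pi$, which is an isomorphism, so the kernel is $0$.

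\textbf{Part (ii).} The inclusion $\pi_1\subseteq\pi_0$ gives $\pi_1^\vee$ as a quotient of $\pi_0^\vee$. The surjection $\E(\pi^\vee)\twoheadrightarrow\E(\pi_1^\vee)^{\vee\vee}$ then factors through $\E(\pi_0^\vee)$, since $\E$ is contravariant and $\E[2f-1]$ of the relevant quotients vanishes by grade. Hence the composite defining $\widetilde{\pi}_1$ factors through the one defining $\widetilde{\pi}_0$, and so $\widetilde{\pi}_0\subseteq\widetilde{\pi}_1$.

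For the isomorphism, apply $\E$ to $0\to\pi_2^\vee\to\pi_0^\vee\to\pi_1^\vee\to0$. By \Cref{prop:CM-via-N}(ii), all three modules are zero or Cohen--Macaulay of grade $2f$ (the proposition is applied to $\pi_1$, $\pi_0$, and to the quotient $\pi/\pi_1$; I would check that $\pi_2^\vee$ is covered via \Cref{cor:CM-via-N-subquotients}, or by observing that $\E[2f+1](\pi_1^\vee)=0$ already suffices). This gives a short exact sequence
\[0\to\E(\pi_1^\vee)\to\E(\pi_0^\vee)\to\E(\pi_2^\vee)\to0.\]
By \eqref{eq:conjugate-when-CM}, $\E(\pi_i^\vee)$ identifies, through $\kappa_\pi^\vee$, with the kernel of $\pi^\vee\otimes(\mytwist)\twoheadrightarrow\widetilde{\pi}_i^\vee\otimes(\mytwist)$, that is, with $(\pi/\widetilde{\pi}_i)^\vee\otimes(\mytwist)$. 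Taking the quotient of these two identifications yields $\E(\pi_2^\vee)\cong(\widetilde{\pi}_1/\widetilde{\pi}_0)^\vee\otimes(\mytwist)$. All maps involved are Kohlhaase-equivariant, so the isomorphism is $\GF$-equivariant.

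\textbf{Part (iii).} This is the step I expect to be the main obstacle. The idea is to use biduality: for a Cohen--Macaulay module $M$ of grade $2f$ one has $\E(\E(M))\cong M$ functorially (\cite{LvO}), and this is compatible with the $\GF$-action by \cite{Koh17}. Applying (ii) with $\pi_0=\pi$ gives $\widetilde{\pi}=0$, and hence $\E((\pi/\pi_1)^\vee)\cong\widetilde{\pi}_1^\vee\otimes(\mytwist)$, which expresses $\widetilde{\pi}_1$ as the dual of $\E$ of the quotient. Now apply the construction again to $\widetilde{\pi}_1$: the module $\widetilde{\widetilde{\pi}}_1$ is the kernel of $\pi\to\E(\widetilde{\pi}_1^\vee)^\vee$. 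Using $\E\E\cong\id$ on the modules $(\pi/\pi_1)^\vee$ and $\pi_1^\vee$, this kernel is identified with $\pi_1$, but only after transport through the automorphism
\[\gamma\defeq(\text{bidual map})^{-1}\circ\kappa_\pi\text{-twisted }\E(\kappa_\pi^\vee)\circ\cdots\]
of $\pi$. In other words, $\gamma$ is the composite of $\kappa_\pi$, the dual of $\E(\kappa_\pi)$, and the canonical biduality isomorphism for $\pi^\vee$. This $\gamma$ depends only on $\pi$ and $\kappa_\pi$, not on $\pi_1$. The delicate points are naturality, so that the same $\gamma$ works for every $\pi_1$, and tracking the twist $\mytwist$, which enters twice and therefore cancels.

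\textbf{Final claim.} Order reversal follows from (ii). Bijectivity on the lattice follows from (iii): the map $\pi_1\mapsto\gamma(\widetilde{\widetilde{\pi}}_1)$ is the identity, so $\pi_1\mapsto\widetilde{\pi}_1$ is injective and surjective.
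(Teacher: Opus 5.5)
Your proposal is correct and follows the same route as the paper. Part (i) is read off from \eqref{eq:conjugate-def}. Part (ii) comes from the short exact $\E$-sequence, exact by the grade bound and the Cohen--Macaulayness of $\pi_1^\vee$, identified inside $\pi^\vee\otimes(\mytwist)$ via $\kappa_\pi^\vee$. Part (iii) uses Kohlhaase's biduality $M\xrightarrow{\sim}\E(\E(M))$ for Cohen--Macaulay modules of grade $2f$, with $\gamma^\vee=\kappa_\pi^\vee\circ\E(\kappa_\pi^\vee)^{-1}\circ(\text{biduality})$, which is exactly the paper's $\gamma$; the naturality you flag as the delicate point is what the paper's glued commutative diagram records, and it is what makes $\gamma$ independent of $\pi_1$.
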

\begin{proof}
(i)
Follows immediately from
\eqref{eq:conjugate-def}.

(ii)
The proof of
\cite[Corollary~4.3.12(vi)]{Ber}
goes through \emph{verbatim}
after replacing $\pi_i$ with $\pi_{i-1}$,
for $i \in \{1, 2\}$,
and $\pi'$ with $\pi_2$.

(iii)
If $\pi_1 = 0$ or $\pi$,
this follows from (i).
Assume now that 
$0 \subsetneq \pi_1 \subsetneq \pi$
is a proper subrepresentation of $\pi$,
and set $\pi_0 = \pi$.
The assumption of 
\Cref{rem:conjugate-when-CM}
is satisfied because $\pi_1^\vee $
is Cohen-Macaulay of grade $2f$ over $\Lambda$
by \Cref{prop:CM-via-N}(ii),
and the diagram \eqref{eq:conjugate-when-CM}
extends to the following
commutative diagram of $\Lambda$-modules
with exact rows
and vertical isomorphisms
(we suppress the twists
as we will not consider the $H$-action):
\begin{equation}
  \label{eq:to-dualise-for-inv}
\begin{tikzcd}
0 \ar[r] & \E(\pi_1 ^{\vee})\ar[r]
\ar[d, "\wr"]
&  \E(\pi^\vee)  
\ar[r]\ar[d,"{ \kappa_\pi^\vee }"', "{\wr}"] 
& 
\E(\pi_2 ^{\vee}) 
\ar[d,"{ \kappa_\pi^\vee}"', "{\wr}"] \ar[r]
& 0 \\
0 \ar[r]
& 
\widetilde{\pi}_2^\vee 
\ar[r]
& \pi^\vee   \ar[r]
&\widetilde{\pi}_1 ^{\vee}
\ar[r]
&
0.
\end{tikzcd}\end{equation}
Apply the functor $\Hom_{\Lambda}(-, \Lambda)$
to \eqref{eq:to-dualise-for-inv}
and use that $\pi^\vee, \pi_1^\vee, \pi_2^\vee$
are Cohen-Macaulay of grade $2f$ over $\Lambda$
by \Cref{prop:CM-via-N}(ii)
to obtain the following
commutative diagram of $\Lambda$-modules
with exact rows
and vertical isomorphisms:
\begin{equation}
  \label{eq:dualised-for-inv}
\begin{tikzcd}
0 \ar[r] & \E \!(\E(\pi_2 ^{\vee}))
\ar[r]
&  \E \!(\E(\pi^\vee))
\ar[r] 
& 
\E \!(\E(\pi_1 ^{\vee}))
\ar[r]
& 0 \\
0 \ar[r]
& 
\E (\widetilde{\pi}_1^\vee) 
\ar[u,"{\E( \kappa_\pi^\vee})", "{\wr}"']
\ar[r]
& \E(\pi^\vee) 
\ar[r]
\ar[u,"{\E( \kappa_\pi^\vee})", "{\wr}"']
&
\E(\widetilde{\pi}_2 ^{\vee})
\ar[u, "{\wr}"']
\ar[r]
&
0.
\end{tikzcd}\end{equation}
For any finitely generated pseudocompact
(in the sense of \cite{Koh17})
$\Lambda$-module $M$,
recall that the natural map
of complexes of $\Lambda$-modules
\begin{equation}
  \label{eq:natural-reflexive}
  M \to R \Hom_{\Lambda}
  (R \Hom_{\Lambda}(M, \Lambda), \Lambda)
\end{equation}
is a quasi-isomorphism,
by the second paragraph of the proof of
\cite[Theorem~3.5]{Koh17}.
Since $\pi^\vee $, $\pi_1^\vee $, $\pi_2^\vee $,
$\widetilde{\pi}_1^\vee $ and $\widetilde{\pi}_2^\vee $
are Cohen-Macaulay $\Lambda$-modules of grade $2f$
by \Cref{prop:CM-via-N}(ii),
taking the $0$-th cohomology group of
\eqref{eq:natural-reflexive}
(and using \eqref{eq:to-dualise-for-inv})
we deduce an isomorphism 
$M \xrightarrow{\sim} \E( \E (M))$
of $\Lambda$-modules
for $M \in \{\pi^\vee , \pi_1^\vee , \pi_2^\vee \}$.
Consider the following commutative diagram
of $\Lambda$-modules with exact rows
and vertical isomorphisms:
\begin{equation}
  \label{eq:glueing-squares-dual}
\begin{tikzcd}
0 \ar[r]
& 
\pi_2^\vee 
\ar[d, "\wr", " {\eqref{eq:natural-reflexive}}"']
\ar[r]
& \pi^\vee   \ar[r]
\ar[d, "\wr", " {\eqref{eq:natural-reflexive}}"']
&\pi_1 ^{\vee}
\ar[r]
\ar[d, "\wr", " {\eqref{eq:natural-reflexive}}"']
&
0
\\
0 \ar[r] & \E \!(\E(\pi_2 ^{\vee}))
\ar[r]
&  \E \!(\E(\pi^\vee))
\ar[r] 
& 
\E \!(\E(\pi_1 ^{\vee}))
\ar[r]
& 0 \\
0 \ar[r]
& 
\E (\widetilde{\pi}_1^\vee) 
\ar[u,"{\E( \kappa_\pi^\vee})", "{\wr}"']
\ar[d, "{\wr}"] 
\ar[r]
& \E(\pi^\vee) 
\ar[r]
\ar[u,"{\E( \kappa_\pi^\vee})", "{\wr}"']
\ar[d,"{ \kappa_\pi^\vee}"', "{\wr}"] 
&
\E(\widetilde{\pi}_2 ^{\vee})
\ar[u, "{\wr}"']
\ar[r]
\ar[d,"{ \kappa_\pi^\vee}"', "{\wr}"] 
&
0 \\
0 \ar[r] & \widetilde{\widetilde{\pi}}_2 ^{\vee}
\ar[r]
&  \pi^\vee
\ar[r]
& 
\widetilde{\widetilde{\pi}}_1 ^{\vee}
\ar[r]
& 0,
\end{tikzcd}\end{equation}
where the upper squares commute
by the naturality of 
\eqref{eq:natural-reflexive},
and where the lower 
squares commute by
\eqref{eq:to-dualise-for-inv}
applied to $\pi_1 = \widetilde{\pi}_1$ and
$\pi_2 = \widetilde{\pi}_2$.
By \eqref{eq:glueing-squares-dual}
we induce a commutative diagram of 
admissible $\GF$-representations
over $\F$
with exact rows and vertical isomorphisms
\begin{equation}
  \label{eq:dual-of-glueing-squares-dual}
\begin{tikzcd}
0 \ar[r]
& 
\pi_1
\ar[d, "\wr"]
\ar[r]
& \pi  \ar[r]
\ar[d, "\wr", "\gamma"']
&\pi_2
\ar[r]
\ar[d, "\wr"]
&
0
\\
0 \ar[r] & \widetilde{\widetilde{\pi}}_1
\ar[r]
&  \pi
\ar[r]
& 
\widetilde{\widetilde{\pi}}_2
\ar[r]
& 0,
\end{tikzcd}\end{equation}
where $\gamma \colon \pi \xrightarrow{\sim} \pi$
is the unique $\GF$-linear isomorphism
such that $\gamma^\vee = 
\kappa_\pi^\vee \circ 
\E (\kappa_\pi^\vee )^{-1} 
\circ \eqref{eq:natural-reflexive}$
is the composition
of the middle vertical arrows in 
\eqref{eq:dualised-for-inv}.
This concludes the proof.
\end{proof}
\begin{remark}
  \label{rmk:tilde-involution}
Recall the $\F$-linear subspaces of $\V$
defined in
\eqref{eq:VFil-rFil-def}.
We suspect that,
using $\kappa_\pi$,
one should be able to construct
a nondegenerate pairing
$b_\pi \colon \V \otimes_{\F} \V \to \F$,
and moreover that
$\VFil[\widetilde{\pi}_1 ,\ell]
= \VFil[\pi_1 , f - \ell]^{\perp}$
for $\ell \in \{0, \dots, f\}$,
where for an $\F$-linear subspace $W$ of $\V$
we set $W^{\perp} \defeq 
\{v \in \V \mid b_\pi (W, v) = 0 \}$.

Notice that this would agree with
Step 4 of the proof of
\Cref{prop:CM-via-N}, which shows that
\eqref{eq:hype-mult-inequality} is an equality.
This would also agree with
\Cref{cor:tilde-involution},
using the well-known identities
$0^{\perp} = \V$, $\V^{\perp} = 0$,
$W_1 \subseteq W_0 \implies W_0^{\perp} \subseteq W_1^{\perp},W_1^{\perp} / W_0^{\perp} 
\cong \Hom_{\F}(W_0 / W_1, \F)$,
and
$(W_1 ^\perp)^ \perp = W_1 $,
valid for $\F$-linear subspaces 
$W_0 , W_1 \subseteq \V$.

In any case, we will not need such a result
in the sequel.
\end{remark}

\subsection{Finite length}
  \label{sec:fl}
Assume that $\rep$ is $\max \{9, 2f +1\}$-generic,
and keep the hypotheses
\ref{hypothesis:i} to
\ref{hypothesis:iv}
of \Cref{sec:hypotheses}.
The goal of this section is to prove,
in \Cref{cor:fl} below,
that $\pi$ has finite length.

For a $\GR$-subrepresentation $\pi_1 $ of $\pi$,
recall the vector subspaces
$\VFil[\pi_1 ,\ell]$ of $\V$
defined in \eqref{eq:VFil-rFil-def}.
The following theorem 
establishes that 
the inclusion $\pi_1  \subseteq \pi_0 $
between two subrepresentations $\pi_0 , \pi_1 $
of $\pi$
can be checked at the level of $K_1$-invariants.
It generalises
\cite[Theorem~4.4.8]{BHHMS4}
to the case $r \ge 1$.
\begin{theorem}
  \label{thm:gen-by-inv}
Assume that $\rep$ is $\max \{9, 2f +1\}$-generic.
Let $\pi_0$ and $\pi_1 $
be two $\GF$-subrepresentations of $\pi$.
\begin{enumerate}[(i)]
\item 
Assume that $\pi_1 \subseteq \pi_0 $,
and set $\pi_2  \defeq \pi_0 /\pi_1 $.
Then, $\pi_2 $ is generated by its $K_1$-invariants.

\item 
The following are equivalent:
\begin{enumerate}[(a)]
\item $\pi_1  \subseteq \pi_0 $;
\item $\pi_1 ^{K_1} \subseteq 
\pi_0 ^{K_1} $ inside $\pi$;
\item $ \VFil[\pi_1 ,\ell] \subseteq 
\VFil[\pi_0 ,\ell]$ for all 
$\ell \in \{0, \dots, f\}$.
\end{enumerate}
Moreover, the statement remains true if we substitute
$ \subseteq $ with $ \subsetneq $ in (a), (b), (c).

\item 
Assume that $\pi_1 \subseteq \pi_0 $,
and set $\pi_2  \defeq \pi_0 /\pi_1 $.
If the inclusion $\pi_1  \subsetneq \pi_0 $
is strict, then the \'etale $(\varphi, \Gamma)$-module
$\Dvee(\pi_2 )$ 
of \Cref{def:Dvee} is nonzero.
\end{enumerate}
\end{theorem}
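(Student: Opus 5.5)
The plan is to prove the three parts in the order (iii), (ii), (i). The numerical invariant $\pi_1 \mapsto (\VFil[\pi_1,\ell])_\ell$ is controlled by $\Dvee$ through \Cref{cor:Wang}, and (iii) is the statement that $\Dvee$ detects strict inclusions.

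\textbf{Part (iii).} Let $\pi_1 \subsetneq \pi_0$ and $\pi_2 = \pi_0/\pi_1 \neq 0$.
\begin{enumerate}[1.]
\item By exactness of $\Dvee$ and \Cref{cor:Wang},
\[
\dimFX \Dvee(\pi_2) = \sum_{\ell} \big(\rFil[\pi_0,\ell]-\rFil[\pi_1,\ell]\big)\binom{f}{\ell}.
\]
So it suffices to show $\VFil[\pi_1,\ell] \subsetneq \VFil[\pi_0,\ell]$ for some $\ell$.
\item Endow $\pi_2^\vee$ with the filtration induced from $\grm(\pi_0^\vee)$. By \Cref{cor:CM-via-N-subquotients}, $\gr(\pi_2^\vee) \cong \ker(h\colon N_0 \twoheadrightarrow N_1)$. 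Using the bases $\mathcal{B}(\ell_0,\ell_1)$ of \Cref{lemma:double-filtration}, this kernel is a direct sum of pieces $\F u \otimes \chi_\lambda^{-1}\otimes \atwo[\ell_1](\lambda)/\atwo[\ell_0](\lambda)$ with $\ell_0>\ell_1$.
\item That kernel vanishes unless some $\mathcal{B}(\ell_0,\ell_1)$ with $\ell_0>\ell_1$ is nonempty, i.e.\ unless the $V$-filtrations of $\pi_0$ and $\pi_1$ differ. Moreover $\pi_2^\vee$ is nonzero and Cohen--Macaulay of grade $2f$ by \Cref{prop:CM-via-N}(ii), so its graded module is nonzero. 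Hence the filtrations do differ, which is what step 1 needs.
\item As a backup, I would also check that $\mpp(\Nfil[\ell])=\sum_{\ell'\le \ell}\binom{f}{\ell'}$. This matches \eqref{eq:Wang-rk-Dvee} through \cite[Proposition~3.87(i)]{BHHMS2} and gives the same conclusion directly from multiplicities.
\end{enumerate}
This step is the main obstacle. Care is needed that the filtration on $\pi_2^\vee$ is the induced one rather than the $\mI$-adic one (\Cref{rmk:F-not-mI}); this is why I go through \Cref{cor:CM-via-N-subquotients} rather than $\pi_2^{K_1}$.

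\textbf{Part (ii).}
\begin{enumerate}[1.]
\item (a)$\Rightarrow$(b) is trivial, and (b)$\Rightarrow$(c) is \Cref{cor:VFil-monotone}.
\item For (c)$\Rightarrow$(a), set $\pi_3 \defeq \pi_0+\pi_1$ and $\pi_4 \defeq \pi_0\cap\pi_1$. Since $\pi_4^{K_1} = \pi_0^{K_1}\cap\pi_1^{K_1}$, \Cref{thm:srep-structure} together with the linewise argument of \cite[Lemma~3.1.5]{Ber} should give $\VFil[\pi_4,\ell] = \VFil[\pi_0,\ell]\cap\VFil[\pi_1,\ell]$.
\item By \Cref{cor:VFil-monotone}, $\VFil[\pi_3,\ell] \supseteq \VFil[\pi_0,\ell]+\VFil[\pi_1,\ell]$.
\item Additivity of $\dimFX\Dvee$ on $0\to\pi_4\to\pi_0\oplus\pi_1\to\pi_3\to 0$, combined with \Cref{cor:Wang}, shows that the $\binom{f}{\ell}$-weighted dimensions of both sides of step 3 agree. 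Since each summand can only be larger on the left, equality holds for every $\ell$.
\item Under (c) this gives $\VFil[\pi_3,\ell] = \VFil[\pi_0,\ell]$ for all $\ell$, so $\Dvee(\pi_3/\pi_0)=0$. Part (iii) then forces $\pi_3=\pi_0$, i.e.\ $\pi_1\subseteq\pi_0$.
\item For the strict version, combine the non-strict equivalence with (iii): a strict inclusion $\pi_1\subsetneq\pi_0$ changes some $\VFil[\cdot,\ell]$ and hence also the $K_1$-invariants.
\end{enumerate}

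\textbf{Part (i).}
\begin{enumerate}[1.]
\item Let $\pi_3 \subseteq \pi_2$ be the subrepresentation generated by $\pi_2^{K_1}$, and let $\pi_3' \subseteq \pi_0$ be its preimage, so $\pi_1 \subseteq \pi_3' \subseteq \pi_0$.
\item The image of $\pi_0^{K_1}$ lies in $\pi_2^{K_1}\subseteq\pi_3$, so $\pi_0^{K_1}\subseteq\pi_3'$, hence $\pi_0^{K_1}\subseteq\pi_3'^{K_1}$.
\item By (ii), (b)$\Rightarrow$(a), we get $\pi_0\subseteq\pi_3'$. Therefore $\pi_3=\pi_2$.
\end{enumerate}
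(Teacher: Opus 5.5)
Your argument is correct, but it runs in the opposite order from the paper's. The paper proves (i) first and directly. Reducing to $\pi_1=0$, it compares $\pi_0'\defeq\langle \GF\cdot\pi_0^{K_1}\rangle\subseteq\pi_0$ with $\pi_0$. Since $\pi_0^{\prime K_1}=\pi_0^{K_1}$, \Cref{thm:srep-structure} gives $\VFil[\pi_0',\ell]=\VFil[\pi_0,\ell]$, so the map $h$ of \Cref{cor:CM-via-N-subquotients} is an isomorphism. Hence so is $\grm(\pi_0^\vee)\to\grm(\pi_0^{\prime\vee})$, and $\pi_0'=\pi_0$. After that (ii) is formal: (b)$\Rightarrow$(a) because $\pi_1$ is generated by $\pi_1^{K_1}\subseteq\pi_0$, (c)$\Rightarrow$(b) follows straight from \eqref{eq:srep-K1-decomposition}, and (b)$\Rightarrow$(c) is \Cref{cor:VFil-monotone}. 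Part (iii) is then (ii) plus \Cref{cor:Wang} and exactness of $\Dvee$.

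Your Step~3 of (iii) is precisely this graded comparison in contrapositive form. Had you applied it to $\langle\GF\cdot\pi_0^{K_1}\rangle\subseteq\pi_0$, you would have obtained (i) at once, and the detour through $\pi_0+\pi_1$, $\pi_0\cap\pi_1$ and additivity of $\dimFX\Dvee$ would be unnecessary. That detour is nonetheless valid, and it yields something the paper does not record: $\VFil[\pi_0+\pi_1,\ell]=\VFil[\pi_0,\ell]+\VFil[\pi_1,\ell]$ and $\VFil[\pi_0\cap\pi_1,\ell]=\VFil[\pi_0,\ell]\cap\VFil[\pi_1,\ell]$. So $\pi_1\mapsto(\VFil[\pi_1,\ell])_\ell$ is a lattice embedding, not merely an order embedding.

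Two minor points. First, the intersection formula should be justified by choosing a basis of $\V$ adapted to both (possibly non-nested) flags and invoking the uniqueness in \Cref{thm:srep-structure}; "should give" is not yet an argument. Second, $\gr_F(\pi_2^\vee)\neq 0$ needs only that $F$ is separated and exhaustive. \Cref{prop:CM-via-N}(ii) as stated concerns quotients of $\pi$, not quotients of $\pi_0$, so citing it there is both unnecessary and not quite applicable.
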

\begin{proof}
(i)
The quotient of any $\GF$-representation
generated by its $K_1$-invariants is also 
generated by its $K_1$-invariants,
hence it suffices to consider the case $\pi_1 = 0$,
i.e.\ $\pi_2  =  \pi_0$.
Let $\pi_0' \defeq \left\langle
\GF \cdot \pi_0^{K_1}  \right\rangle$
be the subrepresentation of $\pi_0$
generated by $\pi_0^{K_1} $,
in particular $\pi_0^{K_1} = \pi_0^{\prime K_1}$,
and $\VFil[\pi_0,\ell] = \VFil[\pi_0',\ell]$
for all $\ell \in \{0, \dots, f\}$
by \Cref{thm:srep-structure}
(applied to $\pi_1 = \pi_0$).
Note that, in the commutative square
\eqref{diag:thetas-subquot}
applied to $\pi_0 = \pi_0 $, $\pi_1 = \pi_0 '$,
the surjection $h$ is an isomorphism,
because 
$\VFil[\pi_0,\ell] = \VFil[\pi_0',\ell]$
for all $\ell \in \{0, \dots, f\}$.
It follows that the natural map
$ \grm(\pi_0^\vee ) \twoheadrightarrow 
\grm(\pi_0^{\prime \vee } )$ is also an isomorphism,
hence $\pi_0^{\prime \vee } /\mK^n=
\pi_0^\vee/\mK^n$ for all $n \in \mathbb{N}$,
for dimension reasons, hence
$\pi_0^{\prime \vee }=\pi_0^\vee$,
and we deduce $\pi_0 = \pi_0'$.

(ii)
We have $(a) \Leftrightarrow (b)$ by (i),
$(c) \Rightarrow (b)$ by \Cref{thm:srep-structure}
and $(b)\Rightarrow (c)$
by \Cref{cor:VFil-monotone}.

For the last statement in (ii) it is enough to observe that,
for every partial order $(X, \le )$ on a set $X$,
and for every two elements $x_1, x_0$ of $X$,
we have $x_1 \lneq x_0$ 
(i.e.\ $x_1 \le x_0$ and $x_1 \neq x_0$)
if and only if 
we have $x_1 \le x_0 $ and 
$x_0 \not \le x_1 $.

(iii)
By \Cref{cor:Wang} and by part (ii),
we must have
$\dimFX \Dvee(\pi_1)< \dimFX \Dvee(\pi_0)$,
hence $\dimFX \Dvee(\pi_2 )\neq 0 $
by the exactness of $\Dvee(-)$,
which implies $\Dvee(\pi_2 ) \neq 0$.
\end{proof}

Finally, we can prove that $\pi$ 
has finite $\GF$-length.
\begin{corollary}
  \label{cor:fl}
Assume that $\rep$ is $\max \{9, 2f +1\}$-generic.
Let $\pi_1 \subseteq \pi_0$
be two $\GF$-subrepresentations of $\pi$.
Then, we have
\begin{equation}
  \label{eq:lg-inequality}
\lg_{\GF}(\pi_0  /\pi_1) 
\le \sum_{\ell = 0}^{f} 
\left( \rFil[\pi_0  , \ell]-
\rFil[\pi_1, \ell] \right),
\end{equation}
where $\rFil[\pi_0, \ell], \rFil[\pi_1, \ell]$
are defined in \eqref{eq:VFil-rFil-def}.
In particular, 
$\pi$ has finite $\GF$-length,
bounded above by $r \cdot (f + 1)$.
\end{corollary}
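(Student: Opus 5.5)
We argue by induction on the integer $d(\pi_0,\pi_1) \defeq \sum_{\ell=0}^{f}\bigl(\rFil[\pi_0,\ell]-\rFil[\pi_1,\ell]\bigr)$. First note that $d(\pi_0,\pi_1)\ge 0$ always: by \Cref{cor:VFil-monotone} we have $\VFil[\pi_1,\ell]\subseteq\VFil[\pi_0,\ell]$ for every $\ell$, so each summand is nonnegative.

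If $d(\pi_0,\pi_1)=0$, then all the inclusions $\VFil[\pi_1,\ell]\subseteq\VFil[\pi_0,\ell]$ are equalities. \Cref{thm:gen-by-inv}(ii), the equivalence of (a) and (c) applied in both directions, then gives $\pi_0=\pi_1$, so the quotient has length $0$.

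Assume now $\pi_1\subsetneq\pi_0$. Since $\pi$ is admissible, every strictly increasing chain of subrepresentations between $\pi_1$ and $\pi_0$ has the following property: by the strict version of \Cref{thm:gen-by-inv}(ii), consecutive terms $\pi'\subsetneq\pi''$ satisfy $\VFil[\pi',\ell]\subseteq\VFil[\pi'',\ell]$ for all $\ell$, with strict inclusion for at least one $\ell$. Hence $d(\pi'',\pi')\ge 1$. The invariant $d$ is additive along chains, because it is a telescoping difference of dimensions. It follows that any chain $\pi_1=\pi^{(0)}\subsetneq\pi^{(1)}\subsetneq\cdots\subsetneq\pi^{(n)}=\pi_0$ satisfies
\[
n\le\sum_{k=1}^{n} d\bigl(\pi^{(k)},\pi^{(k-1)}\bigr)=d(\pi_0,\pi_1).
\]
Chains of subrepresentations of $\pi_0/\pi_1$ correspond to chains between $\pi_1$ and $\pi_0$, so all such chains have length bounded by $d(\pi_0,\pi_1)$. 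Consequently $\pi_0/\pi_1$ has finite length, with $\lg_{\GF}(\pi_0/\pi_1)\le d(\pi_0,\pi_1)$, which is \eqref{eq:lg-inequality}.

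For the final assertion, take $\pi_0=\pi$ and $\pi_1=0$. \Cref{rmk:srep-structure-r=1} gives $\rFil[\pi,\ell]=r$, and $\rFil[0,\ell]=0$ by \Cref{thm:srep-structure}. The bound is therefore $r(f+1)$.

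The only substantive input is the strict form of \Cref{thm:gen-by-inv}(ii), which is already established. The one point requiring care is that a bound on the lengths of all finite chains implies finite length. This holds because such a bound means the lattice of subrepresentations has bounded height, so a composition series exists and has length at most that bound.
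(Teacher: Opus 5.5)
Your proposal is correct and is essentially the paper's argument. The strict form of \Cref{thm:gen-by-inv}(ii) forces each strict step in a chain between $\pi_1$ and $\pi_0$ to increase $\sum_{\ell}\dim_{\F}\VFil[\cdot,\ell]$ by at least one, which bounds every chain length by the difference of these dimension sums; the case $\pi_0=\pi$, $\pi_1=0$ then gives $r(f+1)$. The announced induction on $d$ is never actually used, since the chain-counting argument is direct, exactly as in the paper.
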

\begin{proof}
Let $n \defeq \sum_{\ell = 0}^{f} 
\left( \rFil[\pi_0 , \ell]-
\rFil[\pi_1, \ell] \right)$,
and suppose for the sake of a contradiction
that there exists a strictly increasing chain
\[
\pi_1  \subsetneq \pi'_n \subsetneq \pi_{n-1}' \subsetneq \dots \subsetneq 
\pi'_2 \subsetneq \pi'_{1} \subsetneq  \pi_0 
\]
of nonzero $\GF$-subrepresentations.
For $i \in \{1, \dots, n\}$ and
$\ell \in \{0, \dots, f\}$,
recall the vector subspaces $\VFil[\pi_i', \ell],
\VFil[\pi_0, \ell], \VFil[\pi_1, \ell]$
of \eqref{eq:VFil-rFil-def}.
By \Cref{thm:gen-by-inv}(ii),
the chain 
\[
\bigoplus_{ \ell=0} ^f \VFil[\pi_1 ,\ell]
 \subsetneq
\bigoplus_{ \ell=0} ^f \VFil[\pi'_n ,\ell]
\subsetneq
\bigoplus_{ \ell=0} ^f \VFil[\pi'_{n-1} ,\ell]
\subsetneq
\cdots
\subsetneq
\bigoplus_{ \ell=0} ^f \VFil[\pi'_1 ,\ell]
\subsetneq
\bigoplus_{ \ell=0} ^f \VFil[\pi_0 ,\ell]
\]
is strictly increasing, contradicting
\[
\dim_{\F}\left( \frac{\bigoplus_{\ell=0} ^f \VFil[\pi_0 ,\ell]}
{\bigoplus_{\ell=0} ^f \VFil[\pi_1 ,\ell]} \right)
=\sum_{\ell=0} ^f (\rFil[\pi_0, \ell]-\rFil[\pi_1,\ell])
=n.
\]
As for the last statement, taking
$\pi_0 = \pi, \pi_1  = 0$ we find
$\lg_{\GF}(\pi) \le \sum_{\ell = 0}^{f} (r - 0) = 
r \cdot (f+1)$.
\end{proof}

\begin{remark}
  \label{rmk:fl-want-equality}
Even though we expect equality to always hold in
\eqref{eq:lg-inequality},
proving such a result seems currently
out of reach.
\end{remark}

Write 
$\rep \cong 
\left( 
\begin{smallmatrix}
\chi_1  & \ast \\
0 & \chi_2 
\end{smallmatrix}
\right)$
for some $\Gal( \overline{K}/K)$-characters 
$\chi_1 ,\chi_2 $
(and $\ast \neq 0$).
Let $B \subseteq \GL_{2, \mathcal{O}_K}$
denote the algebraic subgroup of
upper-triangular matrices,
and let $\pi_0 , \pi_f$
be the principal series
$\pi_0 \defeq 
\IndBK (\chi_2 \otimes \chi_1 \omega^{-1})$
and
$\pi_f \defeq 
\IndBK (\chi_1 \otimes \chi_2 \omega^{-1})$,
where we have used
local class field theory,
normalised so that uniformisers correspond to geometric Frobeniuses,
to regard
$\chi_2 \otimes \chi_1 \omega^{-1}$,
$\chi_1 \otimes \chi_2 \omega^{-1}$
as $K^\times$-characters.
The following result
identifies all the principal series 
which occur in $\pi$.
\begin{theorem}
  \label{thm:PS}
Keep the above notation,
and assume that $\rep$ is $\max \{9, 2f +1\}$-generic.
Then,
$\pi$ is of the form
\[
\pi \cong 
\left( 
\begin{xy} (0,0)*+{
\V \otimes_{\F} \pi_0 
}="a";
(15,0)*+{\pi'}="b";
(30,0)*+{
\V \otimes_{\F} \pi_f
}="c"; 
{\ar@{-}"a";"b"};
{\ar@{-}"b";"c"}\end{xy}
\right),
\]
where the notation means that
$\socF \pi \cong  \V \otimes_{\F} \pi_0$,
$\cosocF \pi \cong  \V \otimes_{\F} \pi_f$,
and that $\pi' \cong 
\ker(\pi/\V \otimes_{\F} \pi_0 \twoheadrightarrow 
\V \otimes_{\F} \pi_f)$.
Moreover, we have 
$\lgF (\pi') \le  r \cdot (f -1 )$,
every Jordan-H\"older constituent
of $\pi'$ is supersingular,
and $\pi^{\prime \vee}$
is essentially self-dual
(as in hypothesis \ref{hypothesis:iii}
of \Cref{sec:hypotheses}).
\end{theorem}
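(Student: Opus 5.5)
The plan is to work entirely with the invariants $\VFil[\pi_1,\ell]$ and the involution $\pi_1\mapsto\widetilde{\pi}_1$.

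\textbf{Socle.} Let $\pi_{\rm soc}$ be the subrepresentation generated by $\ip^{-1}(\V\otimes_{\F}\DZeroFil[0])$. Since $\DZeroFil[0]$ is the minimal nonzero piece of the filtration, $\pi_{\rm soc}$ has $\VFil[\pi_{\rm soc},0]=\V$ and $\VFil[\pi_{\rm soc},\ell]=0$ for $\ell\ge1$. I would identify $\pi_{\rm soc}$ with $\V\otimes_{\F}\pi_0$ as follows.
\begin{enumerate}[(a)]
\item Use \Cref{thm:gen-by-inv}(ii) to see that, for each line $L$, the subrepresentation generated by $\ip^{-1}(L\otimes\DZeroFil[0])$ is a subrepresentation $\pi_L$. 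These $\pi_L$ are minimal among nonzero subrepresentations with $\VFil[-,\ell]=0$ for $\ell\ge 1$.
\item Show that $\pi_L\cong\pi_0$. The constituents of $\DZeroFil[0]$ are the Serre weights of $\pi_0^{K_1}$, and $\DZeroFil[0]$ should be isomorphic to $\pi_0^{K_1}$ by the $r=1$ case, i.e.\ \cite[Theorem~4.3.15]{BHHMS4} and its corollaries. Comparing diagrams, $\pi_L$ is a quotient of a compact induction with the same diagram as $\pi_0$. By \Cref{cor:fl}, $\pi_L$ has length at most $1$, so $\pi_L$ is irreducible, and then $\pi_L\cong\pi_0$.
\end{enumerate}
Then $\pi_{\rm soc}=\sum_L\pi_L\cong\V\otimes\pi_0$, by \Cref{cor:K1-direct-sum} and \Cref{thm:gen-by-inv}(ii).

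Conversely, any irreducible subrepresentation $\pi_1$ has some nonzero $\VFil[\pi_1,\ell]$. Since the $\VFil[\pi_1,\ell]$ are decreasing in $\ell$, this forces $\VFil[\pi_1,0]\ne0$. If $L\subseteq\VFil[\pi_1,0]$ is a line, then $\pi_L\subseteq\pi_1$ by \Cref{thm:gen-by-inv}(ii), so $\pi_1=\pi_L$ by irreducibility. Hence $\socF\pi=\V\otimes\pi_0$.

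\textbf{Cosocle.} Dually, \Cref{cor:tilde-involution} gives an order-reversing involution on the lattice of subrepresentations. Maximal proper subrepresentations therefore correspond to minimal nonzero ones, i.e.\ to the $\pi_L$. By \Cref{cor:tilde-involution}(ii), the corresponding quotient $\pi/\widetilde{\pi}_L$ satisfies $(\pi/\widetilde{\pi}_L)^\vee\otimes\chi\cong\E(\pi_L^\vee)$. Taking the kernel $\pi_{\rm rad}$ of $\pi\twoheadrightarrow\cosocF\pi$ to be $\widetilde{\pi_{\rm soc}}$, we get
\[
(\pi/\pi_{\rm rad})^\vee\otimes(\mytwist)\cong\E\big((\V\otimes\pi_0)^\vee\big)\cong\V^\vee\otimes\E(\pi_0^\vee).
\]
The known duality $\E(\pi_0^\vee)\cong\pi_f^\vee\otimes(\mytwist)$ for principal series (Kohlhaase) then gives $\cosocF\pi\cong\V\otimes\pi_f$.

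\textbf{The middle term.} Set $\pi'=\pi_{\rm rad}/\pi_{\rm soc}$. Since $\pi_{\rm rad}=\widetilde{\pi_{\rm soc}}$, the equality case of \eqref{eq:hype-mult-inequality} determines the dimensions of $\VFil[\pi_{\rm rad},\ell]$: they equal $r$ for $\ell\le f-1$ and $0$ for $\ell=f$. Then \Cref{cor:fl} gives
\[
\lgF\pi'\le\sum_{\ell=0}^{f}\big(\rFil[\pi_{\rm rad},\ell]-\rFil[\pi_{\rm soc},\ell]\big)=rf-r=r(f-1).
\]
For essential self-duality of $\pi'$, apply \Cref{cor:tilde-involution}(ii) with $\pi_0\to\pi_{\rm rad}$ and $\pi_1\to\pi_{\rm soc}$. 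This yields
\[
\E(\pi'^\vee)\cong\big(\widetilde{\pi_{\rm soc}}\big/\widetilde{\pi_{\rm rad}}\big)^\vee\otimes(\mytwist).
\]
Using \Cref{cor:tilde-involution}(iii), $\widetilde{\pi_{\rm rad}}=\widetilde{\widetilde{\pi_{\rm soc}}}=\gamma^{-1}(\pi_{\rm soc})$, and $\gamma$ identifies the two quotients. It remains to check that this identification is compatible with the $\GF$-action used in hypothesis~\ref{hypothesis:iii}.

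\textbf{Supersingularity.} A non-supersingular constituent of $\pi'$ would be a subquotient of a principal series, a character, or a special series. Its $K_1$-invariants would then contain Serre weights whose $\VFil$-bookkeeping forces them into $\DZeroFil[0]$ or into the top graded piece. Alternatively, one can argue via $\Dvee$-rank: principal series have $\dimFX\Dvee=1$, and the ranks $\binom f\ell$ of the pieces coming from $0<\ell<f$ are incompatible with this. \Cref{thm:gen-by-inv}(iii) forces each strict step of a composition series of $\pi'$ to have nonzero $\Dvee$, so characters are excluded. I expect this supersingularity step, together with rigorously identifying $\pi_L\cong\pi_0$ (rather than just matching $K_1$-invariants), to be the main obstacle. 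For the latter, I would try first to reduce it to the $r=1$ argument of \cite[Theorem~1.1.1(iii)]{BHHMS4} applied line by line.
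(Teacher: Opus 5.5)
The genuine gap is the identification of the socle. Your opening claim that $\VFil[\pi_{\mathrm{soc}},\ell]=0$ for $\ell\ge1$ (equivalently $\VFil[\pi_L,0]=L$ and $\VFil[\pi_L,\ell]=0$ for $\ell\ge1$) is unjustified. That $\DZeroFil[0]$ is the bottom step of the filtration says nothing about the $K_1$-invariants of the $\GF$-representation it generates; a priori these could meet $L'\otimes\DZeroFil[\ell]$ for other lines $L'$ or for $\ell\ge1$. Step (b) is therefore circular: the bound of \Cref{cor:fl} for $\pi_L$ is $\sum_\ell\rFil[\pi_L,\ell]$, which is exactly the unknown. ``Comparing diagrams'' with $\pi_0$ is not an argument.

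The missing idea is the Hecke operator.
\begin{enumerate}[(1)]
\item By \Cref{thm:srep-structure}, $\pi_L$ is already generated by $\ip^{-1}(L\otimes\sigma_0)$, so it is a quotient of $\cInd\sigma_0$.
\item Via \eqref{eq:Hecke-on-pi} one has $\Hom_{\GF}(\cInd\sigma_0,\pi)\cong\V$. Since $\ip$ is an isomorphism of diagrams, the explicit formula \eqref{eq:explicit-Hecke-action} shows that $T$ preserves every line of $\V$. Hence $T$ acts by a single scalar $\mu_0$.
\item $\mu_0\neq0$ by \cite[Lemma~5.14]{HW22}, using $\JH(\IndI\chi_{\sigma_0})\cap\Wr=\{\sigma_0\}$.
\item By \cite{BL94}, $\cInd\sigma_0/(T-\mu_0)\cong\IndBK\chi_0$ is an irreducible principal series, so $\pi_L\cong\IndBK\chi_0$ for every line $L$.
\item Only now can you compute $\VFil[\pi_L,\cdot]$, by matching the $2^f$ constituents of $(\IndBK\chi_0)^{K_1}$ with those of $L\otimes\DZeroFil[0]=L\otimes\Dssx[\sigma_0]$.
\end{enumerate}
Even then, the $K_1$-invariants with their $\GRZ$-action determine $\chi_0$ only up to an unramified twist $\mu\otimes\mu^{-1}$. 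That information sits in $\mu_0$, i.e.\ in the globally chosen parameters of $\Diagr$. So showing $\chi_0=\chi_2\otimes\chi_1\omega^{-1}$ needs a global input: the paper uses Emerton's ordinary part functor together with \cite[Proposition~7.4]{HW22}.

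Appealing to \cite{BHHMS4} ``line by line'' does not replace this. That result is about global representations with $r=1$, not about the subrepresentations $\pi_L$ of your $\pi$. The new input needed for $r>1$ is precisely that $T$ respects lines. This is also what makes all the $\pi_L$ isomorphic to one and the same principal series, whence $\socF\pi\cong\V\otimes\pi_0$.

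Once the socle is known, the rest of your outline is sound and mostly matches the paper. This covers the cosocle (via the order-reversing involution of \Cref{cor:tilde-involution} and \eqref{eq:PS-self-duality}), the bound $r(f-1)$ (via equality in \eqref{eq:hype-mult-inequality}), and self-duality of $\pi'$ (via \Cref{cor:tilde-involution}(ii)). For self-duality no extra compatibility check is needed: $\gamma$ is a $\GF$-automorphism, so $\gamma^{-1}(\socF\pi)=\socF\pi$ and $\widetilde{\pi_{\mathrm{rad}}}=\pi_{\mathrm{soc}}$ exactly.

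Your $\Dvee$-rank route to supersingularity differs from the paper's and does work. For a constituent $\overline{\pi}=\overline{\pi}_0/\overline{\pi}_1$ of $\pi'$, \Cref{cor:Wang} and exactness give $\dimFX\Dvee(\overline{\pi})=\sum_{0<\ell<f}(\rFil[\overline{\pi}_0,\ell]-\rFil[\overline{\pi}_1,\ell])\binom{f}{\ell}$. Some term is positive by \Cref{thm:gen-by-inv}(ii), so this is $\ge f\ge2$. However, you must supply a reference that characters, principal series and special series have $\dimFX\Dvee\le1$. The paper instead argues with $\GR$-socles: $\overline{\pi}^{K_1}$ contains $\F w\otimes\DZerogr[\ell_0]$ with $0<\ell_0<f$, so $\lgR(\socR\overline{\pi})\ge\binom{f}{\ell_0}\ge2$. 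Non-supersingular irreducibles have $\GR$-socle of length $1$, except principal series with socle $\socR\IndB 1$, which genericity excludes.
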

In the proof we will use
that $\pi$ is of the form 
\eqref{eq:piShi-dev},
i.e.\ our arguments are global.
Without a global input, 
we would not be able to show that the 
principal series occurring inside $\pi$
are precisely $\pi_0 $ and $\pi_f$.
\begin{proof}
Let $\sigma_0\in \Wr$
be the unique Serre weight of length $0$
in $\Wr$,
which by \Cref{def:J-and-ell}
is $\sigma_0 =(r_0, \dots, r_{f-1})
\otimes \det^{e(x_0 , \dots, x_{f-1})
(r_0,\dots,r_{f-1})}$,
and let $\chi_{\sigma_0} $ be the character
of $I$ acting on $\sigma_0^{I_1}$.
If $ \cInd$ denotes the compact induction from 
$\GRZ $ to $\GF $,
we start by studying the action on 
the Hecke algebra
$\End_{\GF}(\cInd \sigma_0 )$
on 
\begin{align}
  \label{eq:Hecke-on-pi}
  \Hom_{\GF}&(\cInd \sigma_0 , \pi)
  \xrightarrow{\sim} \Hom_{\GR}(\sigma_0 , \pi|_{\GR})
  = \Hom_{\GR}(\sigma_0 , \pi^{K_1})
  \\
  \nonumber
  &\xrightarrow{\sim} 
  \Hom_{H}(\sigma_0 ^{I_1}, \pi^{I_1})
  \xrightarrow[\sim]{\ip}
  \V \otimes_{\F}
  \Hom_{H}(\sigma_0 ^{I_1}, \DOne)
  \xrightarrow[\sim]{\id_{\V} \otimes \alphaI^{-1}} 
  \V
\end{align}
where the first isomorphism is given by
Frobenius reciprocity,
the second one follows from the fact that
$\sigma_0$ is generated by its $I_1$-invariants
and that $\sigma_0 ^{I_1 }$ only occurs 
in the $H$-subrepresentation 
$\socR (\pi)^{I_1} \subseteq \pi^{I_1} \cong 
\V \otimes_{\F} \DOne$
of $\pi^{I_1}$
(use that $\DOne$ is multiplicity free),
and where the last one is given by composition
with the $H$-equivariant isomorphism
\eqref{eq:v-iso-def},  using that
$\chi_{\sigma_0 } = \chi_{(x_0 , \dots, x_{f-1})}$.
Following the notation of the paragraph before
\cite[Lemma~3.5]{NewYork},
for $g \in \GF$ and $x \in \sigma_0 $
we let $[g, x] \in \cInd \sigma_0 $
be the unique element with support on $\GRZ g^{-1}$
which sends $g^{-1}$ to $x$.
Recall that, by \cite[Proposition~8]{BL94},
$\End_{\GF}(\cInd \sigma_0 ) \cong \F[T]$
is a polynomial algebra with generator $T$
and that, by \cite[Eqq.~(5),(8)]{NewYork}
and \cite[Lemma~3.10(i),(ii)]{NewYork},
the endomorphism $T$ sends 
$[g, v] \in \cInd \sigma_0$ to
\begin{equation}
  \label{eq:explicit-Hecke-action}
  T([g, v]) = 
  \sum_{a \in \Fres } 
  \left[ g 
  \left( \begin{smallmatrix}
  p & [a] \\ 0 & 1
  \end{smallmatrix} \right),
  \varphi \left( 
  \left( \begin{smallmatrix}
  1 & 0 \\ 0 & p^{-1}
  \end{smallmatrix} \right)
  \right)
  \left( \begin{smallmatrix}
  1 & -[a] \\ 0 & 1
  \end{smallmatrix} \right)
  v \right],
\end{equation}
with the convention $[0] = 0$.
We claim that $T$ 
acts on $\Hom_{\GF}(\cInd \sigma_0 , \pi)$,
(or equivalently on $\V$ via \eqref{eq:Hecke-on-pi})
by multiplication by a scalar.
Equivalently,
we claim that every line $L \subseteq \V$
is stable under the action of $T$.

If we pick $g = \id$,
$v_0 \defeq v_{(x_0 , \dots, x_{f-1})} 
\in \sigma_0^{I_1}$
(cf.\ the line before \eqref{eq:v-iso-def}),
then \eqref{eq:explicit-Hecke-action}
simplifies to
$T([1, v]) = 
\sum_{a \in \Fres } 
\left[ \left( \begin{smallmatrix}
p & [a] \\ 0 & 1
\end{smallmatrix} \right),
v_0  \right]$,
where we have used 
$ \left( \begin{smallmatrix}
1 & -[a] \\ 0 & 1
\end{smallmatrix} \right) \in I_1 $
and \cite[Lemma~3.10(iii)]{NewYork}.
Fix a nonzero $w \in \V$,
let $L \defeq \F w$,
and let
$h_w \colon \cInd \sigma_0 \to \pi$
be the unique $\GF$-equivariant map 
such that
$h_w \colon [\id, v_0 ]\mapsto \ip^{-1}(w \otimes v_0)$,
by \eqref{eq:Hecke-on-pi}.
Writing
$ \left( \begin{smallmatrix}
p & [a] \\ 0 & 1
\end{smallmatrix} \right)
= \left( \begin{smallmatrix}
[a] & 1 \\ 1 & 0
\end{smallmatrix} \right)
\Pi$,
we see that $T\cdot h_w$ sends
$[\id, v_0 ]$ to 
$\sum_{a \in \Fres} \left( \begin{smallmatrix}
[a] & 1 \\ 1 & 0
\end{smallmatrix} \right)
\Pi \ip^{-1}(w \otimes v_0)
\overset{\eqref{eq:iota-pi-diagram-global}}{=} 
\sum_{a \in \Fres} 
\ip^{-1}\left(
w \otimes
\left( \begin{smallmatrix}
[a] & 1 \\ 1 & 0
\end{smallmatrix} \right)
\Pi  v_0 \right)
\in \ip^{-1}(L \otimes \DZero)$,
so the unique $w' \in \V$
such that $T \cdot h_w = h_{w'}$
must belong to $L$.
It follows that the $\F[T]$-module
$\Hom_{\GF}(\cInd \sigma_0 , \pi)$
factors as a $\F[T]/(T- \mu_0)$-module
for a unique $\mu_0 \in \F$.

For each $\F$-vector subspace $W \subseteq \V$,
we let
\begin{equation}
	\label{eq:pi0-line}
	\pi_0(W) \defeq \left\langle \GF \cdot \iota_\pi^{-1}(W \otimes \sigma_0) \right\rangle
\hookrightarrow \pi,
\end{equation}
and we show that $\pi_0 (L)$
is an irreducible principal series
when $L = W \subseteq \V$ is a line.
By the previous paragraph,
the surjective $\GF$-linear morphism 
$\cInd \sigma_0 \twoheadrightarrow \pi_0(L)$
induced by Frobenius reciprocity from
$\sigma_0 \cong L \otimes \sigma_0
\xhookrightarrow{\iota_\pi^{-1}}
  \pi_0(L)$
factors through 
$\cInd \sigma_0/(T- \mu_0)$.
By \cite[Eq.~(57)]{Ber}
(applied to $\rep = \rss$)
we have
$\JH (\IndI \chi_{\sigma_0 }) \cap \Wss 
= \{\sigma_0 \}$,
and since
$\sigma_0 \in 
\JH (\IndI \chi_{\sigma_0 }) \cap \Wr$
we conclude that 
$
  \JH (\IndI \chi_{\sigma_0 }) \cap \Wr
  = \{\sigma_0 \}.
$
So, the hypotheses of \cite[Lemma~5.14]{HW22}
are satisfied, and
by the third paragraph of 
the proof of \emph{loc.\ cit.}\ 
we must have $\mu_0 \in \F^\times$.
By \cite[Theorem~30]{BL94},
$\cInd \sigma_0/(T- \mu_0)$
is irreducible and isomorphic to some
principal series 
$\cInd \sigma_0/(T- \mu_0) \cong 
\IndBK \chi_0,$
where $\chi_0: B(K) \to \F^{\times}$
is a smooth character.
Then,
\begin{equation}
	\label{eq:pi-L}
\Ind_{B(K)} ^{\GF}\chi_0 \cong 
	\cInd \sigma_0/(T- \mu_0) \xrightarrow{\sim}
\pi_0(L)
\end{equation}
is an isomorphism,
since $\pi_0(L)$ is generated by $ \sigma_0$.
Let $[H] \defeq 
\left\{ \left(\begin{smallmatrix}
[a] & 0 \\ 0 & [b]
\end{smallmatrix}\right)
\mid a, b \in \Fres \right\} 
\subseteq \GR$,
and recall the isomorphism
$(\IndBK \chi_0)^{K_1}
 \cong (\Ind _{B(\mathcal{O}_K)} ^{\GR} 
 \chi_0 | _{[H]}) ^{K_1 }
\cong \IndB (\chi_0 |_{[H]})$
of $\Gamma$-representations.
By construction of $\pi_0 (L)$
we see that 
$L \otimes_{\F} \sigma_0 
\subseteq \ip(\socR (\pi_0 (L)))$,
in particular 
$L \otimes_{\F} \DZero \cap \ip(\pi_0 (L)^{K_1 })
\neq 0$,
and it follows from
\Cref{thm:srep-structure}
that
$L \subseteq \VFil[\pi_0(L), 0]$
(in the notation of \eqref{eq:VFil-rFil-def}),
or equivalently that
$L \otimes_{\F}\DZeroFil[0]
\subseteq \ip(\pi_0 (L)^{K_1 })$.
We claim that this inclusion is an equality.

First, we show that
$\DZeroFil[0] = \DssZeroEll[0]$.
We have
$\DZeroFil[0] = \DZeroEll[0]
\subseteq \DssZeroEll[0]$
by the line after 
\eqref{eq:DZero-filtration-definition},
on the other hand,
the proof of \cite[Proposition~5.2]{Hu16}
shows that
$\DZerogr[0]$ is 
the largest subrepresentation of $\Dx[\sigma_0]$
such that $\JH(\DZerogr[0]) \cap \Wss = \{\sigma_0\}$
Unwinding the definition \eqref{eq:diagr-from-I},
this is equivalent to showing that
$\ell(\sigma_0 ,\tau) = 
\ell(\rss, \tau) < \infty$
implies
$\JH(I(\sigma_0 , \tau)) \cap \Wss 
= \{\sigma_0 \}$,
which follows from
\cite[Lemma~12.8]{BP12}
(applied to $\rho = \rss$).

By the $\Gamma$-equivariant isomorphism
$(\IndBK \chi_0)^{K_1}
\cong \IndB (\chi_0 |_{[H]})$
after \eqref{eq:pi-L},
we see that
$\ip(\pi_0 (L)^{K_1 })$
is a $\Gamma$-representation
with $2^f$ Jordan-H\"older constituents
(cf.\ the paragraph
of \eqref{eq:pi-L} 
and the paragraph
of \eqref{eq:P-bijection-1}).
On the other hand, it contains
$L \otimes_{\F}\DZeroFil[0]
  = L \otimes_{\F} \DssZeroEll[0] =
L \otimes_{\F} \Dssx[\sigma_0]$,
and by \cite[Remark~14.9]{BP12}
we know that 
$\Dssx[\sigma_0]
\cong \IndB \chi_{\sigma_0 }^s$,
which is a $\Gamma$-representation
with $2^f$ Jordan-H\"older constituents.
This shows that
$L \otimes_{\F}\DZeroFil[0]
\subseteq \ip(\pi_0 (L)^{K_1 })$
is an equality
(and moreover that
$\chi_{\sigma_0 }^s \cong \chi_0 |_{[H]}$
as $H$-characters).
By
\Cref{thm:srep-structure},
this implies that
\[
  \VFil[\pi_0 (L),\ell] = 
\begin{cases}
L & \text{if }\ell = 0, \\
0 &  \text{otherwise.}
\end{cases}
\]

From the previous paragraph it follows that 
the $\GF$-equivariant map
$i_0 \colon \V \otimes_{\F} \pi_0 
\cong 
\V \otimes_{\F} \cInd \sigma_0 /(T - \mu_0)
\to \pi$
induced from Frobenius reciprocity
restricts to an isomorphism
$i_0 \colon L \otimes_{\F} \pi_0 
\xrightarrow{\sim} \pi_0 (L)$
for every line $L \subseteq \V$,
in particular $i_0 $ is injective
and $i_0 (W \otimes_{\F} \pi_0 ) = \pi_0 (W)$
for every $\F$-vector subspace $W \subseteq \V$.
We prove that
that $ \pi_0 (\V) = \socF \pi$
by showing that 
every irreducible subrepresentation
$\pi'$ of $\pi$
is contained inside $\im (i_0 ) = \pi_0 (\V)$.
Indeed, it follows from
\Cref{thm:gen-by-inv}(ii)
that $\pi'$
contains the nonzero subrepresentation
$\pi_0 (\VFil[\pi',0])$,
hence $\pi' =  \pi_0 (\VFil[\pi',0]) \subseteq 
\pi_0 (\V)$
(and moreover $\VFil[\pi',0]$ is a line).

We now determine the form of $\chi_0$.
Recall the \emph{ordinary part} functor
$\Ord_B$
defined by Emerton in \cite{Eme},
from the category of smooth representations
of $\GF$ over $\F$
to the category of smooth representations
of $T_{2}(K)$ over $\F$.
We proceed as in the proof of 
\cite[Proposition~10.8]{HW22}:
by \cite[Proposition~5.1(i)]{HW22}
and using $\IndBK \chi_0 =
\Ind_{\overline{B}(K)}^{\GF} \chi_0 ^s$,
where $\overline{B} \subseteq \GL_2 $
denotes the algebraic subgroup of
lower-triangular matrices,
it is enough to show that 
$\Ord_B (\pi)$ is isotypical
of $T_2 (K)$-eigencharacter
$\chi_1 \omega^{-1} \otimes \chi_2
= (\chi_2 \otimes \chi_1 \omega^{-1})^s$,
which follows from
\cite[Proposition~7.4]{HW22}
applied to $\mathbb{W} = W(\F)$.

Recall that
$\pi_f \defeq 
\IndBK (\chi_1 \otimes \chi_2 \omega^{-1})$.
By \cite[Lemma~10.7]{HW22},
which is based on
\cite[Proposition~5.4]{Koh17},
for $i \in \{0, f\}$ we have
\begin{equation}
	\label{eq:PS-self-duality}	
	\pi_i^{\vee} \otimes(\mytwist) 
    \cong \E({\pi_{f-i} ^\vee }).
\end{equation}
Arguing as in the proof of 
\cite[Corollary~3.92]{BHHMS2}, 
one computes the $\GR$-socle of $\pi_f$
to be $\sigma_f$.
Let $\widetilde{\pi}_0 (\V) \subseteq \pi$
be the subrepresentation 
of \Cref{eq:conjugate-def}
applied to $\pi_1  = \pi_0 (\V)$.
We show that 
$\pi/ \widetilde{\pi}_0 (\V) = \cosocF \pi$,
or equivalently 
that for every subrepresentation 
$\pi' \subseteq \pi $
such that $\pi'' \defeq \pi/\pi'$ is irreducible
we have $\widetilde{\pi}_0 (\V) \subseteq \pi'$.
By \Cref{cor:tilde-involution}(ii),(iii)
this is equivalent to showing that
$\widetilde{\pi}' \subseteq 
\widetilde{\widetilde{\pi}_0 (\V)}
= \pi_0 (\V)$,
which follows from
$\pi_0 (\V) = \socF \pi$.

Let $\pi' \defeq 
\ker(\pi/\pi_0 (\V) \twoheadrightarrow 
\V \otimes_{\F} \pi_f)
\cong  \ker(\pi \twoheadrightarrow 
\V \otimes_{\F} \pi_f)/\pi_0 (\V)
= \widetilde{\pi}_0 (\V)/\pi_0 (\V)$,
where the last equality follows 
from the previous paragraph.
By \Cref{cor:fl}, $\pi'$
has $\GF$-length bounded above by
$r \cdot (f -1 )$.
Then, \Cref{cor:tilde-involution}(ii)
applied to 
$(\pi_0, \pi_1, \pi_2,
\widetilde{\pi}_0, \widetilde{\pi}_1, \widetilde{\pi}_2) =
(\widetilde{\pi}_0 (\V), \pi_0 (\V), \pi',
\pi_0 (\V), \widetilde{\pi}_0 (\V), \pi')$
provides an isomorphism
$\E (\pi^{\prime \vee }) \cong 
\pi^{\prime \vee} \otimes (\mytwist)$
of $\Lambda$-modules with compatible actions
of $\GF$,
i.e.\ it shows that $\pi^{\prime \vee }$
is essentially self-dual.

To show that every Jordan-H\"older factor 
$\overline{\pi}$ of $\pi'$
is supersingular,
write $\overline{\pi} =
\overline{\pi}_0 / \overline{\pi}_1$
for two subrepresentations
$\overline{\pi}_1 \subsetneq \overline{\pi}_0 $
of $\pi$ such that
$\pi_0 (\V) \subseteq \overline{\pi}_1 $
and 
$\overline{\pi}_0  \subseteq \widetilde{\pi}_0 (\V)$.
By \Cref{thm:gen-by-inv}(ii),
there exists $\ell \in \{0, \dots, f\}$
such that $\VFil[\overline{\pi}_1 ,\ell] \subsetneq 
\VFil[\overline{\pi}_0 ,\ell]$.
and since $\pi_0 (\V) \subseteq \overline{\pi}_1 $
we have 
$\V = \VFil[\overline{\pi}_1 ,0] = 
\VFil[\overline{\pi}_0 ,0]$,
so necessarily $\ell >0$.
Since 
$\overline{\pi}_0  \subseteq \widetilde{\pi}_0 (\V)$
we have 
$0 = \VFil[\overline{\pi}_1 , f] = 
\VFil[\overline{\pi}_0 , f]$,
so necessarily $\ell < f$.
Pick a nonzero $w \in 
\VFil[\overline{\pi}_0 ,\ell] \setminus
\VFil[\overline{\pi}_1 ,\ell]$,
and let $0 < \ell_0  \le \ell < f$ 
be the smallest integer 
such that
$w \notin \VFil[\overline{\pi}_1 ,\ell_0 ]$.
By \Cref{thm:srep-structure},
there exists a $\GR$-equivariant injection
$\F w \otimes_{\F} \DZerogr[\ell_0 ]
\hookrightarrow 
 \overline{\pi}_0^{K_1 } 
/ \overline{\pi}_1^{K_1 }
\subseteq 
\overline{\pi}^{K_1}$.
By \eqref{eq:socle-DZerogr},
we obtain the inequality
$\lgR (\socR \overline{\pi}^{K_1}) \ge 
|\Wss_{\ell_0 }| =  \binom{f}{\ell_0}$,
which is at least two
since $0 < \ell_0 < f$.

Suppose for the sake of contradiction
that $\overline{\pi}$ is not supersingular.
Then, by the characterisation of 
smooth irreducible representations
given in \cite[Theorem~33]{BL94},
$\overline{\pi}$ is isomorphic to
a character, a principal series,
or a special series.
Consider
$\lg_{\GR} ( \socR \overline{\pi})$:
we know that it is $1$ in all cases except
for a principal series 
$\Ind_{B(K)} ^{\GF} (\theta_1 \otimes \theta_2)$
for two characters $\theta_1 , \theta_2 \colon 
K ^{\times} \to \F ^{\times}$
such that  $\theta_1 | _{\mathcal{O}_K ^{\times}} = 
\theta_2| _{\mathcal{O}_K ^{\times}}$,
in which case it is $2$
(cf.\ for example \cite[Corollary~2.38]{AWS25}).
In the latter case we would have
$\socR \overline{\pi}^{K_1} = \socR \IndB 1
\subseteq \Wss$,
which contradicts the 
$0$-genericity of $\rep$.
This concludes the proof.
\end{proof}

\section{Global results}
  \label{sec:global}
In this section we recall our global setup
(which coincides with the ``indefinite
case'' 
of \cite[§2.6]{BHHMS4}) in more detail,
and then prove
\Cref{thm:global-part-fl},
which states that the representation
$\pi$ of \eqref{eq:piShi-def}
has finite length.

We fix a totally real number field $F$, with
ring of integers $\mathcal{O}_F$,
and we let $S_p$ denote the set of places above $p$.
We assume that $F$ is unramified
at $S_p$.
For a place $w$, we denote by $F_w$
the completion of $F$ at $w$, and
by $\mathcal{O}_{F_w} $ its ring of integers.
We let 
$\Frob_w$
be a geometric Frobenius element at $w$.
We fix a quaternion algebra
$D$, with centre $F$, which is split at all places
above $p$ and at most one infinite place,
and we let $S_D$  denote the set of places 
where $D$ ramifies.
Consequently, if $\mathcal{O}_D$
is a fixed maximal order in $D$,
we can also fix isomorphisms
$(\mathcal{O}_D)_w \defeq
\mathcal{O}_D \otimes_{\mathcal{O}_F}\mathcal{O}_{F_w}
\xrightarrow{\sim} \MR[F_w]$
for $w \notin S_D$.

We fix a continuous representation
$\repgl \colon
\aGal[F] \to \GL_2(\F)$,
and we set 
$\repgl_w \defeq \repgl|_{\aGal[F_w]} $
for a finite place $w$ of $F$.
Let $S_{\repgl} $ be the set of places
where $\repgl$ ramifies. 
Then, we make the following assumptions 
on $\repgl$:
\begin{itemize}
\item the restriction
$\repgl|_{\Gal(\overline{F}/F(\mu_p))} $
is absolutely irreducible;
\item for all places $w \in  S_p$,
$\repgl_w$ is $0$-generic, 
in particular $S_p \subseteq S_{\repgl} $;
\item for all places
$w \in (S_D \cup S_{\repgl} ) \setminus 
 S_p$,
the universal framed deformation ring
of $\repgl_w$ is formally smooth over 
$W(\F)$.
\end{itemize}
If $D$ splits at exactly one infinite place
(the ``indefinite case''), 
we make the following choices.

We denote by $\mathbb{A}_F^{ \infty }$
the ring of finite adèles of $F$.
Given a compact open subgroup $V$ of 
$(D \otimes_{F}\mathbb{A}_F^{\infty})^{\times}$
(which should not be confused
with the $\F$ vector space $\V = \F^r$), 
then we can consider the associated smooth
projective Shimura curve $X_{V}$, 
which is defined over $F$, with the conventions of
\cite[§3.1]{BD14}.
We choose:
\begin{enumerate}[(i)]
\item
	\label{condition:place-ii}
 a finite set $S$ of finite places of $F$
such that: 
\begin{enumerate}
\item we have
$ S_D \cup S_{\repgl} \subseteq S$
\item for all $w \in S \setminus S_p$, 
the framed deformation ring 
$R_{\repgl_w ^\vee } $
of $\repgl_w ^\vee $ is formally smooth
over $W(\F)$;
\end{enumerate}
\item 
	\label{condition:place-iii}
compact open subgroups
$V_w$ of 
$(\mathcal{O}_D)_w ^{\times}$,
as $w$ runs over the set of places of $F$,
such that if we set
$V \defeq \prod_{w} V_w$,
then we have:
\begin{enumerate}
\item an equality
$V_w = (\mathcal{O}_D)_w^{\times}$
for $w \notin S $;
\item an inclusion
$V_w \subseteq 1+ p\MR[F_w]$
and
$V_w \unlhd  (\mathcal{O}_D)_w^{\times}$
for $w \in S_p$;
\item 
	\label{condition:place-iii-b}
the nonvanishing of
\begin{equation}
	\label{eq:pre-pi-nonvanishing}
 \Hom_{\aGal[F]}(\repgl, 
 H^{1}_{ \text{ét}}(X_{V} \times_{F} \overline{F}, \F) ) \neq 0.
\end{equation}
\end{enumerate}
\end{enumerate}

If $D$ splits at no infinite places 
(the ``definite'' case)
we make the same choices as 
\ref{condition:place-ii} and
\ref{condition:place-iii} above,
replacing \eqref{eq:pre-pi-nonvanishing}
by the condition $S(V,\F)[\mgl] \neq 0$,
where
$S(V, \F) \defeq 
\{f \colon D^{\times} \backslash\!
(D \otimes_{F}\mathbb{A}_F^{\infty })^{\times}
/V \to \F\}$,
and where $\mgl$ is generated by 
$T_w-S_w \tr(\repgl(\Frob_w))$,
$\Norm(w)-S_w \det(\repgl(\Frob_w))$,
for every $w \notin S$ such that
$V_w = (\mathcal{O}_D)_w^{\times}$,
with $T_w$, $S_w$ acting on $S(V,\F)$
via right translation on functions by
$V \left( \begin{smallmatrix}
\varpi_w & 0 \\
0 & 1 \\
\end{smallmatrix} \right)V$,
$V \left( \begin{smallmatrix}
\varpi_w & 0 \\
0 & \varpi_w \\
\end{smallmatrix} \right)V$
respectively (where $\varpi_w$
is any choosen uniformiser of $F_w$).

For convenience, set
\begin{equation}
	\label{eq:small-global-pi-def}
\pi(V) \defeq \begin{cases}
\Hom_{\Gal(\overline{F}/F)}\left(\overline{r}, H^1_{ \text{ét}} (X_{V} \times_{F}\overline{F}, \F)\right)
 & \text{in the indefinite case,} \\
S(V, \F)[\mgl]
 & \text{in the definite case,} \\
\end{cases}
\end{equation}
so that \ref{condition:place-iii}(c)
becomes $\pi(V) \neq 0$.

\begin{remark}
	\label{rmk:gl-conditions-for-free}
Up to enlarging $S$, we can always assume that
\ref{condition:place-iii}(a) is satisfied
(this, of course, makes 
\ref{condition:place-ii}(b) more restrictive).
Recall that,
by \cite[Remark~2.6.1]{BHHMS4},
we need not assume the existence 
of a finite place $w_1$
satisfying conditions
(ii)(a), (ii)(b), (ii)(c)
of \cite[\S 8.1]{BHHMS1}.
\end{remark}

Now, fix a finite place $v \in  S_p$,
and let $K \defeq F_v$,
$\rep \defeq  \repgl^\vee$,
which we assume to be non-semisimple.
If $V^{v} \defeq \prod _{w \neq v} V_w$,
then we want to study
the admissible smooth representation
of $\GF$ over $\F$
\begin{equation}
	\label{eq:global-pi-def}
\pi(V^{v}) \defeq \begin{cases}
\varinjlim_{V_v} \pi(V^{v}V_v)
 & \text{in the indefinite case,} \\
\varinjlim_{V_v} \pi(V^{v}V_v)
 & \text{in the definite case,} \\
\end{cases}
\end{equation}
where the limit is taken over all
compact open subgroups 
$V_v \unlhd (\mathcal{O}_D)_v^{\times}
\cong \GR$
contained inside
$1+ p \MR$.
It follows from \eqref{eq:pre-pi-nonvanishing}
that \eqref{eq:global-pi-def} is nonzero.

If we let
$\psi \colon \aGal[F] \to W(\F)^{\times}$
be the Teichm\"uller lift
of $\det (\repgl) \omega$,
and denote by $\psi_w$ its restriction to
$\aGal(F_w)$,
and by $\overline{\psi}_w$ 
the reduction modulo $p$ of $\psi_w$,
then notice that \eqref{eq:global-pi-def}
has central character 
$\overline{\psi}_v^{-1} = \mytwist$.

\subsection{The diagram associated to \texorpdfstring{$\pi$}{pi}}
  \label{sec:global-diagram}
The main result of this section 
is \Cref{thm:diag} below,
where we compute the diagram
$ (\pi^{I_1} \hookrightarrow \pi^{K_1 })$
associated to $\pi$,
using global methods.
We start by recalling patching functors 
in the setting of \cite[\S 8]{BHHMS1},
except here $\rep$ is assumed to be non-semisimple.
Throughout this section
we assume that $\rep$ is $12$-generic.

Set $\mathcal{O} \defeq W (\F)$,
$E \defeq \mathcal{O} [1/p]$,
and fix a tame inertial type $\tau_w$ of $F_w$
and a $\GR[F_w]$-invariant lattice
$\sigma^{0} (\tau_w^\vee )$
in $\sigma (\tau_w^\vee ) = \sigma (\tau_w)^\vee$
for each $w \in S_p \setminus \{v\}$.
As in \cite[\S 6]{EGS} we can define a 
\emph{patching functor} $\Moo$ 
from the category of continuous representations
$\sigma_v$ of $\GR[F_v]$
on finite type $\mathcal{O}$-modules 
with central character 
$\psi|_{I_{F_v}}^{-1} \circ
\Art_{F_v}|_{\mathcal{O}_{F_v} ^{\times}}$
(where $\Art_{F_v}$ is the Artin map,
normalised so that uniformisers
are sent to geometric Frobenius elements)
to the category of finite type $\Roo$-modules,
where $\Roo \cong 
R^{\psi_v}_{\repgl_v} [\![X_1 , \dots, X_g]\!] $
for some positive integer $g$,
and where 
$R^{\psi_v}_{\repgl_v}$
denotes the framed deformation ring of $\repgl_v$
with fixed determinant $\varepsilon^{-1} \psi_v$.
We denote by $\moo$ the maximal ideal of $\Roo$.
Moreover, for a tame inertial type $\tau$
we define $\Roo^{(1, 0), \tau} \defeq 
\Roo \otimes_{R_{\repgl_v^\vee }}
R_{\repgl_v^\vee }^{(1, 0), \tau}$,
where
$R_{\repgl_v^\vee }^{(1, 0), \tau}$
parametrises potentially Barsotti-Tate
deformations of $\repgl_v^\vee $.
Let $K \defeq F_v$ and 
recall from \Cref{sec:preliminaries}
that $\Gamma =  \Gres$.
Let $\mK$ be the  maximal ideal of 
$\F [\![K_1/Z_1]\!]$,
and set 
$\Gt \defeq \F [\![\GR/Z_1]\!]/\mK^{2}$.
For a Serre weight $\sigma$,
we let $P_\sigma \defeq \ProjG \sigma$
be the projective envelope of $\sigma$
in the category of $\F [\Gamma]$-modules
and $\widetilde{P}_\sigma$
be the projective $\mathcal{O} [\Gamma]$-module
lifting $P_\sigma$.
The following results on freeness
for patching functors,
which are due to Yitong Wang,
will be crucial in the proof of 
\Cref{thm:diag}.
\begin{proposition}[{\cite[Propositions~6.1]{Wang23}}]
  \label{prop:Wang-freeness}
There exists an integer $r \ge 1$ such that 
\begin{enumerate}[(i)]
\item 
for any $\sigma \in \Wr$
the module $M_\infty (\sigma)$
is free of rank $r$ 
over its scheme-theoretic support,
which is formally smooth over $\F$ 
(hence a domain);

\item 
for any tame inertial type $\tau$
such that $\JH (\overline{\sigma (\tau)}) \cap \Wr 
\neq \emptyset $
and any $\GR$-invariant
$W(\F)$-lattice $\sigma^0 (\tau)$
in $\sigma (\tau)$ 
with irreducible cosocle,
the module $M_\infty (\sigma^0 (\tau))$
is free of rank $r$ 
over its scheme-theoretic support
$R_\infty^{ (1,0), \tau}$,
which is a domain;

\item 
for any $\sigma \in \Wr$ the modules 
$M_\infty (\widetilde{P}_\sigma)$ and
$M_\infty (P_\sigma)$ 
are free of rank $r$ 
over their respective scheme-theoretic support.
\end{enumerate}
\end{proposition}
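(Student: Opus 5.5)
This statement is quoted from \cite[Proposition~6.1]{Wang23}, so the plan is to verify that the hypotheses of \emph{loc.\ cit.}\ hold in our patching setup rather than to reprove it. The argument follows the usual Taylor--Wiles--Kisin strategy together with a Diamond--Fujiwara style freeness criterion.

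\textbf{Part (ii).} Let $\tau$ be a tame type with $\JH(\overline{\sigma(\tau)})\cap\Wr\neq\emptyset$. I will use the following inputs:
\begin{itemize}
\item Under the genericity assumptions, $R_\infty^{(1,0),\tau}$ is a normal (indeed regular or at least Cohen--Macaulay) domain. This comes from the explicit presentations of potentially Barsotti--Tate deformation rings in the generic range.
\item $\Moo(\sigma^0(\tau))$ is maximal Cohen--Macaulay over its support. This follows from the depth estimates built into the patching construction.
\end{itemize}
Given these, freeness over the support reduces, by the Auslander--Buchsbaum formula, to checking freeness after reduction modulo $\moo$. I would then compute the fibre dimension $\dim_{\F}\Moo(\sigma^0(\tau))/\moo$. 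The lattice has irreducible cosocle, and the relevant Jordan--H\"older constituents in $\Wr$ are controlled combinatorially. The fibre dimension therefore equals $r\cdot$(number of generators), where
\[
r \defeq \dim_{\F}\Hom_{\GR}(\sigma,\pi)\quad\text{for }\sigma\in\Wr .
\]
This $r$ is independent of $\sigma$ by weight cycling and the multiplicity-one-type statement of \cite[Theorem~6.3(ii)]{Wang23}.

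\textbf{Part (i).} I will deduce (i) from (ii). For $\sigma\in\Wr$, choose a type $\tau$ such that $\sigma$ is the cosocle of a lattice whose reduction meets $\Wr$ only in $\sigma$ and a few controlled weights. Exactness of $\Moo$, together with the fact that the special fibres of $R_\infty^{(1,0),\tau}$ have irreducible components indexed by such weights, identifies $\Moo(\sigma)$ with a free rank $r$ module over the corresponding formally smooth component.

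\textbf{Part (iii).} For $\widetilde P_\sigma$, I would filter it by lattices in $\sigma(\tau)$ for the tame types $\tau$ containing $\sigma$. The support of $\Moo(\widetilde P_\sigma)$ is then the union of the corresponding components, and its reduction gives the statement for $P_\sigma$. To get freeness of rank $r$, I will compute $\Moo(\widetilde P_\sigma)/\moo$ as $\Hom_{\GR}(P_\sigma,\pi)^\vee$. By projectivity of $P_\sigma$ this has dimension $[\pi^{K_1}:\sigma]=r$, which shows the module is cyclic of rank $r$ over its support.

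\textbf{Main obstacle.} The hardest step is (iii). The support there is not a domain, so showing that the module is free, rather than merely generically free of rank $r$, requires a Cohen--Macaulayness argument for the union of components plus an explicit control of Gorensteinness. This is exactly where Wang's explicit description of the multi-type deformation rings is needed, and I would rely on it directly.
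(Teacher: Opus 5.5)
The arguments you sketch do not establish the statement: the inputs that carry the weight are circular, and one step misuses Auslander--Buchsbaum.

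In (iii), the identity $\dim_{\F}M_\infty(\widetilde P_\sigma)/\mathfrak m_\infty=\dim_{\F}\Hom_{\GR}(P_\sigma,\pi)=[\pi^{K_1}:\sigma]$ is fine. But $[\pi^{K_1}:\sigma]=r$ is not available to you. It is an \emph{output} of the freeness of $M_\infty(\widetilde P_\sigma)$, obtained by exactly this count. In \cite{Wang23} the multiplicity statements of Theorem~6.3, including $\pi^{K_1}\cong\V\otimes_{\F}\DZero$, are derived from Proposition~6.1. For the same reason you cannot define $r$ as $\dim_{\F}\Hom_{\GR}(\sigma,\pi)$ and take its independence of $\sigma$ from Theorem~6.3(ii).

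Two smaller points on (iii). An $r$-dimensional fibre means $r$ generators, not ``cyclic''. Moreover, if you really had $r$ generators and generic rank $r$ on every component of a reduced support, freeness would be immediate: the kernel of $S^r\twoheadrightarrow M$ sits inside $S^r$ and vanishes at every minimal prime. The Gorenstein-type input you call the main obstacle is needed precisely because this generator count is not known beforehand.

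In (ii), $R_\infty^{(1,0),\tau}$ is in general a Cohen--Macaulay domain but not regular. So maximal Cohen--Macaulayness does not give freeness. What you need is generic rank $r$ together with $\dim_{\F}M_\infty(\sigma^0(\tau))/\mathfrak m_\infty=r$. That fibre computation, where the irreducible cosocle is used, is the real content, and your claim that the fibre dimension is ``$r\cdot$(number of generators)'' does not supply it. The constancy of $r$ on $\Wr$ should come from comparing generic ranks through tame types, using (as you do in (i)) that the special fibre of $R_\infty^{(1,0),\tau}$ has components indexed by $\JH(\overline{\sigma(\tau)})\cap\Wr$. It should not come from Theorem~6.3.

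The paper reproves none of this. It quotes (i), (ii) and the $\widetilde P_\sigma$ half of (iii) from \cite[Proposition~6.1]{Wang23}. The only thing it argues is the $P_\sigma$ half, which \emph{loc.\ cit.}\ does not state; this is the step you compress into ``its reduction''. Since $P_\sigma=\widetilde P_\sigma/p\widetilde P_\sigma$ and $M_\infty$ is exact, $M_\infty(P_\sigma)=M_\infty(\widetilde P_\sigma)/pM_\infty(\widetilde P_\sigma)$. If $M_\infty(\widetilde P_\sigma)\cong S^r$ over its support $S$, then $M_\infty(P_\sigma)\cong(S/pS)^r$. Its annihilator is (the preimage of) $pS$, so it is free of rank $r$ over its support $S/pS$.

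If you rely on the citation, this is all that needs to be written. If you want a self-contained proof, the circular inputs above must be replaced by the actual arguments of \emph{loc.\ cit.}
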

To be precise, 
\cite[Proposition~6.1(iii)]{Wang23}
only treats the case $M_\infty (\widetilde{P}_\sigma)$
in \Cref{prop:Wang-freeness}(iii).
For the case $M_\infty (P_\sigma)$ 
we remark that
$\widetilde{P}_\sigma / p \widetilde{P}_\sigma 
= P_\sigma$
by construction,
so by exactness of $\Moo(-)$ 
(cf.\ \cite[Definition~6.1.3]{EGS})
we have
$\Moo(P_\sigma) = 
\Moo(\widetilde{P}_\sigma )/
p \Moo (\widetilde{P}_\sigma)$.

Let $\pi$ be the $\GF$-representation 
of \eqref{def:global-small-pi-def} below,
and assume that it satisfies 
one of the equivalent conditions of
\eqref{eq:small-pi-and-modular-weights} below.
Another crucial result of Yitong Wang
is the following theorem on the $K_1 $-invariants
of $\pi$.
\begin{theorem}[{Consequence of 
\cite[Theorem~6.3(ii)]{Wang23}}]
There is an isomorphism 
$\pi^{K_1} \cong \V \otimes_{\F}\DZero$
of $\GR$-representations.
\end{theorem}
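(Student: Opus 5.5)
The plan is to follow the approach of \cite[Theorem~6.3(ii)]{Wang23} (and, in the semisimple case, \cite[Proposition~3.2.1]{Ber}). We first identify $\pi^{K_1}$ with the dual of a patched module, and then use the freeness statements of \Cref{prop:Wang-freeness} to reduce to the case $r=1$, where the structure is known.

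\textbf{Step 1: patched description.} By the standard patching formalism (cf.\ \cite[\S 6]{EGS}, \cite[\S 8]{BHHMS1}), for any finite-dimensional $\F[\Gamma]$-module $W$ with the correct central character there is a functorial isomorphism
\[
\Hom_{\GR}(W,\pi)^\vee\cong \Moo(W)/\moo\Moo(W).
\]
Applying this to $W=P_\sigma$ for $\sigma\in\Wr$, we get
\[
\Hom_{\Gamma}(P_\sigma,\pi^{K_1})^\vee\cong \Moo(P_\sigma)/\moo .
\]
\Cref{prop:Wang-freeness}(iii) says that $\Moo(P_\sigma)$ is free of rank $r$ over its scheme-theoretic support $S_\sigma$. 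Hence this fibre is $\left(S_\sigma/\moo S_\sigma\right)^{\oplus r}$. For Serre weights $\sigma\notin\Wr$, the weight part of Serre's conjecture gives that $\Moo(\sigma)$, and hence the corresponding contribution, vanishes.

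\textbf{Step 2: comparison with $r=1$.} The support $S_\sigma$, together with the action of $\End_\Gamma(P_\sigma)$ on $\Moo(P_\sigma)$, is independent of $r$: it is determined by the deformation rings $R_\infty^{(1,0),\tau}$ for the types $\tau$ that appear, and these supports are domains by \Cref{prop:Wang-freeness}(i),(ii). We then write $\Moo(P_\sigma)\cong S_\sigma\otimes_{\F}\V$, with $\End_\Gamma(P_\sigma)$ acting only through $S_\sigma$. This yields, functorially in $\sigma$ and compatibly with the maps $P_{\sigma'}\to P_\sigma$,
\[
\Hom_\Gamma(P_\sigma,\pi^{K_1})\cong \V\otimes_\F \Hom_\Gamma(P_\sigma,\pi_{r=1}^{K_1}),
\]
where $\pi_{r=1}$ denotes the minimal-multiplicity case. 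In that case we know $\pi_{r=1}^{K_1}\cong\DZero$ by \cite[Theorem~1.3]{DL21} (see also \cite{BHHMS2}). Since a finite-dimensional $\Gamma$-module $M$ is recovered from the functor $\Hom_\Gamma(P_{(-)},M)$ on projectives, restricted to the Serre weights that occur in $M$, we conclude that $\pi^{K_1}\cong\V\otimes_\F\DZero$.

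\textbf{Main obstacle.} The hard step is justifying the decomposition $\Moo(P_\sigma)\cong S_\sigma\otimes\V$ \emph{compatibly} with all homomorphisms $P_{\sigma'}\to P_\sigma$. Abstract freeness of each module separately does not by itself imply that the transition maps are $\mathrm{id}_\V\otimes(\text{map for } r=1)$.

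To handle this, I would first treat lattices $\sigma^0(\tau)$ with irreducible cosocle. On these, $\Moo$ is free of rank $r$ over the domain $R_\infty^{(1,0),\tau}$. Any $\Roo$-linear map between two such modules with the same support is then an $r\times r$ matrix with entries in that domain. A generic-fibre argument, namely that each such module becomes an $r$-dimensional vector space after inverting $p$ and the map is determined there, forces these matrices to be scalar multiples of a fixed map. Finally, I would glue the lattices along their common Serre weights, using that $\DZero$ is multiplicity free, in order to produce a single identification $\V$ shared by all $\sigma$.
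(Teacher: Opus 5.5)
Your argument has a genuine gap at the reconstruction step. To recover $\pi^{K_1}$ from $\sigma\mapsto\Hom_\Gamma(P_\sigma,\pi^{K_1})$, you need this functor on $P_\sigma$ for \emph{all} $\sigma\in\JH(\DZero)$. Your only input, \Cref{prop:Wang-freeness}(iii), concerns $\sigma\in\Wr$. Moreover, your assertion that for $\sigma\notin\Wr$ ``the corresponding contribution vanishes'' is false. The vanishing $\Moo(\sigma)=0$ only says that $\sigma$ does not occur in $\socR\pi$. On the other hand, $\dim_{\F}\Hom_\Gamma(P_\sigma,\pi^{K_1})=[\pi^{K_1}:\sigma]$, and this must equal $r$ for every $\sigma\in\JH(\DZero)\setminus\Wr$ if the statement holds; these are most of the constituents of $\DZero$. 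In fact, the data you do control cannot see beyond the socle. Each $\sigma\in\Wr$ occurs in the multiplicity-free $\DZero$ exactly once, namely in $\socR\DZero$. So the inclusion $\V\otimes_{\F}\socR\DZero\subseteq\V\otimes_{\F}\DZero$ induces isomorphisms on $\Hom_\Gamma(P_\sigma,-)$ for all $\sigma\in\Wr$, naturally in the projectives. Hence even a perfect identification with the $r=1$ case on $\{P_\sigma\}_{\sigma\in\Wr}$ gives nothing beyond $\socR\pi\cong\V\otimes_{\F}\socR\DZero$.

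Your treatment of the ``main obstacle'' also fails as stated. After inverting $p$, $\Moo(\sigma^0(\tau))$ is a free module of rank $r$ over $R_\infty^{(1,0),\tau}[1/p]$, not an $r$-dimensional vector space. Maps induced by $\Gamma$-maps between lattices in the same $\sigma(\tau)$ are indeed scalar on this generic fibre. However, the integral lattices $\Moo(\sigma^0_i(\tau))$ inside it, although each is free, need not all have the form $L_i\otimes_{\F}\V$ for one common decomposition; think of $\mathrm{diag}(x,y)S^{2}\subseteq S^{2}$ over a domain $S$. Forcing this would require freeness of the cokernels $\Moo(\sigma^0_j(\tau)/\sigma^0_i(\tau))$, which is not among your inputs. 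You also tacitly assume a global $r=1$ situation with the same local data.

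No such compatibility is actually needed. Your Step 1 gives $\socR\pi\cong\V\otimes_{\F}\socR\DZero$ and $[\pi^{K_1}:\sigma]=r$ for $\sigma\in\Wr$, so no weight of $\Wr$ occurs in $\pi^{K_1}/\socR\pi^{K_1}$. The maximality characterisation of $\DZero$ in \cite[Proposition~13.1]{BP12}, applied componentwise inside $\operatorname{Inj}_\Gamma(\socR\pi^{K_1})$, then yields an embedding $\pi^{K_1}\hookrightarrow\V\otimes_{\F}\DZero$. Equality amounts to the lower bounds $[\pi^{K_1}:\tau]\ge r$ for $\tau\in\JH(\DZero)\setminus\Wr$. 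This is exactly where patched information on non-modular weights has to enter, e.g.\ lattices in tame types via \Cref{prop:Wang-freeness}(ii). The paper itself does not reprove any of this; it imports the statement from \cite[Theorem~6.3(ii)]{Wang23}.
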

For the rest of the section,
we fix once and for all
such an isomorphism
$\ip \colon  \pi^{K_1} \xrightarrow{\sim} 
\V \otimes_{\F}\DZero$.
Recall that the diagram $\Diagr$
is only well defined
up to the choice of certain parameters,
namely the scalars $\nu_i$
defined in the second paragraph of
\cite[§6]{Bre11}.
The following theorem 
states that there is a unique choice of $\nu_i$
that makes
\eqref{eq:iota-pi-diagram-global} 
below possible,
generalising
\cite[Theorem~1.3]{DL21} 
to the case $r \ge 1$.
\begin{theorem}
  \label{thm:diag}
Keep the above notation
and recall that $\V \defeq \F^r$,
where $r$ is the integer 
of \Cref{prop:Wang-freeness}.
Then, 
there exists a diagram
$\Diagr = (\DOne \hookrightarrow \DZero)$
as in \Cref{eq:diag-def},
only depending on $\rep$,
such that the $\GR$-equivariant isomorphism $\ip$ 
promotes to an isomorphism of diagram
\begin{equation}
  \label{eq:iota-pi-diagram-global}
\ip \colon 
(\pi^{I_1 } \hookrightarrow \pi^{K_1}) 
\xrightarrow{\sim}
\V \otimes_{\F} \Diagr
\end{equation}
\end{theorem}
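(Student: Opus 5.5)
The plan is to reduce the statement to the known case $r=1$ of \cite[Theorem~1.3]{DL21}, together with the $\Pi$-action on $\pi^{I_1}$. The $\GR$-equivariant isomorphism $\ip\colon\pi^{K_1}\xrightarrow{\sim}\V\otimes_{\F}\DZero$ restricts to an $H$-equivariant isomorphism $\pi^{I_1}\cong\V\otimes_{\F}\DOne$. Since $\DOne$ is multiplicity free with $H$-eigenvectors $v_\mu$, $\mu\in\mathscr{P}$, and $\Pi$ sends the $\chi_\mu$-isotypic part to the $\chi_\mu^s$-isotypic part, transporting $\Pi$ through $\ip$ yields for each $\mu$ a linear map $A_\mu\in\GL(\V)$ with $\ip(\Pi\,\ip^{-1}(w\otimes v_\mu))=A_\mu(w)\otimes v_{\mu^{[s]}}$. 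Here $\mu^{[s]}$ denotes the element of $\mathscr{P}$ corresponding to $\chi_\mu^s$.

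The theorem amounts to showing that there exist scalars $c_\mu\in\F^\times$ and a choice of $\ip$ (up to an automorphism of $\V\otimes_{\F}\DZero$) with $A_\mu=c_\mu\id_{\V}$. The $c_\mu$ will then define the $\Pi$-action on $\DOne$, i.e.\ the parameters $\nu_i$, and these will depend only on $\rep$. I would proceed in three steps.

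\textbf{Step 1 (lines are respected).} I would first show that each $A_\mu$ preserves every line $L\subseteq\V$, so that $A_\mu$ is a scalar $c_\mu$. Following the semisimple argument of \cite[Theorem~1.8]{BHHMS2}, I would use the patching functor $\Moo$. Freeness of rank $r$ of $\Moo(\sigma)$, $\Moo(P_\sigma)$ and $\Moo(\sigma^0(\tau))$ over their domain supports (\Cref{prop:Wang-freeness}) identifies $\Hom_{\GR}(\sigma,\pi)$, and more generally the relevant Hom spaces out of $\Gt\otimes P_\sigma$ and out of the lattices $\sigma^0(\tau)$, with $\F^r\otimes(\text{rank-one object})$. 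The action of $\Pi$ relates two $I$-eigenvectors lying in $\overline{\sigma^0(\tau)}$ for a suitable tame type $\tau$, one whose $\JH$ meets $\Wr$. That relation is computed inside $\Hom(\sigma^0(\tau),\pi)\otimes\F$, and by freeness this space is $\V\otimes(\text{a cyclic module over a domain})$. Hence the relation is $\id_{\V}\otimes(\text{scalar})$, which gives $A_\mu=c_\mu\id_{\V}$. The scalar is the same for every line because it is determined by the rank-one (multiplicity-one) computation.

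\textbf{Step 2 (identifying the scalars).} Restricting to a single line $L=\F w$ gives a sub-diagram $(\ip^{-1}(L\otimes\DOne)\hookrightarrow\ip^{-1}(L\otimes\DZero))$ with $\Pi$-action given by the $c_\mu$. The patched module $\Moo/(\text{rank}\ r\to 1)$ behaves exactly as in the multiplicity-one setting of \cite{DL21}. I would argue that the argument of \cite[Theorem~1.3]{DL21} only uses the rank-one freeness statements, applied linewise. It therefore forces the $c_\mu$ to be the parameters determined by $\rep$. Call the resulting diagram $\Diagr$; with it, $\ip$ is an isomorphism of diagrams. The remaining compatibility, namely $K^\times$ acting by the central character $\mytwist$, holds because $\pi$ has this central character.

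\textbf{Main obstacle.} I expect Step 1 to be the hardest part. The difficulty is making precise that the comparison of the two eigenvectors in $\overline{\sigma^0(\tau)}$ really happens inside a module that is free over a domain, so that no nonscalar matrix in $\GL_r$ can arise. If one works only with $\Moo(P_\sigma)$, reductions modulo $\moo$ may mix lines. I would therefore first try to phrase everything through $\Hom_{\Roo}(\Moo(\sigma^0(\tau)),\ldots)$, where freeness over the domain $R_\infty^{(1,0),\tau}$ forces the relevant endomorphisms of $\Moo(\sigma^0(\tau))\otimes k(\moo)=\V\otimes(\cdot)$ to be $\id_{\V}\otimes$ scalar.
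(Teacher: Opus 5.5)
There is a genuine gap in Step~1, and Step~2 inherits it.

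\textbf{Step~1 targets the wrong statement.} Each $\Dx$ is multiplicity free with socle $\sigma$, so $\operatorname{Aut}_{\GR}(\V\otimes_{\F}\DZero)=\prod_{\sigma\in\Wr}\GL(\V)$. Replacing $\ip$ by $(g_\sigma)_\sigma\circ\ip$ turns $A_\mu$ into $g_{\sigma'}A_\mu g_\sigma^{-1}$, where $\chi_\mu$ occurs in $\DOnex$ and $\chi_\mu^s$ occurs in $\DOnex[\sigma']$. So for a given $\ip$ the individual $A_\mu$ need not be scalar. You say a choice of $\ip$ is needed, but Step~1 never makes one, so no argument there can prove each $A_\mu$ is scalar.

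The intrinsic content lies in closed loops. Normalise with the operators $R$ built from the $S_{i(\chi)}$ of \cite[Def.~4.1]{DL21}. Then the $\Pi$-action on $\pi^{I_1}[\chi]$ is encoded by an isomorphism $\overline{h}_\chi\colon \Moo(\sigma(R\chi^s))/\moo\xrightarrow{\sim}\Moo(\sigma(R\chi))/\moo$ between $r$-dimensional spaces attached to \emph{different} weights. The key statement (\Cref{prop:full-circle}) is this: for every cycle $\chi_0,\dots,\chi_{k-1}$ with $R\chi_i^s=R\chi_{i+1}$ and $R\chi_{k-1}^s=R\chi_0$, the composite of the $\overline{h}_{\chi_i}$ is a scalar independent of $r$ and of $\Moo$. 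Only with this can you choose $\ip$, by propagating identifications $\Hom_{\GR}(\sigma,\pi)\cong\V$ along these cycles, so that $\Pi$ becomes $\id_{\V}\otimes(\cdot)$. This is how the paper runs the proof of \cite[Thm.~3.93]{BHHMS2}.

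\textbf{The proposed mechanism for scalarity fails.} Freeness of $\Moo(\sigma^0(\tau))$ over the domain $R_\infty^{(1,0),\tau}$ does not force $R_\infty$-linear maps between such modules, or their reductions modulo $\moo$, to be $\id_{\V}\otimes$ scalar. The endomorphisms of a free rank-$r$ module form $\mathrm{M}_r(R_\infty^{(1,0),\tau})$. Scalarity can only come from composites induced by ring elements, for instance a closed chain of lattice maps composing to a power of $p$. That is why it holds around full cycles and not edge by edge.

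You also skip how $\Pi$, which is not in $\GR$, is seen by $\Moo$ at all. Functoriality on $\Hom_{\GR}(\sigma^0(\tau),\pi)$ does not see it. This is exactly the construction of $\overline{h}_\chi$ (dual to $g_\chi$) as in \cite[Prop.~4.14]{DL21}, which only needs the spaces involved to have the same dimension.

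\textbf{Step~2 relies on an object that does not exist.} It appeals to a ``rank-one'' patched module attached to a line $L\subseteq\V$. No decomposition of $\Moo$ induces the line decomposition of $\V$. In the paper, the identification with the parameters of \cite{DL21} comes instead from the cycle scalars being independent of $r$ and $\Moo$, hence equal to those of \cite[Eq.~(34)]{DL21}.
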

\begin{remark}
  \label{rmk:ss-case-known}
The diagram $\Diagr$ of \Cref{thm:diag}
coincides with the diagram 
$\mathcal{D}(\pi _{\mathrm{glob}}(\rep))$
of \cite[Theorem~1.3]{DL21}.
\end{remark}

For $\rep$ semisimple
\Cref{thm:diag} is proven in
\cite[Theorem~3.93]{BHHMS2},
we broadly follow their line of argument.

Following \cite[Definition~4.1]{DL21},
we define 
$R \colon \pi^{I_1 } \to (\socR \pi)^{I_1 }$ 
as the unique $\F$-linear map
such that 
its restriction $R [\chi]$ to 
the $\chi$-eigenspace
$\pi^{I_1 } [\chi]$
is nonzero whenever 
$\pi^{I_1 } [\chi]$ is nonzero,
and moreover given by some $S_{i(\chi)}$
for the (uniquely defined) integer
$0 \le i(\chi) < p^f -1$
such that $S_{i(\chi)} \pi^{I_1 } [\chi] \neq 0$ 
when $\chi$ does not occur in 
$(\socR \pi)^{I_1}$
(cf.\ \cite[\S 2.1]{DL21} for the definition
of $S_{i(\chi)}$),
and by $R[\chi] = \id$
when $\chi$ occurs in 
$(\socR \pi)^{I_1}$.
We can similarly define
$R \colon \DOne \to \socR (\socR\DZero)^{I_1 }$,
and this definition
makes the following diagram commute
\[
\begin{tikzcd}
\pi^{I_1 }  
\ar[r, "\ip", "\sim"']\ar[d, "R"'] & 
\V \otimes_{\F} \DOne
\ar[d, "\id_{\V} \otimes R"] \\
(\socR \pi)^{I_1 }
\ar[r, "\ip"', "\sim"] & 
\V \otimes_{\F} (\socR\DZero)^{I_1 }.
\end{tikzcd}
\]
Since $\DOne$ is multiplicity free, 
for $\chi \colon I \to \F ^{\times}$
a character occurring in $\DOne$, 
we denote by $R \chi$
the character of $I$ on
$R (\DOne [\chi])$.
Moreover, if $\chi$ occurs in 
$(\socR\DZero)^{I_1}$,
then we let $\sigma (\chi) \subseteq \socR \DZero$
be the unique Serre weight such that 
$I$ acts by $\chi$ on $\sigma(\chi)^{I_1 }$.
We define as in \cite[Proposition~4.14]{DL21}
an isomorphism 
\[
\overline{h}_\chi \colon
\Moo (\sigma (R \chi^s))/\moo
\xrightarrow{\sim}
\Moo (\sigma (R \chi))/\moo,
\]
which is moreover dual to $g_\chi$
in \emph{loc.\ cit.}
The proof of \emph{loc.\ cit.}\ carries through
in our case, we only need to replace
``one-dimensional by Theorem~4.6''
by 
``of the same dimension by \Cref{prop:Wang-freeness}''.
Before proving
\Cref{thm:diag}
we will need the following proposition,
which generalises \cite[Proposition~3.99]{BHHMS2}
for $\rep$ non-semisimple.
\begin{proposition}
  \label{prop:full-circle}
Let $k \ge 1$ and let 
$\chi_0, \dots, \chi_{k-1}$
be arbitrary characters of $I$ 
which occur in $\pi^{I_1 }$
(or equivalently on $\DOne$)
such that $R \chi_i^s = R \chi_{i+1}$
for $i \in \{0, \dots, k-2\}$,
and $R \chi_{k-1}^s = R \chi_{0}$.
Then, the isomorphism
\[
\overline{h}_{\chi_1 } \circ 
\overline{h}_{\chi_2 } \circ \dots
\overline{h}_{\chi_{k-1} } \circ 
\overline{h}_{\chi_0 }
\colon 
M_\infty (\sigma (R \chi_0^s))/\moo
\xrightarrow{\sim}
M_\infty (\sigma (R \chi_0^s))/\moo
\]
is the multiplication by a scalar in $\F ^{\times}$
which depends neither on $r$ nor on $M_\infty $.
In particular, 
this scalar is the same as in 
\cite[Eq.~(34)]{DL21}.
\end{proposition}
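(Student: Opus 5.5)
The plan is to reduce Proposition~\ref{prop:full-circle} to the case $r=1$ of \cite[Eq.~(34)]{DL21}, using the fact that each $\overline{h}_\chi$ is defined functorially from maps of $\GR$-representations. Following \cite[Proposition~4.14]{DL21}, the map $\overline{h}_\chi$ is obtained as follows. We take the $\mathcal{O}$-lattice $\sigma^0(\tau)$ in a tame type $\sigma(\tau)$ (with irreducible cosocle) whose reduction contains $\sigma(R\chi^s)$ and $\sigma(R\chi)$. We then apply $\Moo$ to the natural maps relating the lattices with cosocle $\sigma(R\chi^s)$ and $\sigma(R\chi)$ (saturated inclusions, scaled by suitable powers of $p$), and finally reduce modulo $\moo$. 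Consequently the composite $\overline{h}_{\chi_1}\circ\cdots\circ\overline{h}_{\chi_0}$ is $\Moo$ of a composite of explicit lattice maps, reduced mod $\moo$.

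First, I would observe that this composite of lattice maps is an endomorphism of a single lattice $\sigma^0(\tau_0)$ with cosocle $\sigma(R\chi_0^s)$, up to the identification of the cycle. Following the proof of \cite[Proposition~4.14, Eq.~(34)]{DL21}, this endomorphism is $\mathcal{O}$-linear and $\GR$-equivariant. Since $\sigma(\tau_0)$ is absolutely irreducible, it is scalar multiplication by some $c\in \mathcal{O}$, and $c$ is a unit after the normalisations of \emph{loc.\ cit.} The key point is that $c$ is computed purely at the level of $\GR$-lattices inside tame types, which depend only on $\rep$ and not on $\Moo$ or $r$. Second, by functoriality of $\Moo$, the composite of the $\overline{h}_{\chi_i}$ is then $\Moo(c\cdot\id)/\moo = \overline{c}\cdot\id$ on $\Moo(\sigma(R\chi_0^s))/\moo$, where $\overline{c}\in\F^\times$ is the reduction of $c$. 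This gives both the scalar claim and the independence from $r$ and $\Moo$. It also identifies $\overline{c}$ with the constant of \cite[Eq.~(34)]{DL21}, since that constant arises from the same lattice computation with $r=1$.

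The main obstacle is checking that in \emph{loc.\ cit.} the scalar really does come from a lattice-level endomorphism, and is not extracted using one-dimensionality of $\Moo(\sigma)/\moo$. In \cite{DL21} they use that $\Moo(\sigma^0(\tau))$ is free of rank one over $\Roo^{(1,0),\tau}$ to say that the relevant maps are isomorphisms mod $\moo$ and to compare them. Here I would instead invoke Proposition~\ref{prop:Wang-freeness}(ii): $\Moo(\sigma^0(\tau))$ is free of rank $r$ over the domain $\Roo^{(1,0),\tau}$. Inverting $p$ then makes the lattice maps isomorphisms, and their reductions are the $\overline{h}_\chi$ (isomorphisms by the argument preceding the proposition). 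If some step in \emph{loc.\ cit.} computes a ratio of generators, I would replace it by comparing endomorphisms of the free module of rank $r$. Any $\Roo^{(1,0),\tau}$-linear endomorphism induced by $c\cdot\id$ on the lattice is $c$ times the identity, so the comparison goes through verbatim.
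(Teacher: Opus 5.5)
Your argument has a genuine gap at its central step. The cycle composite is not $\Moo$ of a $\GR$-equivariant endomorphism of a single lattice, and in fact the maps you describe do not even produce the $\overline{h}_\chi$.

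Let $\sigma^0_1,\sigma^0_2$ be lattices in $\sigma(\tau)$ with distinct irreducible cosocles $\sigma(R\chi^s),\sigma(R\chi)\in W(\rep)$, and let $\iota\colon\sigma^0_1\to\sigma^0_2$ be a saturated $p$-power multiple of the inclusion. The chain of consequences is:
\begin{enumerate}[(a)]
\item $\iota$ is not surjective, since otherwise the two lattices would be homothetic and have the same cosocle.
\item By Nakayama the reduction of $\iota$ is not surjective either, so it lands in $\rad(\overline{\sigma^0_2})$. Hence the composite $\Moo(\sigma^0_1)\to\Moo(\sigma^0_2)\to\Moo(\sigma(R\chi))$ is zero.
\item The map $\Moo(\sigma^0_2)/\moo\to\Moo(\sigma(R\chi))/\moo$ is surjective by exactness of $\Moo$. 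Both sides have dimension $r$ by \Cref{prop:Wang-freeness}(i),(ii), so it is an isomorphism.
\item Therefore $\Moo(\iota)$ is zero modulo $\moo$.
\end{enumerate}
Becoming invertible after inverting $p$ says nothing about the reduction modulo $\moo$. To get an isomorphism you must use the structure of $\Moo(\sigma^0(\tau))$ as a module over $\Roo^{(1,0),\tau}$ (for instance, dividing by a non-unit of that ring). No $\GR$-equivariant map carries this input.

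There is a second problem. The $\overline{h}_{\chi_i}$ are in general built in different tame types, since each type must contain both $\sigma(R\chi_i^s)$ and $\sigma(R\chi_i)$. Consecutive maps are glued through the weight modules via the identifications $\Moo(\sigma^0)/\moo\cong\Moo(\cosoc\sigma^0)/\moo$. These are induced by the non-invertible surjections $\sigma^0\twoheadrightarrow\cosoc\sigma^0$, and going around the cycle requires inverting them. So there is no lattice endomorphism from which a scalar could be extracted using absolute irreducibility of $\sigma(\tau_0)$.

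For $r=1$ the scalar property is automatic because the spaces are one-dimensional, and the work in \cite{DL21} lies in computing the scalar. For $r>1$ the real content is that an automorphism of an $r$-dimensional space is scalar, and equal to the $r=1$ scalar. The paper obtains this as follows:
\begin{enumerate}[(a)]
\item It reruns the proof of \cite[Proposition~3.99]{BHHMS2}, which is already written for arbitrary multiplicity in the semisimple case, with \Cref{prop:Wang-freeness}(ii) in place of \cite[Theorem~3.94(iii)]{BHHMS2}.
\item It checks that the auxiliary Lemma~3.97 and Proposition~3.98 invoked there do not use semisimplicity of $\rep$. The only issue is an appeal to \cite[Proposition~8.2.3]{BHHMS1}, whose proof works in general, as in \cite[Proposition~6.1(ii)]{Wang23}.
\end{enumerate}
You were right that \Cref{prop:Wang-freeness}(ii) is the input replacing rank-one freeness. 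But it has to be fed into that argument, not into a lattice-level scalar computation.
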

\begin{proof}
We adapt the proof of
\cite[Proposition~3.99]{BHHMS2}
to the case where $\rep$ is not assumed to be semisimple.
The argument there carries through \emph{verbatim}, 
except \cite[Theorem~3.94(iii)]{BHHMS2}
should always be replaced by
\Cref{prop:Wang-freeness}(ii).
We should also point out that 
\cite[Lemma~3.97]{BHHMS1} and
\cite[Proposition~3.98]{BHHMS1}
(which are used in the proof of 
\cite[Proposition~3.99]{BHHMS2})
also hold for
$\rep$ not necessarily semisimple.

Indeed, the proof of
\cite[Lemma~3.97]{BHHMS1}
carries through \emph{verbatim},
except when the proof of 
\cite[Proposition~8.2.3]{BHHMS1}
(which assumed $\rep$ semisimple)
is invoked.
However, the assumption $\rep$ semisimple 
is not needed in the proof of
\cite[Proposition~8.2.3]{BHHMS1},
cf.\ for example the proof of
\cite[Proposition~6.1(ii)]{Wang23}.
As for \cite[Lemma~3.98]{BHHMS1},
the assumption $\rep$ semisimple 
is never used in the proof.
\end{proof}

We can now prove \Cref{thm:diag}.
\begin{proof}[Proof of {\Cref{thm:diag}}]
The proof of 
\cite[Theorem~3.93]{BHHMS2}
goes through \emph{verbatim},
except in the second-to-last paragraph,
where 
``Proposition~3.99''
is replaced by 
``\Cref{prop:full-circle}'',
and where
``by (the analogue of) [\textbf{DL21},\ Prop.4.14]''
is replaced by ``by the paragraph before
\Cref{prop:full-circle}''.
\end{proof}

\subsection{Finite length}
  \label{sec:global-fl}
We conclude with the main result of this section,
and of the article.
\begin{theorem}
	\label{thm:global-part-fl}
Suppose that $\rep$ is
$\max \{12,2f+1\}$-generic.
The representation $\pi(V^{v})$ of
\eqref{eq:global-pi-def}
has finite length as a 
$\GF$-representation. 
\end{theorem}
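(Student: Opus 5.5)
The plan is to proceed by dévissage, following the strategy of \cite[Corollary~8.4.6]{BHHMS1} as outlined in the introduction. The goal is to reduce the finite length of $\pi(V^{v})$ to that of the representations $\pi(V^{v},\sigma_p^{v})$ of \eqref{eq:piShi-dev}. Each of these satisfies hypotheses \ref{hypothesis:i} to \ref{hypothesis:iv} of \Cref{sec:hypotheses}, and so has finite length by \Cref{cor:fl}.

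First I would exploit that the groups $V_w$ for $w\in S_p\setminus\{v\}$ are normal in $(\mathcal{O}_D)_w^\times$ and contained in $1+p\MR[F_w]$. Consequently $\pi(V^{v})$ carries a smooth action of the finite group $G\defeq\prod_{w\in S_p\setminus\{v\}}\GR[F_w]/V_w$ commuting with $\GF$. Filtering $\pi(V^{v})$ by the socle filtration for this $G$-action, which has finitely many steps since $\F[G]$ is finite-dimensional, gives a finite filtration by $\GF\times G$-stable subspaces. Each graded piece is a direct sum of $\sigma_p^{v}$-isotypic parts, and the $\sigma_p^v$-isotypic part identifies with $\sigma_p^{v}\otimes_{\F}\Hom_{G}(\sigma_p^{v},-)$. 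It therefore suffices to show that each $\Hom_G(\sigma_p^v,\mathrm{gr}_i)$ has finite $\GF$-length. By left exactness of $\Hom_G(\sigma_p^v,-)$ together with an induction on the socle layers, each such space embeds $\GF$-equivariantly into a subquotient of $\Hom_G(\sigma_p^v,\pi(V^v))$. Up to replacing $V^v$ by $V^v$ with $V_w$ enlarged to $(\mathcal{O}_D)_w^\times$, this last space is $\pi(V^{v},\sigma_p^{v})$. The finite length of $\pi(V^v,\sigma_p^v)$ is inherited by subquotients, so this settles each graded piece. Only finitely many $\sigma_p^v$ occur, so the whole $\pi(V^{v})$ has finite length.

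Second, I would verify hypotheses \ref{hypothesis:i}–\ref{hypothesis:iv} for $\pi=\pi(V^{v},\sigma_p^{v})$ when it is nonzero. Hypothesis \ref{hypothesis:i} is \Cref{thm:diag}, with $r$ given by \Cref{prop:Wang-freeness}. Hypothesis \ref{hypothesis:ii} comes from \cite[Proposition~6.4.6]{BHHMS1} and \cite[Theorem~6.3(ii)]{Wang23}, hypothesis \ref{hypothesis:iii} from \cite[Theorem~8.2]{HW22} and \cite[Theorem~6.3(i)]{Wang23}, and hypothesis \ref{hypothesis:iv} from \cite[Proposition~2.6.3]{BHHMS4}. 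The patching setup of \Cref{sec:global-diagram} is needed for these, with $\sigma^0(\tau_w^\vee)$ replaced by the weights $\sigma_w$; the genericity $\max\{12,2f+1\}$ covers all the required bounds. Admissibility and the central character are standard.

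The main obstacle is the first step. One must check that the multiplicity-space functor really lands in a module of the form \eqref{eq:piShi-dev}, possibly after changing the auxiliary level, and that the patching results, in particular \Cref{prop:Wang-freeness} and \cite[Theorem~6.3]{Wang23}, apply to these twisted coefficient systems so that $r$ is well defined for each $\sigma_p^v$. If an equality with \eqref{eq:piShi-dev} is unavailable, I would instead realise each graded piece as a $\GF$-submodule of a finite direct sum of copies of $\pi(V^v,\sigma_p^v)$. Since finite length passes to submodules, this weaker conclusion suffices.
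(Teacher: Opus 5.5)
\textbf{There is a genuine gap in your first step.} You claim that $\Hom_G(\sigma_p^{v},\mathrm{gr}_i)$ embeds into a subquotient of $\Hom_G(\sigma_p^{v},\pi(V^{v}))$. This is false, and left exactness points the other way. Applying $\Hom_G(\sigma_p^{v},-)$ to $0\to\soc_{i-1}\to\soc_i\to\mathrm{gr}_i\to 0$ only gives an injection $\Hom_G(\sigma_p^{v},\soc_i)/\Hom_G(\sigma_p^{v},\soc_{i-1})\hookrightarrow\Hom_G(\sigma_p^{v},\mathrm{gr}_i)$, which bounds the wrong side.

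Worse, since $\sigma_p^{v}$ is irreducible, every $G$-map $\sigma_p^{v}\to\pi(V^{v})$ lands in the $G$-socle. So $\Hom_G(\sigma_p^{v},\pi(V^{v}))=\pi(V^{v},\sigma_p^{v})$ only sees $\soc_1$. For instance, let $0\to\tau\to E\to\sigma\to 0$ be a nonsplit extension of $G$-modules with $\tau\not\cong\sigma$ irreducible, and let $X$ be any $\GF$-representation. Then $M=E\otimes_{\F}X$ has $\Hom_G(\sigma,M)=0$ but $\Hom_G(\sigma,\mathrm{gr}_2M)\cong X$.

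This is exactly what happens with non-modular weights. If some $\sigma_w\notin W(\repgl_w^\vee)$, then $\pi(V^{v},\sigma_p^{v})=0$ by \eqref{eq:small-pi-and-modular-weights}. Yet such $\sigma_p^{v}$ can sit in higher $G$-socle layers of $\pi(V^{v})$; compare $\pi^{K_1}\cong\V\otimes_{\F}\DZero$ at $v$, whose socle is modular but whose higher layers are not. Your argument gives no control over these layers. Your fallback, embedding each graded piece into finitely many copies of the $\pi(V^{v},\sigma_p^{v})$, is the right kind of statement, but you give no mechanism for it.

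\textbf{The missing idea, which is how the paper argues, is to do the dévissage in the \emph{first} variable.} By Frobenius reciprocity,
$\pi(V^{v})\cong\varinjlim_{V_v}\Hom_{U^{v}/V^{v}}(\bigotimes_{w\in S_p\setminus\{v\}}(\Ind_{V_w}^{\GR[F_w]}1)_Z,\pi(V^{v}V_v))$, which is essentially $\Hom_G(\F[G],\pi(V^{v}))$. A short exact sequence $0\to A\to B\to C\to 0$ of $G$-modules gives $0\to\Hom_G(C,\pi)\to\Hom_G(B,\pi)\to\Hom_G(A,\pi)$. Hence $\lg_{\GF}\Hom_G(B,\pi)\le\lg_{\GF}\Hom_G(A,\pi)+\lg_{\GF}\Hom_G(C,\pi)$. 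Running this along a composition series bounds $\lg_{\GF}\pi(V^{v})$ by a finite sum of terms $\lg_{\GF}\pi(V^{v},\sigma_p^{v})$, with non-modular $\sigma_p^{v}$ contributing zero.

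Alternatively, your socle filtration can be rescued. Set $\mathfrak{J}\defeq\rad\F[G]$; then $\soc_i=\{m\mid\mathfrak{J}^{i}m=0\}$. The map $m\mapsto(x\mapsto xm)$ gives a $\GF$-equivariant injection $\soc_i/\soc_{i-1}\hookrightarrow\Hom_{\F}(\mathfrak{J}^{i-1}/\mathfrak{J}^{i},\soc_1)$. The target is a finite direct sum of copies of $\soc_1\cong\bigoplus_{\sigma}\sigma\otimes_{\F}\pi(V^{v},\sigma)$, so each graded piece has finite length.

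Your second step, verifying hypotheses \ref{hypothesis:i}--\ref{hypothesis:iv} via \Cref{thm:compilation}, is fine.
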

Notice that for a smaller
 $V^{\prime v} \subseteq V^{v}$
we have 
$\pi(V^v) \subseteq \pi(V^{\prime v})$.
In particular, if $\pi(V^{\prime v})$ has finite length,
then so does $\pi(V^v)$,
and so we can always assume that
\ref{condition:place-iii}(b) 
is satisfied.

To prove this theorem, 
we consider a larger auxiliary 
prime-to-$v$ level
$U^{v} \defeq  \prod _{w \neq v} U_w$, where 
\begin{equation}
	\label{eq:U-lvl-def}
\begin{cases}
U_w = V_w & \text{for } w \notin S_p, \\
U_w = (\mathcal{O}_D)_w^{\times}
\cong \GR[F_w]
 &  \text{for } w \in S_p \setminus \{v\}.
\end{cases}
\end{equation}
By \ref{condition:place-iii}(b),
$U^{v}$ normalises $V^{v}$.
We also consider
an auxiliary
$\GF$-representation:
for a choice
of a Serre weight $\sigma_w$ of $\GR[F_w])$
for each $w \in S_p \setminus \{v\} $,
set 
$\sigma_p^{v} \defeq 
\bigotimes_{w \in S_p \setminus \{v\} } \sigma_w $,
and define $\pi(V^{v},\sigma_p^{v})$ to be
\begin{equation}
	\label{def:global-small-pi-def}
\pi(V^{v},\sigma_p^{v}) \defeq 
\varinjlim _{V_v} \Hom_{U^{v}/V^{v}}
\left(\sigma_p^{v}, 
\pi(V^{v}V_v)
\right).
\end{equation}
Then, we have the following result,
 coming from \cite[§5.5]{GK14}:
\begin{equation}
	\label{eq:small-pi-and-modular-weights}
\pi(V^{v},\sigma_p^{v}) \neq 0
\iff \sigma_w \in W(\overline{r}_w ^\vee )
\ 
\forall w \in S_p \setminus \{v\} ,
\end{equation}
where $W(\overline{r}_w ^\vee )$
is defined as in 
\cite[§3]{BDJ10}.

Assume that \eqref{def:global-small-pi-def}
satisfies \eqref{eq:small-pi-and-modular-weights}.
Here is a compilation of results 
concerning the representation 
\eqref{def:global-small-pi-def}.
\begin{theorem}
	\label{thm:compilation}
For $w \in  S_p \setminus \{v\}$,
we let $\sigma_w$ be a Serre weight in
$W(\repgl_w^\vee)$,
and set
$\sigma_p^{v} \defeq 
\bigotimes_{w \in S_p \setminus \{v\} } \sigma_w $.
Assume that $\rep$ is $12$-generic.
\begin{enumerate}[(i)]
\item 
(\Cref{thm:diag})
the hypothesis \ref{hypothesis:i}
of \Cref{sec:hypotheses}
holds for
$\pi(V^{v}, \sigma_p^{v})$;
	\item 
(\cite[Proposition~6.4.6]{BHHMS1} and \cite[Theorem~6.3(ii)]{Wang23})
the hypothesis \ref{hypothesis:ii}
of \Cref{sec:hypotheses}
 holds for
$\pi(V^{v}, \sigma_p^{v})$;
	\item
	(\cite[Theorem~8.2]{HW22} 
and \cite[Theorem~6.3(i)]{Wang23})
the hypothesis \ref{hypothesis:iii}
of \Cref{sec:hypotheses}
holds for
$\pi(V^{v}, \sigma_p^{v})$;
\item 
(\cite[Proposition~2.6.3]{BHHMS4})
the hypothesis \ref{hypothesis:iv}
of \Cref{sec:hypotheses}
 holds for
$\pi(V^{v}, \sigma_p^{v})$.
\end{enumerate}
\end{theorem}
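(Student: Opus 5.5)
The statement is a compilation theorem, so the plan is to check, item by item, that each cited result applies to $\pi \defeq \pi(V^{v},\sigma_p^{v})$ in the present non-semisimple setting. We assume throughout that $\rep$ is $12$-generic and that $\sigma_w \in W(\repgl_w^\vee)$ for all $w \in S_p\setminus\{v\}$, so that $\pi\neq 0$ by \eqref{eq:small-pi-and-modular-weights}. We also record once that $\pi$ is admissible with central character $\mytwist$ (by the paragraph after \eqref{eq:global-pi-def}), as the hypotheses of \Cref{sec:hypotheses} require.

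For (i) we invoke \Cref{thm:diag}. It gives an isomorphism of diagrams $(\pi^{I_1}\hookrightarrow\pi^{K_1})\cong \V\otimes_{\F}\Diagr = \Diagr^{\oplus r}$, where $r$ is the integer of \Cref{prop:Wang-freeness}. The only point to verify is that this $r$ coincides with the multiplicity $r$ of \cite[Theorem~6.3(ii)]{Wang23}, i.e.\ with $\dim_{\F}\Hom_{\GR}(\sigma,\pi)$ for $\sigma\in\Wr$. This follows because $\pi^{K_1}\cong \V\otimes\DZero$ and $\socR\DZero=\bigoplus_{\sigma\in\Wr}\sigma$ is multiplicity free.

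For (ii) we use \cite[Proposition~6.4.6]{BHHMS1}. That result reduces $[\pi[\mI^3]:\chi]=[\pi[\mI]:\chi]$ to freeness statements for $\Moo$ applied to the relevant projective $\Gt$-modules, and only used semisimplicity of $\rep$ through the multiplicity one input. We replace that input with \cite[Theorem~6.3(ii)]{Wang23}, which provides exactly the needed $r$-dimensionality, and also supplies the central character needed to speak of $\Lambda$-modules.

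For (iii) and (iv) we proceed similarly. Item (iii) is the essential self-duality of \cite[Theorem~8.2]{HW22}, whose proof needs Cohen--Macaulayness of $\Moo$, i.e.\ \cite[Theorem~6.3(i)]{Wang23}, together with the central character computed above to pin down the twist $\mytwist$. Item (iv) is \cite[Proposition~2.6.3]{BHHMS4}, whose proof computes $\Ext^i_{I/Z_1}(\chi,\pi)$ via patching and freeness over the support; here we supply \Cref{prop:Wang-freeness}(iii) for the projective envelopes. The factor $\binom{2f}{i}$ comes from the Koszul-type resolution and is unaffected, while the rank contributes the factor $r$.

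The main obstacle is (iv). One must check that the argument of \cite{BHHMS4}, which was written for multiplicity one, really uses only freeness of $\Moo(P_\sigma)$ over a formally smooth support, and that the non-semisimple weights (those with $J_\sigma\subseteq\Jrep$) do not change the Ext computation. I would first try to reduce it verbatim to \Cref{prop:Wang-freeness}(iii) combined with (ii) to control the $I_1$-invariants.
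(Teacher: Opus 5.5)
Your proposal matches the paper, which gives no argument for this statement beyond the attributions written into the statement itself: (i) is \Cref{thm:diag}, and (ii)--(iv) are the cited results of \cite{BHHMS1}, \cite{Wang23}, \cite{HW22} and \cite{BHHMS4}, applied as stated. Your check that the $r$ of \Cref{prop:Wang-freeness} equals $\dim_{\F}\Hom_{\GR}(\sigma,\pi)$ is a harmless consistency remark, and the obstacle you anticipate in (iv) does not arise in the paper, which quotes \cite[Proposition~2.6.3]{BHHMS4} directly for general $r$ rather than re-deriving it from \Cref{prop:Wang-freeness}(iii).
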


We return to \Cref{thm:global-part-fl}.
\begin{proof}[Proof of 
\Cref{thm:global-part-fl}]
Notice that 
$ H^{1}_{\text{ét}}(X_{V^{v}V_v} \times_{F} \overline{F},\F )$
is a $U^{v}/V^{v}$-representation,
in particular it is $V^{v}$-invariant,
and we can rewrite $\pi(V^{v})$ as
\begin{align}
	\label{eq:pi-rewrite-dumb}
\pi(V^{v}) \cong &
\varinjlim _{V_v} \Hom_{V^{v}}\left(1, 
\pi(V^{v}V_v)
\right)\\
	\label{eq:pi-rewrite-Frobenius}
\cong& 
\varinjlim _{V_v} \Hom_{U^{v}/V^{v}}\left(
 \bigotimes _{w \in S_p \setminus \{v\} } 
\Ind_{V_w} ^{U_w}1, 
\pi(V^{v}V_v)
\right).
\end{align}
In \eqref{eq:pi-rewrite-dumb},
$1$ denotes the trivial representation
of $V^{v}$,
and in \eqref{eq:pi-rewrite-Frobenius}
we have used Frobenius reciprocity
from $V^{v}$ to $U^{v}$.
Since the two levels only differ at 
places in
$S_p \setminus \{v\}$,
$\Ind_{V^v} ^{U^v}1$
coincides with 
$
\Ind_{\prod_{w \in S_p \setminus \{v\}} V_w} 
^{\prod_{w \in S_p \setminus \{v\}} U_w}1 
\cong 
\bigotimes _{w \in S_p \setminus \{v\} } 
\Ind_{V_w} ^{U_w}1
\overset{\eqref{eq:U-lvl-def}}{\cong} 
\bigotimes _{w \in S_p \setminus \{v\} } 
\Ind_{V_w} ^{\GR[F_w]}1 $.

Recall that 
the centre of $\GR[F_w]$
acts by $\overline{\psi}^{-1} |_{I(\overline{F}_w/F_w)} $
on \eqref{eq:global-pi-def}.
If
$(\Ind_{V_w} ^{\GR[F_w]}1)_Z $
denotes the maximal quotient of 
$\Ind_{V_w} ^{\GR[F_w]}1$
on which 
the centre of $\GR[F_w]$
acts by the character $\overline{\psi}^{-1} |_{I(\overline{F}_w/F_w)} $,
then
\begin{equation}
	\label{eq:pi-rewrite}
\pi(V^{v}) \cong
\varinjlim _{V_v} 
\Hom_{U^{v}/V^{v}}\!\!\left(
\bigotimes _{w \in S_p \setminus \{v\} } 
(\Ind_{V_w} ^{\GR[F_w]}1)_Z, 
\pi(V^{v}V_v)
\right)
.
\end{equation}
If we set
$\JH_{w}
\defeq \JH((\Ind_{V_w} ^{\GR[F_w]}1)_Z )$,
then a dévissage 
on the Jordan-H\"older constituents of
$ \bigotimes _{w \in S_p \setminus \{v\} } 
(\Ind_{V_w} ^{\GR[F_w]}1)_Z$
(which are all of the form
$\bigotimes_{S_p \setminus \{v\} }  \sigma_w$,
for $\sigma_w \in \JH_{w}$)
gives
\begin{equation}
	\label{eq:length-devissage}
\lg_{\GF}  \left(\pi(V^{v})\right)
\le 
\sum _{w \in S_p \setminus \{v\} } 
\sum _{\sigma_w \in \JH_w} 
\lg_{\GF}  \left(
\pi\left(V^{v}, \textstyle\bigotimes_{w \in S_p \setminus \{v\} }  \sigma_w\right)
\right).
\end{equation}
Finally, notice that
each term
$ \pi\left(V^{v}, \bigotimes_{w \in S_p \setminus \{v\} }  \sigma_w\right) $
is either zero
(by \eqref{eq:small-pi-and-modular-weights})
or satisfies hypotheses
\ref{hypothesis:i}
to 
\ref{hypothesis:iv}
 of \Cref{sec:hypotheses}
(by \Cref{thm:compilation}),
 and hence has finite length 
by \Cref{cor:fl},
using that $\rep$ is
$\max \{12,2f+1\}$-generic.
Since the sum in the right-hand side
of \eqref{eq:length-devissage}
is finite, we conclude
that $\pi(V^{v})$ has finite length.
\end{proof}

\printbibliography
\end{document}